\def\R{\mathbb{R}}
\def\Tr{\mathrm{Tr}}
\DeclareMathOperator{\Var}{Var}
\DeclareMathOperator{\Rademacher}{Rademacher}
\DeclareMathOperator{\supp}{supp}
\DeclareMathOperator{\spn}{span}
\DeclareMathOperator{\median}{median}
\newcommand{\indep}{\perp \!\!\! \perp}
\DeclareMathOperator*{\argmax}{argmax}
\DeclareMathOperator*{\argmin}{argmin}
\DeclareMathOperator{\SRE}{SRE}
\DeclareMathOperator{\dKL}{d_{KL}}
\DeclareMathOperator{\dTV}{d_{TV}}
\newtheorem{theorem}{Theorem}
\newtheorem{proposition}{Proposition}
\newtheorem{lemma}{Lemma}
\newtheorem{corollary}{Corollary}
\newtheorem{definition}{Definition}
\newtheorem{assumption}{Assumption}
\theoremstyle{definition}
\newtheorem{remark}{Remark}
\title{Sparsity meets correlation in Gaussian sequence model}
\author{Subhodh Kotekal$^1$ and Chao Gao$^2$ \\ \textit{University of Chicago}}
\date{}
\begin{document}
\maketitle
\footnotetext[1]{Email: \texttt{skotekal@uchicago.edu}. The research of SK is supported in part by NSF
Grants ECCS-2216912 and DMS-1547396.}
	\footnotetext[2]{Email: \texttt{chaogao@uchicago.edu}. The research of CG is supported in part by NSF
Grants ECCS-2216912 and DMS-2310769, NSF Career Award DMS-1847590, and an Alfred Sloan fellowship.}

\abstract{We study estimation of an \(s\)-sparse signal in the \(p\)-dimensional Gaussian sequence model with equicorrelated observations and derive the minimax rate. A new phenomenon emerges from correlation, namely the rate scales with respect to \(p-2s\) and exhibits a phase transition at \(p-2s \asymp \sqrt{p}\). Correlation is shown to be a blessing provided it is sufficiently strong, and the critical correlation level exhibits a delicate dependence on the sparsity level. Due to correlation, the minimax rate is driven by two subproblems: estimation of a linear functional (the average of the signal) and estimation of the signal's \((p-1)\)-dimensional projection onto the orthogonal subspace. The high-dimensional projection is estimated via sparse regression and the linear functional is cast as a robust location estimation problem. Existing robust estimators turn out to be suboptimal, and we show a kernel mode estimator with a widening bandwidth exploits the Gaussian character of the data to achieve the optimal estimation rate.}

\section{Introduction}\label{section:introduction}

A remarkably successful suite of tools \cite{wainwright_high-dimensional_2019,vershynin_high-dimensional_2018,boucheron_concentration_2013,negahban_unified_2012} for estimating a sparse high-dimensional parameter has been most extensively developed and deployed in models for independent data. Though some recent works \cite{basu_regularized_2015,loh_high-dimensional_2012,shu_estimation_2019,melnyk_estimating_2016} employ sophisticated analyses to furnish upper bounds of similar flavors under dependent data, there is a great dearth of minimax lower bounds; complete and sharp estimation rates for even seemingly simple settings are absent in the literature. We consider a simple ``signal plus noise'' setup in the form of sparse Gaussian sequence model with correlated observations. To fix notation, consider the model 
\begin{equation}\label{model:additive}
    X_i = \theta_i + \sqrt{\gamma} W + \sqrt{1-\gamma} Z_i,
\end{equation}
where the signal \(\theta = (\theta_1,...,\theta_p) \in \R^p\) is \(s\)-sparse, \(\gamma \in [0, 1]\) denotes the correlation level\footnote{The restriction \(\gamma \in [0, 1]\) is needed to express (\ref{model:additive}), but it is not required to express (\ref{model:vector}) in which the covariance matrix is positive semi-definite if and only if \(-\frac{1}{p-1} \leq \gamma \leq 1\). However, it can be shown the minimax estimation rate when \(-\frac{1}{p-1} \leq \gamma < 0\) is the same as when \(\gamma = 0\), and so nothing is lost in restricting to \(\gamma \in [0, 1]\).}, and \(W, Z_1,...,Z_p \overset{iid}{\sim} N(0, 1)\). The shared factor \(W\) in (\ref{model:additive}) implies the observations are all equicorrelated with correlation \(\gamma\). For a first investigation, the correlation \(\gamma\) and the sparsity \(s\) is taken to be known; adaptation is addressed later. In vector form with \(X = (X_1,...,X_p)\), the model can be written as
\begin{equation}\label{model:vector}
    X \sim N\left(\theta, (1-\gamma)I_p + \gamma \mathbf{1}_p\mathbf{1}_p^\intercal\right), 
\end{equation}
where \(\mathbf{1}_p \in \R^p\) is the vector with all entries equal to one. This article is concerned with rate-optimal estimation of the sparse signal \(\theta\). 

The model (\ref{model:additive}) has a long history and is motivated by many statistical considerations. For example, (\ref{model:additive}) is a prototypical mixed effects model in which \(\theta \in \R^p\) are the fixed effects and \(W\) is the random effect. Mixed models are ubiquitous in applications, especially in the social and biomedical sciences, and so an understanding of fundamental limits is quite important, particularly in the modern environment of sparse effects. Aside from standard use-cases of mixed models, the model (\ref{model:additive}) provides a stylized one-factor model for studying correlation, which is especially important in genetics applications \cite{efron_correlation_2007,leek_capturing_2007,leek_general_2008,qiu_correlation_2005,qiu_effects_2005,sun_solving_2018}. 

Additionally, the model (\ref{model:additive}) is quite fundamental in large-scale inference. Consider the statistician faced with \(p\)-many \(z\)-scores \(X_i\), each associated with testing an individual hypothesis. In a series of highly original articles, Efron \cite{efron_microarrays_2008,efron_large-scale_2004,efron_correlation_2007} convincingly criticizes the conventional practice of assuming that, under the null hypothesis, the \(z\)-score \(X_i\) is distributed exactly according to the \(N(0, 1)\) distribution. Armed with a number of illustrative datasets, Efron illustrates the dangers of basing downstream inferences on assuming a \emph{theoretical null} when it is actually misspecified. Given most hypotheses are null in typical large-scale inference settings, Efron points out it is now actually possible to estimate the null distribution, and suggests basing inferences on an \emph{empirical null}. For theoretical study, Efron proposes the Gaussian two-groups model, 
\begin{equation}\label{model:two-groups}
    X_i \overset{ind}{\sim} 
    \begin{cases}
        N(\mu, \sigma^2) &\textit{if } i \in \mathcal{H}_0, \\
        N(\mu + \theta_i, \sigma^2) &\textit{if } i \in \mathcal{H}_0^c,
    \end{cases}
\end{equation}
where now the null parameters \(\mu, \sigma^2\) are unknown and are to be estimated. Here, \(\mathcal{H}_0\) denotes the set of nulls and \(\mathcal{H}_0^c\) denotes the set of nonnulls. The connection to (\ref{model:additive}) is apparent in that (\ref{model:additive}) imposes the prior \(\mu \sim N(0, \gamma)\). The problem of estimating \(\theta\) in (\ref{model:additive}) has the interpretation of estimating the nonnull effects in the two-groups model. Thus, fundamental understanding of model (\ref{model:additive}) is foundational to large-scale inference.

\subsection{Related work}\label{section:related_work}

Some existing work has addressed estimation in the Gaussian sequence model with various forms of dependence. Johnstone and Silverman \cite{johnstone_wavelet_1997} consider estimation in the model \(Y \sim N(\theta, V)\) with the parameter space \(\theta \in \R^p\) and with known \(V\). Those authors provide an oracle inequality for the soft-thresholding estimator \(\hat{\theta}\) with thresholds \(\lambda_i = \sqrt{2V_{ii}\log p}\). This estimator is shown to achieve, within a \(\log p\) factor, the oracle risk amongst hard-thresholding estimators \(\hat{\theta}_i = \delta_i X_i\) with \(\delta_i \in \{0, 1\}\). The oracle threshold is \(\delta_i = \mathbbm{1}_{\left\{|\theta_i| \geq \sqrt{V_{ii}}\right\}}\). Under some conditions on \(V\) (which are violated once \(1-\gamma = o(1)\) in (\ref{model:vector})), they also provide an asymptotic lower bound stating every estimator must incur estimation risk which is at least a \(\log p\) factor of the oracle risk amongst hard-thresholding estimators. Johnstone and Silverman \cite{johnstone_wavelet_1997} go on to explore a few examples of short-range and long-range dependence along with different parameter spaces. 

In \cite{johnstone_wavelet_1999} (see also \cite{wang_function_1996,hall_nonparametric_1990,opsomer_nonparametric_2001,beran_statistical_1992} for related results), Johnstone considers a nonparametric regression model with regular design \(Y_i = f(i/n) + \xi_i\) where the error \(\xi_i\) is drawn from a stationary Gaussian process. Transforming to sequence space via a wavelet transform, Johnstone shows the estimators proposed in \cite{johnstone_wavelet_1997} achieve the asymptotic minimax rate of estimation over Besov classes under some specific models of short- and long-range dependence. Central to the arguments is the decorrelating effect of the wavelet transform which makes the dependence under consideration manageable in sequence space. Estimation can also be done adaptively without knowledge of the smoothness or the dependence index. Though these results are impressive in their scope, they do not say anything about strong, non-decaying correlation as is present in the model (\ref{model:additive}).

Let us turn to a focused discussion of estimation in (\ref{model:additive}). In the case \(\gamma = 0\), it is well known the minimax rate in square error is \(s \log\left(\frac{ep}{s}\right)\). The maximum likelihood estimator \(\hat{\theta}_{\text{MLE}} := \arg\min_{||\theta||_0 \leq s} ||X - \theta||^2\) is rate-optimal but requires knowledge of the sparsity level. The penalized likelihood estimator
\begin{equation*}
    \hat{\theta}_{\text{pen}} = \argmin_{\mu \in \R^p} \left\{||X - \mu||^2 + \text{pen}(||\mu||_0)\right\}
\end{equation*}
does not require knowledge of the sparsity level and is also rate-optimal under some conditions on the penalty function. Moreover, Birg\'{e} and Massart \cite{birge_gaussian_2001} establish the oracle inequality
\begin{equation*}
    ||\hat{\theta}_{\text{pen}} - \theta||^2 \lesssim \min_{1 \leq s \leq p} \left\{ \min_{||\mu||_0 \leq s} ||\mu - \theta||^2 + \text{pen}(s)\right\},
\end{equation*}
which holds with high probability and is applicable for any \(\theta \in \R^p\) (potentially not sparse). In an asymptotic setup with \(p \to \infty\) and \(s = o(p)\), Donoho and Johnstone (Theorem 8.21 in \cite{johnstone_gaussian_nodate}) obtain the sharp constant by establishing the asymptotic minimax estimation risk \(2(1 + o(1))s \log\left(\frac{ep}{s}\right)\). On the other extreme, consider \(\gamma \in [0, 1]\) and \(s = p\). It can be shown the minimax estimation rate is of order \(\Tr\left( (1-\gamma)I_p + \gamma \mathbf{1}_p\mathbf{1}_p^\intercal \right) = p\). Clearly the maximum likelihood estimator \(\hat{\theta}_{\text{MLE}} = X\) is optimal. In fact, for a general known covariance matrix \(\Sigma\), the minimax estimation rate is of the order \(\Tr(\Sigma)\) when \(s = p\). When \(s < p\), one might hope to apply a standard sparse estimator (e.g. hard-thresholding) to the whitened data 
\begin{equation*}
    \Sigma^{-1/2} X \sim N(\Sigma^{-1/2}\theta, I_p).
\end{equation*}
The problem is \(\Sigma^{-1/2}\theta\) may not be sparse and an out-of-the-box application of a sparse estimator may be inappropriate. This issue has been recognized before by Johnstone and Silverman in \cite{johnstone_wavelet_1997}, who write on pages 342-343, 
\begin{quote}
    ``A possible alternative procedure...is to use a prewhitening transformation...This method would have the advantage that wavelet thresholding is applied to a version of the data with homoscedastic uncorrelated noise. However, the wavelet decomposition of the signal in the original domain may have sparsity properties that are lost in the prewhitening transformation...''
\end{quote}
The subtle interaction between \(\Sigma\) and sparsity needs to be carefully studied in order to characterize the minimax estimation rate, which we denote \(\varepsilon^*(p, s, \gamma)\). 
\begin{definition}
    We say \(\varepsilon^*(p, s, \gamma)\) is the minimax estimation rate if for any \(\delta \in (0, 1)\), 
    \begin{enumerate}[label=(\roman*)]
        \item there exists \(C_\delta > 0\) depending only on \(\delta\) such that 
        \begin{equation*}
            \inf_{\hat{\theta}} \sup_{||\theta||_0 \leq s} P_{\theta, \gamma}\left\{||\hat{\theta} - \theta|| \geq C_\delta \varepsilon^*(p, s, \gamma)\right\} \leq \delta,
        \end{equation*}
        \item there exists \(c_\delta > 0\) depending only on \(\delta\) such that 
        \begin{equation*}
            \inf_{\hat{\theta}} \sup_{||\theta||_0 \leq s} P_{\theta, \gamma}\left\{||\hat{\theta} - \theta|| \geq c_\delta \varepsilon^*(p, s, \gamma)\right\} \geq 1-\delta. 
        \end{equation*}
    \end{enumerate}
\end{definition}
This definition of minimax rate captures only the scaling of the problem's difficulty on \(p, s,\) and \(\gamma\). Though it is an interesting problem to pin down the tight scaling with respect to \(\delta\), this article focuses on \(\varepsilon^*(p, s, \gamma)\) since even this coarser quantity has not been sharply characterized in the literature. 

\subsection{A preview of the interaction between sparsity and correlation}\label{section:preview}
To give a preview of how correlation and sparsity can interact in an interesting way, consider the case \(\gamma = 1\). In this setting, the observations are all perfectly correlated, that is \(X = \theta + W \mathbf{1}_p\). In this case, the minimax rate is
\begin{equation}\label{rate:perfect_correlation}
    \varepsilon^*(p, s, 1)^2 \asymp 
    \begin{cases}
        0 &\textit{if } s < \frac{p}{2}, \\
        p &\textit{if } s \geq \frac{p}{2}.
    \end{cases}
\end{equation}
When \(s \geq \frac{p}{2}\), the rate \(p\) is achieved by \(\hat{\theta} = X\). To see why perfect estimation is possible when \(s < \frac{p}{2}\), consider estimating the orthogonal pieces \(\theta - \bar{\theta}\mathbf{1}_p\) and \(\bar{\theta}\mathbf{1}_p\) separately, as combining the separate estimators yields an estimator for \(\theta\). Here, the notation \(\bar{\theta} := \frac{1}{p} \sum_{i=1}^{p} \theta_i\) denotes the average. With this strategy in mind, break the data \(X\) into the two orthogonal pieces
\begin{align*}
    X - \bar{X}\mathbf{1}_p &= \theta - \bar{\theta}\mathbf{1}_p, \\
    \bar{X} &= \bar{\theta} + W \sim N(\bar{\theta}, 1). 
\end{align*}
Clearly \(X - \bar{X}\mathbf{1}_p\) perfectly estimates \(\theta - \bar{\theta}\mathbf{1}_p\). However, \(\bar{X}\) does not perfectly estimate \(\bar{\theta}\) due to the presence of \(W\). So \(\bar{X}\) cannot be used if perfect estimation of \(\theta\) is to be achieved. Counterintuitively, \(X - \bar{X}\mathbf{1}_p\) can be used to perfectly estimate \(\bar{\theta}\) despite their orthogonality. Sparsity of \(\theta\) is crucial. Consider 
\begin{equation*}
    (X - \bar{X}\mathbf{1}_p)_i = 
    \begin{cases}
        \theta_i - \bar{\theta} &\textit{if } i \in \supp(\theta), \\
        -\bar{\theta} &\textit{if } i \in \supp(\theta)^c. 
    \end{cases}
\end{equation*}
Note \(|\supp(\theta)^c| \geq p - s > \frac{p}{2}\). Therefore, \(\bar{\theta} = \text{mode}\left( \left\{ \bar{X} - X_i \right\}_{i=1}^{p} \right)\), meaning \(\bar{\theta}\) can be perfectly estimated from \(X - \bar{X}\mathbf{1}_p\). Furthermore, it is clear \(s < \frac{p}{2}\) is both sufficient \emph{and} necessary for \(\bar{\theta}\) to be identifiable from \(X - \bar{X}\mathbf{1}_p\). When \(s \geq \frac{p}{2}\), one is forced to use \(\bar{X}\) to estimate \(\bar{\theta}\) and so only the rate \(p\) can ever be achieved for estimation of \(\theta\). 

The case \(\gamma = 1\) showcases how sparsity and correlation can interact to yield new phenomena in the minimax estimation rate. Additionally, the analysis here teases the strategy for the general case \(\gamma \in [0, 1]\). Namely, we separately investigate estimation of \(\theta - \bar{\theta}\mathbf{1}_p\) and \(\bar{\theta}\mathbf{1}_p\) from the pieces \(X - \bar{X}\mathbf{1}_p\) and \(\bar{X}\).

\subsection{The ``decorrelate-then-regress" strategy fails}\label{section:decorrelate_then_regress}

Though Johnstone and Silverman \cite{johnstone_wavelet_1997} recognize the issues associated to whitening the data before applying a sparse estimator, it may be argued this problem is specific to the application of an estimator designed for a sequence model to the whitened data. The interlocutor may point out the statistician should use an estimator designed for sparse regression by treating the whitening matrix as the design. It turns out this approach also has issues. 

To illustrate, assume \(\gamma < 1\) so the covariance matrix \(\Sigma = (1-\gamma)I_p + \gamma \mathbf{1}_p\mathbf{1}_p^\intercal\) is invertible. Decorrelating the observations \(\Sigma^{-1/2} X \sim N\left(\Sigma^{-1/2} \theta, I_p\right)\) consider a natural sparse regression estimator  
\begin{equation*}
    \hat{\theta} := \argmin_{\beta \in \R^p} \left\{\frac{1}{L}\left|\left| \Sigma^{-1/2} X - \Sigma^{-1/2} \beta \right|\right|^2 + \lambda ||\beta||_1\right\}. 
\end{equation*}
Here, the scaling factor \(L > 0\) is included just to allow appropriate normalization of the ``design" matrix \(\Sigma^{-1/2}\). Note \(\Sigma^{-1} = \frac{1}{1-\gamma} \left(I_p - \frac{1}{p}\mathbf{1}_p\mathbf{1}_p^\intercal\right) + \frac{1}{1-\gamma+\gamma p} \cdot \frac{1}{p}\mathbf{1}_p\mathbf{1}_p^\intercal\). It can be checked the design matrix satisfies the usual restricted eigenvalue-type conditions (e.g. see \cite{bellec_slope_2018}) when \(s < \delta p\) for a sufficiently small universal constant \(\delta \in (0, 1)\). Therefore, picking the penalty \(\lambda\) in a rate-optimal way \cite{bellec_slope_2018},  one obtains
\begin{equation}\label{eqn:decorrelate_regress_err}
    \frac{E_{\theta, \gamma}\left(\left| \left| \left(I_p - \frac{1}{p}\mathbf{1}_p\mathbf{1}_p^\intercal\right)(\hat{\theta} - \theta) \right|\right|^2\right)}{1-\gamma} + \frac{E_{\theta, \gamma}\left(\left|\left|\left(\frac{1}{p}\mathbf{1}_p\mathbf{1}_p^\intercal\right) (\hat{\theta} - \theta)\right|\right|^2\right)}{1-\gamma+\gamma p} \lesssim s \log\left(\frac{ep}{s}\right).
\end{equation}
\noindent The first term in (\ref{eqn:decorrelate_regress_err}) corresponds to estimating \(\theta - \bar{\theta}\mathbf{1}_p\), that is to say 
\begin{equation*}
    E_{\theta, \gamma}\left( \left| \left| (\hat{\theta} - \overline{\hat{\theta}}\mathbf{1}_p) - (\theta - \bar{\theta}\mathbf{1}_p) \right|\right|^2\right) \lesssim (1-\gamma) s \log\left(\frac{ep}{s}\right).
\end{equation*}
The bound improves as the correlation gets stronger; correlation is a blessing. The second term in (\ref{eqn:decorrelate_regress_err}) corresponds to estimating \(\bar{\theta}\), that is to say 
\begin{equation*}
    E_{\theta, \gamma}\left(p \left(\overline{\hat{\theta}} - \bar{\theta}\right)^2 \right) \lesssim (1-\gamma+\gamma p) s \log\left(\frac{ep}{s}\right). 
\end{equation*}
The bound weakens as the correlation gets stronger; correlation is a curse. In fact, the bound does not even go to \(0\) as \(\gamma \to 1\) when \(s < \frac{p}{2}\), failing to match the rate established in Section \ref{section:preview}. Noting \(\theta = (\theta - \bar{\theta}\mathbf{1}_p) + \bar{\theta}\mathbf{1}_p\) and summing the two error bounds above, it follows the final estimator satisfies 
\begin{equation*}
    E_{\theta, \gamma}\left(||\hat{\theta} - \theta||^2\right) \lesssim (1-\gamma+\gamma p) s \log\left(\frac{ep}{s}\right).
\end{equation*}
The bound has lost all the gains from correlation. It appears the ``decorrelate-then-regress" strategy is suboptimal due to the problematic piece \(\bar{\theta}\).  

Pinning down the sharp minimax rate for estimating \(\theta\) requires studying the functional estimation problem (estimating \(\bar{\theta}\)) separately from the high-dimensional estimation problem (estimating \(\theta - \bar{\theta}\mathbf{1}_p\)). Functional estimation not only boasts a rich history but has also witnessed modern developments \cite{collier_optimal_2018,collier_estimating_2018,collier_minimax_2017,carpentier_estimating_2021,carpentier_adaptive_2019,bickel_estimating_1988,lepski_estimation_1999,cai_testing_2011}. A defining feature of functional estimation is the appearance of distinctive minimax rates not frequently seen when estimating multidimensional parameters. Indeed, it turns out the need to estimate \(\bar{\theta}\) is the principal reason for the new rate phenomena described in this paper.

\subsection{Main contribution}\label{section:main_contribution}
Our main contribution is a characterization of the minimax estimation rate of \(\theta\) in squared error given an observation from (\ref{model:additive}). For \(1 \leq s \leq p\) and \(\gamma \in [0, 1]\), we show the minimax rate is given, up to universal constant factors, by 
\begin{align}\label{rate:correlation}
    \begin{split}
    \varepsilon^*(p, s, \gamma)^2 &\asymp (1-\gamma)s \log\left(\frac{ep}{s \wedge (p-2s)_{+}^2}\right) \wedge p\\
    &\asymp 
    \begin{cases}
        (1-\gamma) s \log\left(\frac{ep}{s}\right) &\textit{if } 1 \leq s \leq \frac{p}{2} - \sqrt{p}, \\
        \left((1-\gamma) p \log\left(\frac{ep}{(p-2s)^2}\right) \right) \wedge p &\textit{if } \frac{p}{2} - \sqrt{p} < s < \frac{p}{2}, \\
        p &\textit{if } \frac{p}{2} \leq s \leq p,
    \end{cases}
\end{split}
\end{align}
where we adopt the convention \(\log(x/0) = \infty\) for any \(x > 0\). A glance at \(\varepsilon^*(p, s, \gamma)\) immediately reveals the blessing of correlation. In the sparsity regime \(\frac{p}{2} - s \gtrsim \sqrt{p}\), we have \(\varepsilon^*(p, s, \gamma) = o(\varepsilon^*(p, s, 0))\) if and only if \(1 - \gamma = o(1)\). When \(\frac{p}{2} - s \lesssim \sqrt{p}\) and \(s < \frac{p}{2}\), we have \(\varepsilon^*(p, s, \gamma) = o\left(\varepsilon^*(p, s, 0)\right)\) if and only if \(1-\gamma = o\left(\log^{-1}\left(\frac{ep}{(p-2s)^2}\right)\right)\). The critical correlation level bestowing a blessing exhibits only a logarithmic dependence on the dimension \(p\). The scaling with \(p-2s\) is a curious feature whose appearance is a direct consequence of needing to estimate \(\bar{\theta}\).  

The minimax rate exhibits a discontinuity at \(s = \frac{p}{2}\) when \(1-\gamma = o\left(\log^{-1}(ep)\right)\). This discontinuity is most severe at \(\gamma = 1\) as seen in (\ref{rate:perfect_correlation}). Moreover, for dense signals (i.e. \(s \geq \frac{p}{2}\)), the minimax rate in the correlated and independent settings match, \(\varepsilon^*(p, s, \gamma) = \varepsilon^*(p, s, 0)\) for all \(\gamma \in [0, 1]\). The irrelevance of correlation in the dense regime is related to nonidentifiability in a robust statistics problem (see Section \ref{section:lower_bound_dense} for further discussion). 

Figure \ref{fig:alternative_scaling} presents plots of the minimax rate against the sparsity level \(s\) for \(p = 100\) with the choice of scaling \(\gamma = 1-p^{-\kappa}\). Figure \ref{fig:alternative_scaling} illustrates the blessing of correlation (discussed further in Remark \ref{remark:estimation_vs_testing}) as well as the hard jump in the rate. Furthermore, note how the transition from the rate \((1-\gamma)p\log\left(\frac{ep}{(p-2s)^2}\right)\) to \(p\) occurs for larger \(s\) as \(\gamma\) increases. Specifically, the elbows move to the right as \(\kappa\) increases in Figure \ref{fig:alternative_scaling}.

\begin{figure}[h]
    \centering 
    \includegraphics[scale=0.5]{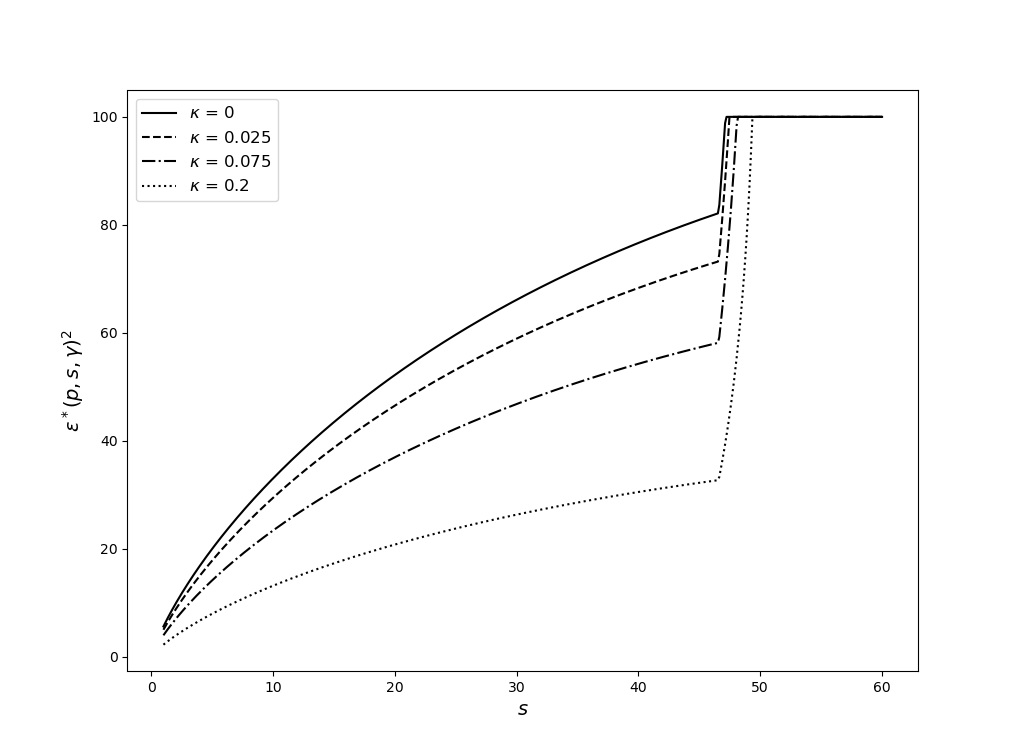}
    \caption{Plots of the rate \(\varepsilon^*(p, s, \gamma)^2\) against \(s\) with \(p = 100\) and \(\gamma = 1-p^{-\kappa}\) for various choices of \(\kappa\).}\label{fig:alternative_scaling}
\end{figure}

\begin{remark}[Testing vs estimation]\label{remark:estimation_vs_testing}
    As has been repeatedly pointed out in the minimax testing literature \cite{ingster_nonparametric_2003}, the fundamental limits of hypothesis testing and estimation are often different in high-dimensional models. The same turns out to be true for model (\ref{model:additive}). In \cite{kotekal_minimax_2023}, the authors derived the minimax separation rate for the hypothesis testing problem 
    \begin{align*}
        H_0 &: \theta = 0, \\
        H_1 &: ||\theta|| \geq \rho \text{ and } ||\theta||_0 \leq s. 
    \end{align*}
    The minimax separation rate is given by 
    \begin{equation*}
        \rho^*(p, s, \gamma)^2 \asymp 
        \begin{cases}
            (1-\gamma)s \log\left(\frac{ep}{s^2}\right) &\textit{if } 1 \leq s < \sqrt{p}, \\ 
            (1-\gamma)\sqrt{p} &\textit{if } \sqrt{p} \leq s \leq \frac{p}{2}, \\
            (1-\gamma)\sqrt{p} + \frac{(1-\gamma)p^{3/2}}{p-s} \wedge (1-\gamma + \gamma p) &\textit{if } \frac{p}{2} < s < p - \sqrt{p}, \\
            (1-\gamma)\sqrt{p} + \left((1-\gamma)p \log\left(\frac{ep}{(p-s)^2}\right)\right) \wedge (1-\gamma+\gamma p) &\textit{if } p - \sqrt{p} \leq s \leq p. 
        \end{cases}
    \end{equation*}
    The display above is exactly the same as in \cite{kotekal_minimax_2023}, but presented in a slightly different way to ease comparison to the rate (\ref{rate:correlation}). Unsurprisingly, the separation rate never exceeds the estimation rate, and in many regimes the separation rate is strictly faster. 
    
    Also, correlation exhibits different effects for testing and estimation. For example, when \(\frac{p}{2} \leq s < p\), correlation can be a blessing for testing whereas it is totally irrelevant for estimation. The necessary strength for correlation to help also differs between the two problems. For example, in the regime \(s < \frac{p}{2}\), testing requires \(1-\gamma = o(1)\) in order for correlation to be a blessing. On the other hand, estimation requires the stronger condition \(1-\gamma = o\left(\log^{-1}\left(\frac{ep}{(p-2s)^2}\right)\right)\) in the regime \(\frac{p}{2} - s \lesssim \sqrt{p}\). Moreover, \cite{kotekal_minimax_2023} showed there exist regimes in which correlation can be a curse for hypothesis testing. In contrast, correlation is always a blessing for estimation. 

    The intuition for differing effect of correlation on the two problems can be understood roughly as follows. At its core, what issue does correlation introduce? As pointed out in Section \ref{section:decorrelate_then_regress}, it makes the scalar quantity \(\bar{\theta}\) difficult to discern from the data; this affects both estimation and testing. In the estimation problem, the \(p\)-dimensional target \(\theta\) splits into a \((p-1)\)-dimensional piece \(\theta - \bar{\theta}\mathbf{1}_p\) and a \(1\)-dimensional piece \(\bar{\theta}\mathbf{1}_p\). Roughly speaking, the gains in estimating \(\theta - \bar{\theta}\mathbf{1}_p\) due to correlation can be quite large as it is high-dimensional, outweighing the increased difficulty in estimating the scalar \(\bar{\theta}\). Thus, correlation is always a blessing. In the testing problem, we can use a nonrigorous heuristic that the complexity of testing should be comparable to that of estimating the quadratic functional \(||\theta||^2\). In contrast to \(\theta\), the target \(||\theta||^2\) splits into two \(1\)-dimensional pieces, \(||\theta-\bar{\theta}\mathbf{1}_p||^2\) and \(||\bar{\theta}\mathbf{1}_p||^2\). Now, the gains from correlation in estimating \(||\theta-\bar{\theta}\mathbf{1}_p||^2\) need not be so large since it is a scalar, and thus need not outweigh the increased difficulty in estimating \(\bar{\theta}\). There is now room for correlation to be a curse.
\end{remark}

\begin{remark}[Large-scale inference]\label{remark:large-scale_inference}
    Model (\ref{model:additive}) can be interpreted as a descendant of the two-groups model (\ref{model:two-groups}) in which a Gaussian prior on the unknown \(\mu\) is imposed. If \(\mu\) is known, then (\ref{model:two-groups}) is equivalent to the sparse Gaussian sequence model after transforming the data \(\{X_i - \mu\}_{i=1}^{n}\). Thus, the oracle rate of estimation if \(\mu\) were known is 
    \begin{equation*}
        \varepsilon_{\text{oracle}}^*(p, s, \sigma)^2 \asymp \sigma^2 s \log\left(\frac{ep}{s}\right).
    \end{equation*}
    When \(\mu \in \R\) is completely unknown (no prior is imposed), then the minimax rate for estimating the signal \(\theta\) can be established in a straightforward way from our result, namely
    \begin{equation}\label{rate:LSI}
        \varepsilon^*(p, s, \sigma)^2 \asymp 
        \begin{cases}
            \sigma^2 s \log\left(\frac{ep}{s}\right) &\textit{if } 1 \leq s \leq \frac{p}{2} - \sqrt{p}, \\
            \sigma^2 p \log\left(\frac{ep}{(p-2s)^2}\right) &\textit{if } \frac{p}{2} - \sqrt{p} < s < \frac{p}{2}, \\
            \infty &\textit{if } \frac{p}{2} \leq s \leq p. 
        \end{cases}
    \end{equation}
    Remarkably, the oracle rate as if \(\mu\) were known is achieved in the sparsity regime \(\frac{p}{2} - s \gtrsim \sqrt{p}\). No price is paid in the rate for abandoning the assumption the theoretical null holds. Likewise, the price paid in the regime \(s < \frac{p}{2}\) and \(\frac{p}{2} - s \lesssim \sqrt{p}\) is mild as it is at most only logarithmic in the dimension. Section \ref{section:LSI} discusses large-scale inference further. 
\end{remark}

\subsection{Notation}
This section defines frequently used notation. For a natural number \(n\), denote \([n] := \{1,..., n\}\). For \(a, b \in \R\) the notation \(a \lesssim b\) denotes the existence of a universal constant \(c > 0\) such that \(a \leq cb\). The notation \(a \gtrsim b\) is used to denote \(b \lesssim a\). Additionally \(a \asymp b\) denotes \(a \lesssim b\) and \(a \gtrsim b\). The symbol \(:=\) is frequently used when defining a quantity or object. Furthermore, we frequently use \(a \vee b := \max(a, b)\) and \(a \wedge b := \min(a, b)\). We also use the notation \((x)_{+}\) to denote \(x \vee 0\) for \(x \in \R\). We generically use the notation \(\mathbbm{1}_A\) to denote the indicator function for an event \(A\). For a vector \(v \in \R^p\) and a subset \(S \subset [p]\), we sometimes use the notation \(v_S \in \R^{p}\) to denote the vector with coordinate \(i\) equal to \(v_i\) if \(i \in S\) and zero otherwise. In other cases, the notation \(v_S \in \R^{|S|}\) denotes the subvector of dimension \(|S|\) corresponding to the coordinates in \(S\). The context will clarify between the two different notational uses of \(v_S\). In particular, we will frequently make use of the notation \(\mathbf{1}_S := (\mathbf{1}_p)_S\) in this way. Additionally, \(||v||_0 := \sum_{i=1}^{p} \mathbbm{1}_{\{v_i \neq 0\}}\), \(||v||_1 := \sum_{i=1}^{p} |v_i|\), and \(||v||^2 := \sum_{i=1}^{p} v_i^2\). We also frequently make use of the notation \(\bar{v} = p^{-1}\sum_{i=1}^{p} v_i\), though in some cases the notation is used to denote something specified in advance. For a vector \(v \in \R^p\), the notation \(\supp(v)\) refers to the support of \(v\), namely the set \(\{i \in [p] : v_i \neq 0\}\). For two probability measures \(P\) and \(Q\) on a measurable space \((\mathcal{X}, \mathcal{A})\), the total variation distance is defined as \(\dTV(P, Q) := \sup_{A \in \mathcal{A}} |P(A) - Q(A)|\). If \(P\) is absolutely continuous with respect to \(Q\), then the \(\chi^2\)-divergence is defined as \(\chi^2(P||Q) := \int_{\mathcal{X}} \left(\frac{dP}{dQ} - 1\right)^2 \, dQ\). For sequences \(\{a_k\}_{k=1}^{\infty}\) and \(\{b_k\}_{k=1}^{\infty}\), the notation \(a_k = o(b_k)\) denotes \(\lim_{k \to \infty} \frac{a_k}{b_k} = 0\) and the notation \(a_k = \omega(b_k)\) is used to denote \(b_k = o(a_k)\). For a matrix \(A \in \R^{m \times n}\), the Frobenius norm of \(A\) is denoted as \(||A||_F = \sqrt{\sum_{i=1}^{m}\sum_{j=1}^{n} a_{ij}^2}\). For \(1 \leq j \leq p\), the vector \(e_j \in \R^p\) denotes the \(j\)th standard basis vector of \(\R^p\).

\section{Estimation of a projection: sparse regression}\label{section:sparse_regression}

Before describing the core of our methodology in the interesting regime \(s < \frac{p}{2}\), it is convenient to quickly address the case \(s \geq \frac{p}{2}\). Furnishing an estimator which achieves error of order \(p\) is trivial. The raw data \(X\) can be used. The following result is a simple consequence of Markov's inequality along with \(E_{\theta, \gamma}(||X - \theta||^2) = p\). 
\begin{proposition}\label{prop:dense_ubound}
    If \(1 \leq s \leq p\) and \(\gamma \in [0, 1]\), then for any \(\delta \in (0, 1)\) we have
    \begin{equation*}
        \sup_{||\theta||_0 \leq s} P_{\theta, \gamma}\left\{ ||X - \theta||^2 > \frac{p}{\delta}\right\} \leq \delta.
    \end{equation*}
\end{proposition}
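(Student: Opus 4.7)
The plan is to apply Markov's inequality after computing the expected squared error exactly, exploiting the fact that the noise distribution does not depend on $\theta$ at all.

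First I would observe that under the model, $X - \theta \sim N(0, \Sigma)$ where $\Sigma = (1-\gamma)I_p + \gamma \mathbf{1}_p \mathbf{1}_p^\intercal$, so the distribution of $\|X - \theta\|^2$ is free of $\theta$. Next I would compute
\[
E_{\theta, \gamma}(\|X - \theta\|^2) = \Tr(\Sigma) = (1-\gamma)p + \gamma \Tr(\mathbf{1}_p \mathbf{1}_p^\intercal) = (1-\gamma)p + \gamma p = p.
\]
Then Markov's inequality gives
\[
P_{\theta, \gamma}\left\{ \|X - \theta\|^2 > \frac{p}{\delta}\right\} \leq \frac{\delta \cdot E_{\theta, \gamma}(\|X - \theta\|^2)}{p} = \delta,
\]
and since the right-hand side does not depend on $\theta$, the supremum over $\{\theta : \|\theta\|_0 \leq s\}$ is bounded by $\delta$ as well.

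There is no real obstacle here — the sparsity constraint plays no role because the noise covariance is known and the trace is immediately computable, and the Gaussian character of the noise is used only through the mean and variance. The content of the statement is simply to record, for later reference in the regime $s \geq \tfrac{p}{2}$, that the trivial estimator $\hat\theta = X$ attains squared error of order $p$ with constant probability.
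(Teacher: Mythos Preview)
Your proof is correct and matches the paper's approach exactly: compute $E_{\theta,\gamma}(\|X-\theta\|^2)=\Tr\bigl((1-\gamma)I_p+\gamma\mathbf{1}_p\mathbf{1}_p^\intercal\bigr)=p$ and apply Markov's inequality.
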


As alluded in earlier discussion, for the regime \(s < \frac{p}{2}\) our methodology relies on the simple decomposition \(\theta = (\theta - \bar{\theta}\mathbf{1}_p) + \bar{\theta} \mathbf{1}_p\). Separate estimators will be constructed for each piece, and later combined to furnish an estimator for \(\theta\). In preparation for the definition of our estimators, consider the following decorrelation transformation
\begin{equation*}
    \widetilde{X} := X - \bar{X}\mathbf{1}_p + \frac{\sqrt{1-\gamma}}{\sqrt{p}} \xi \mathbf{1}_p,
\end{equation*}
where \(\xi \sim N(0, 1)\) is drawn by the statistician independently of the data. It follows 
\begin{equation}\label{def:Xtilde}
    \widetilde{X} \sim N(\theta - \bar{\theta}\mathbf{1}_p, (1-\gamma)I_p),
\end{equation}
and so \(\widetilde{X}\) has independent components. Note also \(\bar{X} \sim N\left(\bar{\theta}, \frac{1-\gamma+\gamma p}{p}\right)\) and \(\bar{X}\) is independent of \(\widetilde{X}\).

We can estimate \(\theta - \bar{\theta}\mathbf{1}_p\) via regression; recall from Section \ref{section:decorrelate_then_regress} there are potential gains from correlation when estimating this piece. In what follows, we will assume that \(p\) is larger than a sufficiently large universal constant to avoid technical distractions. We will use the Lasso estimator with a rate-optimal choice of the tuning parameter (see \cite{bellec_slope_2018}). Some preliminary scaling is necessary. Define \(Y := \sqrt{p} \widetilde{X}\) and note 
\begin{equation*}
    Y \sim N\left(M \theta, \sigma^2I_p\right),
\end{equation*}
where \(M = \sqrt{p} \left(I_p - \frac{1}{p}\mathbf{1}_p\mathbf{1}_p^\intercal\right)\) and \(\sigma^2 = (1-\gamma)p\). For a choice of the tuning parameter \(\lambda > 0\), define 
\begin{equation}\label{problem:lasso}
    \hat{\beta} = \argmin_{\beta \in \R^p} \left\{ \frac{1}{p} ||Y - M\beta||^2 + 2\lambda ||\beta||_1 \right\}.
\end{equation}
The choice of penalty \(\lambda \asymp \sqrt{(1-\gamma) \log\left(ep/s\right)}\) will be made as prescribed by Bellec et al. \cite{bellec_slope_2018}. Some rescaling is necessary to estimate \(\theta - \bar{\theta}\mathbf{1}_p\), and so \(p^{-1/2}M \hat{\beta}\) will be used.

\begin{proposition}\label{prop:lasso_error}
    Suppose \(1 \leq s \leq \frac{p}{784}\) and \(\gamma \in [0, 1)\). If \(\lambda \geq 2(4 + \sqrt{2}) \sqrt{(1-\gamma)\log\left(\frac{2ep}{s}\right)}\) and \(\delta \in (0, 1)\), then 
    \begin{equation*}
        \sup_{||\theta||_0 \leq s} P_{\theta, \gamma} \left\{ \left|\left|\frac{1}{\sqrt{p}} M \hat{\beta} - \left(\theta - \bar{\theta}\mathbf{1}_p\right)\right|\right|^2 > 16\lambda^2s \delta^{-1} \right\} \leq \delta,
    \end{equation*}
    where \(\hat{\beta}\) solves (\ref{problem:lasso}).
\end{proposition}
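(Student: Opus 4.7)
The plan is to identify the target quantity as the in-sample prediction error of the Lasso and then invoke a sharp oracle inequality from \cite{bellec_slope_2018}. Since $\frac{1}{\sqrt{p}}M = I_p - \frac{1}{p}\mathbf{1}_p\mathbf{1}_p^\intercal$, we have $\frac{1}{\sqrt{p}}M\theta = \theta - \bar{\theta}\mathbf{1}_p$, hence
$$\left\|\frac{1}{\sqrt{p}}M\hat{\beta} - (\theta - \bar{\theta}\mathbf{1}_p)\right\|^2 = \frac{1}{p}\|M(\hat{\beta} - \theta)\|^2,$$
which is precisely the in-sample prediction error of $\hat{\beta}$ in the sparse regression $Y \sim N(M\theta, \sigma^2 I_p)$ with $\sigma^2 = (1-\gamma)p$. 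Note that the randomization $\xi$ entering $\widetilde{X}$ only contaminates the $\mathbf{1}_p$ direction, which $M$ annihilates, so it plays no role here.

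The main technical input is a restricted eigenvalue condition for $M/\sqrt{p}$ on the usual Lasso cone. Direct calculation gives
$$\frac{1}{p}\|Mu\|^2 = \|u\|^2 - \frac{(\mathbf{1}_p^\intercal u)^2}{p} \geq \|u\|^2 - \frac{\|u\|_1^2}{p}.$$
For any $u$ in a cone of the form $\{u : \|u_{S^c}\|_1 \leq c\|u_S\|_1,\ |S| \leq s\}$, Cauchy-Schwarz yields $\|u\|_1 \leq (1+c)\sqrt{s}\|u\|$, so
$$\frac{1}{p}\|Mu\|^2 \geq \left(1 - \frac{(1+c)^2 s}{p}\right)\|u\|^2.$$
Although $M$ has a null direction along $\mathbf{1}_p$, sparse vectors are far from this direction, so the restricted eigenvalue property survives on the relevant cone. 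The hypothesis $s \leq p/784$ is calibrated to the particular cone constant $c$ arising in the argument of \cite{bellec_slope_2018}, keeping the restricted eigenvalue constant bounded well away from zero.

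With the restricted eigenvalue condition verified and $\lambda$ satisfying the stated lower bound (exactly the choice prescribed by \cite{bellec_slope_2018} upon substituting $\sigma^2 = (1-\gamma)p$ and ``sample size'' $n = p$), that paper's oracle inequality in expectation yields
$$E_{\theta,\gamma}\left[\frac{1}{p}\|M(\hat{\beta} - \theta)\|^2\right] \leq 16\lambda^2 s,$$
where the constant $16$ is obtained by tracking the numerical constants in \cite{bellec_slope_2018} together with the near-unity restricted eigenvalue constant. Markov's inequality applied to this expectation immediately gives the claimed tail bound.

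The main obstacle is essentially bookkeeping: aligning the normalizations (the columns of $M$ have squared norm $p-1$ rather than exactly $p$, and the Lasso program uses the factor $1/p$ in front of the squared residuals), pinning down the specific cone constant $c$ that fixes the $p/784$ threshold, and extracting the explicit constant $16$ from the oracle inequality. These are all routine once the cited result is invoked as a black box on the non-null portion of the signal.
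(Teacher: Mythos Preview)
Your proof is correct and follows the same high-level strategy as the paper: identify $\|\frac{1}{\sqrt{p}}M\hat{\beta} - (\theta - \bar{\theta}\mathbf{1}_p)\|^2 = \frac{1}{p}\|M(\hat{\beta}-\theta)\|^2$ as the in-sample prediction error, verify a strong restricted eigenvalue condition for $M$, apply the in-expectation oracle inequality of \cite{bellec_slope_2018} (their Corollary~4.4), and finish with Markov's inequality.

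The one genuine difference is how the $\SRE(s,7)$ condition is verified. You argue directly on the cone: from $\frac{1}{p}\|Mu\|^2 = \|u\|^2 - (\mathbf{1}_p^\intercal u)^2/p \geq \|u\|^2 - \|u\|_1^2/p$ and $\|u\|_1 \leq (1+c_0)\sqrt{s}\,\|u\|$ you get $\kappa(s,c_0)^2 \geq 1-(1+c_0)^2 s/p$, which for $c_0=7$ and $s\leq p/784$ gives $\kappa^2 \geq 45/49$. The paper instead goes indirectly: it first bounds the sparse eigenvalue $\psi_{\min}(M,p/2)^2 \geq 1/2$ via $\min_{\|\Delta\|_0\leq s^*}\|\Delta-\bar{\Delta}\mathbf{1}_p\|^2/\|\Delta\|^2 = (p-s^*)/p$, and then invokes Proposition~8.1(iii) of \cite{bellec_slope_2018} to transfer to $\SRE(s,7)$ with $\kappa(s,7)\geq 1/2$. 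Your route is more elementary, yields a sharper restricted eigenvalue constant, and shows that the threshold $p/784$ is an artifact of the paper's transfer step rather than intrinsic (your bound only needs $s < p/64$). Two minor remarks: the cone you write is the classical Lasso cone, whereas \cite{bellec_slope_2018} works with the $\SRE$ cone $\{\|u\|_1 \leq (1+c_0)\sqrt{s}\,\|u\|\}$ directly; your inequality applies verbatim there. Also, your aside that $\xi$ ``plays no role'' is correct for the optimizer $\hat{\beta}$ (since $M$ annihilates $\mathbf{1}_p$), but the paper simply uses that $\xi$ renders the noise in $Y$ isotropic so that Corollary~4.4 of \cite{bellec_slope_2018} applies as stated.
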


The condition \(s \leq \frac{p}{784}\) ensures \(M\) satisfies a restricted eigenvalue type condition by way of a sparse eigenvalue condition (see Proposition 8.1 in \cite{bellec_slope_2018}). To provide intuition on why a condition like \(s \leq c p\) for some constant \(c \in (0, 1)\) is needed, consider
\begin{equation*}
    \min_{\substack{\Delta \in \R^p \setminus \{0\} \\ ||\Delta||_0 \leq s}} \frac{\frac{1}{p} ||M\Delta||^2 }{||\Delta||^2} = \min_{\substack{\Delta \in \R^p \setminus \{0\}\\ ||\Delta||_0 \leq s}} \frac{||\Delta - \bar{\Delta}\mathbf{1}_p||^2}{||\Delta||^2} = \frac{p-s}{p}. 
\end{equation*}
Thus, the condition \(p-s \asymp p\) is necessary for ensuring \(\min_{\substack{\Delta \in \R^p\setminus\{0\}, ||\Delta||_0 \leq s}} \frac{1}{p} ||M\Delta||^2 / ||\Delta||^2 \gtrsim 1\). Here, we have used \(||\Delta - \bar{\Delta}\mathbf{1}_p||^2 \geq ||\Delta||^2 \cdot \frac{p-s}{p}\) for any \(||\Delta||_0 \leq s\) (see Corollary 1 in \cite{kotekal_minimax_2023}), which is tight whenever \(\Delta\) is constant on its support.

\begin{remark}[Strong restricted eigenvalue condition]
    Though the preceding discussion provides intuition, it does not explain why sparse regression cannot be applied when \(s\) is close to \(\frac{p}{2}\). The condition \(p-s \asymp p\) is still satisfied here. Proposition \ref{prop:lasso_error}, at its core, relies on Theorem 4.2 of \cite{bellec_slope_2018}, which requires \(M\) to satisfy the \(\SRE(s, c_0)\) (strong restricted eigenvalue) condition for some \(c_0 > 1\). Specifically, it is required \(\frac{1}{p} ||Me_j||^2 \leq 1\) for all \(1 \leq j \leq p\) and 
    \begin{equation*}
        \kappa(s, c_0)^2 := \min_{\substack{\Delta \in \mathcal{C}_{\SRE}(s, c_0) \\ \Delta \neq 0}} \frac{\frac{1}{p} ||M\Delta||^2}{||\Delta||^2} > 0,
    \end{equation*}
    where \(\mathcal{C}_{\SRE}(s, c_0) = \left\{ \Delta \in \R^p : ||\Delta||_1 \leq (1 + c_0) \sqrt{s}||\Delta||_2 \right\}\). When \(s \geq (1+c_0)^{-2}p\), it is straightforward to see \(\mathbf{1}_p \in \mathcal{C}_{\SRE}(s, c_0)\), which directly implies \(\kappa(s, c_0) = 0\) since \(M\mathbf{1}_p = 0\). Since \(c_0 > 1\), there is only hope to apply Theorem 4.2 for \(s \leq \frac{p}{4}\). In other words, the condition \(\frac{p}{2} - s \gtrsim p\) is needed to apply the sparse regression strategy.
\end{remark}

When \(s > \frac{p}{784}\), we will use the data \(\widetilde{X}\) to estimate \(\theta - \bar{\theta}\mathbf{1}_p\). This estimator exhibits risk of order \((1-\gamma) p\). The following theorem summarizes our upper bound. 
\begin{theorem}\label{thm:orthog_ubound}
    Suppose \(1 \leq s < \frac{p}{2}\) and \(\gamma \in [0, 1)\). Set 
    \begin{equation*}
        \hat{v} := 
        \begin{cases}
            \frac{1}{\sqrt{p}} M\hat{\beta} &\textit{if } s \leq \frac{p}{784}, \\
            \widetilde{X} &\textit{if } s > \frac{p}{784},
        \end{cases}
    \end{equation*}
    where \(\hat{\beta}\) solves (\ref{problem:lasso}) with \(\lambda = 2(4+\sqrt{2}) \sqrt{(1-\gamma) \log\left(\frac{2ep}{s}\right)}\). For any \(\delta \in (0, 1)\), there exists \(C_\delta > 0\) depending only on \(\delta\) such that
    \begin{equation*}
        \sup_{||\theta||_0 \leq s} P_{\theta, \gamma}\left\{\left|\left| \hat{v} - (\theta - \bar{\theta}\mathbf{1}_p) \right|\right|^2 > C_\delta (1-\gamma) s \log\left(\frac{ep}{s}\right)\right\} \leq \delta. 
    \end{equation*}
\end{theorem}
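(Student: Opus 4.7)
The plan is to split the argument along the two regimes in the definition of \(\hat{v}\) and handle each with a short appeal to previously established machinery.

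In the sparse regime \(1 \leq s \leq p/784\), I would simply invoke Proposition \ref{prop:lasso_error} with the prescribed tuning parameter \(\lambda = 2(4+\sqrt{2})\sqrt{(1-\gamma)\log(2ep/s)}\). Since \(\log(2ep/s) \asymp \log(ep/s)\), the proposition immediately gives \(P_{\theta,\gamma}\{||\hat{v} - (\theta - \bar{\theta}\mathbf{1}_p)||^2 > 16\lambda^2 s/\delta\} \leq \delta\), and \(16\lambda^2 s/\delta \lesssim (1-\gamma) s \log(ep/s)/\delta\), yielding a constant \(C_\delta\) depending only on \(\delta\).

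In the denser regime \(p/784 < s < p/2\), the estimator is \(\hat{v} = \widetilde{X}\). By (\ref{def:Xtilde}), the residual \(\widetilde{X} - (\theta - \bar{\theta}\mathbf{1}_p) \sim N(0, (1-\gamma)I_p)\), so \(E_{\theta,\gamma}||\hat{v} - (\theta - \bar{\theta}\mathbf{1}_p)||^2 = (1-\gamma)p\) and Markov's inequality furnishes the corresponding tail bound \((1-\gamma)p/\delta\) with probability at least \(1 - \delta\). Since \(p/s \in (2, 784)\) in this regime, \(\log(ep/s)\) is bounded above and below by universal constants and \(p \asymp s\), so \((1-\gamma)p/\delta\) is absorbed into \(C_\delta (1-\gamma) s \log(ep/s)\) after adjusting \(C_\delta\). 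Taking the maximum of the two \(\delta\)-dependent constants finishes the argument. The real technical content is already carried by Proposition \ref{prop:lasso_error}, so beyond constant bookkeeping and the elementary observation that \(\log(ep/s)\) is of constant order throughout the intermediate window, there is no substantive obstacle to address at this level.
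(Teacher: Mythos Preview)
Your proposal is correct and matches the paper's own proof essentially verbatim: the paper likewise dispatches the case \(s \leq p/784\) by a direct appeal to Proposition~\ref{prop:lasso_error}, and handles \(s > p/784\) via \(E_{\theta,\gamma}\|\widetilde{X} - (\theta - \bar{\theta}\mathbf{1}_p)\|^2 = (1-\gamma)p\) together with Markov's inequality, using \(p \asymp s\) and \(\log(ep/s) \asymp 1\) in that window.
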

\noindent To summarize, the ``decorrelate-then-regress'' strategy discussed in Section \ref{section:decorrelate_then_regress} is a good one for estimating \(\theta - \bar{\theta}\mathbf{1}_p\). 

\section{Estimation of a linear functional: kernel mode estimator} \label{section:linear_functional}
The linear functional \(\bar{\theta}\) can always be estimated by the sample mean \(\bar{X}\). However, since \(\bar{X} \sim N(\bar{\theta}, p^{-1}(1-\gamma+\gamma p))\), sample mean only has good risk when the correlation level is quite low. In particular, the risk does not scale with \(1-\gamma\) as in (\ref{rate:correlation}), and so \(\bar{X}\) misses out on potential gains in the setting of strong correlation (e.g. most severely in \(\gamma = 1\) discussed in Section \ref{section:preview}).

Collier et al. \cite{collier_minimax_2017} consider linear functional estimation in the setting \(\gamma = 0\), where the coordinates are all independent. At a high level, to exploit the sparsity \(||\theta||_0\leq s\), they estimate the linear functional \(\bar{\theta}\) by thresholding; their estimator is \(\hat{L} = \frac{1}{p}\sum_{j=1}^{p} X_j \mathbbm{1}_{\{|X_j| > (2\log(1+p/s^2))^{1/2}\}}\) if \(s < \sqrt{p}\) and \(\hat{L} = \bar{X}\) if \(s \geq \sqrt{p}\). Focusing on the regime \(s < \sqrt{p}\), the intuition for thresholding is the usual one; if \(|X_j|\) is small, then \(\theta_j\) is ``likely'' to be zero (or near-zero), in which case \(\theta_j\) can be estimated by \(0\) instead of \(X_j\). Since the \(Z_j\) in (\ref{model:additive}) are i.i.d. mean zero, this line of reasoning essentially holds ``on average'' across coordinates.  

The situation becomes more complicated when \(\gamma > 0\). With their thresholding estimator \(\hat{L} = \frac{1}{p}\sum_{j=1}^{p} X_j \mathbbm{1}_{\{|X_j| > t\}}\) for some threshold choice \(t > 0\), it becomes quickly clear there is trouble at hand. If \(|X_j|\) is small, we can now only really say \(\theta_j + \sqrt{\gamma} W\) is near-zero. It is not quite right to argue that since \(W\) is mean zero, it then follows \(\theta_j\) must be near-zero; there is only a single draw \(W\) which is shared across all coordinates. Therefore, the term \(\sqrt{\gamma}W\) is an unknown, nuisance ``background'' level which causes trouble in picking the threshold. A natural idea is to estimate \(\sqrt{\gamma}W\) and use it to center the data or set a data-driven threshold. However, the task of estimating \(\sqrt{\gamma}W\) is essentially equivalent to the task of estimating \(\bar{\theta}\) itself. Indeed, note \(\bar{X} \,|\, W \sim N(\bar{\theta} + \sqrt{\gamma}W, (1-\gamma)p^{-1})\), and so estimating \(\sqrt{\gamma}W\) is the same as estimating \(\bar{\theta}\) up to only a parametric rate slowdown. Consequently, it appears a different estimation strategy for \(\bar{\theta}\) needs to be developed.

In this section, we describe two estimators for the regimes \(s \leq \frac{p}{784}\) and \(s > \frac{p}{784}\). In the former, the linear functional \(\bar{\theta}\) can be essentially estimated using the Lasso estimator discussed in Section \ref{section:sparse_regression}. However, the problem becomes more delicate when \(s > \frac{p}{784}\). We cast the problem as a mode estimation problem and investigate a natural kernel mode estimator. Section \ref{section:kme_illustration} illustrates the methodological idea in a simple, special case, and Section \ref{section:kme_bandwidth} describes the optimal choice of the bandwidth. Section \ref{section:kme_robust_connection} highlights the connection of mode estimation to robust statistics, and illustrates why the kernel mode estimator can outperform the sample median, which is a standard robust location estimator, for estimating \(\bar{\theta}\).

When \(1 \leq s \leq \frac{p}{784}\), it turns out estimation of the linear functional \(\bar{\theta}\) via sparse regression suffices.
\begin{proposition}\label{prop:small_s_linear}
    Suppose \(1 \leq s \leq \frac{p}{784}\) and \(\gamma \in [0, 1)\). If \(\lambda = 2(4+\sqrt{2})\sqrt{(1-\gamma)\log\left(\frac{2ep}{s}\right)}\) and \(\delta \in (0, 1)\), then there exists \(C_\delta > 0\) depending only on \(\delta\) such that
    \begin{equation*}
        \sup_{||\theta||_0 \leq s} P_{\theta, \gamma} \left\{ \left|\left| \left(\frac{1}{p}\sum_{i=1}^{p} \hat{\beta}_i\right) \mathbf{1}_p - \bar{\theta}\mathbf{1}_p\right|\right|^2 > C_\delta (1-\gamma) s \log\left(\frac{ep}{s}\right)\right\} \leq \delta, 
    \end{equation*}
    where \(\hat{\beta}\) solves (\ref{problem:lasso}).  
\end{proposition}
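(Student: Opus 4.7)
The plan is to reduce the problem to an $\ell_1$-error bound on the Lasso and then exploit sparsity. The key observation is a simple duality: writing $\left\|\left(\frac{1}{p}\sum_i \hat{\beta}_i\right)\mathbf{1}_p - \bar{\theta}\mathbf{1}_p\right\|^2 = p\left(\overline{\hat{\beta}} - \bar{\theta}\right)^2$ and applying $|\overline{\hat{\beta}} - \bar{\theta}| = \frac{1}{p}\bigl|\sum_i (\hat{\beta}_i - \theta_i)\bigr| \leq \frac{1}{p}\|\hat{\beta} - \theta\|_1$, one gets
\[
    \left\|\left(\tfrac{1}{p}\sum\nolimits_i \hat{\beta}_i\right)\mathbf{1}_p - \bar{\theta}\mathbf{1}_p\right\|^2 \leq \frac{1}{p}\|\hat{\beta} - \theta\|_1^2.
\]
So it suffices to control the $\ell_1$-error of the Lasso, which is standard output of the same analysis that underlies Proposition \ref{prop:lasso_error}.

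Concretely, I would invoke the $\ell_1$ half of Theorem 4.2 of Bellec et al.\ \cite{bellec_slope_2018} applied to the design $M/\sqrt{p}$ and noise variance $\sigma^2 = (1-\gamma)p$ (same setup as in the proof of Proposition \ref{prop:lasso_error}). Since $s \leq p/784$, Proposition 8.1 of \cite{bellec_slope_2018} ensures $M$ satisfies an $\SRE(s, c_0)$ condition with $\kappa(s, c_0) \gtrsim 1$ for an appropriate $c_0 > 1$. On the same good event used for Proposition \ref{prop:lasso_error} (which occurs with probability at least $1 - \delta/2$, say), this yields
\[
    \|\hat{\beta} - \theta\|_1 \;\lesssim\; s\lambda \;\asymp\; s\sqrt{(1-\gamma)\log(ep/s)},
\]
where the implicit constant depends only on $\delta$.

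Combining the two displays and using the sparsity constraint $s \leq p/784$ (which gives $s^2/p \lesssim s$) produces
\[
    \left\|\left(\tfrac{1}{p}\sum\nolimits_i \hat{\beta}_i\right)\mathbf{1}_p - \bar{\theta}\mathbf{1}_p\right\|^2 \lesssim \frac{s^2\lambda^2}{p} \lesssim s\lambda^2 \asymp (1-\gamma)\, s \log(ep/s),
\]
which is the claimed bound. There is no substantial obstacle here: the only mildly delicate point is making sure to use the $\ell_1$ version of the Lasso guarantee (rather than just the prediction or $\ell_2$ bound), and observing that averaging is exactly the operation controlled by the $\ell_1$ norm. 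The gain of a factor $s/p$ relative to the squared $\ell_2$ estimation error (of order $s\lambda^2$) is precisely what makes the sample-mean-style functional $p^{-1}\sum_i \hat{\beta}_i$ preferable to any single coordinate and matches the overall rate $(1-\gamma)s\log(ep/s)$ of the regime $s \leq p/784$.
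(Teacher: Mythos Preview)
Your proof is correct, but the paper takes a slightly different and more direct route. Instead of passing through the $\ell_1$ norm, the paper simply uses the Pythagorean identity: since $\left(\overline{\hat{\beta}} - \bar{\theta}\right)\mathbf{1}_p$ is the orthogonal projection of $\hat{\beta} - \theta$ onto $\spn\{\mathbf{1}_p\}$, one has
\[
\left\|\left(\tfrac{1}{p}\sum\nolimits_i \hat{\beta}_i\right)\mathbf{1}_p - \bar{\theta}\mathbf{1}_p\right\|^2 \;\leq\; \|\hat{\beta} - \theta\|_2^2,
\]
and then the $\ell_2$ estimation bound from Theorem~4.2 (or Corollary~4.4) of \cite{bellec_slope_2018} gives $\|\hat{\beta}-\theta\|_2^2 \lesssim s\lambda^2$ directly, with no need for the extra step $s^2/p \lesssim s$. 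Your $\ell_1$ route is equally valid and in fact yields the sharper intermediate bound $s^2\lambda^2/p$ before you discard the factor $s/p$; this refinement is not needed here but could be useful elsewhere. The only minor comment is that your closing sentence about the ``gain of a factor $s/p$'' slightly overstates its role in the present proof---the paper's $\ell_2$ argument already matches the target rate without it.
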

\noindent The estimator benefits from strong correlation for the same reason the sparse regression estimator of Section \ref{section:sparse_regression} enjoys strong correlation. Consequently, for \(s \leq \frac{p}{784}\), either \(\bar{X}\) or \(\frac{1}{p}\sum_{i=1}^{p}\hat{\beta}_i\) will be used depending on the correlation level. 

Estimating \(\bar{\theta}\) for \(s > \frac{p}{784}\) is a much more delicate problem. Linear functional estimation has been extensively studied in various models other than (\ref{model:additive}). For example, in the context of the white noise model, perhaps the simplest example is estimation of the regression function at a point \cite{ibragimov_nonparametric_1984}. In the problem of estimating a linear functional over a convex parameter space, Donoho and Liu \cite{donoho_geometrizing_1991} establish a connection between the minimax estimation rate and a certain modulus of continuity. Most of the contemporaneous literature on linear functional estimation assumed convex parameter spaces (which precludes sparsity). Cai and Low \cite{cai_minimax_2004} generalized to the case of a finite union of convex parameter spaces and were able to furnish a bound in sparse Gaussian sequence model with sparsity \(s < p^{1/2-\delta}\). Their bound is sharp up to a logarithmic factor in \(p\). The sharp nonasymptotic rate for estimating \(\sum_{i=1}^{p} m_i\) in the model \(Y \sim N(m, \sigma^2I_p)\) with \(||m||_0 \leq s\) was finally established in \cite{collier_minimax_2017} and is given by \(\sigma^2 s^2 \log(1 + p/s^2)\). The rate can be reexpressed in a more evocative form
\begin{equation*}
    \sigma^2 s^2\log\left(1+\frac{p}{s^2}\right) \asymp 
    \begin{cases}
        \sigma^2 s^2 \log\left(\frac{ep}{s^2}\right) &\textit{if } s < \sqrt{p}, \\
        \sigma^2 p &\textit{if } s \geq \sqrt{p}. 
    \end{cases}
\end{equation*}
The rate exhibits a phase transition at \(s \asymp \sqrt{p}\), which is a distinguishing feature compared to the rate \(\sigma^2 s \log\left(\frac{ep}{s}\right)\) for estimating the full vector \(m\).

Returning to the problem of estimating \(\bar{\theta}\), recall \(\widetilde{X} \sim N\left(\theta - \bar{\theta}\mathbf{1}_p, (1-\gamma)I_p\right)\). Examining \(Y := -\widetilde{X}\), note
\begin{equation}\label{model:Gaussian_contamination}
    Y_i \overset{ind}{\sim}
    \begin{cases}
        N(\eta_i, 1-\gamma) &\textit{if } i \in \mathcal{O}, \\
        N(\mu, 1-\gamma) &\textit{if } i \in \mathcal{I},
    \end{cases}
\end{equation}
where \(\mathcal{O} = \supp(\theta)\), \(\mathcal{I} = \supp(\theta)^c\), \(\mu = \bar{\theta}\), and \(\eta_i = \bar{\theta} - \theta_i\). Since \(||\theta||_0 \leq s\), it follows that the majority of data have mean \(\bar{\theta}\). In the case \(\gamma = 1\) addressed in Section \ref{section:preview}, the empirical mode was used to perfectly estimate \(\bar{\theta}\). In the case \(\gamma < 1\), the intuition from the perfectly correlated setting suggests viewing estimation of \(\bar{\theta}\) as a mode estimation problem.

We will use the kernel mode estimator with the box kernel. For a bandwidth \(h > 0\), define 
\begin{equation}\label{def:Ghat_h}
    \hat{G}_h(t) := \frac{1}{2ph} \sum_{i = 1}^{p} \mathbbm{1}_{\{|t - Y_i| \leq h\}}
\end{equation}
and 
\begin{equation}\label{def:mhat}
    \hat{\mu} := \argmax_{t \in \R} \hat{G}_h(t).
\end{equation}
The population counterpart is denoted as \(G_h(t) = E_{\theta, \gamma}\left(\hat{G}_h(t)\right)\). Mode estimation has a long history. In one of the most foundational papers in the history of statistics (where kernel density estimators were proposed for the problem of density estimation, see also \cite{rosenblatt_remarks_1956}), Parzen \cite{parzen_estimation_1962} discussed estimation of the mode. Given independent and identically distributed data \(X_1,...,X_n\) drawn from a distribution with probability density function \(f\), Parzen proposed the estimator 
\begin{equation*}
    \argmax_{x \in \R} \frac{1}{nh} \sum_{i=1}^{n} K\left(\frac{x-X_i}{h}\right),
\end{equation*}
where \(K\) is a kernel and \(h > 0\) is the bandwidth. Parzen proved that when \(f\) is uniformly continuous, the mode of \(f\) is unique, \(K\) satisfies some mild conditions, and \(\lim_{n\to \infty} nh^2 = \infty\), the estimator is consistent for the mode. The contents of Parzen's article do not cover the simple, yet important box kernel; Chernoff \cite{chernoff_estimation_1964} studied consistency and the asymptotic distribution of the associated mode estimator. Eddy \cite{eddy_optimum_1980}, under the same iid setup, derived optimal rates of convergence associated to each kernel under sufficient restrictions. Jiang \cite{jiang_uniform_2017} considered a finite-sample setting and derived high-probability bounds on kernel density estimates as well as associated functionals including the mode. Arias-Castro, Qiao, and Zheng \cite{arias-castro_estimation_2022} studied a closely related estimator based on histograms.  

These existing results in the literature cannot be directly applied to our problem. The core difficulty is that the data drawn from (\ref{model:additive}) are not identically distributed, violating the typical assumption in the literature. The presence of outliers drawn from different distributions in (\ref{model:additive}) demands new analysis; the problem is now one of ``robust'' mode estimation. Furthermore, it turns out we need to choose a widening, instead of shrinking, bandwidth. In particular, good mode estimation is possible through the choice of bandwidth which happens to yield a bad density estimator. This understanding is different from that found in the older articles, where the resulting mode estimator is good because the density estimator is good.

There are many articles (especially from the causal inference and semiparametric statistics communities) which have found that optimal functional estimation can be done using hyperparameters for nuisance function estimators which are tuned in a way that is suboptimal for estimating the nuisance \cite{newey_crossfit_2018,mcclean_double_2024,hall_bias_1992,gine_simple_2008,paninski_undersmoothed_2008,mcgrath_nuisance_2024}. This can be in the form of undersmoothing or oversmoothing, and often involves some kind of sample splitting scheme \cite{mcclean_double_2024}. A major issue with a simple plugin estimator using an optimal nuisance function estimator is excessive bias, and these techniques aim to reduce the bias. In a variety of settings and for a variety of functionals, many such estimators have been shown to not only be minimax rate-optimal but also semiparametrically efficient in the \(\sqrt{n}\)-consistent regime. Casting estimation of \(\bar{\theta}\) as a mode estimation problem can be viewed in this light; the underlying density is the nuisance function and the mode is the target of interest. Though there is no sample splitting since data are not i.i.d., the kernel mode estimator we investigate indeed oversmooths to estimate the mode well. 

Oversmoothing has also be employed in the robust statistics literature. In particular, the authors of \cite{liu_density_2019} consider estimating a density at a point under Huber's \(\epsilon\)-contamination model and shows that a kernel-based estimator with a larger choice of bandwidth than the usual, non-robust choice turns out to be minimax-rate optimal. A similar result was established by \cite{zhang_robust_2023} when considering estimating the full density in \(L_p\) norm. The presence of outliers drives the oversmoothing, and, viewing estimation of \(\bar{\theta}\) as a robust statistics problem, the same phenomenon seems to occur.  

\subsection{An illustration in a special case}\label{section:kme_illustration}
For illustration, ignore the constraint \(\sum_{i \in \supp(\theta)^c} \mu + \sum_{i \in \supp(\theta)} \eta_i = 0\) and consider the model (\ref{model:Gaussian_contamination}) in the simple case where all the nonzero \(\eta_i\) are all equal to some \(\eta \neq 0\) and \(\gamma = 0\). For fixed \(t \in \R\), the random variable \(\hat{G}_h(t)\) is unbiased for 
\begin{equation*}
    G_h(t) = \frac{1}{2h} \left( \frac{p-s}{p} \left(\Phi\left( t - \mu + h\right) - \Phi\left( t - \mu - h\right) \right) + \frac{s}{p} \left(\Phi\left( t - \eta + h\right) - \Phi\left( t - \eta - h\right) \right) \right).
\end{equation*}
Here, \(\Phi\) denotes the cumulative distribution function for the standard normal distribution. Define 
\begin{equation}\label{def:global_maximizer}
    m := \argmax_{t \in \R} G_h(t).
\end{equation}
Consider \(m\) is the global maximizer of the function \(G_h\) which can also be written as 
\begin{equation*}
    G_h(t) = \frac{1}{2h} \cdot \frac{p-2s}{p} \left(\Phi(t - \mu +h) - \Phi(t - \mu - h)\right) + J_h(t),
\end{equation*}
where \(J_h(t) = \frac{1}{2h} \cdot \frac{s}{p} \left(\left(\Phi(t-\mu+h) - \Phi(t-\mu- h)\right) + \left(\Phi(t - \eta +h) - \Phi(t - \eta - h)\right)\right)\). Observe \(J_h\) is symmetric about the point \(\frac{\mu+\eta}{2}\) and exhibits two global maxima, one which is close to \(\mu\) and one which is close to \(\eta\). Looking at \(G_h\), the first term is monotone decreasing in \(|t - \mu|\), and it is precisely its presence which ensures \(m\) is close to \(\mu\). Figure \ref{fig:JvG} shows an example of the functions \(J_h\) and \(G_h\). The global maximizer \(m\) is indeed close to \(\mu\).  

\begin{figure}[ht]
    \centering
    \includegraphics[scale=0.5]{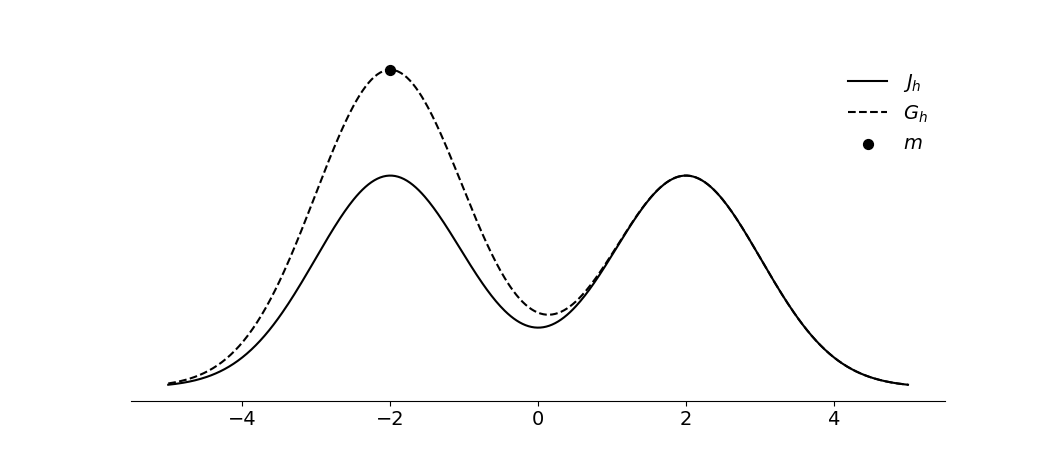}
    \caption[Figure 1.]{Plots of \(G_h\) and \(J_h\) with \(\mu = -2, \eta = 2, h = 0.25, p = 10000, s = \left\lfloor \frac{p}{2} - 10\sqrt{p}\right\rfloor = 4000, m \approx -1.999\).}\label{fig:JvG}
\end{figure}

\subsection{A widening, instead of shrinking, bandwidth}\label{section:kme_bandwidth}
For development of the methodology, let us exit the special case and return to the general setting where \(\gamma \in [0, 1)\) and \(\eta_i = \bar{\theta} - \theta_i\) need not all be the same. Curiously, it turns out one should take the bandwidth to essentially widen rather than shrink. By ``oversmoothing'', variance is traded off for bias. It turns out the correct choice is 
\begin{equation}\label{eqn:h_order}
    h \asymp \sqrt{(1-\gamma)\left(1 \vee \log\left(\frac{ep}{(p-2s)^2}\right)\right)}. 
\end{equation}
The factor \(\sqrt{1-\gamma}\) is the standard deviation in (\ref{model:Gaussian_contamination}) and thereby denotes the scale of the data. Notably, the scale-free quantity \(\frac{h}{\sqrt{1-\gamma}}\), which can be effectively understood as a bandwidth, grows in \(p\) when \(p-2s = o(\sqrt{p})\). Throughout the following discussion and in the proofs, it will be assumed \(p\) is larger than a sufficiently large universal constant. 

To illustrate why this choice of bandwidth \(h\) is the correct one, recall the high level goal is to show the global maximizer of \(\hat{G}_h\) is close to \(\mu\). It suffices to find universal positive constants \(C_1\) and \(C_2\) such that with high probability, there exists a point \(x\) with \(|x - \mu| \leq C_1 h\) so for every point \(t\) with \(|t - \mu| > C_2 h\) we have \(\hat{G}_h(x) > \hat{G}_h(t)\). On this event, it would then follow \(|\hat{\mu} - \mu| \leq C_2 h\). 

To show \(\hat{G}_h(x) > \hat{G}_h(t)\), consider 
\begin{equation}\label{eqn:kme_master_display}
    \hat{G}_h(x) - \hat{G}_h(t) \geq \left(G_h(x) - G_h(t)\right) - \left|\hat{G}_h(x) - G_h(x)\right| - \left|\hat{G}_h(t) - G_h(t)\right|. 
\end{equation}
A lower bound for the signal \(\left(G_h(x) - G_h(t)\right)\) is needed, along with control of the stochastic deviations \(\left|\hat{G}_h(x) - G_h(x)\right|\) and \(\left|\hat{G}_h(t) - G_h(t)\right|\) uniformly over \(|t - \mu| \geq C_2 h\). The following bound on the signal is available by Proposition \ref{prop:mean_diff_fix}.

\begin{proposition}\label{prop:mean_diff_fix}
    Suppose \(C_a\) is a sufficiently large universal constant. Further suppose \(\frac{h}{\sqrt{1-\gamma}}\) is larger than a sufficiently large universal constant. There exists \(|x - \mu| \leq \frac{h}{4}\) such for any \(|t - \mu| \geq C_a h\), we have 
    \begin{equation}\label{eqn:mean_diff}
        G_h(x) - G_h(t) \geq \left(\frac{1}{2} \cdot \frac{p-2|\mathcal{O}|}{2ph} \right) \vee \left( \frac{p-2|\mathcal{O}|}{2ph} - \frac{e^{-\frac{Ch^2}{1-\gamma}}}{h} + \frac{1}{2ph} \sum_{i \in \mathcal{O}} P_{\theta, \gamma}\left\{|t - Y_i| > h\right\} \right),
    \end{equation}
    where \(C > 0\) is a universal constant. 
\end{proposition}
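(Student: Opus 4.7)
The plan is to take $x = \mu$, which trivially satisfies $|x - \mu| = 0 \leq h/4$. With the shorthand $F(u) := \Phi\bigl((u+h)/\sqrt{1-\gamma}\bigr) - \Phi\bigl((u-h)/\sqrt{1-\gamma}\bigr)$ for the probability a $N(0, 1-\gamma)$ variable places in a window of radius $h$ at offset $u$, the contamination model (\ref{model:Gaussian_contamination}) yields
\begin{equation*}
G_h(\mu) - G_h(t) = \frac{p-|\mathcal{O}|}{2ph}\bigl(F(0) - F(t-\mu)\bigr) + \frac{1}{2ph}\sum_{i \in \mathcal{O}}\bigl(F(\mu-\eta_i) - F(t-\eta_i)\bigr).
\end{equation*}

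First I would handle the inlier piece via standard Gaussian tail bounds: $F(0) \geq 1 - e^{-h^2/(2(1-\gamma))}$ follows from $1 - \Phi(u) \leq \tfrac{1}{2}e^{-u^2/2}$, and for $|t-\mu| \geq C_a h$ with $C_a \geq 2$ a one-sided tail bound gives $F(t-\mu) \leq e^{-(C_a-1)^2 h^2/(2(1-\gamma))}$. Consequently $F(0) - F(t-\mu) \geq 1 - 2e^{-Ch^2/(1-\gamma)}$ for a universal $C > 0$ depending on $C_a$. For the outlier piece I would discard the nonnegative $F(\mu - \eta_i) \geq 0$ and substitute $F(t-\eta_i) = 1 - P_{\theta,\gamma}\{|t - Y_i| > h\}$, so the outlier sum contributes at least $-|\mathcal{O}| + \sum_{i \in \mathcal{O}} P_{\theta,\gamma}\{|t-Y_i|>h\}$. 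Combining these steps and bounding $(p-|\mathcal{O}|)/p \leq 1$ gives
\begin{equation*}
G_h(\mu) - G_h(t) \geq \frac{p - 2|\mathcal{O}|}{2ph} - \frac{e^{-Ch^2/(1-\gamma)}}{h} + \frac{1}{2ph}\sum_{i\in\mathcal{O}} P_{\theta,\gamma}\{|t-Y_i| > h\},
\end{equation*}
which is precisely the right branch of the $\vee$.

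For the left branch $(p-2|\mathcal{O}|)/(4ph)$, I would dichotomize on whether the correction $e^{-Ch^2/(1-\gamma)}/h$ is at most $(p-2|\mathcal{O}|)/(4ph)$: if it is, the display above immediately majorizes the target after discarding the nonnegative outlier sum. In the complementary regime, where $p - 2|\mathcal{O}|$ is exponentially small compared to $pe^{-Ch^2/(1-\gamma)}$, the target is itself tiny, and I would take the universal lower bound on $h/\sqrt{1-\gamma}$ and the constant $C_a$ large enough so that the tail estimates upgrade to $F(0) - F(t-\mu) \geq p/(2(p-|\mathcal{O}|))$, i.e., $(p-|\mathcal{O}|)(F(0) - F(t-\mu)) \geq p/2$, so that the inlier piece alone of the decomposition delivers $G_h(\mu) - G_h(t) \geq (p-2|\mathcal{O}|)/(4ph)$.

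The main obstacle is the delicate quantitative control of the left branch in the near-critical regime $|\mathcal{O}|/p \to 1/2$, where the signal margin $p - 2|\mathcal{O}|$ and the exponential correction $pe^{-Ch^2/(1-\gamma)}$ can be of comparable size; the proof must calibrate the universal thresholds so that the inlier majority uniformly surfaces above the outlier count no matter how close $|\mathcal{O}|$ comes to $p/2$, which is what ultimately motivates the widening bandwidth choice in (\ref{eqn:h_order}).
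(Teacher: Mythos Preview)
Your argument for the right branch is correct and essentially identical to the paper's. The flaw is entirely in the left branch, specifically your Case~2.

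In Case~2 you need the upgrade
\[
F(0)-F(t-\mu)\ \ge\ \frac{p}{2(p-|\mathcal O|)}\ =\ 1-\frac{p-2|\mathcal O|}{2(p-|\mathcal O|)},
\]
equivalently $1-F(0)+F(t-\mu)\le \frac{p-2|\mathcal O|}{2(p-|\mathcal O|)}$. But $1-F(0)=2\bigl(1-\Phi(h/\sqrt{1-\gamma})\bigr)$ is a \emph{fixed} positive number once the universal threshold on $h/\sqrt{1-\gamma}$ is set, whereas the right side can be of order $1/p$ (take $|\mathcal O|=\lfloor p/2\rfloor-1$). No choice of universal constants for $C_a$ or for the bandwidth threshold can force a fixed positive number below $1/p$ for all $p$. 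So the upgrade is impossible under the proposition's hypotheses, and your Case~2 collapses: the inlier piece cannot by itself absorb the outlier deficit $-|\mathcal O|/(2ph)$ when $p-2|\mathcal O|$ is tiny.

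What the paper does instead is avoid the crude outlier bound $F(\mu-\eta_i)\ge 0,\ F(t-\eta_i)\le 1$ altogether. It \emph{does not} take $x=\mu$; rather it sets $\mathcal O'=\{i\in\mathcal O:|\eta_i-\mu|\ge C_a h/4\}$ and takes $x$ to be the global maximizer of the mixture
\[
y\ \longmapsto\ |\mathcal O'|\,f_\mu(y)+\sum_{i\in\mathcal O'} f_{\eta_i}(y).
\]
A separate structural result about Gaussian mixtures (Theorem~\ref{thm:mixture}) shows that, because the $\eta_i$'s in $\mathcal O'$ are well separated from $\mu$, this maximizer satisfies $|x-\mu|\le h/4$. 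The argmax property then gives
\[
|\mathcal O'|\bigl(f_\mu(x)-f_\mu(t)\bigr)+\sum_{i\in\mathcal O'}\bigl(f_{\eta_i}(x)-f_{\eta_i}(t)\bigr)\ \ge\ 0
\]
for every $t$, with no loss at all. Together with the easy bound $f_{\eta_i}(x)\ge f_{\eta_i}(t)$ for the near outliers $i\in\mathcal O\setminus\mathcal O'$, this reduces the left branch to $\frac{p-2|\mathcal O|}{2ph}(f_\mu(x)-f_\mu(t))\ge \frac12\cdot\frac{p-2|\mathcal O|}{2ph}$, which holds uniformly in $p$ and $|\mathcal O|$. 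The missing idea in your attempt is precisely this variational choice of $x$ (and the mixture lemma certifying it is near $\mu$); with $x=\mu$ you are forced into bookkeeping that cannot be balanced under merely a universal lower bound on $h/\sqrt{1-\gamma}$.
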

Specifically, we will use the lower bound provided from Proposition \ref{prop:mean_diff_fix} with the choice \(C_1 = \frac{1}{4}\) and \(C_2 = C_a\). The proof of Proposition \ref{prop:mean_diff_fix} is quite involved. The main technical component is a precise characterization of the mode of a Gaussian mixture. See Theorem \ref{thm:mixture} in Appendix \ref{appendix:mixture} and a full treatment of this topic there.

With the bound (\ref{eqn:mean_diff}) on the signal in hand, the proof proceeds by considering two disjoint regions, namely 
\begin{align*}
    \mathcal{U} &:= \left\{ t \in \R : |t - \mu| \geq C_2 h \text{ and } \frac{1}{p} \sum_{i \in \mathcal{O}} P_{\theta, \gamma}\left\{|t - Y_i| > h\right\} \geq 4e^{-\frac{Ch^2}{1-\gamma}} \right\}, \\
    \mathcal{V} &:= \left\{ t \in \R : |t - \mu| > C_2 h \text{ and } t \in \mathcal{U}^c \right\}.
\end{align*}
Take a point \(t \in \R\) with \(|t - \mu| > C_2 h\). Note either \(t \in \mathcal{U}\) or \(t \in \mathcal{V}\). Suppose \(t \in \mathcal{V}\). From the first term in (\ref{eqn:mean_diff}) we have 
\begin{equation*}
    G_h(x) - G_h(t) \geq \frac{1}{2} \cdot \frac{p-2s}{2ph}. 
\end{equation*}
From (\ref{eqn:kme_master_display}), the stochastic deviations needed to be bounded. Clearly standard concentration (say the Dvoretzky-Kiefer-Wolfowitz inequality) giving \(\sup_{t \in \R}|\hat{G}_h(t) - G_h(t)| \lesssim h^{-1}p^{-1/2}\) will not suffice in the regime \(p-2s \lesssim \sqrt{p}\). In other words, the signal is too small and so faster concentration is needed. To illustrate why the choice (\ref{eqn:h_order}) is the right one, consider the variance
\begin{equation*}
    \Var\left(\hat{G}_h(t)\right) = \frac{1}{4p^2h^2} \sum_{i \in \mathcal{I}} \Var\left(\mathbbm{1}_{\{|t - Y_i| \leq h\}}\right) + \frac{1}{4p^2h^2} \sum_{i \in \mathcal{O}} \Var\left(\mathbbm{1}_{\{|t - Y_i| \leq h\}}\right).
\end{equation*}
Since \(|t - \mu| \geq C_2 h\) with \(C_2\) sufficiently large, we have 
\begin{equation*}
    \Var\left(\hat{G}_h(t)\right) \lesssim \frac{1}{ph^2} e^{-\frac{Ch^2}{1-\gamma}} + \frac{1}{p^2h^2} \sum_{i \in \mathcal{O}} P_{\theta, \gamma}\left\{|t - Y_i| > h\right\} \lesssim \frac{1}{ph^2}e^{-\frac{Ch^2}{1-\gamma}},
\end{equation*}
where we have used the definition of \(\mathcal{V}\). By selecting \(h\) as in (\ref{eqn:h_order}), the standard deviation is of no larger order than \(\frac{p-2s}{ph}\), which is the signal magnitude. Of course, controlling the stochastic deviation at just the one point \(t\) is insufficient. Rather, we need to bound the deviation uniformly over \(t \in \mathcal{V}\) and we use well-known empirical process theory tools \cite{boucheron_concentration_2013}. Nevertheless, this variance calculation captures the intuition for why this choice of \(h\) is the right one. 

Now suppose \(t \in \mathcal{U}\). By the definition of $\mathcal{U}$, extra signal is available as the second term in (\ref{eqn:mean_diff}) gives
\begin{equation}\label{eqn:kme_signal_U}
    G_h(x) - G_h(t) \geq \frac{p-2s}{2ph} + \frac{1/2}{2ph} \sum_{i \in \mathcal{O}} P_{\theta, \gamma}\left\{|t - Y_i| > h\right\}.     
\end{equation}
From (\ref{eqn:kme_master_display}), the stochastic deviations needed to be bounded. Since \(|t - \mu| \geq C_2 h\) with \(C_2\) sufficiently large, we have
\begin{align*}
    \Var\left(\hat{G}_h(t)\right) &\leq \frac{1}{4ph^2} e^{-\frac{Ch^2}{1-\gamma}} + \frac{1}{4p^2h^2} \sum_{i \in \mathcal{O}} P_{\theta, \gamma}\left\{|t - Y_i| > h\right\} \\
    &\leq \frac{1}{4ph^2} e^{-\frac{Ch^2}{1-\gamma}} + \frac{4}{4p^2h^2} + \left(\frac{1/2}{2ph} \sum_{i \in \mathcal{O}} P_{\theta, \gamma}\left\{|t - Y_i| > h\right\}\right)^2,
\end{align*}
where we have used the inequality \(ab \leq a^2 + b^2\) for all \(a, b \in \R\). By selecting \(h\) as in (\ref{eqn:h_order}), the standard deviation is of no larger order than the signal (\ref{eqn:kme_signal_U}). Again, controlling the stochastic deviation at just the one point \(t\) is insufficient. Uniform control is obtained through empirical process theory tools \cite{boucheron_concentration_2013}. 

The argument to bound the stochastic deviation \(|\hat{G}_h(x) - G_h(x)|\) has a similar flavor to the above analysis, but is slightly different and so we relegate discussion of it to the proof. Putting together the results for \(\mathcal{U}\) and \(\mathcal{V}\) thus establishes \(|\hat{\mu} - \mu| \lesssim h\) with high probability provided we choose \(h\) as in (\ref{eqn:h_order}). 

\begin{theorem}\label{thm:kme_known_var}
    Suppose \(1 \leq s < \frac{p}{2}\) and \(\gamma \in [0, 1)\). There exist universal constants \(C_1, C_2> 0\) such that the following holds. For any \(\delta \in (0, 1)\), there exists \(L_\delta\) depending only on \(\delta\) such that if \(p\) is sufficiently large depending only on \(\delta\) and 
    \begin{equation*}
        h = C_1 \sqrt{(1-\gamma)\left(1 \vee \log\left(\frac{L_\delta p}{(p-2s)^2}\right)\right)},
    \end{equation*}
    then 
    \begin{equation*}
        \sup_{||\theta||_0 \leq s} P_{\theta, \gamma} \left\{|\hat{\mu} - \bar{\theta}| > C_2 h\right\} \leq \delta,
    \end{equation*}
    where \(\hat{\mu}\) is given by (\ref{def:mhat}).
\end{theorem}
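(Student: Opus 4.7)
The plan is to reduce the theorem to a single deterministic comparison: with probability at least $1-\delta$, the point $x$ with $|x - \mu| \leq h/4$ produced by Proposition \ref{prop:mean_diff_fix} satisfies $\hat{G}_h(x) > \hat{G}_h(t)$ for every $t$ with $|t-\mu| > C_2 h$. Since $\hat{\mu}$ maximizes $\hat{G}_h$, this event forces $|\hat{\mu} - \mu| \leq C_2 h$, which is the desired conclusion with $\mu = \bar{\theta}$. For each such $t$, the natural decomposition
\begin{equation*}
    \hat{G}_h(x) - \hat{G}_h(t) \geq \left(G_h(x) - G_h(t)\right) - \left|\hat{G}_h(x) - G_h(x)\right| - \left|\hat{G}_h(t) - G_h(t)\right|
\end{equation*}
isolates a deterministic signal, supplied by Proposition \ref{prop:mean_diff_fix}, from two stochastic deviation terms that need to be controlled.

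I would partition $\{t : |t-\mu| > C_2 h\}$ into the sets $\mathcal{V}$ and $\mathcal{U}$ defined above, each requiring a tailored variance computation. On $\mathcal{V}$ the signal is at least $\frac{p-2s}{4ph}$ and $\Var(\hat{G}_h(t)) \lesssim \frac{1}{ph^2} e^{-Ch^2/(1-\gamma)}$; on $\mathcal{U}$ the signal gains the extra term $\frac{1}{4ph}\sum_{i \in \mathcal{O}} P_{\theta,\gamma}\{|t-Y_i| > h\}$ whose square dominates the outlier contribution to the variance. The bandwidth choice in (\ref{eqn:h_order}) is tuned so that $e^{-Ch^2/(1-\gamma)} \lesssim (p-2s)^2/p$, aligning the pointwise standard deviation with the signal scale $(p-2s)/(ph)$; the constant $L_\delta$ absorbs $\delta$-dependent slack coming out of the concentration inequalities. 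The pointwise deviation at the deterministic anchor $x$ is handled by Bernstein's inequality applied to the bounded summands $\mathbbm{1}_{\{|x-Y_i|\leq h\}}/(2h)$, exploiting that $x$ is close enough to $\mu$ that its variance is also governed by the Gaussian tail rate $e^{-Ch^2/(1-\gamma)}$ plus a small outlier contribution.

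The main obstacle is upgrading these pointwise bounds to uniform control over the uncountable families $\mathcal{V}$ and $\mathcal{U}$ without losing the variance profile. I would handle the empirical process via Talagrand's (or Bousquet's) inequality applied to the VC-class $\{y \mapsto \mathbbm{1}_{\{|t-y| \leq h\}} : t \in \R\}$, which has VC-dimension two and envelope $1/(2h)$, yielding a uniform rate proportional to the worst-case standard deviation on the class under consideration. For $\mathcal{V}$, this is immediate. For $\mathcal{U}$, the $t$-dependent signal makes a direct uniform bound too crude, and I would proceed by dyadic peeling, partitioning $\mathcal{U}$ according to the level of $\frac{1}{p}\sum_{i \in \mathcal{O}} P_{\theta,\gamma}\{|t-Y_i|>h\}$, applying Talagrand on each slice, and taking a union bound over the $O(\log p)$ slices that is absorbed into the logarithmic factor in $h^2$. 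Combining the pointwise bound at $x$, the uniform bound on $\mathcal{V}$, and the peeled bound on $\mathcal{U}$, and choosing $C_1$ sufficiently large to dominate the universal constants from Proposition \ref{prop:mean_diff_fix} and the concentration inequalities, yields the stated high-probability bound.
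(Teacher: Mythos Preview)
Your architecture matches the paper's exactly: the same master decomposition, the same $\mathcal{U}/\mathcal{V}$ partition, peeling on $\mathcal{U}$, Talagrand-type concentration on the VC class for $\mathcal{V}$, and Bernstein at the anchor $x$. One technical point you omit is that the paper splits off an easy regime $s \geq \frac{p}{2} - p^{1/4}$ (Proposition~\ref{prop:kme_easy}), handled by a crude argument since there $h^2 \asymp (1-\gamma)\log p$; this is needed because in the hard regime the peeling bound produces an additive $\frac{C'\log(ep)}{ph}$ term that is only dominated by the signal $\frac{p-2s}{ph}$ when $p-2s \gtrsim \log p$.

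There is, however, a genuine gap in your control at $x$. You assert that the variance of $\hat{G}_h(x)$ is governed by $e^{-Ch^2/(1-\gamma)}$ ``plus a small outlier contribution,'' but this is false in general: an outlier $i \in \mathcal{O}$ with $|\eta_i - \mu| \leq \frac{C_a h}{2}$ has $\Var(\mathbbm{1}_{\{|x-Y_i| \leq h\}}) \asymp 1$, and there may be $\Theta(p)$ such outliers, giving $\Var(\hat{G}_h(x)) \asymp \frac{1}{ph^2}$ and standard deviation $\asymp \frac{1}{h\sqrt{p}}$, which swamps the $\mathcal{V}$-signal $\frac{p-2s}{ph}$ whenever $p-2s \ll \sqrt{p}$. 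The paper's fix (Proposition~\ref{prop:x_control}) is to split $\mathcal{O}$ into far outliers $E = \{i : |\eta_i - \mu| > \frac{C_a h}{2}\}$, where indeed $P\{|x-Y_i| \leq h\} \leq e^{-Ch^2/(1-\gamma)}$, and near outliers $F = \mathcal{O} \setminus E$. For $i \in F$, any comparison point $t$ with $|t-\mu| \geq C_a h$ satisfies $|t - \eta_i| \geq \frac{C_a h}{2}$ and hence $P\{|t - Y_i| > h\} \geq \frac{1}{2}$, so that $\sum_{i \in F} P\{|x - Y_i| \leq h\} \leq 2\sum_{i \in \mathcal{O}} P\{|t - Y_i| > h\}$. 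On $\mathcal{U}$ this right-hand side is absorbed into the extra signal term you already identified; on $\mathcal{V}$ it is at most $8p\, e^{-Ch^2/(1-\gamma)}$ by definition of $\mathcal{V}$. This coupling of the near-outlier variance at $x$ to the $t$-dependent signal is the missing ingredient; without it Bernstein at $x$ does not give a usable bound.
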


\subsection{Connection to robust statistics}\label{section:kme_robust_connection}
In (\ref{model:Gaussian_contamination}), estimation of \(\mu\) can be viewed as a robust estimation problem of a location parameter. Indeed, the notation \(\mathcal{I}\) and \(\mathcal{O}\) is suggestive, calling to mind the categorization of ``inliers'' and ``outliers''. The model (\ref{model:Gaussian_contamination}) is an instantiation of the mean-shift contamination model which has been extensively studied in the robust statistics literature, mainly in the context of regression \cite{gannaz_robust_2007,mccann_robust_2007,antoniadis_wavelet_2007,foygel_corrupted_2014,nguyen_robust_2013,dalalyan_outlier-robust_2019,collier_multidimensional_2019}. This literature largely contains results in the case \(s \leq \delta p\) where \(\delta > 0\) is a sufficiently small constant. To the best of our knowledge, the regime where \(s\) is close to \(\frac{p}{2}\), that is to say \(p-2s = o(p)\), is unaddressed. Moreover, in the regime \(s \leq \delta p\), the results specialized to the location estimation problem do not deliver rates faster than that achieved by sample median. The following proposition gives a bound on the error obtained by sample median, and in fact provides content when \(p-2s = o(p)\).

\begin{proposition}\label{prop:sample_median}
    Suppose \(1 \leq s < \frac{p}{2}\) and \(\gamma \in [0, 1)\). If \(\delta \in (0, 1)\) and \(p\) is sufficiently large depending only on \(\delta\), then there exists \(C_\delta > 0\) depending only on \(\delta\) such that
    \begin{equation*}
        \sup_{||\theta||_0 \leq s} P_{\theta, \gamma}\left\{ \left|\left| \hat{T}\mathbf{1}_p - \bar{\theta}\mathbf{1}_p \right|\right|^2 > C_\delta(1-\gamma)\left(1 + \frac{s^2}{p}\log\left(\frac{ep}{p-2s}\right)\right) \right\} \leq \delta,
    \end{equation*}
    where \(\hat{T} = \median\left(Y_1,...,Y_p\right)\). 
\end{proposition}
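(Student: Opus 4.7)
By the symmetry $Y_i \mapsto 2\bar\theta - Y_i$, it suffices to bound $P_{\theta,\gamma}(\hat{T} - \bar\theta > r) \leq \delta/2$ for a suitable $r > 0$, and combine with the analogous lower-tail bound via a union bound. The event $\{\hat{T} - \bar\theta > r\}$ forces fewer than half of the $Y_i$ to lie in $(-\infty, \bar\theta + r]$; dropping the nonnegative outlier contribution gives $N := \#\{i \in \mathcal{I} : Y_i \leq \bar\theta + r\} \leq p/2$. Under model (\ref{model:Gaussian_contamination}), $N \sim \mathrm{Bin}(n, p_1)$ with $n = |\mathcal{I}| \geq p-s$ and $p_1 = \Phi(r/\sqrt{1-\gamma})$, so the task reduces to a Binomial lower-tail estimate. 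The remaining work splits according to the magnitude of $s$.

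\textbf{Regime A: $s \leq p/3$.} Bernstein's inequality applied to $N$ ensures $P(N \leq p/2) \leq \delta/2$ provided $np_1 - p/2 \gtrsim \sqrt{p\log(1/\delta)} + \log(1/\delta)$. Using $n \geq p-s$, this rearranges to $\Phi(r/\sqrt{1-\gamma}) - 1/2 \gtrsim s/(p-s) + \sqrt{\log(1/\delta)/p}$, whose right-hand side is bounded away from $1/2$ in this regime. Linearizing $\Phi^{-1}$ near the origin yields $r \lesssim \sqrt{1-\gamma}\,(s/p + \sqrt{\log(1/\delta)/p})$, so $pr^2 \lesssim C_\delta(1-\gamma)(1 + s^2/p)$. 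Since $\log(ep/(p-2s)) = O(1)$ throughout Regime A, this matches the claim.

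\textbf{Regime B: $s > p/3$.} Here $p_1$ must be pushed close to $1$ and Bernstein becomes loose; instead apply the multiplicative Chernoff bound to $\widetilde{N} := n - N \sim \mathrm{Bin}(n, \epsilon)$ with $\epsilon = 1 - p_1$. The event $\{N \leq p/2\}$ becomes $\{\widetilde{N} \geq m\}$ for $m := n - \lfloor p/2 \rfloor \geq (p-2s)/2 \geq 1/2$, and the Chernoff moment-generating-function estimate gives $P(\widetilde{N} \geq m) \leq (en\epsilon/m)^m$. Together with the Gaussian tail $\epsilon \leq \exp(-r^2/(2(1-\gamma)))$, requiring this bound to be $\leq \delta/2$ yields
\[
r^2 \lesssim (1-\gamma)\left[\log\!\left(\tfrac{ep}{p-2s}\right) + \tfrac{\log(1/\delta)}{p-2s}\right].
\]
For $p$ sufficiently large depending on $\delta$, the first term dominates; and since $s > p/3$ implies $s^2/p \asymp p$, multiplying by $p$ gives $pr^2 \lesssim C_\delta(1-\gamma)\,(s^2/p)\log(ep/(p-2s))$.

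\textbf{Main obstacle.} The chief technical difficulty is Regime B near the boundary $s \asymp p/2$: the required inlier quantile is pushed deep into the upper Gaussian tail, making Bernstein-type variance bounds too loose. The tight exponential decay $\bar\Phi(t) \leq \exp(-t^2/2)$ must be exploited via the moment-generating-function Chernoff bound to recover the $\log(ep/(p-2s))$ factor, and a careful balance of the two terms in the exponent, combined with the case split above and the symmetric lower-tail argument, completes the proof.
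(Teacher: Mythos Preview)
Your proof is correct and follows the same core strategy as the paper's: reduce the median deviation to a one-sided Binomial tail bound on the inliers, then apply concentration. The paper splits into three regimes ($s \leq p/4$, $p/4 < s < p/2 - \sqrt{p}$, $s \geq p/2 - \sqrt{p}$) using Hoeffding, Bernstein, and a crude union bound on $\max_{j\in\mathcal{I}}|Y_j-\bar\theta|$ respectively, whereas your two-regime split with the multiplicative Chernoff bound in Regime~B cleanly unifies the paper's last two cases into a single argument.
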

\noindent The proof of Proposition \ref{prop:sample_median} can be found in Appendix \ref{section:misc}. If the better of \(\hat{T}\mathbf{1}_p\) or \(\bar{X}\mathbf{1}_p\) is used to estimate \(\bar{\theta}\mathbf{1}_p\) and the sparse regression estimator from Section \ref{section:sparse_regression} is used to estimate \(\theta - \bar{\theta}\mathbf{1}_p\), then the combined estimator achieves the rate
\begin{align*}
    &(1-\gamma) s \log\left(\frac{ep}{s}\right) + (1-\gamma)\left(1 + \frac{s^2}{p}\log\left(\frac{ep}{p-2s}\right)\right) \wedge (1-\gamma+\gamma p) \\
    &\asymp 
    \begin{cases}
        (1-\gamma)s\log\left(\frac{ep}{s}\right) &\textit{if } s \leq \frac{p}{4}, \\
        (1-\gamma)p \log\left(\frac{ep}{p-2s}\right) \wedge p &\textit{if } \frac{p}{4} < s < \frac{p}{2}. 
    \end{cases}
\end{align*}
Compare to the minimax rate (\ref{rate:correlation}),
\begin{equation*}
    \varepsilon^*(p, s, \gamma)^2 \asymp 
    \begin{cases}
        (1-\gamma) s \log\left(\frac{ep}{s}\right) &\textit{if } 1 \leq s \leq \frac{p}{2} - \sqrt{p}, \\
        (1-\gamma) p \log\left(\frac{ep}{(p-2s)^2}\right) \wedge p&\textit{if } \frac{p}{2} - \sqrt{p} < s < \frac{p}{2}.
    \end{cases}
\end{equation*}
When \(\frac{p}{2}-s = o(p)\), using the sample median yields an estimator which may have risk suboptimal by a factor logarithmic in \(p\). Sample median can be improved upon when \(\frac{p}{2}-s\) is near \(\sqrt{p}\). 

It is not immediately obvious why the kernel mode estimator is a better robust location estimator in (\ref{model:Gaussian_contamination}) than the sample median. In fact, it is well known \cite{chen_general_2016} sample median is minimax rate-optimal in Huber's contamination model. It turns out the model (\ref{model:Gaussian_contamination}) is more structured than Huber's contamination model. In particular, the Gaussian character of the data in (\ref{model:Gaussian_contamination}) can be exploited to achieve faster rates of convergence. To illustrate how the kernel mode estimator exploits the Gaussian character of the data, suppose \(\gamma = 0\) and the data were actually generated from Huber's contamination model 
\begin{equation*}
    Y_i \overset{ind}{\sim} 
    \begin{cases}
        N(\mu, 1) &\textit{if } i \in \mathcal{I}, \\
        \delta_{\eta_i} &\textit{if } i \in \mathcal{O}. 
    \end{cases}
\end{equation*}
For illustration, consider the special case where \(\eta_i = \eta\) for all \(i \in \mathcal{O}\). Then, \(\hat{G}_h(t)\) is unbiased for 
\begin{equation*}
    G_{\text{Huber}, h}(t) := \frac{1}{2h} \cdot \frac{p-2s}{p} \left( \Phi(t - \mu + h) - \Phi(t - \mu - h)\right) + J_{\text{Huber}, h}(t),
\end{equation*}
where \(J_{\text{Huber}, h}(t) := \frac{1}{2h} \cdot \frac{s}{p} \left( \left(\Phi(t - \mu + h) - \Phi(t - \mu - h)\right) + \mathbbm{1}_{\{|t - \eta| \leq h\}}\right)\). Figure \ref{fig:JHuber_v_GHuber} shows any global maximizer \(m_{\text{Huber}}\) (which is not unique) of \(G_{\text{Huber}, h}\) is not at all close to \(\mu\). Figure \ref{fig:JHuber_v_GHuber} is markedly different from Figure \ref{fig:JvG}. Thus, the Gaussianity is essential for \(m\) to be close to \(\mu\), and the kernel mode estimator's success crucially depends on this.

\begin{figure}[!ht]
    \centering
    \includegraphics[scale=0.5]{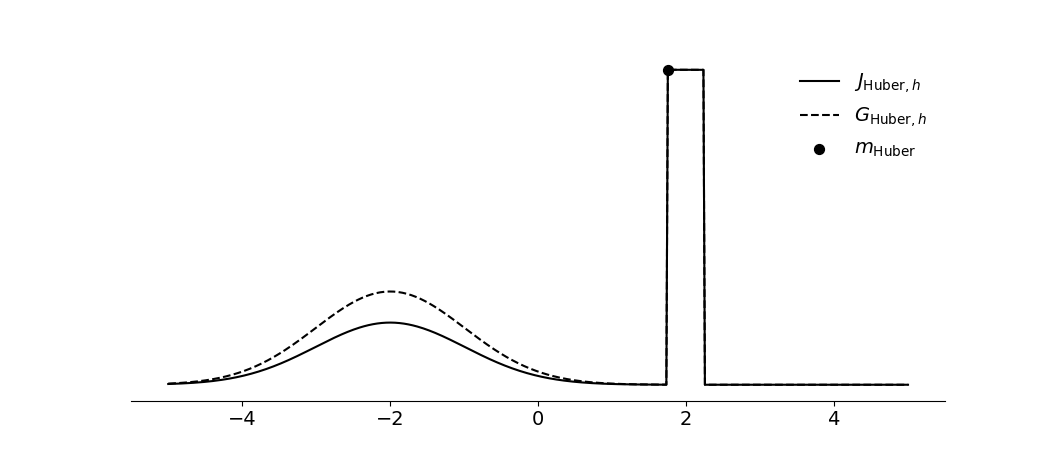}
    \caption[Figure 1.]{Plots of \(G_{\text{Huber}, h}\) and \(J_{\text{Huber}, h}\) with \(\mu = -2, \eta = 2, h = 0.25, p = 10000, s = \left\lfloor \frac{p}{2} - 10\sqrt{p}\right\rfloor = 4000, m_{\text{Huber}} \approx 1.750\).}\label{fig:JHuber_v_GHuber}
\end{figure}

\section{Upper bound}\label{section:upper_bound}

The two components for estimating an orthogonal projection and a linear functional from Sections \ref{section:sparse_regression} and \ref{section:linear_functional} are combined to obtain a final estimator of the signal. To estimate \(\theta - \bar{\theta}\mathbf{1}_p\), the estimator \(\hat{v}\) from Theorem \ref{thm:orthog_ubound} is used. To estimate \(\bar{\theta}\), the three estimators discussed in Section \ref{section:linear_functional} need to be combined.

As discussed in Section \ref{section:linear_functional}, if the correlation is strong enough, sparse regression is used to estimate \(\bar{\theta}\) for \(s \leq \frac{p}{784}\) and a kernel mode estimator is used for \(s > \frac{p}{784}\). On the other hand, if the correlation is not strong enough, then \(\bar{X}\) is used since \(\bar{X} \sim N(\bar{\theta}, p^{-1}(1-\gamma+\gamma p))\) as the risk can be low for weak correlation. The following result provides detail and is stated without proof.

\begin{corollary}\label{corollary:linear_functional_estimation}
    Suppose \(1 \leq s < \frac{p}{2}\) and \(\gamma \in [0, 1)\). Set 
    \begin{equation*}
        \hat{T} = 
        \begin{cases}
            \frac{1}{p} \sum_{i=1}^{p} \hat{\beta}_i &\textit{if } 1 \leq s \leq \frac{p}{784} \text{ and } (1-\gamma)s \log\left(\frac{ep}{s}\right) \leq 1-\gamma+\gamma p, \\
            \hat{\mu} &\textit{if } \frac{p}{784} < s < \frac{p}{2} \text{ and } (1-\gamma)p\left(1\vee \log\left(\frac{ep}{(p-2s)^2}\right)\right)\leq 1-\gamma+\gamma p, \\
            \bar{X} &\textit{otherwise},
        \end{cases}
    \end{equation*}
    where \(\hat{\beta}\) is given in Proposition \ref{prop:small_s_linear} and \(\hat{\mu}\) is given in Theorem \ref{thm:kme_known_var}. For any \(\delta \in (0, 1)\), there exists \(C_\delta > 0\) depending only on \(\delta\) such that 
    \begin{equation*}
        \sup_{||\theta||_0 \leq s} P_{\theta, \gamma}\left\{ ||\hat{T}\mathbf{1}_p - \bar{\theta}\mathbf{1}_p||^2 > C_\delta \left( \epsilon(p, s, \gamma)^2 \wedge (1-\gamma+\gamma p) \right)\right\} \leq \delta, 
    \end{equation*}
    where
    \begin{equation*}
        \epsilon(p, s, \gamma)^2 = 
        \begin{cases}
            (1-\gamma)s\log\left(\frac{ep}{s}\right) &\textit{if } 1 \leq s \leq \frac{p}{2} - \sqrt{p}, \\
            (1-\gamma)p\log\left(\frac{ep}{(p-2s)^2}\right) &\textit{if } \frac{p}{2} - \sqrt{p} < s < \frac{p}{2}. 
        \end{cases}
    \end{equation*}
\end{corollary}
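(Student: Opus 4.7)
The plan is to verify the stated bound separately in each of the three regimes used to define $\hat{T}$, drawing on Proposition \ref{prop:small_s_linear}, Theorem \ref{thm:kme_known_var}, and a direct Gaussian concentration argument for $\bar{X}$. The heavy lifting has been done; the remaining work is essentially bookkeeping to check that the one-line expression $\epsilon(p,s,\gamma)^2 \wedge (1-\gamma+\gamma p)$ collapses to the correct quantity in each regime.

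In the first regime, $1 \leq s \leq p/784$ with $(1-\gamma)s\log(ep/s) \leq 1-\gamma+\gamma p$, we invoke Proposition \ref{prop:small_s_linear} directly (with the $\delta$ of the corollary), which yields $\|\hat{T}\mathbf{1}_p - \bar{\theta}\mathbf{1}_p\|^2 \lesssim_\delta (1-\gamma)s\log(ep/s)$ with probability at least $1-\delta$. For $p$ larger than a universal constant, $s \leq p/784$ forces $p-2s \geq 2\sqrt{p}$, so $\epsilon^2 = (1-\gamma)s\log(ep/s)$, and the case hypothesis implies $\epsilon^2 \wedge (1-\gamma+\gamma p) = \epsilon^2$, matching the bound.

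In the second regime, $p/784 < s < p/2$ with $(1-\gamma)p(1 \vee \log(ep/(p-2s)^2)) \leq 1-\gamma+\gamma p$, Theorem \ref{thm:kme_known_var} with the prescribed bandwidth $h \asymp \sqrt{(1-\gamma)(1 \vee \log(L_\delta p/(p-2s)^2))}$ gives $|\hat{T} - \bar{\theta}| = |\hat{\mu} - \bar{\theta}| \leq C_2 h$ with probability at least $1 - \delta$, hence $\|\hat{T}\mathbf{1}_p - \bar{\theta}\mathbf{1}_p\|^2 = p(\hat{\mu}-\bar{\theta})^2 \lesssim_\delta (1-\gamma) p\bigl(1 \vee \log(ep/(p-2s)^2)\bigr)$. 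A short sub-case analysis shows this equals $\epsilon^2$ up to constants: when $p/784 < s \leq p/2 - \sqrt{p}$ the log factor is $O(1)$ and the bound is $\asymp (1-\gamma)p \asymp (1-\gamma) s \log(ep/s) = \epsilon^2$ since $s \asymp p$; when $p/2 - \sqrt{p} < s < p/2$ the bound matches $\epsilon^2 = (1-\gamma)p\log(ep/(p-2s)^2)$ directly. The case hypothesis again collapses the minimum to $\epsilon^2$.

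In the third regime (the ``otherwise'' case), $\hat{T} = \bar{X}$ with $\bar{X} \sim N\bigl(\bar{\theta}, (1-\gamma+\gamma p)/p\bigr)$, so $\|\hat{T}\mathbf{1}_p - \bar{\theta}\mathbf{1}_p\|^2 = p(\bar{X}-\bar{\theta})^2$ has expectation $1-\gamma+\gamma p$, and Markov's inequality yields $\|\hat{T}\mathbf{1}_p - \bar{\theta}\mathbf{1}_p\|^2 \leq \delta^{-1}(1-\gamma+\gamma p)$ with probability at least $1-\delta$. Because the conditions for the first two regimes are violated here, $\epsilon^2 > 1-\gamma+\gamma p$, so $\epsilon^2 \wedge (1-\gamma+\gamma p) = 1-\gamma+\gamma p$, as required. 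Taking $C_\delta$ to be the maximum of the constants produced in the three regimes completes the proof. The only step demanding any care is the sub-case split in the second regime needed to reconcile the bandwidth-driven rate with the two-branch definition of $\epsilon^2$; everything else is an immediate consequence of the results already established.
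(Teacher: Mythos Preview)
Your proposal is correct and is precisely the argument the paper has in mind: the corollary is stated there without proof, and the intended justification is exactly the case-by-case combination of Proposition~\ref{prop:small_s_linear}, Theorem~\ref{thm:kme_known_var}, and the Gaussian bound for $\bar X$, together with the bookkeeping you describe to match each branch with $\epsilon(p,s,\gamma)^2 \wedge (1-\gamma+\gamma p)$.
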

In fact, the keen reader will point out the estimator \(p^{-1} \sum_{i=1}^{p}\hat{\beta}_i\) can be used even in the case \((1-\gamma) s \log\left(\frac{ep}{s}\right) > 1 - \gamma + \gamma p\). To see this, consider Proposition \ref{prop:small_s_linear} asserts \(\left|\left|\left(p^{-1}\sum_{i=1}^{p} \hat{\beta}_i\right) \mathbf{1}_p - \bar{\theta}\mathbf{1}_p\right|\right|^2 \lesssim (1-\gamma)s\log\left(\frac{ep}{s}\right)\) with high probability for \(s \leq \frac{p}{784}\), which actually matches the desired rate for estimating the entire vector \(\theta\) as seen in (\ref{rate:correlation}). In other words, though \(\bar{X}\) may be better for estimating \(\bar{\theta}\), ultimately it brings no benefit from the perspective of rates for estimating \(\theta\).  

The estimators for the two components \(\theta - \bar{\theta}\mathbf{1}_p\) and \(\bar{\theta}\) can be combined to achieve the following rate, which we state without proof.
\begin{theorem}\label{thm:upper_bound}
    Suppose \(1 \leq s \leq p\) and \(\gamma \in [0, 1)\). Set 
    \begin{equation*}
        \hat{\theta} = 
        \begin{cases}
            \hat{v} + \hat{T}\mathbf{1}_p &\textit{if } 1 \leq s < \frac{p}{2}, \\
            X &\textit{if } \frac{p}{2} \leq s \leq p, 
        \end{cases}
    \end{equation*}
    where \(\hat{v}\) is given in Theorem \ref{thm:orthog_ubound} and \(\hat{T}\) is given in Corollary \ref{corollary:linear_functional_estimation}. For any \(\delta \in (0, 1)\), there exists \(C_\delta > 0\) depending only on \(\delta\) such that
    \begin{equation*}
        \sup_{||\theta||_0 \leq s} P_{\theta, \gamma}\left\{ ||\hat{\theta} - \theta||^2 > C_\delta \varepsilon^*(p, s, \gamma)^2\right\} \leq \delta, 
    \end{equation*}
    where \(\varepsilon^*(p, s, \gamma)^2\) is given by (\ref{rate:correlation}). 
\end{theorem}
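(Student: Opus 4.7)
The proof is essentially a careful assembly of the two component results already established, handled separately in the dense and sparse regimes.

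For the dense regime \(\frac{p}{2} \leq s \leq p\), the estimator is \(\hat{\theta} = X\). Since \(X - \theta \sim N(0, (1-\gamma)I_p + \gamma \mathbf{1}_p\mathbf{1}_p^\intercal)\) has \(E_{\theta,\gamma} ||X-\theta||^2 = p\), Proposition \ref{prop:dense_ubound} immediately yields \(P_{\theta,\gamma}(||X - \theta||^2 > p/\delta) \leq \delta\) for any \(\delta \in (0,1)\). Since \(\varepsilon^*(p,s,\gamma)^2 \asymp p\) in this regime, taking \(C_\delta = 1/\delta\) closes this case.

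For \(1 \leq s < \frac{p}{2}\), write \(\hat\theta - \theta = [\hat v - (\theta - \bar\theta\mathbf{1}_p)] + (\hat T - \bar\theta)\mathbf{1}_p\) and apply the triangle inequality:
\begin{equation*}
||\hat\theta - \theta||^2 \leq 2||\hat v - (\theta - \bar\theta\mathbf{1}_p)||^2 + 2p(\hat T - \bar\theta)^2.
\end{equation*}
Theorem \ref{thm:orthog_ubound} at level \(\delta/2\) bounds the first summand by \(C^{(1)}_\delta (1-\gamma) s \log(ep/s)\) with probability at least \(1 - \delta/2\), while Corollary \ref{corollary:linear_functional_estimation} at level \(\delta/2\) bounds the second by \(C^{(2)}_\delta [\epsilon(p,s,\gamma)^2 \wedge (1-\gamma+\gamma p)]\) with the same probability. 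A union bound gives both inequalities simultaneously on an event of probability at least \(1 - \delta\). Note that the proof makes no use of independence between the two pieces, so the fact that \(\hat v\) and \(\hat T\) may both be measurable with respect to \(\widetilde{X}\) is harmless.

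It remains only to verify that the sum matches \(\varepsilon^*(p,s,\gamma)^2\) in each sub-regime of \(s\). When \(1 \leq s \leq \frac{p}{2} - \sqrt{p}\), the quantity \(\epsilon(p,s,\gamma)^2\) coincides with \((1-\gamma)s\log(ep/s)\), so both contributions are of the same order and the sum matches (\ref{rate:correlation}). When \(\frac{p}{2} - \sqrt p < s < \frac{p}{2}\), we have \(s \asymp p\), so the orthogonal contribution \((1-\gamma) s \log(ep/s) \asymp (1-\gamma) p\) is dominated by \((1-\gamma)p\log(ep/(p-2s)^2)\) because the logarithmic factor is at least \(1\). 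Using the elementary inequality \(1 - \gamma + \gamma p \leq p\) valid for all \(\gamma \in [0,1]\), one obtains
\begin{equation*}
\epsilon(p,s,\gamma)^2 \wedge (1-\gamma+\gamma p) \leq (1-\gamma)p\log(ep/(p-2s)^2) \wedge p,
\end{equation*}
which is exactly the rate in (\ref{rate:correlation}) for this regime. There is no genuine technical obstacle here: all of the analytical weight has been absorbed into Theorem \ref{thm:orthog_ubound} and Corollary \ref{corollary:linear_functional_estimation}, and the only subtlety is the routine bookkeeping to verify that the orthogonal-component error is absorbed by the linear-functional error precisely in the regime \(s \asymp p/2\) where the rate phase-transitions.
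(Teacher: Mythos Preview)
Your proof is correct and is exactly the routine combination argument the paper has in mind; indeed, the paper states this theorem without proof, so your write-up is the natural fill-in. The only cosmetic point is your claim that in the regime \(\frac{p}{2}-\sqrt{p}<s<\frac{p}{2}\) ``the logarithmic factor is at least \(1\)'': since \((p-2s)^2\) can be as large as \(4p\), the quantity \(\log(ep/(p-2s)^2)\) is only bounded below by a universal constant (not literally \(1\)), but this is harmless at the level of \(\asymp\) and the conclusion stands.
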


\section{Lower bound}\label{section:lower_bounds}

In this section, we present a matching minimax lower bound by considering various sparsity regimes in turn. 

\subsection{\texorpdfstring{Regime \(1 \leq s \leq \frac{p}{2} - \sqrt{p}\)}{Regime 1 <= s <= p/2 - sqrt(p)}}
As seen in (\ref{rate:correlation}), the rate \((1-\gamma)s\log\left(\frac{ep}{s}\right)\) in the regime \(s \leq \frac{p}{2} - \sqrt{p}\) will be quite familiar to the reader acquainted with high-dimensional statistics. 

\begin{proposition}\label{prop:sparse_lbound}
    Suppose \(1 \leq s \leq p\) and \(\gamma \in [0, 1]\). If \(\delta \in (0, 1)\) and \(p\) is sufficiently large depending only on \(\delta\), then there exists \(c_\delta > 0\) depending only on \(\delta\) such that 
    \begin{equation*}
        \inf_{\hat{\theta}} \sup_{||\theta||_0 \leq s} P_{\theta, \gamma}\left\{ ||\hat{\theta} - \theta||^2 \geq c_\delta (1-\gamma) s \log\left(\frac{ep}{s}\right) \right\} \geq 1 - \delta.
    \end{equation*}
\end{proposition}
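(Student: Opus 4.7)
The plan is a Fano-type lower bound over a Varshamov--Gilbert packing of sparse signals, with the signal amplitude calibrated to the spectral gap of \(\Sigma^{-1}\) on the subspace orthogonal to \(\mathbf{1}_p\). Since the rate \((1-\gamma)s\log(ep/s)\) simply rescales the classical independent-data rate by \((1-\gamma)\), the task is to observe that sparse perturbations of \(\theta\) are effectively corrupted only in the ``orthogonal'' spectral direction where \(\Sigma\) has eigenvalue \(1-\gamma\).

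Concretely, set \(\tau^2 = c_0 (1-\gamma)\log(ep/s)\) for a small constant \(c_0 > 0\) to be tuned, and consider the finite family \(\theta = \tau v\) with \(v \in \{0,1\}^p\) and \(\|v\|_0 = s\). A standard Varshamov--Gilbert-type combinatorial lemma (e.g.\ Lemma 4.10 of Massart or Lemma 2.9 of Tsybakov) produces a subfamily \(\theta^{(1)},\dots,\theta^{(N)}\) with \(\log N \gtrsim s\log(ep/s)\) and pairwise Hamming distance at least \(s/4\), so that
\[
    \|\theta^{(i)} - \theta^{(j)}\|^2 \geq \tfrac{s\tau^2}{4} \gtrsim s(1-\gamma)\log(ep/s).
\]

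For the KL divergences, use the explicit formula
\(
    \Sigma^{-1} = \frac{1}{1-\gamma}(I_p - p^{-1}\mathbf{1}_p\mathbf{1}_p^\intercal) + \frac{1}{1-\gamma+\gamma p} \cdot p^{-1}\mathbf{1}_p\mathbf{1}_p^\intercal,
\)
whose eigenvalues are \(1/(1-\gamma)\) (multiplicity \(p-1\)) and \(1/(1-\gamma+\gamma p) \leq 1/(1-\gamma)\), so that \(\Sigma^{-1} \preceq (1-\gamma)^{-1} I_p\). Writing \(w := \theta^{(i)} - \theta^{(j)}\), one has \(w \in \tau\{-1,0,1\}^p\) with \(\|w\|_0 \leq 2s\), hence \(\|w\|^2 \leq 2s\tau^2\), and therefore
\[
    \dKL(P_{\theta^{(i)},\gamma} \| P_{\theta^{(j)},\gamma}) = \tfrac{1}{2} w^\intercal \Sigma^{-1} w \leq \frac{\|w\|^2}{2(1-\gamma)} \leq \frac{s\tau^2}{1-\gamma} = c_0 s\log\left(\tfrac{ep}{s}\right).
\]
Choosing \(c_0\) small enough that the right-hand side is at most \(\tfrac{1}{4}\log N\), and taking \(p\) large depending on \(\delta\) so that \((\log 2)/\log N\) is negligible, Fano's inequality gives
\[
    \inf_{\hat{\theta}} \max_i P_{\theta^{(i)},\gamma}\left\{\|\hat{\theta} - \theta^{(i)}\|^2 \geq \tfrac{s\tau^2}{16}\right\} \geq 1 - \delta,
\]
which supplies the claimed lower bound with \(c_\delta \asymp c_0\).

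The argument has no real obstacle; the only point requiring attention is the verification that the effective noise level for sparse perturbations is \(1-\gamma\) rather than the worst-case variance \(1-\gamma+\gamma p\). This is exactly what the inequality \(\Sigma^{-1} \preceq (1-\gamma)^{-1} I_p\) encodes, and it is what allows Fano's method to recover a rate that \emph{decreases} with correlation strength. Everything else is the standard Varshamov--Gilbert plus Fano template for sparse Gaussian sequence lower bounds.
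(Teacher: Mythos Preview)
Your approach is essentially the paper's: Varshamov--Gilbert packing plus Fano, with the KL controlled via the precision matrix. Your use of the spectral bound \(\Sigma^{-1} \preceq (1-\gamma)^{-1} I_p\) is in fact cleaner than the paper's coordinate-by-coordinate computation of \(\langle \theta, \Omega\theta'\rangle\), and yields the same conclusion.

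There is one small gap worth noting. You invoke the Varshamov--Gilbert lemma at sparsity exactly \(s\), but that lemma (Massart's Lemma~4.10, which the paper also cites) requires \(s \leq \alpha\beta p\) for some \(\alpha,\beta \in (0,1)\); it does not produce a large packing when \(s\) is close to \(p\) (e.g.\ at \(s=p\) there is only one binary vector of weight \(p\)). The paper handles this by splitting into two cases: for \(s < p/2\) it runs your argument verbatim, and for \(s \geq p/2\) it simply replaces \(s\) by \(\lfloor p/2\rfloor\) throughout, noting that \(\lfloor p/2\rfloor\)-sparse vectors are still in the parameter space and that \((p/2)\log(2e) \asymp s\log(ep/s)\) in this regime. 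You should also note that \(\gamma=1\) is trivial since the claimed lower bound is then zero. With these two easy amendments your argument is complete.
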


This lower bound is entirely driven by the difficulty of estimating \(\theta - \bar{\theta}\mathbf{1}_p\). The set \(\{ \theta - \bar{\theta}\mathbf{1}_p : ||\theta||_0 \leq s\}\) is simply a \(p-1\) dimensional orthogonal projection of the set of \(s\)-sparse vectors. The data \(X\) can be broken into two independent pieces, \(X - \bar{X}\mathbf{1}_p\) and \(\bar{X}\mathbf{1}_p\). Since \(\bar{X} \sim N\left(\bar{\theta}, \frac{1-\gamma+\gamma p}{p}\right)\) has large variance compared to the desired scaling of \(1-\gamma\) in the rate, we have the intuition that the information of \(\bar{X}\) is negligible. Putting it aside, consider \(X - \bar{X}\mathbf{1}_p\) has a distribution which is nearly a spherical Gaussian with variance \(1-\gamma\) and mean \(\theta - \bar{\theta}\mathbf{1}_p\), suggesting it should suffice for estimation of \(\theta - \bar{\theta}\mathbf{1}_p\). This intuition materializes when applying Fano's method, as it turns out \(\bar{X}\) does not contribute much and the situation is as if only \(X - \bar{X}\mathbf{1}_p\) were available to estimate. Fano's method yields a lower bound of order \((1-\gamma) s \log\left(\frac{ep}{s}\right)\), and the correlation structure of (\ref{model:additive}) presents no serious technical challenge. 

\subsection{\texorpdfstring{Regime \(\frac{p}{2} - \sqrt{p} < s < \frac{p}{2} \)}{Regime p/2 - sqrt(p) < s < p/2}}

In the regime \(s < \frac{p}{2}\) with \(\frac{p}{2} - s \lesssim \sqrt{p}\), the need to estimate \(\bar{\theta}\mathbf{1}_p\) affects the difficulty of the problem. Though the lower bound argument uses the same technique found in the literature on functional estimation, the details are not standard. 

\begin{proposition}\label{prop:thetabar_lbound}
    Suppose \(\frac{p}{2} - \sqrt{p} < s < \frac{p}{2}\) and \(\gamma \in [0, 1]\). If \(\delta \in (0, 1)\) and \(p\) is sufficiently large depending only on \(\delta\), then there exists \(c_\delta > 0\) depending only on \(\delta\) such that 
    \begin{equation*}
        \inf_{\hat{\theta}} \sup_{||\theta||_0 \leq s} P_{\theta, \gamma}\left\{ ||\hat{\theta} - \theta||^2 \geq c_\delta \left((1-\gamma)p \log\left(1 + \frac{p}{(p-2s)^2}\right) \wedge p\right) \right\} \geq 1 - \delta. 
    \end{equation*}
\end{proposition}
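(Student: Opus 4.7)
My plan is to reduce the problem to estimation of the scalar linear functional $\bar{\theta}$ and then apply a two-prior Bayesian lower bound. By the orthogonal decomposition, any estimator $\hat{\theta}$ satisfies
\[
\|\hat{\theta} - \theta\|^2 \geq \bigl\|\overline{\hat{\theta}}\,\mathbf{1}_p - \bar{\theta}\,\mathbf{1}_p\bigr\|^2 = p\bigl(\overline{\hat{\theta}} - \bar{\theta}\bigr)^2,
\]
so it suffices to lower bound the minimax squared error for estimating $\bar{\theta}$ by $(1-\gamma)\log(1 + p/(p-2s)^2) \wedge 1$.

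For the core argument, I would construct two product priors $\pi_{-1}, \pi_{+1}$ whose coordinates are iid from a three-atom distribution on $\{-\mu, 0, \mu\}$ with probabilities $s/(2p) - b\epsilon$, $1 - s/p$, and $s/(2p) + b\epsilon$ respectively, for $b \in \{-1, +1\}$. This choice produces expected sparsity $s$ and $E_{\pi_b}[\bar{\theta}] = 2b\epsilon\mu$, giving a mean gap $\Delta = 4\epsilon\mu$ whose square will be tuned to the target order $(1-\gamma)\log(1 + p/(p-2s)^2) \wedge 1$. The constraint $\|\theta\|_0 \leq s$ (rather than only expected sparsity $s$) and the concentration of $\bar{\theta}$ around its mean under each prior are handled by a standard truncation/conditioning on high-probability events, leaving an $o(1)$ correction that does not affect the argument. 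By Le Cam's inequality in its fuzzy-hypothesis form, a bound of the form $\dTV(P_{-1}, P_{+1}) \leq 1/2$ on the induced data distributions $P_b$ delivers the desired minimax lower bound on $(\overline{\hat{\theta}} - \bar{\theta})^2$.

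The main work lies in controlling $\chi^2(P_{+1}, P_{-1})$, and it is here that the correlation enters critically. Writing
\[
\Sigma^{-1} = \frac{1}{1-\gamma}\Bigl(I - \frac{1}{p}\mathbf{1}_p\mathbf{1}_p^\intercal\Bigr) + \frac{1}{1-\gamma+\gamma p}\cdot\frac{1}{p}\mathbf{1}_p\mathbf{1}_p^\intercal
\]
splits the quadratic form $\theta^\intercal \Sigma^{-1}\theta'$ into a subspace-orthogonal-to-$\mathbf{1}_p$ piece (weighted by $1/(1-\gamma)$) and an along-$\mathbf{1}_p$ piece (weighted by the much smaller $1/(1-\gamma+\gamma p) \asymp 1/p$). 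Invoking the Gaussian-mixture identity $\chi^2(P_1, P_0) + 1 = E\exp(\theta^\intercal\Sigma^{-1}\theta')$ (suitably adapted for non-atomic $\pi_0$), combined with the iid product structure of the priors and a hypergeometric/binomial moment bound for the common-support inner product, should yield a closed-form bound. The correlation renders the along-$\mathbf{1}_p$ contribution small, so the effective per-coordinate information that distinguishes $\pi_{+1}$ from $\pi_{-1}$ scales with $\epsilon^2 \asymp (p-2s)^2/p^2$ rather than the larger $s^2/p^2$ of the classical iid setting, producing the enhanced logarithm $\log(1 + p/(p-2s)^2)$ in place of $\log(1 + p/s^2)$.

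The main obstacle is carrying out this chi-squared calculation carefully in the correlated setting. Unlike the iid-Gaussian case of Collier-Comminges-Tsybakov, the noise covariance $\Sigma$ is non-diagonal, so the interplay between the product structure of the prior and the rank-one perturbation of $\Sigma$ must be unwound explicitly; the subspace-orthogonal piece of $\chi^2$ factorizes while the along-$\mathbf{1}_p$ piece couples the coordinates and requires separate treatment. I expect the computation will show $\chi^2 + 1$ stays bounded precisely when $\mu^2 \lesssim (1-\gamma)\log(1 + p/(p-2s)^2)$. The $\wedge 1$ cap in the rate reflects the separate constraint that $|E_{\pi_b}[\bar{\theta}]|$ cannot exceed the $\bar{X}$-noise standard deviation $\sqrt{(1-\gamma+\gamma p)/p} \asymp 1$ in the high-correlation regime without the priors becoming trivially distinguishable by the sample mean; this cap is enforced by limiting $\Delta$ at a constant. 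Assembling the two contributions delivers the rate $(1-\gamma)\log(1 + p/(p-2s)^2) \wedge 1$ for the scalar estimation, hence the proposition after the reduction.
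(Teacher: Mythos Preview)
Your proposal has a genuine gap at the step you flag as ``the main obstacle.'' The identity $\chi^2(P_{\pi}\,\|\,P_0)+1=E_{\theta,\theta'\sim\pi}\exp(\theta^\intercal\Sigma^{-1}\theta')$ holds only when $P_0$ is a \emph{single} Gaussian; when both $P_{+1}$ and $P_{-1}$ are nontrivial Gaussian mixtures, as in your construction, there is no such closed form, and the parenthetical ``suitably adapted for non-atomic $\pi_0$'' is precisely the hard problem the paper is built to sidestep. There is also a separation issue: under your iid three-atom prior with $s\asymp p/2$, the functional $\bar\theta$ has standard deviation $\asymp \mu/\sqrt{p}$ while the mean gap is $4\epsilon\mu$; your scaling $\epsilon^2\asymp(p-2s)^2/p^2\lesssim 1/p$ forces $\epsilon\lesssim 1/\sqrt{p}$, so the two priors on $\bar\theta$ overlap at order one and the truncation/conditioning step cannot produce a high-probability separation. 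Finally, the claimed emergence of the $(p-2s)^2$ scaling from ``effective per-coordinate information $\epsilon^2$'' is not justified; in your symmetric construction nothing singles out $p-2s$.

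The paper's argument is structurally different and resolves all three issues simultaneously. It splits $[p]$ into halves $E_0,E_1$ and takes $\pi_0$ to place constant negative entries on a random size-$s$ subset of $E_0$ (and zero on $E_1$), with $\pi_1$ the mirror image on $E_1$. This makes $\bar\theta$ \emph{deterministic} under each prior, so separation is exact. The data then factor into three independent pieces $Y_{E_0}-\bar Y_{E_0}\mathbf{1}_{E_0}$, $Y_{E_1}-\bar Y_{E_1}\mathbf{1}_{E_1}$, $(\bar Y_{E_0},\bar Y_{E_1})$; on the first piece $P_{\pi_1}$ is a single Gaussian (since $\theta_{E_0}=0$ under $\pi_1$), so Ingster--Suslina applies, and symmetrically for the second; the third is point-versus-point. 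A complementation step---rewriting the resulting hypergeometric inner product in terms of the $|E_j|-s\asymp p-2s$ zero coordinates rather than the $s$ nonzero ones---is what produces the $\log(1+p/(p-2s)^2)$. The proposition itself then combines this Lemma~\ref{lemma:thetabar_lbound_lemma} bound (which has $\wedge(1-\gamma+\gamma p)$) with the $(1-\gamma)p$ bound of Proposition~\ref{prop:sparse_lbound} to upgrade to $\wedge\,p$.
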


As is typical in the functional estimation literature, Le Cam's method (or the ``method of two fuzzy hypotheses'' \cite{tsybakov_introduction_2009}) is used to prove the lower bound. Namely, one seeks two priors \(\pi_0\) and \(\pi_1\) which maximize \(||\bar{\theta}_0 \mathbf{1}_p - \bar{\theta}_1\mathbf{1}_p||^2\) for \(\theta_0 \sim \pi_0\) and \(\theta_1 \sim \pi_1\) while keeping the total variation distance between the mixtures \(P_{\pi_0} := \int_{\theta} P_{\theta, \gamma} \pi_0(d\theta)\) and \(P_{\pi_1} := \int_{\theta} P_{\theta, \gamma} \pi_1(d\theta)\) small. 

In parametric problems, it typically suffices to pick both \(\pi_0\) and \(\pi_1\) to be point masses and typically \(\dTV(P_{\pi_1}, P_{\pi_0})\) can be controlled explicitly. For more modern settings where the underlying parameter is high dimensional, \(\pi_1\) usually must be chosen to be a nontrivial mixture. In some problems, it suffices to pick \(\pi_0\) to be a point mass. The lower bound arguments are highly related to the minimax testing literature; perhaps the most typical technique is the Ingster-Suslina method \cite{ingster_nonparametric_2003} which is used to bound \(\chi^2(P_{\pi_1}||P_{\pi_0})\) and deduce a corresponding bound for \(\dTV(P_{\pi_1}, P_{\pi_0})\). 

Though this approach has borne fruit in some problems, other problems require choosing both \(\pi_0\) and \(\pi_1\) to be nontrivial mixtures. This case is the most technically challenging in terms of bounding the total variation; the Ingster-Suslina method \cite{ingster_nonparametric_2003}, which has seen massive success in delivering sharp testing results, is no longer applicable. In the literature thus far, the technique of moment matching is perhaps most popular \cite{wu_polynomial_2020,lepski_estimation_1999,cai_testing_2011}. The priors \(\pi_0\) and \(\pi_1\) are constructed, sometimes in an implicit manner, to share as many moments as possible. Recently, Fourier-based approaches have seen success, in which \(\pi_0\) and \(\pi_1\) are constructed to have characteristic functions agreeing on a large interval around the origin \cite{carpentier_adaptive_2019,cai_optimal_2010}. 

In the problem of estimating \(\bar{\theta}\) where \(\theta\) is sparse, our lower bound construction involves choosing \(\pi_0\) and \(\pi_1\) to be nontrivial mixtures. However, we are able to avoid intricate constructions involving matching moments or characteristic functions by defining the priors explicitly. Our explicit construction has the added benefit of illustrating once again why the \(s = \frac{p}{2}\) transition exists in the rate. For sake of illustration, let us take \(p\) to be even. For ease of notation, set 
\begin{equation*}
    \psi^2 = \psi(p, s, \gamma)^2 = (1-\gamma) p \log\left(1 + \frac{p}{(p-2s)^2}\right) \wedge (1-\gamma+\gamma p).
\end{equation*}
A draw \(\theta_0 \sim \pi_0\) is defined by drawing uniformly at random a size \(s\) subset \(S_0 \subset E_0 := \left\{1,...,\frac{p}{2}\right\}\) and setting \(\theta_0 = \frac{c \psi}{\sqrt{s}}\mathbf{1}_{S_0}\). A draw \(\theta_1 \sim \pi_1\) is defined by drawing uniformly at random a size \(s\) subset \(S_1 \subset E_1 := \left\{\frac{p}{2} + 1,..., p\right\}\) and setting \(\theta_1 = - \frac{c\psi}{\sqrt{s}}\mathbf{1}_{S_1}\). Here, \(c\) is a suitably small positive constant and \(\mathbf{1}_{S_j} \in \R^p\) denotes the vector with entry \(i\) equal to \(1\) if \(i \in S_j\) and \(0\) otherwise, for \(j = 0, 1\). Note \(||\bar{\theta}_0 \mathbf{1}_p - \bar{\theta}_1\mathbf{1}_p||^2 \asymp \psi^2 \) almost surely since \(s \asymp p\). 

The key observation which avoids having to match moments or characteristic functions is the following. For \(j = 0, 1\), it follows immediately from the additive representation (\ref{model:additive}) that for \(Y \sim P_{\pi_j}\) we have that the random quantities \(Y_{E_0} - \bar{Y}_{E_0}\mathbf{1}_{E_0}\), \(Y_{E_1} - \bar{Y}_{E_1}\mathbf{1}_{E_1}\), and \((\bar{Y}_{E_0}, \bar{Y}_{E_1})\) are all mutually independent, where \(\bar{Y}_{E_j} = \frac{1}{|E_j|} \sum_{i \in E_j} Y_i\). Writing \(P_{\pi_j}^{I}, P_{\pi_j}^{II}\), and \(P_{\pi_j}^{III}\) to denote the marginal distributions of these three random vectors, we have \(\dTV(P_{\pi_0}, P_{\pi_1}) \leq \dTV(P_{\pi_0}^{I}, P_{\pi_1}^{I}) + \dTV(P_{\pi_0}^{II}, P_{\pi_1}^{II}) + \dTV(P_{\pi_0}^{III}, P_{\pi_1}^{III})\). The third term can be handled explicitly since the distribution of \((\bar{\theta}_{E_0}, \bar{\theta}_{E_1})\) is a point mass under either \(\pi_0\) or \(\pi_1\). It is here where \(\psi^2 \leq 1-\gamma+\gamma p\) is needed. By symmetry, handling the first term is exactly like handling the second term, so attention can be focused on the second term. 

By the definition of \(E_1\) and \(\pi_0\), we have \(Y_{E_1} = 0\) almost surely when \(Y \sim P_{\pi_0}\). As a consequence of the Neyman-Pearson lemma, the quantity \(1 - \dTV(P_{\pi_1}^{II}, P_{\pi_2}^{II})\) corresponds to the minimal Type I plus Type II error of the hypothesis testing problem 
\begin{align*}
    H_0 &: U \sim N(0, (1-\gamma)I_{p/2} + \gamma \mathbf{1}_{p/2}\mathbf{1}_{p/2}^\intercal), \\
    H_1 &: U \sim \int N\left(-\frac{c\psi}{\sqrt{s}}\mathbf{1}_{S_1}, (1-\gamma)I_{p/2} + \gamma \mathbf{1}_{p/2}\mathbf{1}_{p/2}^\intercal\right)\, \pi_1(dS_1),
\end{align*}
given the data \(U - \bar{U}\mathbf{1}_{p/2}\). The difficult problem of testing a mixture null against a mixture alternative has been conveniently reduced to the simpler problem of testing a point null against a mixture alternative. Additionally, this testing problem is essentially the same as Problem II in \cite{kotekal_minimax_2023}, with the only notable change being that the dimension has halved to \(\frac{p}{2}\). Since \(\frac{p}{2} - s \gtrsim \sqrt{p}\), the result of \cite{kotekal_minimax_2023} suggests the term \(\dTV(P_{\pi_1}^{II}, P_{\pi_2}^{II})\) is suitably bounded since \(\psi^2 \lesssim (1-\gamma) \frac{p}{2} \log\left(1 + \left(\frac{p}{2}\right)/\left(\frac{p}{2} - s\right)^2\right)\). 

The first term \(\dTV(P_{\pi_1}^{I}, P_{\pi_2}^{I})\) is bounded by symmetry, and so appropriate control over \(\dTV(P_{\pi_1}, P_{\pi_0})\) has been established. In other words, \(\psi^2\) is indeed a minimax lower bound, up to universal constant factors. Combining the \((1-\gamma)p\) lower bound from Proposition \ref{prop:sparse_lbound} with \(\psi^2\) yields the lower bound stated in Proposition \ref{prop:thetabar_lbound}. 

\subsection{\texorpdfstring{Regime \(\frac{p}{2} \leq s \leq p\)}{Regime p/2 <= s <= p}}\label{section:lower_bound_dense}
In the regime \(s \geq \frac{p}{2}\), the lower bound of order \(p\) in (\ref{rate:correlation}) admits a simple proof once the key observation is made. 

\begin{proposition}\label{prop:dense_thetabar_lbound}
    Suppose \(\frac{p}{2} \leq s \leq p\) and \(\gamma \in [0, 1]\). If \(\delta \in (0, 1)\), then there exists \(c_\delta > 0\) depending only on \(\delta\) such that 
    \begin{equation*}
        \inf_{\hat{\theta}} \sup_{||\theta||_0 \leq s} P_{\theta, \gamma}\left\{ ||\hat{\theta} - \theta||^2 \geq c_\delta p \right\} \geq 1 - \delta. 
    \end{equation*}
\end{proposition}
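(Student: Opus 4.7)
The proof splits according to the magnitude of $\gamma$. In the regime $\gamma \leq 1/2$, Proposition \ref{prop:sparse_lbound} immediately suffices, since $(1-\gamma) s \log(ep/s) \geq \tfrac{1}{2}\cdot\tfrac{p}{2}\log(2e) \asymp p$ when $s \geq p/2$, and the probability-$(1-\delta)$ form is inherited directly.

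In the regime $\gamma > 1/2$, I exploit the fact that the covariance $\Sigma = (1-\gamma)I_p + \gamma\mathbf{1}_p\mathbf{1}_p^\intercal$ has its single large eigenvalue $1-\gamma+\gamma p \asymp p$ along $\mathbf{1}_p$, rendering the mean direction hard to estimate. Fix any $S \subset [p]$ with $|S| = \lceil p/2 \rceil$, so that $|S|, |S^c| \leq s$, and for $a \in \R$ consider the two-atom sparse family
\[
    \theta^{(0)}(a) = -a\mathbf{1}_S, \qquad \theta^{(1)}(a) = a\mathbf{1}_{S^c}.
\]
Both are $s$-sparse by construction. The central observation is that $\theta^{(0)}(a) - \theta^{(1)}(a) = -a\mathbf{1}_p$, so the two atoms differ purely along $\mathbf{1}_p$, while their perpendicular components $\theta^{(j)}(a) - \overline{\theta^{(j)}(a)}\,\mathbf{1}_p$ in fact coincide (a direct check shows both equal $-\tfrac{a(p-s)}{p}\mathbf{1}_S + \tfrac{as}{p}\mathbf{1}_{S^c}$). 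Since $\Sigma^{-1}\mathbf{1}_p = (1-\gamma+\gamma p)^{-1}\mathbf{1}_p$, the KL divergence evaluates to
\[
    \dKL(P_{\theta^{(0)}(a),\gamma} \,\|\, P_{\theta^{(1)}(a),\gamma}) = \frac{a^2 p}{2(1-\gamma+\gamma p)} \leq a^2,
\]
while the separation is $\|\theta^{(0)}(a) - \theta^{(1)}(a)\|^2 = a^2 p$. Selecting $a$ a small enough constant and invoking Le Cam yields the $p$-order lower bound with positive probability.

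To upgrade this to the probability-$(1-\delta)$ form for arbitrary $\delta \in (0,1)$, I promote the two-point construction to a Bayesian argument: place the prior $\sigma \sim \Bernoulli(1/2)$ and $a \sim N(0, \tau^2)$ independently on $\theta = \theta^{(\sigma)}(a)$, so by symmetry the induced marginal on $\bar\theta$ is $N(0, (s/p)^2 \tau^2)$. Choosing $\tau^2 \asymp \sigma_\gamma^2 := (1-\gamma+\gamma p)/p$ ensures that the scalar $a$ and the sign $\sigma$ contribute comparably to the posterior uncertainty of $\bar\theta$. Using the deterministic bound $\|\hat\theta - \theta\|^2 \geq p\,(\overline{\hat\theta} - \bar\theta)^2$, the problem reduces to lower bounding the posterior risk for the binary recovery of $\mathrm{sign}(\bar\theta)$ from $\bar X \sim N(\bar\theta, \sigma_\gamma^2)$. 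The main technical obstacle is verifying that the perpendicular observation $X - \bar X\mathbf{1}_p$ --- which depends on $a$ but is $\sigma$-invariant by the coincidence noted above --- leaks no information about the sign of $\bar\theta$, so that the residual one-dimensional uncertainty of $\bar\theta$ is genuinely governed by the Gaussian testing problem on $\bar X$ and contributes posterior spread of order $\sigma_\gamma^2 \asymp 1$ in the regime $\gamma > 1/2$, yielding the claim with $c_\delta$ depending only on $\delta$.
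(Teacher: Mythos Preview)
Your case split and your use of Proposition~\ref{prop:sparse_lbound} for $\gamma\le 1/2$ are fine and mirror the paper's combination of Proposition~\ref{prop:sparse_lbound} with Lemma~\ref{lemma:dense_theta_lbound_lemma}. Your two-point construction for $\gamma>1/2$ is essentially the paper's: both place two $s$-sparse atoms whose difference lies exactly along $\mathbf{1}_p$, evaluate the KL via $\Sigma^{-1}\mathbf{1}_p=(1-\gamma+\gamma p)^{-1}\mathbf{1}_p$, and obtain separation $\asymp a^2p$ with KL $\asymp a^2$.

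Where you diverge from the paper is in the passage from ``positive probability'' to the $1-\delta$ form, and here your Bayesian upgrade has a genuine gap. The paper never randomizes $a$: it simply sets the scale $a\propto c_\delta$ so that Pinsker gives $\dTV(P_{\theta_0,\gamma},P_{\theta_1,\gamma})\le\delta$, and then invokes its two-point inequality (Proposition~\ref{prop:fuzzy_hypotheses}) to obtain $\inf_{\hat\theta}\sup_\theta P_\theta\{\|\hat\theta-\theta\|^2\ge \tfrac{c_\delta^2}{4}(1-\gamma+\gamma p)\}\ge 1-\dTV\ge 1-\delta$ directly. Your observation that $X-\bar X\mathbf{1}_p$ is $\sigma$-invariant is correct, but the perpendicular component is \emph{not} $a$-invariant: for $\gamma>1/2$ it determines $a$ to accuracy $O(\sqrt{(1-\gamma)/p})=o(1)$. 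Conditionally on the perpendicular part, the posterior of $\bar\theta$ therefore collapses (up to $o(1)$ blur) to the two atoms $\{-a/2,\,a/2\}$ with $\sigma$ still $\Bernoulli(1/2)$. An estimator that reads off $a$ and then selects whichever of $\pm a/2$ is nearer to $\bar X$ is \emph{exactly} correct with conditional probability $\Phi(|a|/(2\sigma_\gamma))$; under your prior $a\sim N(0,\tau^2)$ with $\tau\asymp\sigma_\gamma$ this averages to a fixed constant strictly above $1/2$, independent of $c_\delta$. Consequently $P\{(\overline{\hat\theta}-\bar\theta)^2<c_\delta\}$ stays bounded away from $0$ however small $c_\delta$ is, so your Bayes argument cannot push the failure probability above $1-\delta$ for any $\delta<1/2$. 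The defect is structural: once $a$ is (essentially) revealed, you are back to a two-hypothesis problem, and no prior on $a$ can manufacture the continuous posterior spread your last paragraph asserts. (As a minor point, the marginal variance you quote should involve $|S|/p$, not $s/p$.)
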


The key observation is the same as that discussed in Section \ref{section:preview}, namely that \(\bar{\theta}\) is not identifiable from \(X - \bar{X}\mathbf{1}_p\) when \(s \geq \frac{p}{2}\). To elaborate, consider the typical approach to proving minimax lower bounds. At a high level, we seek two parameters \(\theta_0\) and \(\theta_1\) such that \(||\bar{\theta}_0\mathbf{1}_p - \bar{\theta}_1\mathbf{1}_p||^2\) is large while \(\dTV(P_{\theta_0, \gamma}, P_{\theta_1, \gamma})\) is small. The principal eigenvector of the covariance matrix is \(\frac{1}{\sqrt{p}}\mathbf{1}_p\), meaning it is most difficult to distinguish two parameters \(\theta_0\) and \(\theta_1\) which exhibit a difference vector \(\theta_0 - \theta_1\) that lies in \(\spn\{\mathbf{1}_p\}\). Le Cam's two point method is used with the choices \(\theta_0 := \frac{c\sqrt{1-\gamma+\gamma p}}{\sqrt{s}}\mathbf{1}_S\) and \(\theta_1 := -\frac{c\sqrt{1-\gamma+\gamma p}}{\sqrt{s}}\mathbf{1}_T\) where \(S = \{1,...,s\}\) and \(T = S^c\). Note that both \(\theta_0\) and \(\theta_1\) are \(s\)-sparse since \(|S| = s\) and \(|T| = p-s \leq \frac{p}{2} \leq s\). The condition \(s \geq \frac{p}{2}\) is critical. With this choice, we importantly have \(\theta_0 - \theta_1 \in \spn\{\mathbf{1}_p\}\) and it can be shown that \(\dTV(P_{\theta_0, \gamma}, P_{\theta_1, \gamma})\) is small. Since \(||\bar{\theta}_0\mathbf{1}_p - \bar{\theta}_1 \mathbf{1}_p||^2 \asymp 1-\gamma+\gamma p\), a lower bound of order \(1-\gamma+\gamma p\) is thus established. Combining with the \((1-\gamma) p\) lower bound from Proposition \ref{prop:sparse_lbound} yields the lower bound claimed in Proposition \ref{prop:dense_thetabar_lbound}.

\section{Adaptation to sparsity and correlation}\label{section:adaptation}

In this section, estimators adaptive to the sparsity and correlation levels are constructed. At a high level, a pilot estimator is furnished which estimates the order of \(1-\gamma\) with high probability. The pilot estimator is used in the choice of the penalty parameter \(\lambda\) in Section \ref{section:sparse_regression} and the choice of bandwidth \(h\) in Section \ref{section:linear_functional}. To adapt to the sparsity level, Lepski's method is employed. 

\subsection{Correlation estimation}
Since Lepski's method is eventually used for adaptation to the sparsity, it turns out it suffices to only consider correlation estimation in the regime \(s < \frac{p}{2}\). From (\ref{model:additive}), consider for \(i \in \supp(\theta)^c\) we have 
\begin{equation*}
    X_i = \sqrt{\gamma}W + \sqrt{1-\gamma}Z_i.
\end{equation*}
In effect, \(\sqrt{\gamma}W\) is a shared location shift for \(X_i\) such that \(i \in \supp(\theta)^c\). If \(\supp(\theta)^c\) were known, a natural estimator for \(1-\gamma\) would be the sample variance computed from that subset. Of course, \(\supp(\theta)^c\) is unknown. A key observation is that if sample variance were computed on a subset \(D\) of the data such that \(D \cap \supp(\theta) \neq \emptyset\), then it would overestimate \(1-\gamma\) due to the presence of nonzero \(\theta_i\). Conversely, a good choice \(D \subset \supp(\theta)^c\) does not overestimate. Since \(|\supp(\theta)^c| \geq \frac{p}{2}\), one idea for an estimator is 
\begin{equation*}
    \min_{|D| = \left\lceil\frac{p}{2}\right\rceil} \frac{1}{|D| - 1}\sum_{i \in D} (X_i - \bar{X}_D)^2,
\end{equation*}
where \(\bar{X}_D = \frac{1}{|D|}\sum_{i \in D} X_i\). This estimator is essentially that suggested in \cite{collier_optimal_2018}, except with a modification to handle the nuisance location shift \(\sqrt{\gamma}W\). 

Though conceptually clean, it appears an exhaustive search of exponential time is required. The setting of \cite{collier_optimal_2018} enables one to write down a simple polynomial-time algorithm, but the nuisance \(\sqrt{\gamma}W\) here appears to hinder this. However, it turns out the idea can be rescued by random sampling. Independently draw subsets \(E_1,...,E_m \subset \{1,...,p\}\) of size \(\ell\) uniformly at random and define
\begin{equation}\label{def:gamma_hat}
    1 - \hat{\gamma} = \min_{1 \leq r \leq m} \frac{1}{\ell-1} \sum_{i \in E_r} (X_i - \bar{X}_{E_r})^2.
\end{equation}
If \(m\) is chosen to scale polynomially in \(p\), then \(1-\hat{\gamma}\) can be computed in polynomial time. 

\begin{proposition}\label{prop:correlation_estimation}
    Suppose \(1 \leq s < \frac{p}{2}\). Fix \(\delta \in (0, 1)\). There exist constants \(C_1, C_2, L > 0\) depending only on \(\delta\) such that if \(m = \left\lceil p^{C_1}\right\rceil\) and \(2 < \ell < \left\lceil C_2\log p\right\rceil\), then 
    \begin{equation*}
        \inf_{\substack{||\theta||_0 \leq s \\ \gamma \in [0, 1)}} P_{\theta, \gamma}\left\{ L^{-1} \leq \frac{1 - \hat{\gamma}}{1-\gamma} \leq L\right\} \geq 1 - \delta,
    \end{equation*}
    where \(\hat{\gamma}\) is given by (\ref{def:gamma_hat}).
\end{proposition}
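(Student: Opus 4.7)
The plan is to establish the two-sided ratio bound by separately proving the upper bound $1 - \hat{\gamma} \leq L(1-\gamma)$ and the lower bound $1 - \hat{\gamma} \geq L^{-1}(1-\gamma)$, each with probability at least $1 - \delta/2$. The starting observation is that the shared shift $\sqrt{\gamma}W$ cancels in every sample variance, so that conditional on $E_r$,
\begin{equation*}
\frac{1}{\ell-1}\sum_{i \in E_r}(X_i - \bar{X}_{E_r})^2 \;\sim\; \frac{1-\gamma}{\ell-1}\,\chi^2_{\ell-1,\,\lambda_r},
\end{equation*}
where $\lambda_r = \|\theta_{E_r}-\bar{\theta}_{E_r}\mathbf{1}_\ell\|^2/(1-\gamma) \geq 0$ is the noncentrality. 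When $E_r\subseteq\supp(\theta)^c$ the noncentrality vanishes; in general, the noncentral $\chi^2$ stochastically dominates its central counterpart, which will drive the lower direction.

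For the upper direction, I would first argue that at least one $E_r$ lies entirely in $\supp(\theta)^c$ with high probability, and then that this clean subset's sample variance exceeds $L(1-\gamma)$ with only exponentially small probability. Since $s<p/2$ yields $|\supp(\theta)^c|\geq (p+1)/2$, a direct calculation gives the containment probability $q := \binom{p-s}{\ell}/\binom{p}{\ell} \geq 3^{-\ell}$ once $p$ is large compared to $\ell$. The number of clean $E_r$'s is $\mathrm{Binomial}(m,q)$ with mean at least $p^{C_1-C_2\log 3}$ when $\ell \leq C_2\log p$, so a standard Chernoff bound yields existence with probability at least $1-\delta/3$ provided $C_1 > C_2\log 3$. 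A standard $\chi^2$ upper tail then gives $P(\chi^2_{\ell-1} > L(\ell-1)) \leq \exp(-(L-1-\log L)(\ell-1)/2)$, which is negligible for $\ell \asymp \log p$ and $L$ a bounded constant.

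For the lower direction, stochastic dominance reduces matters to a central $\chi^2$ bound uniformly in $\theta$ and $\gamma$:
\begin{equation*}
P\!\left(\frac{1}{\ell-1}\sum_{i \in E_r}(X_i - \bar{X}_{E_r})^2 \leq (1-\gamma)/L\right) \;\leq\; P\!\left(\chi^2_{\ell-1} \leq (\ell-1)/L\right) \;\leq\; \exp\!\left(-\tfrac{\ell-1}{2}\,c(L)\right),
\end{equation*}
where $c(L) = \log L - 1 + 1/L$ arises from the Cram\'er transform of $\chi^2_1$ at $1/L$. The crucial feature, stronger than the usual Laurent--Massart subgaussian bound, is that $c(L) \to \infty$ as $L \to \infty$. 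Union-bounding over the $m$ subsets then costs at most $p^{C_1 - c(L)C_2/2}$, which is $\leq \delta/3$ once $c(L)C_2/2 > C_1$ by a margin.

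The principal obstacle is coordinating the constants so that both directions succeed simultaneously. The upper bound demands $C_1 > C_2\log 3$ (one needs $\gtrsim 3^\ell$ queries to find a clean subset), while the lower bound demands $C_1 < c(L)C_2/2$ (the union bound must beat the number of queries). These are jointly satisfiable if and only if $c(L)/2 > \log 3$, which in turn holds once $L$ exceeds a universal numerical threshold. Fixing such an $L$, any $C_1/C_2$ strictly between $\log 3$ and $c(L)/2$ works; one then selects $C_2$ large enough (depending only on $\delta$) to absorb the constants from the three failure probabilities -- no clean subset, clean-subset upper deviation, and the union-bounded chi-squared lower deviation across all $m$ subsets. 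A final union bound combines the two directions and yields the claimed ratio bound with probability at least $1-\delta$.
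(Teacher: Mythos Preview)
Your proposal is correct and follows essentially the same architecture as the paper's proof: show some $E_r$ falls in $\supp(\theta)^c$ with high probability via the $3^{-\ell}$ bound on $q$, control the upper tail of the resulting central $\chi^2_{\ell-1}$ for that clean subset, and control the lower tail for all $m$ subsets via stochastic domination of noncentral over central $\chi^2$ together with a union bound. The only cosmetic difference is the order in which constants are fixed: you choose a universal $L$ first (so that $c(L)/2>\log 3$) and then pick $C_1,C_2$ depending on $\delta$, whereas the paper fixes $C_1,C_2$ first and then enlarges $L$ to make $c_L'$ beat $C_1/C_2$; both orderings satisfy the proposition's requirement that $C_1,C_2,L$ depend only on $\delta$.
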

The reader should note the definition of \(\hat{\gamma}\) in (\ref{def:gamma_hat}) does not require knowledge of \(s\), but the bound established in Proposition \ref{prop:correlation_estimation} relies on \(s < \frac{p}{2}\). 

\subsection{Adaptive sparse regression}\label{section:lasso_adapt}
A major component to our methodology is working with the decorrelated data \(\widetilde{X}\) given by (\ref{def:Xtilde}). Forming \(\widetilde{X}\) requires knowledge of \(\gamma\) in order to add the right amount of Gaussian noise to \(X - \bar{X}\mathbf{1}_p\). To furnish estimators which adapt to \(\gamma\), we will forgo the decorrelation step and directly work with the correlated data. It turns out the same sparse regression approach of Section \ref{section:sparse_regression} can be employed directly to \(X - \bar{X}\mathbf{1}_p\).

Letting \(Y = \sqrt{p}(X - \bar{X}\mathbf{1}_p)\) and \(M = \sqrt{p}\left(I_p - \frac{1}{p}\mathbf{1}_p\mathbf{1}_p^\intercal\right)\), observe 
\begin{equation}\label{def:correlated_regression}
    Y \sim N\left(M\theta, \sigma^2\left(I_p - \frac{1}{p}\mathbf{1}_p\mathbf{1}_p^\intercal\right)\right),
\end{equation}
where \(\sigma^2 = (1-\gamma) p\). In other words, we are faced with a sparse regression problem with negatively correlated noise. Note Theorem 4.2 of \cite{bellec_slope_2018} is a deterministic result which holds on the event (4.1) in \cite{bellec_slope_2018}. It turns out one can easily show the event (4.1) continues to have high probability even with the correlated noise in (\ref{def:correlated_regression}), and so the sparse regression result goes through. For a choice of tuning parameter \(\lambda > 0\), define 
\begin{equation}\label{def:correlated_beta}
    \hat{\beta} = \argmin_{\beta \in \R^p} \left\{ \frac{1}{p} ||Y - M\beta||^2 + 2\lambda||\beta||_1\right\}.
\end{equation}
To furnish an adaptive estimator, Lepski's method is used. Define the set 
\begin{equation}\label{grid:lasso}
    \mathcal{S} = \left\{2^k : k = 0,1,\ldots,\left\lfloor \log_2\left( \frac{p}{784} \right) \right\rfloor\right\} \cup \left\{p\right\}. 
\end{equation}
For each \(s \in \mathcal{S}\), define 
\begin{equation}\label{eqn:lepski_lambda}
    \hat{\lambda}(s) = 2(4+\sqrt{2}) L_\eta^{1/2} \sqrt{(1-\hat{\gamma}) \log\left(\frac{2ep}{s}\right)},
\end{equation}
where \(\hat{\gamma}\) is the correlation estimator (\ref{def:gamma_hat}) at confidence level \(\eta \in (0, 1)\) and \(L_\eta\) is the corresponding constant from Proposition \ref{prop:correlation_estimation}. Let \(\hat{\beta}(s)\) denote the solution to (\ref{def:correlated_beta}) with the choice of penalty \(\hat{\lambda}(s)\), and define the estimators 
\begin{equation}\label{def:lepski_vs}
    \hat{v}(s) = 
    \begin{cases}
        \frac{1}{\sqrt{p}} M\hat{\beta}(s) &\textit{if } s \leq \frac{p}{784}, \\
        X - \bar{X}\mathbf{1}_p &\textit{if } s > \frac{p}{784}.
    \end{cases}
\end{equation}
The ingredients to which Lepski's method can be applied are now in hand. Using the notation \(B(x, r) = \left\{z \in \R^p : ||x - z|| \leq r\right\}\) for \(x \in \R^p\) and \(r > 0\) to denote the ball of radius \(r\) centered at \(x\), define the estimator \(\hat{v}\) to be any element of the set
\begin{align*}
    \bigcap_{\substack{s \in \mathcal{S} \setminus \{p\} \\ s \geq s'}} B\left(\hat{v}(s), \sqrt{13 s\hat{\lambda}(s)^2}\right),
\end{align*}
where \(s'\) is the smallest value in \(\mathcal{S}\) such that the set is nonempty. If no such \(s'\) exists, set \(\hat{v} = \hat{v}(p)\). Note the computation of \(\hat{v}\) requires no knowledge of the true sparsity nor the true correlation.

\begin{proposition}\label{prop:lasso_lepski}
    Suppose \(1 \leq s^* \leq p\). If \(\eta, \delta \in (0, 1)\) and \(p\) is sufficiently large depending only on \(\delta\) and \(\eta\), then there exists \(C_{\delta, \eta} > 0\) depending only on \(\delta\) and \(\eta\) such that
    \begin{equation*}
        \sup_{\substack{||\theta||_0 \leq s^* \\ \gamma \in [0, 1)}} P_{\theta, \gamma}\left\{ ||\hat{v} - (\theta - \bar{\theta}\mathbf{1}_p)||^2 > C_{\delta, \eta}(1-\gamma) s^*\log\left(\frac{ep}{s^*}\right) \right\} \leq \delta + \eta.  
    \end{equation*}
\end{proposition}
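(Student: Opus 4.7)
The plan is to run the standard Lepski machinery on the intersection of two high-probability events. First, Proposition \ref{prop:correlation_estimation} yields $L_\eta^{-1}(1-\gamma) \leq 1-\hat{\gamma} \leq L_\eta(1-\gamma)$ with probability at least $1-\eta$; on this event each penalty $\hat{\lambda}(s)$ is comparable to $\sqrt{(1-\gamma)\log(ep/s)}$ and exceeds the threshold required by Proposition \ref{prop:lasso_error}. Second, applying the adaptation of Proposition \ref{prop:lasso_error} to the correlated regression (\ref{def:correlated_regression}) (as flagged in Section \ref{section:lasso_adapt}) together with a union bound over the $O(\log p)$ scales in $\mathcal{S}$ yields, with probability at least $1-\delta$, the simultaneous bound $\|\hat{v}(s) - (\theta - \bar{\theta}\mathbf{1}_p)\|^2 \leq 13\, s\hat{\lambda}(s)^2$ for every $s \in \mathcal{S}\setminus\{p\}$ with $s \geq s^*$, together with $\|\hat{v}(p) - (\theta - \bar{\theta}\mathbf{1}_p)\|^2 = (1-\gamma)\|Z - \bar{Z}\mathbf{1}_p\|^2 \leq C_\delta (1-\gamma)p$ from a chi-squared tail bound.

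In the sparse regime $1 \leq s^* \leq p/784$, let $\tilde{s}$ be the smallest element of $\mathcal{S}\setminus\{p\}$ with $\tilde{s} \geq s^*$; the dyadic structure gives $\tilde{s} \leq 2s^*$. On the good event, $\theta - \bar{\theta}\mathbf{1}_p$ lies in every ball $B(\hat{v}(s), \sqrt{13 s\hat{\lambda}(s)^2})$ for $s \in \mathcal{S}\setminus\{p\}$ with $s \geq \tilde{s}$, so the intersection corresponding to $s' = \tilde{s}$ is nonempty, and the Lepski-selected $s'$ satisfies $s' \leq \tilde{s}$. Hence $\hat{v} \in B(\hat{v}(\tilde{s}), \sqrt{13 \tilde{s}\hat{\lambda}(\tilde{s})^2})$, and the triangle inequality gives
\[
    \|\hat{v} - (\theta - \bar{\theta}\mathbf{1}_p)\|^2 \lesssim \tilde{s}\hat{\lambda}(\tilde{s})^2 \lesssim (1-\gamma)s^*\log(ep/s^*),
\]
using $\tilde{s} \leq 2s^*$ and the monotonicity of $s \mapsto s\log(ep/s)$ on $[1, p]$.

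In the dense regime $s^* > p/784$, the target is of order $(1-\gamma)p$. Let $s_{\max} = \max(\mathcal{S}\setminus\{p\})$, so $s_{\max} \asymp p$ and $\sqrt{13 s_{\max}\hat{\lambda}(s_{\max})^2} \asymp \sqrt{(1-\gamma)p}$. The intersection corresponding to $s' = s_{\max}$ is a single ball and hence trivially nonempty, so the Lepski-selected $s' \leq s_{\max}$ and in particular $\hat{v} \in B(\hat{v}(s_{\max}), \sqrt{13 s_{\max}\hat{\lambda}(s_{\max})^2})$. The Lasso KKT conditions for $\hat{\beta}(s_{\max})$ give $\hat{v}(p) - \hat{v}(s_{\max}) = \hat{\lambda}(s_{\max})\, \hat{z}$ for some subgradient $\hat{z} \in \partial\|\hat{\beta}(s_{\max})\|_1$ with $\|\hat{z}\|_\infty \leq 1$, so $\|\hat{v}(p) - \hat{v}(s_{\max})\|^2 \leq p\,\hat{\lambda}(s_{\max})^2 \asymp (1-\gamma)p$ (the logarithmic factor collapses since $s_{\max} \asymp p$). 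Combined with $\|\hat{v}(p) - (\theta - \bar{\theta}\mathbf{1}_p)\|^2 \leq C_\delta(1-\gamma)p$, the triangle inequality delivers $\|\hat{v} - (\theta - \bar{\theta}\mathbf{1}_p)\|^2 \lesssim (1-\gamma)p$, as required.

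The main technical obstacle is extending Proposition \ref{prop:lasso_error} to the correlated noise in (\ref{def:correlated_regression}) with constants sharp enough to fit inside the factor-$13$ Lepski balls; once the event (4.1) of \cite{bellec_slope_2018} is verified under this noise via routine Gaussian concentration, the Lepski bookkeeping in the sparse regime and the KKT-based argument in the dense regime are mechanical.
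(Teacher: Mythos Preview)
Your sparse-regime argument matches the paper's, with one remark: you cite Proposition~\ref{prop:lasso_error}, but the simultaneous bound with the fixed constant $13$ really needs the high-probability version (Theorem~4.3 of \cite{bellec_slope_2018}, adapted to correlated noise as in Corollary~\ref{corollary:bellec_prob_correlation}), whose tail $(s/(2ep))^{s/\kappa(s,7)^2}\le (s/p)^s$ is summable over the dyadic grid. Your mention of event~(4.1) of \cite{bellec_slope_2018} suggests you have this in mind, so this is essentially a labeling issue.

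Your dense-regime KKT device is genuinely different from the paper. The paper instead argues via the event $\mathcal{G}$ that $\hat v(s_{\max})$ (or $\hat v(\tilde s)$) lies near the target and then applies the triangle inequality; your observation that stationarity for the Lasso with design $M=\sqrt{p}(I_p-p^{-1}\mathbf{1}_p\mathbf{1}_p^\intercal)$ yields exactly $\hat v(p)-\hat v(s_{\max})=\hat\lambda(s_{\max})\hat z$ (since $p^{-1}M^\intercal Y=X-\bar X\mathbf{1}_p$ and $p^{-1}M^\intercal M\hat\beta=p^{-1/2}M\hat\beta$) gives a clean deterministic link between the Lasso output and the raw centered data, and avoids having to control $\hat v(s_{\max})$ directly.

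There is, however, a genuine gap when $s^*\ge p/2$. You invoke Proposition~\ref{prop:correlation_estimation} to obtain $1-\hat\gamma\le L_\eta(1-\gamma)$, but that proposition is stated and proved only for $s^*<p/2$: its upper bound on $1-\hat\gamma$ requires some random subset $E_r$ to fall entirely inside $\supp(\theta)^c$, which need not happen when $|\supp(\theta)^c|<p/2$. Without this upper bound, both $r(s_{\max})$ and $\sqrt{p}\,\hat\lambda(s_{\max})$ scale with $\sqrt{(1-\hat\gamma)p}$ rather than $\sqrt{(1-\gamma)p}$, and your triangle-inequality chain no longer delivers the target rate. The paper separates out $s^*\ge p/2$ as its own case and argues without the event $\mathcal{E}_{\text{var}}$; you will need to do likewise, since your KKT bound by itself only controls $\|\hat v-\hat v(p)\|$ in units of $1-\hat\gamma$.
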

\noindent Hence, the rate achieved in Section \ref{section:sparse_regression} can still be achieved without knowledge of the sparsity level nor the correlation level. 

\subsection{Adaptive linear functional estimation}\label{section:kme_adaptation}
The kernel mode estimator of Section \ref{section:linear_functional} can be directly applied to \(-(X - \bar{X}\mathbf{1}_p)\). For \(h > 0\), define
\begin{equation}\label{def:KME_correlated}
    \hat{G}_h(x) = \frac{1}{2ph} \sum_{i=1}^{p} \mathbbm{1}_{\{|x - (\bar{X} - X_i)| \leq h\}}.
\end{equation}
For a specific choice of \(h\), our estimator is 
\begin{equation*}
    \hat{\mu} = \argmax_{t \in \R} \hat{G}_h(t).
\end{equation*}
Note \(\hat{G}_h\) is now a sum of correlated random variables, and so \(\hat{\mu}\) appears complicated to analyze. It turns out a simple line of reasoning enables the analysis of Section \ref{section:linear_functional} to still be of use. To elaborate, define 
\begin{equation*}
    \tilde{G}_h(x) = \frac{1}{2ph} \sum_{i=1}^{p} \mathbbm{1}_{\{|x - Y_i| \leq h\}},
\end{equation*}
where \(Y\) is given by (\ref{model:Gaussian_contamination}), and define 
\begin{equation}\label{def:oracle_kme}
    \tilde{\mu} = \argmax_{t \in \R} \tilde{G}_h(t).
\end{equation}
Note the results of Section \ref{section:linear_functional} apply to \(\tilde{\mu}\). In a sense, \(\tilde{\mu}\) can be thought of as an oracle estimator, namely since it uses \(Y\) which is obtained as a consequence of knowing \(\gamma\). Consider \(|\hat{\mu} - \bar{\theta}| \leq |\hat{\mu} - \tilde{\mu}| + |\tilde{\mu} - \bar{\theta}|\). The error term \(|\tilde{\mu} - \bar{\theta}|\) is handled by appeals to Section \ref{section:linear_functional}. The first term \(|\hat{\mu} - \tilde{\mu}|\) can be bounded by noting \(Y = (\bar{X} - X_i) + \frac{\sqrt{1-\gamma}}{\sqrt{p}}\xi\) where \(\xi \sim N(0, 1)\). Therefore, \(\hat{G}_h(x) = \tilde{G}_h\left(x + \sqrt{\frac{1-\gamma}{p}}\xi\right)\). Consequently, for every maximizer of \(\hat{G}_h\), there exists a maximizer of \(\tilde{G}_h\) which is located away of distance exactly \(\sqrt{\frac{1-\gamma}{p}}|\xi|\). Therefore,
\begin{equation}\label{def:kme_bypass_decorrelation}
    |\hat{\mu} - \bar{\theta}| \leq \sqrt{\frac{1-\gamma}{p}}|\xi| + |\tilde{\mu} - \bar{\theta}|. 
\end{equation}
Since \(|\xi| \lesssim 1\) with high probability, the first term turns out to be negligible. Hence, the order of the error of \(\hat{\mu}\), in which we avoided explicit decorrelation, can be bounded by order of the error of \(\tilde{\mu}\), which is obtained after decorrelation. Therefore, it suffices to consider \(\tilde{\mu}\) when investigating adaptation to \(s\) and \(\gamma\). As mentioned earlier, Lepski's method will be employed to adapt to the sparsity level. For use in the method, an estimator which knows \(s\) but adapts to \(\gamma\) is needed.
\begin{theorem}\label{thm:kme_unknown_var}
    Suppose \(1 \leq s < \frac{p}{2}\) and suppose \(\tilde{C} > 0\) is a sufficiently large universal constant. Suppose \(\delta, \eta \in (0, 1)\). If either \(s \geq \frac{p}{2} - p^{1/4}\) and \(p \geq (2/\delta)^2\) or \(s < \frac{p}{2} - p^{1/4}\) and \(p \geq \tilde{C}\log^{16}(1/\delta)\), then the choice
    \begin{equation*}
        \hat{h} = C_1 \sqrt{(1-\hat{\gamma})\left(1 \vee \left(\log\left(\frac{ep}{(p-2s)^2}\right) + \log\log\left(\frac{1}{\delta}\right)\right)\right)},
    \end{equation*}
    with \(C_1\) sufficiently large depending only on \(\eta\) yields  
    \begin{equation*}
        \sup_{\substack{||\theta||_0 \leq s \\ \gamma \in [0, 1)}} P_{\theta, \gamma}\left\{ \frac{|\tilde{\mu} - \bar{\theta}|}{\sqrt{1-\gamma}} > C \sqrt{1 \vee \left(\log\left(\frac{ep}{(p-2s)^2}\right) + \log\log\left(\frac{1}{\delta}\right)\right)} \right\} \leq \delta + \eta,
    \end{equation*}
    where \(C > 0\) is a constant depending only on \(\eta\). Here, \(\hat{\gamma}\) is the correlation estimator in Proposition \ref{prop:correlation_estimation} at confidence level \(\eta\) and \(\tilde{\mu}\) is given by (\ref{def:oracle_kme}) with the (random) bandwidth \(\hat{h}\).
\end{theorem}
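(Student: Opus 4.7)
The plan is to reduce to the known-variance case (Theorem \ref{thm:kme_known_var}) by first bracketing the random bandwidth $\hat{h}$ via the correlation estimator, then upgrading Theorem \ref{thm:kme_known_var} to hold uniformly over bandwidths in the bracket. Applying Proposition \ref{prop:correlation_estimation} at confidence level $\eta$, on an event $A$ with $P_{\theta,\gamma}(A) \geq 1-\eta$ one has $L_\eta^{-1}(1-\gamma) \leq 1-\hat{\gamma} \leq L_\eta(1-\gamma)$, so $\hat{h}$ lies in a deterministic window $[h_-, h_+]$ with both endpoints of order $\sqrt{(1-\gamma)(1 \vee (\log(ep/(p-2s)^2) + \log\log(1/\delta)))}$ and $h_+/h_- = L_\eta^{1/2}$. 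The $\log\log(1/\delta)$ slack inside $\hat{h}$, as opposed to the unspecified $L_\delta$ in Theorem \ref{thm:kme_known_var}, is exactly right: the concentration inequalities backing that theorem force $L_\delta \asymp \log(1/\delta)$, and $\log L_\delta \asymp \log\log(1/\delta)$.

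Next, I would upgrade Theorem \ref{thm:kme_known_var} to a uniform statement: on an event $B$ with $P_{\theta,\gamma}(B) \geq 1-\delta$, the argmax $\tilde{\mu}(h)$ of $\tilde{G}_h$ satisfies $|\tilde{\mu}(h) - \bar{\theta}| \leq C_2 h$ for every $h \in [h_-, h_+]$. The signal bound (Proposition \ref{prop:mean_diff_fix}) already holds for every bandwidth whose scale-free magnitude $h/\sqrt{1-\gamma}$ exceeds a universal constant, so only the stochastic control of $\tilde{G}_h(t) - G_h(t)$ needs to be made uniform in $(t,h)$. The class $\{\mathbbm{1}_{\{|\cdot - t| \leq h\}} : t \in \R,\, h \in [h_-, h_+]\}$ has constant VC dimension, so a Talagrand-type empirical process inequality delivers uniform concentration at cost of at most an $O_\eta(1)$ factor. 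Equivalently, one may $\log_2$-grid $[h_-, h_+]$ with $O_\eta(1)$ points, invoke Theorem \ref{thm:kme_known_var} at each, union-bound, and then chain to arbitrary $h$ in the window using the fact that $2ph\tilde{G}_h(t) = \sum_i \mathbbm{1}_{\{|Y_i - t| \leq h\}}$ is nondecreasing in $h$. On $A \cap B$, of probability at least $1 - \eta - \delta$, $\hat{h} \in [h_-, h_+]$ then forces $|\tilde{\mu}(\hat{h}) - \bar{\theta}| \leq C_2 h_+$, which upon division by $\sqrt{1-\gamma}$ is the claim.

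The main obstacle is the uniform concentration step. Since $\hat{\gamma}$ is a function of $X - \bar{X}\mathbf{1}_p$ while $Y = -\widetilde{X}$ is built from the same $X - \bar{X}\mathbf{1}_p$ plus the independent perturbation $\xi$, $\hat{h}$ and $\tilde{G}_{\hat{h}}$ are dependent, and conditioning on $\hat{h}$ would disturb the distribution of $Y$. This rules out the easy route and forces a genuine uniform empirical-process argument over $(t, h)$. Care is further needed because the two-case split over $\mathcal{U}$ and $\mathcal{V}$ in the proof of Theorem \ref{thm:kme_known_var} is itself $h$-dependent, so the signal-to-noise balancing in each case has to be rechecked uniformly across the bandwidth window.
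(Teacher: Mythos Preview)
Your proposal is essentially the paper's approach: bracket $\hat{h}$ by deterministic endpoints $[\underline{h},\bar{h}]$ via Proposition~\ref{prop:correlation_estimation}, note that Proposition~\ref{prop:mean_diff_fix} is deterministic and hence valid for every $h$ in the window, and then rerun the stochastic analysis of Theorem~\ref{thm:kme_known_var} uniformly over $(t,h)$ using that the relevant indicator classes have bounded VC dimension (the paper indeed defines $\mathcal{U}(h)$, $\mathcal{V}(h)$, $x(h)$ and controls $|\hat{G}(t,h)-G(t,h)|$ jointly). Your remark that the control at $x=x(h)$ must now also be uniform in $h$---unlike the known-variance case where a single point sufficed---is exactly the one new wrinkle the paper singles out.
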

\noindent The proof is largely the same as in the case of known \(\gamma\). The only difference is that the bandwidth is now random, but this is easily accommodated. 

We can now apply Lepski's method to adapt to the sparsity level. Let \(\hat{\gamma}\) denote the correlation estimator (\ref{def:gamma_hat}) at confidence level \(\eta \in (0, 1)\) and let \(L_\eta\) be the corresponding constant from Proposition \ref{prop:correlation_estimation}. Define the set 
\begin{align*}
    \mathcal{S} = &\left\{ 2^k : k = 0,1,2,...,\left\lfloor \log_2\left(\frac{p}{784}\right)\right\rfloor\right\} \\
	&\;\;\;\; \cup \left\{\left\lfloor \frac{p}{2} \right\rfloor - 2^k : k =\lceil \log_2 p^{1/4} \rceil, \lceil\log_2 p^{1/4}\rceil + 1,\ldots, \lfloor \log_2\sqrt{p}\rfloor\right\} \cup \left\{\left\lfloor \frac{p}{2} \right\rfloor - 1, p\right\}. 
\end{align*}
For \(s \leq \frac{p}{784}\), sparse regression will be used. Specifically, let \(\hat{\lambda}(s)\) be given by (\ref{eqn:lepski_lambda}) and let \(\hat{\beta}(s)\) denote the solution to (\ref{def:correlated_beta}) with the choice of penalty \(\hat{\lambda}(s)\). The kernel mode estimator will be used for other sparsity levels. For each \(s \in \mathcal{S}\), define the confidence level 
\begin{equation*}
    \delta_s = 
    \begin{cases}
        \exp\left(-K \left(\frac{p}{(p-2s)^2} \wedge p^{1/32} \right)\right) &\textit{if } s \leq \left\lfloor \frac{p}{2} \right\rfloor - 2^{\lceil \log_2p^{1/4}\rceil}, \\
        e^{-K} &\textit{otherwise},
    \end{cases}
\end{equation*}
where \(K > 0\). For each \(s \in \mathcal{S}\), define the bandwidths
\begin{equation*}
    \hat{h}(s) = C_1L_\eta^{1/2}\sqrt{(1-\hat{\gamma}) \left(1 \vee \left(\log\left(\frac{ep}{(p-2s)^2}\right) + \log\log\left(\frac{1}{\delta_s}\right)\right)\right)},
\end{equation*}
where \(C_1\) is the constant depending only on \(\eta\) from Theorem \ref{thm:kme_unknown_var}. Define the estimators for \(s \in \mathcal{S}\), 
\begin{equation*}
    \hat{T}(s) =
    \begin{cases}
        \frac{1}{p}\sum_{i=1}^{p} \hat{\beta}_i(s) &\textit{if } s \leq \frac{p}{784},\\
        \argmax_{t \in \R} \hat{G}_{\hat{h}(s)}(t) &\textit{if } \frac{p}{784} < s < \frac{p}{2} \text{ and } 1-\hat{\gamma} \leq C_\eta\log^{-1}\left(\frac{ep}{(p-2s)^2}\right),\\
        \bar{X} &\textit{otherwise},
    \end{cases}
\end{equation*}
where \(\hat{G}_h\) is given by (\ref{def:KME_correlated}) and \(C_\eta > 0\) is sufficiently large depending only on \(\eta\). Note \(\bar{X}\) is not used when \(s \leq \frac{p}{784}\) since it is not needed to achieve the minimax rate (see Section \ref{section:upper_bound} for discussion). Define the radii
\begin{equation*}
    r(s) = 
    \begin{cases}
        R_{K, \eta} \sqrt{\frac{s}{p}\hat{\lambda}(s)^2}&\textit{if } s \leq \frac{p}{784}, \\
        R_{K, \eta}\hat{h}(s) &\textit{if } \frac{p}{784} < s < \frac{p}{2} \text{ and } 1-\hat{\gamma} \leq C_\eta\log^{-1}\left(\frac{ep}{(p-2s)^2}\right), \\
        R_{K, \eta} &\textit{otherwise},
    \end{cases}
\end{equation*}
where \(R_{K, \eta} > 0\) is sufficiently large depending only on \(\eta\) and \(K\). Now define the adaptive estimator \(\hat{T}\) to be an element of the set 
\begin{equation*}
    \bigcap_{\substack{s \in \mathcal{S} \\ s \geq s'}} B\left(\hat{T}(s), r(s)\right)
\end{equation*}
where \(s'\) is the smallest value in \(\mathcal{S}\) such that the set is nonempty. If no such \(s'\) exists, set \(\hat{T} = \hat{T}(p)\). 

\begin{proposition}\label{prop:linear_lepski}
    Suppose \(1 \leq s^* \leq p\). If \(\eta \in (0, 1)\) and \(p\) is sufficiently large depending only on \(K\), then there exists \(C_1, C_\eta > 0\) depending only on \(\eta\) and \(R_{K, \eta}\) depending only on \(K\) and \(\eta\) such that 
    \begin{equation*}
        \sup_{\substack{||\theta||_0 \leq s^* \\ \gamma \in [0, 1)}} P_{\theta, \gamma}\left\{ \left|\hat{T} - \bar{\theta}\right| > C_{K, \eta} \epsilon(p, s^*, \gamma) \right\} \leq \eta + \frac{5}{e^{K/4}-1},
    \end{equation*}
    where \(C_{K, \eta} > 0\) depends only on \(K\) and \(\eta\), and 
    \begin{equation*}
        \epsilon(p, s, \gamma)^2 = 
        \begin{cases}
            (1-\gamma) \frac{s}{p}\log\left(\frac{ep}{s}\right) &\textit{if } s \leq \frac{p}{2} - \sqrt{p}, \\
            (1-\gamma)\log\left(\frac{ep}{(p-2s)^2}\right) \wedge 1 &\textit{if } \frac{p}{2} - \sqrt{p} < s < \frac{p}{2}, \\
            1 &\textit{if } \frac{p}{2} \leq s \leq p. 
        \end{cases}
    \end{equation*}
\end{proposition}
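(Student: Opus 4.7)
I would follow the standard Lepski template: construct a high-probability ``oracle'' event on which every per-scale estimator $\hat{T}(s)$ is accurate for $s \geq s^*$, conclude that the data-driven threshold satisfies $s' \leq \tilde{s} := \min\{s \in \mathcal{S} : s \geq s^*\}$, and then verify that $r(\tilde{s}) \lesssim \epsilon(p, s^*, \gamma)$.

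Define the good event $\mathcal{E}$ as the intersection of: (i) the correlation-estimator bound $L_\eta^{-1}(1-\gamma) \leq 1-\hat{\gamma} \leq L_\eta(1-\gamma)$ from Proposition \ref{prop:correlation_estimation}, costing at most $\eta$ in probability; and (ii) the per-scale bound $|\hat{T}(s) - \bar{\theta}| \leq r(s)/2$ for every $s \in \mathcal{S}$ with $s \geq s^*$. Assertion (ii) is verified branch-by-branch: Proposition \ref{prop:small_s_linear} in the sparse-regression branch, Theorem \ref{thm:kme_unknown_var} combined with the bypass identity (\ref{def:kme_bypass_decorrelation}) in the kernel-mode branch (the extra $\sqrt{(1-\gamma)/p}\,|\xi|$ term is controlled by a standard Gaussian tail), and a Gaussian tail on $\bar{X} \sim N(\bar{\theta}, p^{-1}(1-\gamma+\gamma p))$ in the remaining branch. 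The embedded $L_\eta^{1/2}$ in $\hat{\lambda}(s)$ and $\hat{h}(s)$ absorbs the discrepancy between $\hat{\gamma}$ and $\gamma$, and the constant $R_{K,\eta}$ is chosen large enough that each per-scale failure probability is at most $\delta_s$ in the kernel-mode regime and at most $e^{-K}$ otherwise.

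On $\mathcal{E}$, for every $s \in \mathcal{S}$ with $s \geq \tilde{s}$ one has $\bar{\theta} \in B(\hat{T}(s), r(s)/2) \subseteq B(\hat{T}(s), r(s))$, so $\bar{\theta}$ lies in the intersection $\bigcap_{s \in \mathcal{S},\, s \geq \tilde{s}} B(\hat{T}(s), r(s))$. This forces $s' \leq \tilde{s}$ by minimality, and the selected $\hat{T}$ lies in $B(\hat{T}(\tilde{s}), r(\tilde{s}))$. The triangle inequality then yields $|\hat{T} - \bar{\theta}| \leq r(\tilde{s}) + |\hat{T}(\tilde{s}) - \bar{\theta}| \leq (3/2) r(\tilde{s})$. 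A regime-by-regime computation leveraging the geometric density of $\mathcal{S}$---powers of two below $p/784$, the ladder $\lfloor p/2\rfloor - 2^k$ near $p/2$, and the isolated endpoints---shows $r(\tilde{s}) \lesssim \epsilon(p, s^*, \gamma)$, since $\tilde{s} \leq 2 s^*$ in the sparse band and $p - 2\tilde{s} \geq (p-2s^*)/2$ in the ladder band, with all constants absorbed into $C_{K,\eta}$.

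The principal obstacle is the probability bookkeeping. The grid $\mathcal{S}$ has $\Theta(\log p)$ elements, yet the target failure probability $5/(e^{K/4}-1)$ must be independent of $p$. The choice of $\delta_s$ with the geometric ladder near $p/2$ is engineered precisely so that $\sum_{s \in \mathcal{S}} \delta_s$ is a convergent geometric series in $K$ rather than a sum with a $\log p$ prefactor: for ladder scales $s_k = \lfloor p/2\rfloor - 2^k$ one has $\delta_{s_k} = \exp(-K \min(p \cdot 4^{-k}, p^{1/32}))$, and summing over $k$ yields a tail bounded by $(e^{K/4}-1)^{-1}$ once $p$ is sufficiently large depending on $K$; the handful of remaining grid points contribute at most $O(e^{-K})$, which combine to the advertised $5/(e^{K/4}-1)$. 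Verifying this carefully---and reconciling the $p^{1/32}$ truncation with the stated rate at the boundary $s^* = \lfloor p/2\rfloor - 1$, where both the regime cutoff $1-\hat{\gamma} \leq C_\eta \log^{-1}(ep/(p-2s)^2)$ and the fallback $\bar{X}$ branch come into play---is the most delicate part of the calculation.
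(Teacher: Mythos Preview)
Your outline is essentially the paper's argument: define $\tilde{s}$, construct a good event on which every $\hat{T}(s)$ with $s \geq \tilde{s}$ is within its radius, deduce $s' \leq \tilde{s}$ and hence $|\hat{T}-\bar\theta| \lesssim r(\tilde{s})$, then check $r(\tilde{s}) \lesssim \epsilon(p,s^*,\gamma)$ regime-by-regime using the geometric grid. The geometric summation of the $\delta_s$ over the ladder near $p/2$ is exactly how the paper obtains the $p$-free bound $3/(e^{K/4}-1)$ for the kernel-mode contribution.

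There is, however, one slip in your probability bookkeeping. For the sparse-regression branch you invoke Proposition~\ref{prop:small_s_linear}, whose proof runs through Markov's inequality and therefore only yields a per-scale failure probability of order $1/R_{K,\eta}^2$; to make each of the $O(\log p)$ sparse-regression scales fail with probability $e^{-K}$, you would need $R_{K,\eta}$ of order $e^{K/2}$, and the resulting union bound $O(\log p)\,e^{-K}$ grows with $p$ and cannot be dominated by $5/(e^{K/4}-1)$ for all large $p$. The paper instead uses the explicit high-probability tail from Corollary~\ref{corollary:bellec_prob_correlation} (the in-probability version of the Lasso oracle inequality), which gives a per-scale failure probability of $(s/(2ep))^{4s}$; summing these over the dyadic grid collapses to $p^{-C}$ for a universal $C$, and only then does the hypothesis ``$p$ large depending on $K$'' convert this to $1/(e^{K/4}-1)$. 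Once you swap in this sharper tail, your argument goes through unchanged.
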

\noindent Hence, the adaptive estimator constructed by Lepski's method achieves the same rate in (\ref{section:linear_functional}). Consequently, \(\hat{T}\) can be combined with the adaptive estimator \(\hat{v}\) developed in Section \ref{section:lasso_adapt} to achieve the minimax rate for estimating \(\theta\) without knowledge of the true correlation nor sparsity level.

\subsection{An adaptive procedure}

Sections \ref{section:lasso_adapt} and \ref{section:kme_adaptation} give adaptive estimators for the components \(\theta - \bar{\theta}\mathbf{1}_p\) and \(\bar{\theta}\). The two pieces can be directly combined to furnish an adaptive estimator of \(\theta\), as stated in the following theorem without proof. 

\begin{theorem}
    Suppose \(1 \leq s^* \leq p\). If \(\delta \in (0, 1)\) and \(p\) is sufficiently large depending only on \(\delta\), then 
    \begin{equation*}
        \sup_{\substack{||\theta||_0 \leq s^* \\ \gamma \in [0, 1)}} P_{\theta, \gamma}\left\{||\hat{v} + \hat{T}\mathbf{1}_p - \theta||^2 \geq C_{\delta} \varepsilon^*(p, s^*, \gamma)^2\right\} \leq \delta
    \end{equation*}
    where \(C_{\delta} > 0\) depends only on \(\delta\), \(\varepsilon^*(p, s, \gamma)^2\) is given by (\ref{rate:correlation}), \(\hat{v}\) is given by Proposition \ref{prop:lasso_lepski}, and \(\hat{T}\) is given by Proposition \ref{prop:linear_lepski}.
\end{theorem}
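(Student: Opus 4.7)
The plan is to combine the two adaptive building blocks via the triangle inequality, treating the error of $\hat v+\hat T\mathbf1_p$ as the sum of the orthogonal-component error controlled by Proposition~\ref{prop:lasso_lepski} and the linear-functional error controlled by Proposition~\ref{prop:linear_lepski}. Using $\theta=(\theta-\bar\theta\mathbf1_p)+\bar\theta\mathbf1_p$, I would write
\[
\|\hat v+\hat T\mathbf1_p-\theta\|^2\leq 2\|\hat v-(\theta-\bar\theta\mathbf1_p)\|^2+2\|\hat T\mathbf1_p-\bar\theta\mathbf1_p\|^2
=2\|\hat v-(\theta-\bar\theta\mathbf1_p)\|^2+2p(\hat T-\bar\theta)^2,
\]
and then bound each term separately.

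The next step is a union bound on the high-probability events delivered by the two adaptation results. Choose $\eta\in(0,1)$ and $K>0$ in Propositions~\ref{prop:lasso_lepski} and~\ref{prop:linear_lepski} so that $\eta+\tfrac{5}{e^{K/4}-1}\leq\delta/2$ and, simultaneously, the Lepski/Lasso failure level in Proposition~\ref{prop:lasso_lepski} is at most $\delta/2$; this is possible provided $p$ is large enough depending only on $\delta$. On the resulting good event, Proposition~\ref{prop:lasso_lepski} gives $\|\hat v-(\theta-\bar\theta\mathbf1_p)\|^2\lesssim_{\delta}(1-\gamma)s^*\log(ep/s^*)$, and Proposition~\ref{prop:linear_lepski} (after multiplying by $p$) gives $\|\hat T\mathbf1_p-\bar\theta\mathbf1_p\|^2\lesssim_{\delta}p\,\epsilon(p,s^*,\gamma)^2$, where $\epsilon(p,s,\gamma)^2$ is the rate defined in Proposition~\ref{prop:linear_lepski}.

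It then remains to verify that, in every sparsity regime, the sum $(1-\gamma)s^*\log(ep/s^*)+p\,\epsilon(p,s^*,\gamma)^2$ matches $\varepsilon^*(p,s^*,\gamma)^2$ from~(\ref{rate:correlation}) up to a constant depending only on $\delta$. For $1\leq s^*\leq p/2-\sqrt p$ both summands are of order $(1-\gamma)s^*\log(ep/s^*)$; for $p/2-\sqrt p<s^*<p/2$ the Lasso term is at most a constant multiple of $(1-\gamma)p\leq(1-\gamma)p\log(ep/(p-2s^*)^2)$, which is absorbed by the kernel-mode term $[(1-\gamma)p\log(ep/(p-2s^*)^2)]\wedge p$; and for $p/2\leq s^*\leq p$ the kernel-mode term contributes $p$ while the Lasso term is trivially $\lesssim(1-\gamma)p\log(ep/s^*)\lesssim p$ (since $s^*\geq p/2$ implies $\log(ep/s^*)$ is a constant).

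The hardest part is not the triangle-inequality decomposition, but reconciling the grids and confidence levels used by the two Lepski-type procedures: the radii $r(s)$ in the linear-functional procedure and $\sqrt{13s\hat\lambda(s)^2}$ in the Lasso procedure use possibly different $\mathcal{S}$ and different random scalings through $\hat\gamma$, so one must verify both adapt on the same high-probability event for $\hat\gamma$ granted by Proposition~\ref{prop:correlation_estimation}. This is essentially a bookkeeping exercise: intersect the good events for $\hat\gamma$, $\hat v$, and $\hat T$, and absorb the resulting failure probabilities into $\delta$ by an initial choice of $\eta$ and $K$. Once this is arranged, the casewise verification above closes the argument with no further probabilistic work.
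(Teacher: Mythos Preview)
Your proposal is correct and matches the paper's own treatment: the theorem is stated there without proof, with the remark that ``the two pieces can be directly combined,'' which is precisely your triangle-inequality decomposition plus a union bound over the events in Propositions~\ref{prop:lasso_lepski} and~\ref{prop:linear_lepski}, followed by the casewise check that $(1-\gamma)s^*\log(ep/s^*)+p\,\epsilon(p,s^*,\gamma)^2\asymp\varepsilon^*(p,s^*,\gamma)^2$. The bookkeeping you flag about synchronizing $\eta$, $K$, and the $\hat\gamma$-event is exactly what is needed, and no additional idea is missing.
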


\noindent Hence, simultaneous adaptation to the sparsity and the correlation levels is possible. 

\section{Discussion}

A couple of finer points are explored in this section.

\subsection{Large-scale inference}\label{section:LSI}
As mentioned in Remark \ref{remark:large-scale_inference}, the minimax rate (\ref{rate:LSI}) for estimation of the signal \(\theta\) in the two-groups model (\ref{model:two-groups}) can be directly obtained from our results. To fix notation, we will take \(\theta \in \R^p\) with the understanding \(\mathcal{H}_0 = \supp(\theta)^c\) in (\ref{model:two-groups}) and \(||\theta||_0 \leq s\). For the sake of discussion, we will assume \(s\) and \(\sigma^2\) are known, as adaptation to the sparsity and variance can be straightforwardly addressed via ideas from Section \ref{section:adaptation}. With these preliminaries in place, we have the representation 
\begin{equation}\label{model:additive_LSI}
    X_i = \mu + \theta_i + \sigma Z_i
\end{equation}
where \(Z_1,\ldots, Z_p \overset{iid}{\sim} N(0, 1)\). Clearly the model (\ref{model:additive}) is exactly (\ref{model:additive_LSI}) with \(\sigma^2 = 1-\gamma\) and the prior \(\mu \sim N(0, \gamma)\). The same approach of estimating \(\theta - \bar{\theta}\mathbf{1}_p\) and \(\bar{\theta}\mathbf{1}_p\) separately can be taken. In Appendix \ref{appendix:LSI}, we describe in more detail how the rate (\ref{rate:LSI}) can be established.

\subsection{Correlation causes impossibility of adaptation in expectation}
The success of adaptation seen in Section \ref{section:adaptation} may seem odd to some readers. They may intuit it is not known whether \(s < \frac{p}{2}\), in which case \(\bar{\theta}\) can be identified from \(X-\bar{X}\mathbf{1}_p\) and can be exclusively used to achieve a faster rate, or \(s \geq \frac{p}{2}\), in which case \(\bar{\theta}\) cannot be identified from \(X - \bar{X}\mathbf{1}_p\) and its exclusive use may result in unbounded risk. The keen reader's intuition turns out to be correct when considering adaptation in expectation, but not so when considering adaptation in probability. In other words, adaptation in probability is possible as seen in Section \ref{section:adaptation}, but adaptation in expectation is not possible. 

Generally speaking, the key difference between estimation in expectation and in probability is the following. Successful estimation in probability allows the possibility of an estimator to have unbounded risk on an event of small probability; in other words, good risk only needs to be achieved on a high-probability event. In contrast, successful estimation in expectation does not allow the estimator to have unbounded risk on the bad event; rather, the risk must be suitably bounded so it can be canceled out by the small probability of the bad event. In the context of adaptation, if the true sparsity satisfies \(s^* \geq \frac{p}{2}\), and an adaptive estimator only makes a mistake in using a strategy designed for \(s < \frac{p}{2}\) on a small probability event, then it may very well achieve adaptation in probability while failing to adapt in expectation. 

The following result rigorously states the impossibility of adaptation in expectation. 

\begin{theorem}\label{thm:adapt_in-exp}
    Suppose \(1 \leq s < p\) and \(\gamma \in [0, 1]\). There exist two universal constants \(C_0, C_1 > 0\) such that the following holds. For any \(r \geq 1\), if \(\hat{\theta}\) is an estimator such that 
    \begin{equation*}
        \sup_{||\theta||_0 \leq s} E_{\theta, \gamma}\left(||\hat{\theta} - \theta||^2\right) \leq C_0(1-\gamma+\gamma p)e^{-C_1r},
    \end{equation*}
    then 
    \begin{equation*}
        \sup_{||\theta||_0 \leq p} E_{\theta, \gamma}\left(||\hat{\theta} - \theta||^2\right) \geq C_0 (1-\gamma+\gamma p)r.
    \end{equation*}
\end{theorem}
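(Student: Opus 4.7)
The plan is a two-point Le Cam argument in the direction of the top eigenvector of $\Sigma := (1-\gamma)I_p + \gamma \mathbf{1}_p\mathbf{1}_p^\intercal$, where the key ingredient is a Cauchy--Schwarz change-of-measure bound (instead of the more standard Bretagnolle--Huber) which trades the hypothesized exponentially small null risk for a linear-in-$r$ lower bound on the worst-case dense risk. Take $\theta_0 = 0$, so that $||\theta_0||_0 = 0 \leq s$, and $\theta_1 = \mu^\ast \mathbf{1}_p$ for some scalar $\mu^\ast > 0$ to be chosen, so that $||\theta_1||_0 = p$ (which lies in the full class $\{||\theta||_0 \leq p\}$ but not in the sparse class, using $s < p$). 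Because $\theta_1 - \theta_0 = \mu^\ast \mathbf{1}_p$ points along the principal eigenvector of $\Sigma$, which has eigenvalue $1-\gamma+\gamma p$, this is the statistically hardest direction, and setting $\sigma_\ast^2 := (1-\gamma+\gamma p)/p$ the pair reduces effectively to a one-dimensional location problem governed by $\bar{X} \sim N(\bar{\theta}, \sigma_\ast^2)$.

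By Cauchy--Schwarz, $||v||^2 \geq p\bar{v}^2$ for every $v \in \R^p$, where $\bar{v} := p^{-1}\sum_{i=1}^p v_i$; applied to $v = \hat{\theta}$ and $v = \hat{\theta} - \mu^\ast\mathbf{1}_p$ this gives
\begin{equation*}
E_{0, \gamma}\overline{\hat{\theta}}^2 \leq \frac{1}{p}E_{0,\gamma}||\hat{\theta}||^2 \leq C_0 \sigma_\ast^2 e^{-C_1 r}, \qquad E_{\mu^\ast\mathbf{1}_p, \gamma} ||\hat{\theta} - \mu^\ast\mathbf{1}_p||^2 \geq p\, E_{\mu^\ast\mathbf{1}_p,\gamma}(\overline{\hat{\theta}} - \mu^\ast)^2.
\end{equation*}
Using $\Sigma^{-1}\mathbf{1}_p = (1-\gamma+\gamma p)^{-1}\mathbf{1}_p$, the second moment of the likelihood ratio between the two candidate distributions is
\begin{equation*}
E_{0,\gamma}\!\left[\left(\frac{dP_{\mu^\ast\mathbf{1}_p,\gamma}}{dP_{0,\gamma}}\right)^{\!2}\right] = \exp\!\left((\mu^\ast)^2 \mathbf{1}_p^\intercal \Sigma^{-1}\mathbf{1}_p\right) = e^{(\mu^\ast)^2/\sigma_\ast^2}.
\end{equation*}
Consider the event $A := \{|\overline{\hat{\theta}}| \leq \mu^\ast/2\}$. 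Markov's inequality gives $P_{0,\gamma}(A^c) \leq 4 C_0 \sigma_\ast^2 e^{-C_1 r}/(\mu^\ast)^2$, and Cauchy--Schwarz on the likelihood ratio yields
\begin{equation*}
P_{\mu^\ast\mathbf{1}_p,\gamma}(A^c) \leq \sqrt{P_{0,\gamma}(A^c)} \cdot e^{(\mu^\ast)^2/(2\sigma_\ast^2)} \leq \frac{2\sqrt{C_0}\,\sigma_\ast}{\mu^\ast}\exp\!\left(\frac{(\mu^\ast)^2}{2\sigma_\ast^2} - \frac{C_1 r}{2}\right).
\end{equation*}

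Choosing $(\mu^\ast)^2 = \tfrac{1}{2}C_1 r \sigma_\ast^2$ collapses the exponential factor to $e^{-C_1 r/4}$, and for any $C_1 \geq 32 C_0$ and $r \geq 1$ the right side above is at most $1/2$; hence $P_{\mu^\ast\mathbf{1}_p,\gamma}(A) \geq 1/2$. On $A$ we have $|\overline{\hat{\theta}} - \mu^\ast| \geq \mu^\ast/2$, so $E_{\mu^\ast\mathbf{1}_p,\gamma}(\overline{\hat{\theta}} - \mu^\ast)^2 \geq (\mu^\ast)^2/8$, and therefore $E_{\mu^\ast\mathbf{1}_p,\gamma}||\hat{\theta} - \mu^\ast\mathbf{1}_p||^2 \geq p(\mu^\ast)^2/8 = C_1 r(1-\gamma+\gamma p)/16$. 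Since $||\mu^\ast\mathbf{1}_p||_0 = p$, taking $C_1 = 32 C_0$ yields the claim. The main obstacle is the tight balancing of $\mu^\ast$: Markov wants $(\mu^\ast)^2$ large so that $4 C_0 \sigma_\ast^2 e^{-C_1 r}/(\mu^\ast)^2$ is small, while the $\chi^2$-factor $e^{(\mu^\ast)^2/(2\sigma_\ast^2)}$ demands $(\mu^\ast)^2/\sigma_\ast^2$ not too large. It is essential to use Cauchy--Schwarz rather than Bretagnolle--Huber here: BH would only yield $P_{\mu^\ast\mathbf{1}_p,\gamma}(A) \gtrsim e^{-\mathrm{KL}(P_{0,\gamma}, P_{\mu^\ast\mathbf{1}_p,\gamma})}$, also exponential in $(\mu^\ast)^2/\sigma_\ast^2$, yielding only a constant---not an $r$-sized---lower bound on the dense risk, whereas the square root from Cauchy--Schwarz effectively halves the exponent so that $(\mu^\ast)^2/\sigma_\ast^2$ can scale linearly in $r$ while being absorbed by the $e^{-C_1 r}$ slack in the hypothesis.
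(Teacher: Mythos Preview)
Your proof is correct and follows essentially the same two-point scheme as the paper: both take $\theta_0=0$ against $\theta_1=\mu^\ast\mathbf{1}_p$, both project to $\bar{\hat\theta}$ via $||v||^2\ge p\bar v^2$, and both control the change of measure through the second moment of the likelihood ratio $E_{0,\gamma}[(dP_{\theta_1,\gamma}/dP_{0,\gamma})^2]=\exp((\mu^\ast)^2/\sigma_\ast^2)$. The only real difference is packaging: the paper invokes a black-box inequality (Lemma~8 of \cite{collier_optimal_2018}) that lower-bounds $\inf_{\mathcal A}\{qP_{0,\gamma}(\mathcal A)+P_{\theta_1,\gamma}(\mathcal A^c)\}$ in terms of $\chi^2(P_{\theta_1,\gamma}\,\|\,P_{0,\gamma})$ and then optimizes over an auxiliary $\tau$, whereas you unroll this step by hand with Markov on $A=\{|\bar{\hat\theta}|\le\mu^\ast/2\}$ followed by Cauchy--Schwarz on $\mathbbm{1}_{A^c}$ against $dP_{\theta_1,\gamma}/dP_{0,\gamma}$. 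Your route is more elementary and self-contained; the paper's route hides the constants inside a reusable lemma. The underlying mechanism---halving the $\chi^2$ exponent via the square root from Cauchy--Schwarz so that $(\mu^\ast)^2/\sigma_\ast^2$ can grow linearly in $r$---is identical in both, and your remark on why Bretagnolle--Huber would only recover a constant is exactly the point. (A minor cosmetic note: writing $\Sigma^{-1}\mathbf{1}_p$ is ill-defined at $\gamma=1$, but your formula $E_{0,\gamma}[L^2]=e^{(\mu^\ast)^2/\sigma_\ast^2}$ remains valid there by working directly with $\bar X\sim N(\bar\theta,\sigma_\ast^2)$; the paper has the same informality.)
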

Theorem \ref{thm:adapt_in-exp} is proved in Appendix \ref{section:misc}. To illustrate Theorem \ref{thm:adapt_in-exp}, consider \(s = 1\) in which the minimax (squared) rate in expectation is \((1-\gamma)\log(ep)\). If \(\hat{\theta}\) is an estimator which achieves the minimax rate for \(s = 1\), then Theorem \ref{thm:adapt_in-exp} with \(r \asymp 1 \vee \log\left(\frac{1-\gamma+\gamma p}{(1-\gamma) \log(ep)}\right)\) implies \(\hat{\theta}\) must satisfy 
\begin{equation*}
    \sup_{||\theta||_0 \leq p} E_{\theta, \gamma}\left(||\hat{\theta} - \theta||^2\right) \gtrsim (1-\gamma+\gamma p) \left(1 \vee \log\left(\frac{1-\gamma+\gamma p}{(1-\gamma)\log(ep)}\right)\right).
\end{equation*}
At \(s = p\), the minimax (squared) rate in expectation is \(p\). Clearly, \(\hat{\theta}\) suffers a worse rate if \(\gamma\) is sufficiently strong. For example, if \(\gamma \asymp 1\), then \(\sup_{||\theta||_0 \leq p} E_{\theta, \gamma}\left(||\hat{\theta} - \theta||^2\right) \gtrsim p \log\left(\frac{p}{1-\gamma}\right)\). Notably, the impossibility of adaptation in expectation is a phenomenon which appears only with the presence of correlation. In particular, if \(\gamma = 0\) then Theorem \ref{thm:adapt_in-exp} has no content since the upper bound condition \(C_0(1-\gamma+\gamma p)e^{-C_1 r} = C_0 e^{-C_1 r} \lesssim 1\), and no estimator achieves this rate since the minimax rate is \(\log(ep)\). Indeed, it is well known adaptation in expectation is possible in the independent setting (i.e. \(\gamma = 0\)) \cite{birge_gaussian_2001}, and Theorem \ref{thm:adapt_in-exp} poses no contradiction. In general, Theorem \ref{thm:adapt_in-exp} can only be meaningfully applied when the upper bound condition is of order at least the minimax rate, since no estimator can achieve risk of smaller order by definition. Similarly, Theorem \ref{thm:adapt_in-exp} only delivers a nontrivial conclusion when \((1-\gamma+\gamma p)r \gtrsim p\). 

Theorem \ref{thm:adapt_in-exp} thus highlights an interesting consequence of correlation. Furthermore, it identifies a problem in which adaptation in probability is actually different from adaptation in expectation, which is a phenomenon which has received very little attention in the statistics literature; to the best of our knowledge, only \cite{cai_adaptation_2006} systematically investigates the difference between in-expectation and in-probability adaptation. Furthermore, \cite{cai_adaptation_2006} only shows adaptation in probability for linear functional estimation is possible when adapting to a bounded number of convex classes. Our result in Section \ref{section:adaptation} establishes adaptation is, in fact, possible when adapting to a growing number of classes which are actually unions of convex classes. While our work contributes to the interesting difference between in-probability and in-expectation adaptation, Theorem \ref{thm:adapt_in-exp} only asserts adaptation is not possible. It is an intriguing open problem to sharply characterize the exact cost for adaptation in expectation.

\subsection{Other correlation structures}\label{section:other_correlation}
A natural generalization of (\ref{model:additive}) is the model with multiple random effects. Consider the vector form analogous to (\ref{model:vector}), 
\begin{equation*}
    X \sim N\left(\theta, (1-\gamma)I_p + \gamma\sum_{k=1}^{R} \mathbf{1}_{B_k}\mathbf{1}_{B_k}^\intercal\right),
\end{equation*} 
where \(B_1,...,B_R\) are known sets of cardinality \(\frac{p}{R}\) (assumed to be an integer) which form a partition of \(\{1,...,p\}\). Here, \(\mathbf{1}_{B_k} \in \R^p\) is the vector with coordinate \(i\) equal to \(\mathbbm{1}_{\{i \in B_k\}}\). The minimax testing rate for the sparse signal detection problem in this model was obtained in \cite{kotekal_minimax_2023} for all configurations of \(p, s, \gamma\), and \(R\). As evidenced by the results of \cite{kotekal_minimax_2023}, new phase transitions emerge for various regimes of \(R\); the sharp minimax testing rate thus exhibits a fairly intricate dependence on \(R\). 

For the problem of estimating a sparse \(\theta\), the methodology developed in this article can be extended to obtain the sharp rate when \(R \lesssim 1\), namely 
\begin{equation*}
    (1-\gamma) s \log\left(\frac{ep}{s \wedge \left(\frac{p}{2R} - s\right)_{+}^2}\right) \wedge p. 
\end{equation*}
In Appendix \ref{appendix:other_correlation}, we discuss the intuition for how this can be established. The case of growing \(R\) is also discussed there. 

\bibliographystyle{skotekal.bst}
\bibliography{sparse_estimation.bib} 

\begin{appendix}

		
	

    \section{Continued discussion}\label{appendix:discussion}

    \subsection{Large-scale inference}\label{appendix:LSI}
    We continue the discussion from Section \ref{section:LSI}, namely the estimation of \(\theta\) from the model (\ref{model:additive_LSI}). Consider the transformation \(\widetilde{X} = X - \bar{X}\mathbf{1}_p + \sigma p^{-1/2} \xi \mathbf{1}_p\) where \(\xi \sim N(0, 1)\) is drawn independently of the data. Note \(\widetilde{X} \sim N(\theta - \bar{\theta}\mathbf{1}_p, \sigma^2I_p)\). With the choice \(\lambda = 2(4+\sqrt{2})\sigma \sqrt{\log\left(\frac{2ep}{s}\right)}\), let 
    \begin{equation}\label{eqn:LSI_betahat}
        \hat{\beta} = \argmin_{\beta \in \R^p} \left\{\frac{1}{p} \left|\left|\sqrt{p}\widetilde{X} - \sqrt{p}\left(I_p - \frac{1}{p}\mathbf{1}_p\mathbf{1}_p^\intercal\right)\beta\right|\right|^2 + 2\lambda ||\beta||_1\right\}. 
    \end{equation}
    For estimating \(\theta - \bar{\theta}\mathbf{1}_p\), define the estimator
    \begin{equation}\label{eqn:LSI_vhat}
        \hat{v} :=
        \begin{cases}
            \left(I_p - \frac{1}{p}\mathbf{1}_p\mathbf{1}_p^\intercal\right) \hat{\beta} &\textit{if } 1 \leq s \leq \frac{p}{784}, \\
            \widetilde{X} &\textit{if } \frac{p}{784} < s < \frac{p}{2}.
        \end{cases}
    \end{equation}
    For estimation of \(\bar{\theta}\), if \(1 \leq s \leq \frac{p}{784}\), then \(\frac{1}{p}\sum_{i=1}^{p}\hat{\beta}_i\) will be used. Otherwise, a kernel mode estimator will be used. Define \(h = C_1\sqrt{\sigma^2 \left(1 \vee \log\left(\frac{L_\delta p}{(p-2s)^2}\right)\right)}\) for a confidence level \(\delta\) with \(C_1\) and \(L_\delta\) selected as in Theorem \ref{thm:kme_known_var}. Consider the kernel mode estimator 
    \begin{equation}\label{eqn:LSI_muhat}
        \hat{m} = \argmax_{t \in \R} \frac{1}{2ph} \sum_{i=1}^{p} \mathbbm{1}_{\left\{\left|t - \left(-\widetilde{X}_i\right)\right| \leq h \right\}}.
    \end{equation}
    Note \(\bar{X} \sim N(\bar{\theta} + \mu, \sigma^2p^{-1})\) is useless for estimating \(\bar{\theta}\) since \(\mu \in \R\) is completely unknown and is thus an unbounded nuisance. In contrast, there is information about \(\mu\) (e.g. \(|\mu| \lesssim \gamma\) with high probability) in model (\ref{model:additive}), and so \(\bar{X}\) is utilized in Section \ref{section:linear_functional}. As in Section \ref{section:upper_bound}, the two pieces can be combined to obtain the following result which we state without proof.
    \begin{theorem}\label{thm:LSI}
        Suppose \(1 \leq s < \frac{p}{2}\) and \(\sigma > 0\). Let \(\hat{v}\) be given by (\ref{eqn:LSI_vhat}) and set 
        \begin{equation*}
            \hat{T} = 
            \begin{cases}
                \frac{1}{p}\sum_{i=1}^{p} \hat{\beta}_i &\textit{if } 1 \leq s \leq \frac{p}{784},\\
                \hat{m} &\textit{if } \frac{p}{784} < s < \frac{p}{2}, 
            \end{cases}
        \end{equation*}
        where \(\hat{\beta}\) is given by (\ref{eqn:LSI_betahat}) and \(\hat{m}\) is given by (\ref{eqn:LSI_muhat}). If \(\delta \in (0, 1)\) and \(p\) is sufficiently large depending only on \(\delta\), then 
        \begin{equation*}
            \sup_{\substack{||\theta||_0 \leq s \\ \mu \in \R}} P_{\theta, \mu, \sigma}\left\{||\hat{v} + \hat{T}\mathbf{1}_p - \theta||^2 > C_\delta \varepsilon^*(p, s, \sigma)^2\right\} \leq \delta
        \end{equation*}
        where \(C_\delta > 0\) depends only on \(\delta\) and \(\varepsilon^*(p, s, \sigma)^2\) is given by (\ref{rate:LSI}).
    \end{theorem}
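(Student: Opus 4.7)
The plan is to mirror the strategy of Theorem \ref{thm:upper_bound}, exploiting the fact that the decorrelated data $\widetilde X$ in the LSI setting has the same distributional structure as $\widetilde X$ in the correlated model (\ref{def:Xtilde}), with $1-\gamma$ simply replaced by $\sigma^2$. Indeed, substituting $X_i = \mu + \theta_i + \sigma Z_i$ into $\widetilde X = X - \bar X \mathbf{1}_p + \sigma p^{-1/2} \xi \mathbf{1}_p$ gives $\widetilde X_i = \theta_i - \bar\theta + \sigma(Z_i - \bar Z) + \sigma p^{-1/2}\xi$, and a quick covariance check shows $\widetilde X \sim N(\theta - \bar\theta \mathbf{1}_p, \sigma^2 I_p)$. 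In particular, the unbounded nuisance $\mu$ has been eliminated entirely. I would start from the triangle bound
\begin{equation*}
\|\hat v + \hat T \mathbf{1}_p - \theta\|^2 \leq 2\|\hat v - (\theta - \bar\theta \mathbf{1}_p)\|^2 + 2p(\hat T - \bar\theta)^2
\end{equation*}
and handle the two summands separately, each at confidence level $\delta/2$.

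For the projection piece $\hat v$, when $1 \leq s \leq p/784$, the Lasso problem (\ref{eqn:LSI_betahat}) is identical in form to (\ref{problem:lasso}) after writing $\sigma^2 p$ in place of $(1-\gamma)p$, so Proposition \ref{prop:lasso_error} transfers verbatim and yields $\|\hat v - (\theta - \bar\theta\mathbf{1}_p)\|^2 \lesssim \sigma^2 s \log(ep/s)$ with the prescribed choice of $\lambda$. When $p/784 < s < p/2$, $\hat v = \widetilde X$ has Gaussian error with covariance $\sigma^2 I_p$, and Markov's inequality (as in Proposition \ref{prop:dense_ubound}) gives $\|\widetilde X - (\theta - \bar\theta\mathbf{1}_p)\|^2 \lesssim \sigma^2 p$; since $s \asymp p$ in this regime, this matches $\sigma^2 s \log(ep/s)$ up to a constant.

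For the linear-functional piece $\hat T$, when $1 \leq s \leq p/784$, the analog of Proposition \ref{prop:small_s_linear} applies, since its proof depends only on the restricted-eigenvalue analysis of the Lasso program and not on any feature specific to the correlated model; this gives $p(\hat T - \bar\theta)^2 \lesssim \sigma^2 s \log(ep/s)$. When $p/784 < s < p/2$, observe that $-\widetilde X \sim N(\bar\theta \mathbf{1}_p - \theta, \sigma^2 I_p)$ is precisely the Gaussian contamination model (\ref{model:Gaussian_contamination}) after renaming $\mu \leftrightarrow \bar\theta$ and $1-\gamma \leftrightarrow \sigma^2$, so Theorem \ref{thm:kme_known_var} applies with the stated bandwidth $h = C_1 \sqrt{\sigma^2(1 \vee \log(L_\delta p/(p-2s)^2))}$ and delivers $p(\hat T - \bar\theta)^2 \lesssim \sigma^2 p \bigl(1 \vee \log(ep/(p-2s)^2)\bigr)$. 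This expression matches $\sigma^2 s \log(ep/s) \asymp \sigma^2 p$ when $p/2 - s \gtrsim \sqrt p$, and matches $\sigma^2 p \log(ep/(p-2s)^2)$ when $p/2 - s \lesssim \sqrt p$, exactly as required by (\ref{rate:LSI}).

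The only real bookkeeping issue is that $\widetilde X$ depends on the independent randomization $\xi$; however, every concentration bound cited above holds uniformly in the realization of the observed sequence, so one can condition on $\xi$ and invoke the previously established results unchanged. Summing the two pieces and taking a union bound then yields the claimed rate with probability at least $1-\delta$. I do not anticipate a genuine obstacle: the LSI model differs from (\ref{model:vector}) only in that the nuisance $\mu$ is deterministic and unbounded instead of Gaussian, but the decorrelation step (\ref{eqn:LSI_vhat}) annihilates this distinction, allowing the entire upper-bound machinery to port over. Careful tracking of the universal constants to absorb $\sigma^2$ and to reconcile the two pieces with the three regimes of (\ref{rate:LSI}) will be the most involved but routine aspect of the proof.
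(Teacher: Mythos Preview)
Your proposal is correct and matches the paper's intended approach. The paper explicitly states this theorem without proof, noting only that ``as in Section~\ref{section:upper_bound}, the two pieces can be combined,'' which is precisely the decomposition and reuse of Propositions~\ref{prop:lasso_error}, \ref{prop:small_s_linear}, and Theorem~\ref{thm:kme_known_var} (with $\sigma^2$ in place of $1-\gamma$) that you carry out.
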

    
    The minimax lower bound can established leveraging the results of Section \ref{section:lower_bounds}. In the model (\ref{model:additive_LSI}), consider placing the prior \(N(0, \tau^2)\) on \(\mu\), so we can write \(\mu = \tau W\) for \(W \sim N(0, 1)\). After normalization, we can write 
    \begin{equation*}
        \frac{X_i}{\sqrt{\tau^2 + \sigma^2}} = \frac{\theta_i}{\sqrt{\tau^2 + \sigma^2}} + \frac{\tau}{\sqrt{\tau^2 + \sigma^2}} W + \frac{\sigma}{\sqrt{\tau^2 + \sigma^2}} Z_i.
    \end{equation*}
    This is precisely model (\ref{model:additive}) with \(\gamma = \frac{\tau}{\sqrt{\tau^2 + \sigma^2}}\). All of the results of Section \ref{section:lower_bounds} can now be applied to obtain the following result, which we state without proof.
    \begin{theorem}\label{thm:LSI_lower_bound}
        Suppose \(1 \leq s \leq p\) and \(\sigma > 0\). For any \(\tau > 0\), if \(\delta \in (0, 1)\) and \(p\) is sufficiently large depending only on \(\delta\), then there exists \(c_\delta > 0\) depending only on \(\delta\) such that
        \begin{equation*}
            \inf_{\hat{\theta}} \sup_{\substack{||\theta||_0 \leq s \\ \mu \in \R}} P_{\theta, \mu, \sigma} \left\{ ||\hat{\theta} - \theta||^2 \geq c_\delta (\tau^2 + \sigma^2)\left( \frac{\sigma^2}{\tau^2 + \sigma^2} s \log\left(\frac{ep}{s}\right)\right)\right\} \geq 1-\delta.
        \end{equation*}
        Furthermore, if \(\frac{p}{2} - \sqrt{p} \leq s < \frac{p}{2}\) then 
        \begin{equation*}
            \inf_{\hat{\theta}} \sup_{\substack{||\theta||_0 \leq s \\ \mu \in \R}} P_{\theta, \mu, \sigma} \left\{ ||\hat{\theta} - \theta||^2 \geq c_\delta (\tau^2 + \sigma^2)\left( \frac{\sigma^2}{\tau^2 + \sigma^2} p \log\left(1 + \frac{p}{(p-2s)^2}\right) \wedge p\right)\right\} \geq 1-\delta
        \end{equation*}
        and if \(s \geq \frac{p}{2}\) then 
        \begin{equation*}
            \inf_{\hat{\theta}} \sup_{\substack{||\theta||_0 \leq s \\ \mu \in \R}} P_{\theta, \mu, \sigma} \left\{ ||\hat{\theta} - \theta||^2 \geq c_\delta (\tau^2 + \sigma^2)p\right\} \geq 1-\delta.
        \end{equation*}
    \end{theorem}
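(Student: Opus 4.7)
My plan is to reduce each bound to the corresponding minimax lower bound for model (\ref{model:additive}) that is already established in Section \ref{section:lower_bounds}, via the Bayesian device sketched immediately before the theorem statement. Since the sup over $\mu \in \R$ dominates any Bayes risk, for any prior $\pi$ on $\mu$ we have
\begin{equation*}
    \inf_{\hat{\theta}} \sup_{\substack{||\theta||_0 \leq s \\ \mu \in \R}} P_{\theta, \mu, \sigma}\left\{ ||\hat{\theta} - \theta||^2 \geq c \right\} \;\geq\; \inf_{\hat{\theta}} \sup_{||\theta||_0 \leq s} \int P_{\theta, \mu, \sigma}\left\{ ||\hat{\theta} - \theta||^2 \geq c \right\}\, \pi(d\mu).
\end{equation*}
Taking $\pi = N(0, \tau^2)$ and writing $\mu = \tau W$ with $W \sim N(0,1)$, under the induced joint law the observation satisfies $X = \theta + \tau W \mathbf{1}_p + \sigma Z$, so the rescaled data $X' := X/\sqrt{\tau^2+\sigma^2}$ follows exactly model (\ref{model:additive}) with rescaled signal $\theta' := \theta/\sqrt{\tau^2+\sigma^2}$ and correlation $\gamma := \tau^2/(\tau^2+\sigma^2)$ (so that $1-\gamma = \sigma^2/(\tau^2+\sigma^2)$).

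Next I would transfer estimators between the two problems. Given any $\hat\theta(X)$ in the original problem, define $\tilde{\theta}(X') := \hat\theta(\sqrt{\tau^2+\sigma^2}\,X')/\sqrt{\tau^2+\sigma^2}$, which is a valid estimator of $\theta'$ in the rescaled problem, and note
\begin{equation*}
    \|\hat{\theta}(X) - \theta\|^2 = (\tau^2 + \sigma^2)\,\|\tilde\theta(X') - \theta'\|^2.
\end{equation*}
Furthermore, the parameter space is preserved under rescaling: $\|\theta\|_0 \leq s$ if and only if $\|\theta'\|_0 \leq s$. Conversely, any estimator of $\theta'$ yields a corresponding estimator of $\theta$. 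Hence the Bayes risk in the previous display equals the minimax probability for estimating $\theta'$ in model (\ref{model:additive}) at the threshold $c/(\tau^2+\sigma^2)$.

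Now I would invoke the lower bounds of Section \ref{section:lower_bounds} with the specific value of $\gamma$ above. For the first inequality, Proposition \ref{prop:sparse_lbound} applied to model (\ref{model:additive}) with correlation $\gamma$ gives, for $p$ large enough depending on $\delta$, a constant $c_\delta > 0$ such that the minimax probability exceeds $1-\delta$ at threshold $c_\delta (1-\gamma) s \log(ep/s)$. Multiplying through by $(\tau^2+\sigma^2)$ and using $(\tau^2+\sigma^2)(1-\gamma) = \sigma^2$ yields exactly the stated bound $c_\delta(\tau^2+\sigma^2)\cdot\tfrac{\sigma^2}{\tau^2+\sigma^2}s\log(ep/s)$. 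For the second inequality (regime $\tfrac{p}{2}-\sqrt{p}\leq s < \tfrac{p}{2}$), the same rescaling combined with Proposition \ref{prop:thetabar_lbound} produces the threshold $(\tau^2+\sigma^2)\bigl[(1-\gamma)p\log\bigl(1+\tfrac{p}{(p-2s)^2}\bigr)\wedge p\bigr]$, which rearranges to the claimed expression since $1-\gamma = \sigma^2/(\tau^2+\sigma^2)$. For the third inequality ($s\geq p/2$), Proposition \ref{prop:dense_thetabar_lbound} gives the minimax lower bound $c_\delta p$ in the rescaled problem, which after multiplying by $(\tau^2+\sigma^2)$ yields $c_\delta(\tau^2+\sigma^2)p$.

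\textbf{Where the work is.} Each step is largely bookkeeping since the hard minimax analysis has already been carried out in Section \ref{section:lower_bounds}, and the rescaling preserves both the Gaussian structure and the sparsity constraint. The only subtlety is that the lower bounds of Section \ref{section:lower_bounds} are stated in high-probability form uniformly over $\theta$ (not in expectation), so the Bayes argument needs to be applied to the tail event $\{\|\hat{\theta}-\theta\|^2 \geq c\}$ directly rather than to expected squared error; this is precisely what the display at the top of this proposal does. No other obstacle is anticipated.
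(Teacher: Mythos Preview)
Your proposal is correct and follows essentially the same approach as the paper: place the Gaussian prior \(N(0,\tau^2)\) on \(\mu\), rescale to reduce to model (\ref{model:additive}) with \(\gamma=\tau^2/(\tau^2+\sigma^2)\), and then invoke Propositions \ref{prop:sparse_lbound}, \ref{prop:thetabar_lbound}, and \ref{prop:dense_thetabar_lbound}. Your write-up correctly handles the high-probability (rather than in-expectation) form of the lower bounds and the estimator correspondence under rescaling; nothing further is needed.
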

    \noindent Theorem \ref{thm:LSI_lower_bound} establishes a family of lower bounds indexed by \(\tau\), namely
    \begin{equation*}
        \varepsilon^*(p, s, \sigma)^2 \gtrsim 
        \begin{cases}
            \sigma^2 s \log\left(\frac{ep}{s}\right) &\textit{if } 1 \leq s \leq \frac{p}{2} - \sqrt{p}, \\
            \sigma^2 p \log\left(1 + \frac{p}{(p-2s)^2}\right) \wedge (\tau^2 + \sigma^2)p &\textit{if } \frac{p}{2} - \sqrt{p} < s < \frac{p}{2}, \\
            (\tau^2 + \sigma^2) p &\textit{if } \frac{p}{2} \leq s \leq p.
        \end{cases}
    \end{equation*}
    Taking \(\tau \to \infty\) establishes the lower bound (\ref{rate:LSI}). Hence, the minimax rate for estimating \(\theta\) in (\ref{model:additive_LSI}) has been derived from a straightforward application of our results for (\ref{model:additive}).
    
    \subsection{Other correlation structures}\label{appendix:other_correlation}
    Here, we continue the discussion from Section \ref{section:other_correlation}. The rate for estimating the signal \(\theta\) in the \(R\) random effects setting with \(R \lesssim 1\) can be intuited as follows. The data on each block \(B_k\) can be viewed as an independent draw from the equicorrelation model (\ref{model:vector}) with ambient dimension \(\frac{p}{R}\). Thus, a simple idea to estimate \(\theta\) is to employ our methodology on each block separately, and it can be checked this gives the minimax upper bound. The minimax lower bound can be obtained by just focusing on one block and repeating the construction for the equicorrelated case but now in ambient dimension \(\frac{p}{R}\). Focusing on just one block is not suboptimal since \(R \lesssim 1\) implies \(\frac{p}{R} \asymp p\).
    
    It is more interesting to consider potentially growing \(R\), but this simple strategy of treating each block separately turns out to be suboptimal. To see why, consider the special case \(\gamma = 1\). If \(s < \frac{p}{2R}\), then the signal on each block \(B_k\) can be estimated perfectly as described in Section \ref{section:preview}. On the other hand, if \(s \geq \frac{p}{2R}\), then the squared error \(\frac{p}{2R}\) is incurred on each block. In other words, the estimator achieves \(||\hat{\theta} - \theta||^2 = 0\) if \(s < \frac{p}{2R}\) and \(||\hat{\theta} - \theta||^2 \asymp p\) if \(s \geq \frac{p}{2R}\). But it is clearly possible to do much better if \(s \geq \frac{p}{2R}\). For example, if \(R = p\), then the model actually reduces to the usual sparse Gaussian sequence model \(X \sim N(\theta, I_p)\). The minimax rate is well-known to be \(s\log\left(\frac{ep}{s}\right)\), yet our naive estimator only achieves \(p\). The key issue of treating each block separately is that it does not exploit the fact that sparsity \(||\theta||_0 \leq s\) is a global constraint across \emph{all} coordinates. A more clever estimation strategy is needed. We imagine our methodology might be used to construct pilot estimators for each block, which are then fed into a higher level aggregation scheme to exploit the global sparsity condition to achieve the sharp rate. 
    
    It is also interesting to consider spike correlation models in general. For example, the ``rank-one'' correlation model \(X \sim N(\theta, (1-\gamma)I_p + \gamma vv^\intercal)\) was considered in \cite{kotekal_minimax_2023}, where partial results for the testing problem were obtained. The methodology developed in this article is quite specific to the case \(v = \mathbf{1}_p\). The idea to cast estimation of \(\bar{\theta}\) as a robust statistics problem from the data \(X - \bar{X}\mathbf{1}_p\) really relies on the nice structure of \(v = \mathbf{1}_p\), and it is not clear to us that our methodology could be broadly generalized to handle generic \(v\). Furthermore, the minimax rate appears to depend delicately on \(v\) and it may be the case that estimators should be tailored to the correlation direction. In view of the challenges the spiked model presents, developing rate optimal estimators in even more general correlation structures appears to be a very complicated problem. 
    
    \section{Proofs}
    \subsection{Sparse regression}\label{section:sparse_regression_proofs}
    In this section, we prove Theorem \ref{thm:orthog_ubound} and Proposition \ref{prop:small_s_linear}. The proofs are a direct application of the sparse regression results of \cite{bellec_slope_2018}. Before presenting the argument, some preliminary definitions are needed. 
    
    \begin{definition}\label{def:sre}
        The design matrix \(\mathbb{X} \in \R^{n \times p}\) is said to satisfy the \(\SRE(s, c_0)\) condition if \(\frac{1}{n}||\mathbb{X}e_j||^2 \leq 1\) for all \(1 \leq j \leq p\) and 
        \begin{equation*}
            \kappa(s, c_0)^2 := \inf_{\substack{\Delta \in \mathcal{C}_{\SRE}(s, c_0) \\ \Delta \neq 0}} \frac{\frac{1}{n} ||\mathbb{X} \Delta||^2}{||\Delta||^2} > 0,
        \end{equation*}
        where \(\mathcal{C}_{\SRE}(s, c_0) := \left\{ \Delta \in \R^p : ||\Delta||_1 \leq (1+c_0)\sqrt{s}||\Delta||_2 \right\}\).
    \end{definition}
    
    \begin{definition}\label{def:sparse_eigenvalue}
        The design matrix \(\mathbb{X} \in \R^{n \times p}\) is said to satisfy the \(s\)-sparse eigenvalue condition if \(\frac{1}{n}||\mathbb{X}e_j||^2 \leq 1\) for all \(1 \leq j \leq p\) and 
        \begin{equation*}
            \psi_{\min}(\mathbb{X}, s)^2 := \min_{\substack{\Delta \in \R^p \setminus \{0\} \\ ||\Delta||_0 \leq s}} \frac{\frac{1}{n}||\mathbb{X}\Delta||^2}{||\Delta||^2} > 0.
        \end{equation*}
    \end{definition}
    
    \begin{proposition}[Proposition 8.1 Part (iii) in \cite{bellec_slope_2018}]\label{prop:sparse_eigenvalue}
        Let \(\psi_1 > 0, c_0 > 0,\) and \(1 \leq s \leq p\). If the \(s\)-sparse eigenvalue condition holds with \(\psi_{\min}(\mathbb{X}, s) \geq \psi_1\), then the \(\SRE(s_1, c_0)\) condition holds and \(\kappa(s_1, c_0) \geq \psi_1/\sqrt{2}\) for \(s_1 \leq (s-1)\psi_1^2/(2c_0^2)\). 
    \end{proposition}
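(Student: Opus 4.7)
The plan is a standard shelling (or peeling) argument, which is the workhorse for passing from a sparse eigenvalue condition to a restricted-eigenvalue-type inequality. Fix any \(\Delta \in \mathcal{C}_{\SRE}(s_1, c_0)\). Sort the coordinates of \(\Delta\) in decreasing absolute magnitude, let \(T_0\) denote the indices of the top \(s_1\) coordinates, and partition the complement into consecutive blocks \(T_1, T_2, \ldots\) each of size \(s_1\) (with the last possibly smaller). A coordinatewise comparison gives the monotonicity estimate \(\|\Delta_{T_j}\|_2 \leq \|\Delta_{T_{j-1}}\|_1/\sqrt{s_1}\) for \(j \geq 1\), so summing telescopically and invoking the SRE cone condition yields
\[
\sum_{j \geq 1}\|\Delta_{T_j}\|_2 \;\leq\; \frac{\|\Delta\|_1}{\sqrt{s_1}} \;\leq\; (1+c_0)\|\Delta\|_2.
\]

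Next I would apply the reverse triangle inequality to isolate the head block,
\[
\frac{1}{\sqrt{n}}\|\mathbb{X}\Delta\| \;\geq\; \frac{1}{\sqrt{n}}\|\mathbb{X}\Delta_{T_0}\| \;-\; \sum_{j \geq 1}\frac{1}{\sqrt{n}}\|\mathbb{X}\Delta_{T_j}\|.
\]
Since \(|T_0| = s_1 \leq s\), the sparse eigenvalue condition of Definition \ref{def:sparse_eigenvalue} applied to \(\Delta_{T_0}\) lower-bounds the head term by \(\psi_1 \|\Delta_{T_0}\|\). For the tail terms, each \(\Delta_{T_j}\) is \(s_1\)-sparse, so an upper sparse eigenvalue bound (whose constant can be taken equal to one from the column normalization \(\|\mathbb{X}e_j\|^2/n \leq 1\), together with a short calculation exploiting that each \(\Delta_{T_j}\) is dominated entrywise by \(\|\Delta_{T_{j-1}}\|_1/s_1\)) gives \(\|\mathbb{X}\Delta_{T_j}\|/\sqrt{n} \leq \|\Delta_{T_j}\|_2\) up to a mild constant. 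Combining with the cone-based bound on \(\sum_j \|\Delta_{T_j}\|_2\) then controls the tail.

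The main obstacle, and the place where the threshold \(s_1 \leq (s-1)\psi_1^2/(2c_0^2)\) enters, is tuning the block size so that the tail contribution is at most \(\tfrac{\psi_1}{\sqrt{2}}\|\Delta\|_2\). Splitting \(\|\Delta\|^2 = \|\Delta_{T_0}\|^2 + \|\Delta_{T_0^c}\|^2\) and using that the cone condition \(\|\Delta\|_1 \leq (1+c_0)\sqrt{s_1}\|\Delta\|_2\) forces \(\|\Delta_{T_0^c}\|_2\) to be at most \(c_0 \|\Delta_{T_0}\|_2\) once \(s_1\) is large enough relative to \(c_0\) (this is where the factor \(c_0^2\) enters the hypothesis on \(s_1\)), the display above reduces to
\[
\frac{1}{\sqrt{n}}\|\mathbb{X}\Delta\| \;\geq\; \psi_1\|\Delta_{T_0}\|_2 - \tfrac{\psi_1}{\sqrt{2}}\|\Delta\|_2 \;\geq\; \frac{\psi_1}{\sqrt{2}}\|\Delta\|_2,
\]
establishing \(\kappa(s_1, c_0) \geq \psi_1/\sqrt{2}\). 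I expect the constant-tracking in the tail bound, specifically verifying the factor \(1/\sqrt{2}\) rather than a weaker constant, to be the finicky step; everything else is a routine application of the shelling decomposition.
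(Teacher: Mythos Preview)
The paper does not prove this proposition; it is quoted verbatim from \cite{bellec_slope_2018} and used as a black box. So there is no ``paper's own proof'' to compare against, and the relevant question is whether your sketch actually goes through.

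It does not, and the gap is precisely where you flag the ``finicky step.'' Your reverse--triangle--inequality route requires an \emph{upper} bound on $\tfrac{1}{\sqrt n}\|\mathbb{X}\Delta_{T_j}\|$, but the hypothesis supplies only a \emph{lower} sparse eigenvalue together with the column normalization $\tfrac{1}{n}\|\mathbb{X}e_j\|^2\le 1$. Column normalization yields at best $\tfrac{1}{\sqrt n}\|\mathbb{X}v\|\le \|v\|_1$ for any $v$, and combining this with either the entrywise domination $\|\Delta_{T_j}\|_\infty\le \|\Delta_{T_{j-1}}\|_1/s_1$ or the $\ell_2$ shelling bound gives, after summing over $j\ge 1$, at most $\|\Delta\|_1\le (1+c_0)\sqrt{s_1}\,\|\Delta\|_2$. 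This tail bound is \emph{independent of the block size and of $s$}; balancing it against the head term $\psi_1\|\Delta_{T_0}\|_2$ could only ever force $s_1\lesssim \psi_1^2/c_0^2$, never the claimed $s_1\le (s-1)\psi_1^2/(2c_0^2)$. The classical shelling argument you have in mind (Cand\`es--Tao, Bickel--Ritov--Tsybakov) needs an upper sparse eigenvalue as well, which is not assumed here.

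The argument that actually delivers the stated constants is Maurey's empirical method: one writes $\Delta=E[U]$ for a random $s$-sparse vector $U$ obtained by i.i.d.\ sampling of coordinates with probabilities proportional to $|\Delta_i|$. Then $\tfrac{1}{n}\|\mathbb{X}\Delta\|^2 = \tfrac{1}{n}E\|\mathbb{X}U\|^2 - \tfrac{1}{n}E\|\mathbb{X}(U-\Delta)\|^2$; the first term is $\ge \psi_1^2 E\|U\|^2\ge \psi_1^2\|\Delta\|^2$ by the $s$-sparse eigenvalue, while the second equals $\mathrm{tr}\bigl((\mathbb{X}^\top\mathbb{X}/n)\,\mathrm{Cov}(U)\bigr)$ with $\mathrm{Cov}(U)$ diagonal, so the column normalization gives exactly $E\tfrac{1}{n}\|\mathbb{X}(U-\Delta)\|^2\le \|\Delta\|_1^2/s$. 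Subtracting and using $\|\Delta\|_1^2\le (1+c_0)^2 s_1\|\Delta\|^2$ produces the factor $s_1/s$ that your shelling cannot reach.
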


    With Definitions \ref{def:sre} and \ref{def:sparse_eigenvalue} in hand, the following sparse regression results were obtained by \cite{bellec_slope_2018}.
    
    \begin{theorem}[Corollary 4.3 of \cite{bellec_slope_2018}]\label{thm:bellec_prob}
        Let \(1 \leq s \leq p\). Suppose \(y \sim N(f, \sigma^2 I_n)\) and 
        \begin{equation*}
            \hat{\beta} \in \argmin_{\beta \in \R^p} \left\{ \frac{1}{n} ||\mathbb{X}\beta - y||^2 + 2\lambda ||\beta||_1\right\}. 
        \end{equation*}
        Assume the \(\SRE(s, 7)\) condition holds. Let \(\lambda \geq 2(4+\sqrt{2})\sigma \sqrt{\frac{\log(2ep/s)}{n}}\). Then, with probability at least \(1 - \frac{1}{2}\left(\frac{s}{2ep}\right)^{\frac{s}{\kappa(s, 7)^2}}\), we have 
        \begin{equation*}
            \frac{1}{n}||\mathbb{X}\hat{\beta} - f||^2 \leq \min_{||\beta||_0 \leq s} \frac{1}{n}||\mathbb{X}\beta - f||^2 + \frac{49\lambda^2 s}{16\kappa(s, 7)^2}. 
        \end{equation*}
        Moreover, if \(f = \mathbb{X}\beta^*\) for some \(\beta^* \in \R^p\) with \(||\beta^*||_0 \leq s\), then 
        \begin{equation*}
            P\left\{ ||\hat{\beta} - \beta^*||^2 \leq \frac{(49)^2 \lambda^2 s}{8^2 \kappa(s, 7)^{4}}\right\} \geq 1 - \frac{1}{2}\left(\frac{s}{2ep}\right)^{\frac{s}{\kappa(s, 7)^2}}. 
        \end{equation*}
    \end{theorem}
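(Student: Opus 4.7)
The result is quoted verbatim as Corollary 4.3 of \cite{bellec_slope_2018}, and is itself a specialization of their sharper SLOPE oracle inequality (Theorem 4.2 of \cite{bellec_slope_2018}) to the Lasso penalty. My plan is to reproduce the standard Lasso-via-basic-inequality argument, but replace the usual union-bound noise control (which costs \(\log p\)) with a SLOPE-type stochastic bound that yields the sharper \(\log(2ep/s)\) factor.

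First, fix any \(s\)-sparse reference \(\beta \in \R^p\) and let \(\Delta = \hat{\beta} - \beta\), \(S = \supp(\beta)\). Writing \(y = f + \sigma \xi\) with \(\xi \sim N(0, I_n)\), the optimality of \(\hat{\beta}\) combined with expanding the squared loss produces the basic inequality
\[
\frac{1}{n}\|\mathbb{X}\Delta\|^2 \leq \frac{2}{n}(\mathbb{X}\Delta)^\intercal(f - \mathbb{X}\beta) + \frac{2\sigma}{n}\xi^\intercal \mathbb{X}\Delta + 2\lambda(\|\beta\|_1 - \|\hat{\beta}\|_1).
\]
The deterministic cross-term involving \(f - \mathbb{X}\beta\) is absorbed by Young's inequality into \(\tfrac{1}{n}\|\mathbb{X}\Delta\|^2\) and the approximation error \(\tfrac{1}{n}\|f - \mathbb{X}\beta\|^2\). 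The penalty difference reduces, by triangle inequality on \(S\) and \(S^c\), to \(\|\Delta_S\|_1 - \|\Delta_{S^c}\|_1\).

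Second, I control the Gaussian noise term \(\sigma \xi^\intercal \mathbb{X}\Delta/n\) by a sorted SLOPE-type bound rather than a crude \(\ell_\infty\)-bound. Letting \(|\Delta|_{(1)} \geq |\Delta|_{(2)} \geq \ldots\) denote the sorted magnitudes of \(\Delta\), I show that with probability at least \(1 - \tfrac{1}{2}(s/(2ep))^{s/\kappa(s,7)^2}\),
\[
\frac{\sigma}{n}\xi^\intercal \mathbb{X}\Delta \leq \sum_{j=1}^p \mu_j \, |\Delta|_{(j)}, \qquad \mu_j := \sigma\sqrt{\tfrac{\log(2ep/j)}{n}},
\]
which is the step where the failure probability enters; the argument is Gaussian concentration combined with a union bound stratified by rank \(j\) rather than over all coordinates. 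Splitting this sum at \(j = s\), using monotonicity of \(\mu_j\), and invoking \(\sum_{j=1}^{s} \mu_j \lesssim \sigma\sqrt{s \log(2ep/s)/n}\) yields a bound of the shape \(\lambda (\|\Delta_S\|_1 + \|\Delta_{S^c}\|_1) / (4+\sqrt{2})\). Combining with the penalty term \(2\lambda(\|\Delta_S\|_1 - \|\Delta_{S^c}\|_1)\) forces \(\|\Delta_{S^c}\|_1 \leq 7 \|\Delta_S\|_1\), which is exactly membership in the cone \(\mathcal{C}_{\SRE}(s, 7)\).

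Finally, on this cone, the \(\SRE(s,7)\) condition gives \(\tfrac{1}{n}\|\mathbb{X}\Delta\|^2 \geq \kappa(s,7)^2 \|\Delta\|^2\), and also \(\|\Delta\|_1 \leq 8\sqrt{s}\|\Delta\|_2\). Assembling the basic inequality with the controlled noise term becomes a quadratic inequality in \(\|\mathbb{X}\Delta\|/\sqrt{n}\); solving it delivers the prediction oracle inequality with excess term \(49\lambda^2 s/(16 \kappa(s,7)^2)\). For the well-specified case \(f = \mathbb{X}\beta^*\), the approximation error vanishes, and dividing by \(\kappa(s,7)^2\) converts the prediction bound into the stated \(\|\hat{\beta} - \beta^*\|^2\) bound. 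The main obstacle is the sharper Gaussian concentration step: a naive \(\ell_\infty\)-bound on \(\mathbb{X}^\intercal\xi/n\) would cost an extra \(\log p\) factor, so the SLOPE-style stratified control of the ordered Gaussian suprema — together with the forced membership in the SRE cone — is what both produces the \(\log(2ep/s)\) rate and gives the failure probability as a power of \(s/(2ep)\) rather than an exponentially weaker \(p^{-1}\).
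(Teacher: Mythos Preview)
The paper does not prove this statement at all; it is quoted verbatim as Corollary 4.3 of \cite{bellec_slope_2018} and invoked as a black box (see the surrounding Theorems \ref{thm:bellec_prob} and \ref{thm:bellec} in Section \ref{section:sparse_regression_proofs}, which are both cited without proof). So there is no ``paper's own proof'' to compare against.

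Your reconstruction is a faithful high-level outline of the original argument in \cite{bellec_slope_2018}: the basic inequality, the SLOPE-type stochastic control of \(\sigma\xi^\intercal\mathbb{X}\Delta/n\) by a sorted-norm (their Theorem 4.1 and event (4.1)), the forced cone membership \(\|\Delta_{S^c}\|_1 \leq 7\|\Delta_S\|_1\), and then the \(\SRE(s,7)\) condition. One point you gloss over is where \(\kappa(s,7)^2\) enters the \emph{exponent} of the failure probability: the stochastic event (4.1) in \cite{bellec_slope_2018} has probability \(1-\delta_0/2\) for a free parameter \(\delta_0\), and the specific choice \(\delta_0 = (s/(2ep))^{s/\kappa(s,7)^2}\) is made after the deterministic oracle inequality (their Theorem 4.2) is established --- the \(\kappa\) in the exponent is not a feature of the Gaussian concentration itself but of this calibration. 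Otherwise your sketch matches the structure of the original proof.
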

    \begin{theorem}[Corollary 4.4 of \cite{bellec_slope_2018}]\label{thm:bellec}
        Let \(1 \leq s \leq p\). Suppose \(y \sim N(f, \sigma^2 I_n)\) and 
        \begin{equation*}
            \hat{\beta} \in \argmin_{\beta \in \R^p} \left\{ \frac{1}{n}||\mathbb{X}\beta - y||^2 + 2\lambda ||\beta||_1 \right\}. 
        \end{equation*}
        Assume the \(\SRE(s, 7)\) condition holds. Let \(\lambda \geq 2(4+\sqrt{2})\sigma \sqrt{\frac{\log(2ep/s)}{n}}\). Then 
        \begin{equation*}
            \frac{1}{n} E\left(||\mathbb{X}\hat{\beta} - f||^2\right) \leq \min_{||\beta||_0 \leq s} \frac{1}{n}||\mathbb{X} \beta - f||^2 + \frac{49\lambda^2 s}{16} \left( \frac{1}{\kappa(s, 7)^2} + \frac{1}{2\log(2ep)} \right).
        \end{equation*}
        Moreover, if \(f = \mathbb{X}\beta^*\) for some \(\beta^* \in \R^p\) with \(||\beta^*||_0 \leq s\), then 
        \begin{equation*}
            E\left(||\hat{\beta} - \beta^*||^2\right) \leq \frac{(49)^2\lambda^2 s}{8^2} \left(\frac{1}{\kappa(s, 7)^{4}} + \frac{1}{(\log(2ep))^2}\right).
        \end{equation*}
    \end{theorem}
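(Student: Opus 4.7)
The difference between Theorem \ref{thm:bellec} and the preceding Theorem \ref{thm:bellec_prob} is simply that it states the risk bound in expectation rather than with high probability, so the natural strategy is to split the expectation across the good event $\mathcal{E}$ on which the conclusions of Theorem \ref{thm:bellec_prob} hold and its complement:
\begin{equation*}
E\left(||\hat{\beta}-\beta^*||^2\right) = E\left(||\hat{\beta}-\beta^*||^2 \mathbbm{1}_{\mathcal{E}}\right) + E\left(||\hat{\beta}-\beta^*||^2 \mathbbm{1}_{\mathcal{E}^c}\right),
\end{equation*}
and analogously for the prediction risk $\frac{1}{n}||\mathbb{X}\hat{\beta} - f||^2$. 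The first summand is immediately bounded by $\frac{(49)^2 \lambda^2 s}{64 \kappa(s,7)^4}$ via Theorem \ref{thm:bellec_prob}, supplying the leading term in the claimed inequality. Recall that $P(\mathcal{E}^c) \leq \frac{1}{2}(s/(2ep))^{s/\kappa(s,7)^2}$, which decays super-polynomially in $p$ and will be crucial for the second summand.

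To control the second summand, the plan is to combine a deterministic fallback bound on $\hat{\beta}$, obtained by testing the Lasso objective against $\beta = 0$, with the small probability of $\mathcal{E}^c$. Specifically, evaluating the objective at $\beta = 0$ yields $\frac{1}{n}||y - \mathbb{X}\hat{\beta}||^2 + 2\lambda ||\hat{\beta}||_1 \leq \frac{1}{n}||y||^2$, so $||\hat{\beta}||^2 \leq ||\hat{\beta}||_1^2 \leq \frac{||y||^4}{4n^2\lambda^2}$ and $\frac{1}{n}||\mathbb{X}\hat{\beta} - f||^2 \leq \frac{2}{n}||y||^2 + \frac{2}{n}||y-f||^2$. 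Since $y \sim N(f, \sigma^2 I_n)$, standard Gaussian moment bounds control $E(||y||^{2k})$ by a polynomial in $n, \sigma^2,$ and $||f||^2$ for any fixed $k$. Applying Cauchy-Schwarz gives
\begin{equation*}
E\left(||\hat{\beta}-\beta^*||^2 \mathbbm{1}_{\mathcal{E}^c}\right) \leq \sqrt{E\left(||\hat{\beta}-\beta^*||^4\right)} \cdot \sqrt{P(\mathcal{E}^c)},
\end{equation*}
and similarly for the prediction error.

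The main obstacle will be calibrating constants so that the contribution from $\mathcal{E}^c$ is of order exactly $\frac{\lambda^2 s}{(\log(2ep))^2}$, matching the second term in the claimed bound. This requires noting that the super-polynomial decay of $P(\mathcal{E}^c)$ (with rate $s/\kappa(s,7)^2$) dominates the polynomial growth of $E(||\hat{\beta}-\beta^*||^4)$, and then leveraging the standing assumption $\lambda \geq 2(4+\sqrt{2})\sigma\sqrt{\log(2ep/s)/n}$ to absorb the residual Gaussian-tail factors into $\frac{\lambda^2 s}{(\log(2ep))^2}$ in the well-specified case $f = \mathbb{X}\beta^*$, and into the prediction oracle term in the general case. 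The oracle inequality for prediction risk follows by the entirely parallel split, using the oracle bound on $\mathcal{E}$ and the triangle-inequality fallback $\frac{1}{n}||\mathbb{X}\hat{\beta}-f||^2 \leq \frac{2}{n}||y||^2 + \frac{2}{n}||y-f||^2$ on $\mathcal{E}^c$.
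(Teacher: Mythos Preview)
First, note that the paper does not actually prove this statement: it is quoted as Corollary 4.4 of \cite{bellec_slope_2018} and invoked as a black box. So there is no in-paper argument to compare against, and any assessment must be on the merits of your proposal alone.

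Your approach has a genuine gap. The fallback bound you obtain on $\mathcal{E}^c$ by comparing the Lasso objective at $\hat\beta$ and at $0$ is $\|\hat\beta\|^2 \le \|y\|^4/(4n^2\lambda^2)$. But $\|y\|^2 = \|f\|^2 + 2\langle f,\xi\rangle + \|\xi\|^2$, so $E\|\hat\beta-\beta^*\|^4$ grows like a power of $\|f\|^2 + \|\beta^*\|^2$. In the well-specified case $f=\mathbb{X}\beta^*$, this quantity can be arbitrarily large independently of $p,s,\sigma,\lambda$. Meanwhile the probability bound $P(\mathcal{E}^c)\le \tfrac12(s/(2ep))^{s/\kappa(s,7)^2}$ depends only on $p,s,\kappa$ and not at all on $\|\beta^*\|$. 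Hence your Cauchy--Schwarz product $\sqrt{E\|\hat\beta-\beta^*\|^4}\cdot\sqrt{P(\mathcal{E}^c)}$ cannot be made smaller than the target residual $\tfrac{(49)^2\lambda^2 s}{64(\log(2ep))^2}$ uniformly in $\beta^*$: for any fixed $p$ one can send $\|\beta^*\|\to\infty$ and blow up the left side while the right side stays fixed. The same obstruction applies to the prediction inequality, since in the well-specified case the oracle term on the right vanishes and so cannot absorb a $\|f\|$-dependent residual either.

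The actual argument in \cite{bellec_slope_2018} avoids ever introducing $\|f\|$ or $\|\beta^*\|$ in the bad-event analysis. Their deterministic Theorem 4.2 bounds the \emph{excess} prediction loss by quantities depending only on the noise $\xi=y-f$ through the functions $H(\cdot),G(\cdot)$; the in-expectation corollary then controls the expectation of these noise-driven quantities directly, and the $\tfrac{1}{2\log(2ep)}$ and $\tfrac{1}{(\log(2ep))^2}$ terms arise from integrating the tails of the stochastic term, not from a crude bound on $\|\hat\beta\|$. To salvage your strategy you would need a scale-free fallback on $\mathcal{E}^c$, which means working with the basic inequality relative to $\beta^*$ and bounding $\frac{2}{n}\xi^\top\mathbb{X}(\hat\beta-\beta^*)$ in a way that does not reintroduce $\|\beta^*\|$; this is exactly the delicate point that forces the more careful analysis in \cite{bellec_slope_2018}.
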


    Theorem \ref{thm:bellec} will be applied to obtain the error bound in Proposition \ref{prop:lasso_error}. Recall that in Section \ref{section:sparse_regression}, we have defined
    \begin{align*}
        M &= \sqrt{p}\left(I_p - \frac{1}{p}\mathbf{1}_p\mathbf{1}_p^\intercal\right), \\
        \sigma^2 &= (1-\gamma)p.
    \end{align*}
    Further recall in Section \ref{section:sparse_regression} we have defined \(Y = \sqrt{p}\widetilde{X}\) and so \(Y \sim N(M\theta, \sigma^2I_p)\). In order to apply Theorem \ref{thm:bellec}, it must be verified \(M\) satisfies a sparse eigenvalue condition.
    \begin{lemma}\label{lemma:M_SRE}
        If \(s \leq \frac{p}{784}\), then \(M\) satisfies the \(\SRE(s, 7)\) condition with \(\kappa(s, 7) \geq \frac{1}{2}\). 
    \end{lemma}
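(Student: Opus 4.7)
The plan is to deduce the \(\SRE(s, 7)\) condition by invoking Proposition \ref{prop:sparse_eigenvalue}, which reduces the task to verifying the two requirements of the \(s^*\)-sparse eigenvalue condition for a suitably chosen sparsity level \(s^*\). The column normalization is immediate: a direct computation gives \(Me_j = \sqrt{p}(e_j - p^{-1}\mathbf{1}_p)\), hence \(\frac{1}{p}||Me_j||^2 = 1 - p^{-1} \leq 1\) for every \(1 \leq j \leq p\).

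Next, I would establish the lower bound \(\psi_{\min}(M, s^*)^2 \geq 1 - s^*/p\) for any \(1 \leq s^* \leq p\). The key identity is \(\frac{1}{p}||M\Delta||^2 = ||\Delta - \bar\Delta \mathbf{1}_p||^2 = ||\Delta||^2 - p\bar\Delta^2\), so it suffices to control \(p\bar\Delta^2\). For any \(\Delta\) with \(||\Delta||_0 \leq s^*\), Cauchy-Schwarz restricted to \(\supp(\Delta)\) gives
\[
p\bar\Delta^2 = p^{-1}\left(\sum_{i \in \supp(\Delta)} \Delta_i\right)^2 \leq (||\Delta||_0/p)\,||\Delta||^2 \leq (s^*/p)\,||\Delta||^2,
\]
which yields the claim. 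This same calculation was previewed in the discussion preceding the lemma, and is the tightness argument for the sparse eigenvalue.

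With these two ingredients, I choose \(s^* = \lfloor p/2 \rfloor\) so that \(\psi_1 := \psi_{\min}(M, s^*) \geq 1/\sqrt{2}\). Proposition \ref{prop:sparse_eigenvalue} with \(c_0 = 7\) then asserts that \(\SRE(s_1, 7)\) holds with \(\kappa(s_1, 7) \geq \psi_1/\sqrt{2} \geq 1/2\) for every \(s_1 \leq (s^* - 1)\psi_1^2/(2c_0^2) = (s^* - 1)/196\). To close the argument, I verify that \(s \leq p/784\) implies \(s \leq (\lfloor p/2 \rfloor - 1)/196\); since \(s \geq 1\) forces \(p \geq 784\), the factor-of-two gap between \(p/784\) and the tighter threshold \((p-4)/392\) more than absorbs the floor. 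This bookkeeping is the only mild obstacle, and the argument presents no serious technical difficulty.
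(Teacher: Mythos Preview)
Your proof is correct and follows essentially the same route as the paper's: verify column normalization, establish the sparse eigenvalue bound \(\psi_{\min}(M, s^*)^2 \geq 1 - s^*/p\) for \(s^* = \lfloor p/2 \rfloor\), and then invoke Proposition~\ref{prop:sparse_eigenvalue} with \(\psi_1 = 1/\sqrt{2}\) and \(c_0 = 7\). The only cosmetic differences are that the paper takes \(s^* = p/2\) (implicitly assuming \(p\) large enough) and cites an external result for the inequality \(\|\Delta - \bar\Delta\mathbf{1}_p\|^2 \geq (1 - s^*/p)\|\Delta\|^2\), whereas you supply the one-line Cauchy--Schwarz argument directly and handle the integer bookkeeping a bit more carefully.
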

    \begin{proof}
        First, consider \(\frac{1}{p} ||Me_j||^2 = ||e_j - \bar{e}_j \mathbf{1}_p||^2 \leq ||e_j||^2 \leq 1\) for all \(1 \leq j \leq p\). Next, let \(s^* = \frac{p}{2}\). Consider
        \begin{align*}
            \psi_{\min}(M, s^*)^2 = \min_{\substack{\Delta \in \R^p \setminus \{0\} \\ ||\Delta||_0 \leq s^*}} \frac{\frac{1}{p}||M\Delta||^2}{||\Delta||^2} = \min_{\substack{\Delta \in \R^p \setminus \{0\} \\ ||\Delta||_0 \leq s^*}} \frac{||\Delta - \bar{\Delta}\mathbf{1}_p||^2}{||\Delta||^2} = \frac{p-s^*}{p} \geq \frac{1}{2},
        \end{align*}
        where we have used Corollary 1 of \cite{kotekal_minimax_2023}. Taking \(\psi_1 = \frac{1}{\sqrt{2}}\) and \(c_0 = 7\), observe by Proposition \ref{prop:sparse_eigenvalue} the \(\SRE(s, 7)\) condition holds for all \(s \leq (s^* - 1)\psi_1^2/(2c_0^2) = (s^* - 1) \frac{1}{196}\) with \(\kappa(s, 7) \geq \frac{\psi_1}{\sqrt{2}} = \frac{1}{2}\). Since \(s \leq \frac{p}{784}\) and \(p\) is larger than a sufficiently large universal constant, we have \(s \leq \frac{p}{4 \cdot 196} \leq \left(\frac{p}{2}-1\right)\frac{1}{196} = (s^* - 1)\frac{1}{196}\). Therefore, \(\kappa(s, 7) \geq \frac{1}{2}\) for all \(s \leq \frac{p}{784}\). 
    \end{proof}
    
    \begin{proof}[Proof of Proposition \ref{prop:lasso_error}]
        Consider \(\lambda \geq 2(4+\sqrt{2})\sqrt{\frac{\sigma^2 \log(2ep/s)}{p}}\). By Lemma \ref{lemma:M_SRE}, \(M\) satisfies the \(\SRE(s, 7)\) condition with \(\kappa(s, 7)^2 \geq \frac{1}{4}\), we have by Theorem \ref{thm:bellec} 
        \begin{equation*}
            \sup_{||\theta||_0 \leq s} \frac{1}{p} E\left(||M(\hat{\beta} - \theta)||^2\right) \leq \frac{49\lambda^2 s}{16}\left(4 + \frac{1}{2 \log(2ep)}\right) \leq 16\lambda^2 s. 
        \end{equation*}
        Since $\|\frac{1}{\sqrt{p}}M\hat{\beta}-(\theta - \bar{\theta}\mathbf{1}_p)\|^2=||M(\hat{\beta} - \theta)||^2$, we obtain
        \begin{equation*}
            \sup_{||\theta||_0 \leq s} E\left(\left|\left|\frac{1}{\sqrt{p}}M\hat{\beta} - (\theta - \bar{\theta}\mathbf{1}_p)\right|\right|^2\right) \leq 16\lambda^2 s,
        \end{equation*}
        and so the desired result follows by Markov inequality. 
    \end{proof}
    
    \begin{proof}[Proof of Theorem \ref{thm:orthog_ubound}]
        The result follows directly from Proposition \ref{prop:lasso_error} and the fact that \(E_{\theta, \gamma}(||\widetilde{X} - (\theta - \bar{\theta}\mathbf{1}_p)||^2) = (1-\gamma)p\). 
    \end{proof}
    
    \begin{proof}[Proof of Proposition \ref{prop:small_s_linear}]
        For ease, set \(\hat{T} = \frac{1}{p} \sum_{i=1}^{p} \hat{\beta}_i\). Since the Pythagorean identity implies \(||\hat{T} \mathbf{1}_p - \bar{\theta}\mathbf{1}_p||^2 \leq ||\hat{\beta} - \theta||^2\), the desired result then follows form Markov's inequality and applying the argument of Proposition \ref{prop:lasso_error} using the second conclusion of Theorem \ref{thm:bellec}.
    \end{proof}
    
    \subsection{Kernel mode estimator: known correlation}\label{appendix:kme_known_var}
        Our goal in this section is to prove Theorem \ref{thm:kme_known_var}. Recall the data \(Y\) coming from the model (\ref{model:Gaussian_contamination}). In particular, recall the notation \(\mu = \bar{\theta}\) and \(\eta_i = \bar{\theta} - \theta_i\) for \(i \in \mathcal{O}\), where \(\mathcal{I} = \supp(\theta)^c\) and \(\mathcal{O} = \supp(\theta)\). Recall the kernel mode estimator (\ref{def:mhat}), that is, \(\hat{\mu} = \argmax_{t \in \R} \hat{G}_h(t)\) where 
        \begin{equation*}
            \hat{G}_h(t) = \frac{1}{2ph} \sum_{i=1}^{p} \mathbbm{1}_{\{|t - Y_i| \leq h\}}.
        \end{equation*}
        Let \(G_h(t) = E_{\theta, \gamma}(\hat{G}_h(t))\) denote the expectation. Without loss of generality, we can take \(\gamma = 0\) in (\ref{model:Gaussian_contamination}), as otherwise we could simply work with \(\left\{Y_i/\sqrt{1-\gamma}\right\}_{i=1}^{p}\) and consider estimation of \(\mu/\sqrt{1-\gamma}\). Consequently, we will use \(P_{\theta}\) and \(E_{\theta}\) instead of \(P_{\theta, \gamma}\) and \(E_{\theta, \gamma}\) to reduce notational clutter. Throughout this section, we will assume the bandwidth \(h\) is larger than some sufficiently large universal constant.
        
        Before launching into the proof, let us recall the intuition presented in Section \ref{section:linear_functional}. At a high level, the strategy is to show that with high probability, for every \(t \in \R\) with \(|t - \mu| \geq C_a h\), we can find \(x\) close to \(\mu\), say \(|x - \mu| \leq \frac{h}{4}\), such that \(\hat{G}_h(x) > \hat{G}_h(t)\). Then it would follow that on this event we have \(|\hat{\mu} - \mu| \leq C_a h\). 
        
        Finally, throughout this section we can take without loss of generality \(s \geq \frac{p}{4}\). To see this, consider for all \(s < \frac{p}{4}\), we have \((1-\gamma) \left(1 \vee \log\left(\frac{ep}{(p-2s)^2}\right)\right) \asymp 1-\gamma\), and so it suffices to prove the result of Theorem \ref{thm:kme_known_var} for \(s \geq \frac{p}{4}\). Let us note we can take without loss of generality \(|\mathcal{O}| \geq \frac{p}{4}\). This is because we can simply adjust the sets \(|\mathcal{I}|\) and \(|\mathcal{O}|\) by taking points from \(\mathcal{I}\) and putting them in \(\mathcal{O}\) until \(|\mathcal{O}| = s\). So throughout this section, it will be assumed \(|\mathcal{O}| \geq \frac{p}{4}\). 
    
        \subsubsection{\texorpdfstring{Regime \(\frac{p}{2} - p^{1/4} \leq s < \frac{p}{2}\)}{Regime p/2 - p\textasciicircum (1/4) <= s < p/2}}
        In the regime \(\frac{p}{2} - p^{1/4} \leq s < \frac{p}{2}\), the analysis is standard. The exponent \(\frac{1}{4}\) is not special and could be replaced by any constant in \(\left(\frac{1}{2}, 1\right)\). As remarked after the statement of Theorem \ref{thm:kme_known_var}, we have \(\log\left(\frac{ep}{(p-2s)^2}\right) \asymp \log(ep)\) for \(s \geq \frac{p}{2} - p^{\frac{1}{2}-\delta}\) with any constant \(\delta \in (0, 1/2]\). Hence, it is not so surprising that a delicate argument is not needed.
    
        \begin{proposition}\label{prop:kme_easy}
            Suppose \(\frac{p}{2} - p^{1/4} \leq s < \frac{p}{2}\), and \(C_a\) is a sufficiently large universal constant. There exist universal constants \(C_1\) and \(C_2\) such that the following holds. For any \(\delta \in (0, 1)\), if \(p\) is sufficiently large depending only on \(\delta\) and
            \begin{equation*}
                h = C_1 \sqrt{\log\left(\frac{ep}{(p-2s)^2}\right)},
            \end{equation*}
            then 
            \begin{equation*}
                \sup_{||\theta||_0 \leq s} P_{\theta}\left\{ |\hat{\mu} - \mu| > C_2 h \right\} \leq \delta
            \end{equation*}
            where \(\hat{\mu}\) is given by (\ref{def:mhat}).
        \end{proposition}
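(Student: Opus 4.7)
The approach exploits the fact that in the regime $\frac{p}{2} - p^{1/4} \le s < \frac{p}{2}$ the stated bandwidth $h = C_1\sqrt{\log(ep/(p-2s)^2)}$ is of order $\sqrt{\log(ep)}$, which far exceeds the (unit) noise scale. This lets us bypass both Proposition \ref{prop:mean_diff_fix} and the empirical process machinery used in the harder regime: on a single high-probability event that every inlier sits inside $[\mu-h,\mu+h]$, the comparison $\hat G_h(\mu) > \hat G_h(t)$ becomes purely deterministic, driven by the fact that with $s < p/2$ inliers strictly outnumber outliers.

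After the usual reduction to $\gamma=0$, we have $Y_i\sim N(\mu,1)$ for $i\in\mathcal{I}$ and $Y_i\sim N(\eta_i,1)$ for $i\in\mathcal{O}$, with $|\mathcal{O}|\le s<p/2$. Because $(p-2s)^2\le 4\sqrt{p}$ in this regime, $\log(ep/(p-2s)^2)\ge \log(e\sqrt{p}/4)\gtrsim \log(ep)$, so $h^2\gtrsim C_1^2\log(ep)$. A Gaussian tail bound combined with a union bound over $\mathcal{I}$ gives
\begin{equation*}
    P_\theta\!\left\{\max_{i\in\mathcal{I}}|Y_i-\mu|>h\right\} \le 2(p-|\mathcal{O}|)e^{-h^2/2} \le 2p\cdot(ep)^{-cC_1^2}
\end{equation*}
for some universal $c>0$, which is at most $\delta$ once $C_1$ is chosen as a sufficiently large universal constant and $p$ is sufficiently large depending only on $\delta$.

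On the complementary event, the required comparison is deterministic. Taking $x=\mu$, every inlier falls inside the window $[\mu-h,\mu+h]$, so $\hat G_h(\mu) \ge (p-|\mathcal{O}|)/(2ph)$. With $C_2:=2$, for any $t$ with $|t-\mu|>C_2 h$ and any inlier $i\in\mathcal{I}$, the triangle inequality yields $|t-Y_i|\ge |t-\mu|-|Y_i-\mu|>2h-h=h$, so no inlier contributes to $\hat G_h(t)$ and $\hat G_h(t)\le |\mathcal{O}|/(2ph)$. Subtracting,
\begin{equation*}
    \hat G_h(\mu) - \hat G_h(t) \ge \frac{p - 2|\mathcal{O}|}{2ph} \ge \frac{p-2s}{2ph} > 0,
\end{equation*}
since $s<p/2$. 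Hence no $t$ outside $[\mu-C_2h,\mu+C_2 h]$ can attain the supremum of $\hat G_h$, forcing any argmax $\hat\mu$ to satisfy $|\hat\mu-\mu|\le C_2 h$ on this event.

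The only real obstacle is calibrating $C_1$ so that the tail-concentration event has probability at least $1-\delta$; because $(p-2s)^2$ is at most polynomial in $p^{1/2}$ here, this amounts to the standard Gaussian maximum bound and no non-trivial machinery is invoked. The contrast with the harder regime $s<\tfrac{p}{2}-p^{1/4}$ is stark: there the target error bound is much smaller relative to $\sqrt{\log(ep)}$, the bandwidth cannot be taken so wide, and one must uniformly control $\hat G_h-G_h$ via empirical process arguments; here the bandwidth already dwarfs the noise scale and pooling all inliers in a single interval is all that is needed.
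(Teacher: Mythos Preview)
Your proof is correct and follows essentially the same route as the paper: compare $\hat G_h(\mu)$ against $\sup_{|t-\mu|>C_2 h}\hat G_h(t)$ by separating inlier and outlier contributions, then use concentration of the inliers near $\mu$ to conclude. The only minor difference is that the paper bounds the random count $\sum_{i\in\mathcal{I}}\mathbbm{1}_{\{|\mu-Y_i|>h\}}$ in probability via its variance and Markov's inequality (yielding the bound $\tfrac{p-2s}{2ph}-\tfrac{2}{h}e^{-Ch^2}>0$), whereas you use a union bound to force this count to be exactly zero on the good event; your version is a slight simplification that works precisely because $h\asymp\sqrt{\log p}$ in this regime.
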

        \begin{proof}
            First, consider that 
            \begin{equation*}
                \hat{G}_h(\mu) \geq \frac{1}{2ph} \sum_{i \in \mathcal{I}} \mathbbm{1}_{\{|\mu - Y_i| \leq h\}} = \frac{p-|\mathcal{O}|}{2ph} - \frac{1}{2ph} \sum_{i \in \mathcal{I}} \mathbbm{1}_{\{|\mu - Y_i| > h\}}. 
            \end{equation*}
            For \(t \in \R\) with \(|t - \mu| \geq C_a h\) with \(C_a > 2\) sufficiently large universal constant, consider that 
            \begin{align*}
                \hat{G}_h(t) &\leq \frac{|\mathcal{O}|}{2ph} + \frac{1}{2ph} \sum_{i \in \mathcal{I}} \mathbbm{1}_{\{|t - Y_i| \leq h\}} \\
                &= \frac{|\mathcal{O}|}{2ph} + \frac{1}{2ph} \sum_{i \in \mathcal{I}} \mathbbm{1}_{\{|t - \mu + \mu - Y_i| \leq h\}} \\
                &\leq \frac{|\mathcal{O}|}{2ph} + \frac{1}{2ph} \sum_{i \in \mathcal{I}} \mathbbm{1}_{\{|\mu - Y_i| \geq |t - \mu| - h\}} \\
                &\leq \frac{|\mathcal{O}|}{2ph} + \frac{1}{2ph} \sum_{i \in \mathcal{I}} \mathbbm{1}_{\{|\mu - Y_i| > h\}}.
            \end{align*}
            Note this holds for all \(t\) such that \(|t-\mu| \geq C_a h\), and so we have 
            \begin{equation*}
                \hat{G}_h(\mu) - \sup_{|t-\mu| \geq C_a h} \hat{G}_h(t) \geq \frac{p-2s}{2ph} - \frac{1}{ph} \sum_{i \in \mathcal{I}} \mathbbm{1}_{\{|\mu - Y_i| > h\}}. 
            \end{equation*}
            Note \(Y_i - \mu \sim N(0, 1)\) for \(i \in \mathcal{I}\). Further note \(\Var(\sum_{i \in \mathcal{I}} \mathbbm{1}_{\{|\mu - Y_i| > h\}}) \leq |\mathcal{I}| e^{-Ch^2}\) for some universal constant \(C > 0\) whose value may change from instance to instance. Hence, by Markov's inequality we have for \(u \geq 0\), 
            \begin{equation*}
                \hat{G}_h(\mu) - \sup_{|t - \mu| \geq C_a h} \hat{G}_h(t) \geq \frac{p-2s}{2ph} - \frac{|\mathcal{I}|}{ph} e^{-Ch^2} - u
            \end{equation*}
            with \(P_{\theta}\)-probability at least \(1 - \frac{e^{-Ch^2}}{u\sqrt{p}h}\). Select \(u = \frac{1}{h}e^{-Ch^2}\). Then 
            \begin{equation*}
                \hat{G}_h(\mu) - \sup_{|t-\mu| \geq C_a h} \hat{G}_h(t) \geq \frac{p-2s}{2ph} - \frac{2}{h} e^{-Ch^2}
            \end{equation*}
            with probability at least \(1 - \frac{2}{\sqrt{p}}\). Since \(p\) is sufficiently large depending only on \(\delta\), this probability is greater than or equal to \(1 - \delta\). Select \(C_1\) sufficiently large such that 
            \begin{equation*}
                h^2 \geq\frac{3}{2C}\log\left(\frac{8p}{\left(p-2s\right)^2}\right). 
            \end{equation*}
            Observe that with this choice, since \(p-2s \leq 2p^{1/4}\) we have  
            \begin{align*}
                h^2 &\geq \frac{3}{2C} \left(\log\left(\frac{8p}{p-2s}\right) - \log(2p^{1/4})\right) \\
                &\geq \frac{3}{2C} \left(\log\left(\frac{8p}{p-2s}\right) - \frac{1}{3}\log\left(\frac{8p}{p-2s}\right)\right) \\
                &\geq \frac{1}{C}\log\left(\frac{8p}{p-2s}\right). 
            \end{align*}
            Thus, with \(P_{\theta}\)-probability at least \(1-\delta\), we have 
            \begin{align*}
                \hat{G}_h(\mu) - \sup_{|t - \mu| \geq C_a h} \hat{G}_h(t) &\geq \frac{p-2s}{2ph} - \frac{p-2s}{4ph} = \frac{p-2s}{4ph} > 0.
            \end{align*}
            Hence, \(|\hat{\mu} - \mu| \leq C_a h\) on this event and so the proof is complete.
        \end{proof}
    
        \subsubsection{\texorpdfstring{Regime \(s \leq \frac{p}{2} - p^{1/4}\)}{Regime s <= p/2 - p\textasciicircum (1/4)}} \label{section:peel}
        
        In the regime \(s \leq \frac{p}{2} - p^{1/4}\), the argument is much more involved. Instead of directly showing \(\hat{G}_h(x) > \hat{G}_h(t)\) as in the proof of Proposition \ref{prop:kme_easy}, we will compare the empirical quantities with their population counterparts \(G_h(x)\) and \(G_h(t)\). In particular, consider 
        \begin{equation}\label{eqn:kme_master}
            \hat{G}_h(x) - \hat{G}_h(t) \geq G_h(x) - G_h(t) - \left|\hat{G}_h(x) - G_h(x)\right| - \left|\hat{G}_h(t) - G_h(t)\right|. 
        \end{equation}
        In order to show the right hand side is positive, a population level analysis giving a lower bound on \(G_h(x) - G_h(t)\) is needed. Later, control of the stochastic deviations \(\left|\hat{G}_h(x) - G_h(x)\right|\) and \(\left|\hat{G}_h(t) - G_h(t)\right|\) uniformly over \(|t - \mu| \geq C_a h\) is needed. Recall Proposition \ref{prop:mean_diff_fix} gives a result about the population level quantities.
        \begin{proof}[Proof of Proposition \ref{prop:mean_diff_fix}]
            Recall we have taken \(\gamma = 0\) as stated at the beginning of Section \ref{appendix:kme_known_var}. For ease of notation, denote the functions \(f_{\eta_i}(y) := \Phi(y-\eta_i+h) - \Phi(y-\eta_i-h)\) and \(f_\mu(y) := \Phi(y-\mu+h) - \Phi(y-\mu-h)\). Define the set \(\mathcal{O}' := \left\{i \in \mathcal{O} : |\eta_i - \mu| \geq \frac{C_a}{4}h\right\}\). 
            \begin{equation*}
                x = \argmax_{y \in \R} \left\{ \frac{|\mathcal{O}'|}{2ph} f_\mu(y) + \frac{1}{2ph} \sum_{i \in \mathcal{O}'} f_{\eta_i}(y)\right\}.
            \end{equation*}
            There may be potentially two global maxima, but by Theorem \ref{thm:mixture} we can take \(x\) to be the one near \(\mu\). Since \(C_a\) is sufficiently large, we have \(|x - \mu| \leq \frac{h}{4}\). Fix \(t \in \R\) such that \(|t - \mu| \geq C_a h\). Now, consider 
            \begin{align*}
                G_h(x) - G_h(t) &= \frac{p-|\mathcal{O}|}{2ph} (f_\mu(x) - f_\mu(t)) + \frac{1}{2ph} \sum_{i \in \mathcal{O}} (f_{\eta_i}(x) - f_{\eta_i}(t)) \\
                &= \frac{p-|\mathcal{O}|-|\mathcal{O}'|}{2ph} (f_\mu(x) - f_\mu(t)) + \left(\frac{|\mathcal{O}'|}{2ph} (f_\mu(x) - f_\mu(t)) + \frac{1}{2ph} \sum_{i \in \mathcal{O}'} (f_{\eta_i}(x) - f_{\eta_i}(t)) \right) \\
                &\;\; + \frac{1}{2ph} \sum_{i \in \mathcal{O} \setminus \mathcal{O}'} \left(f_{\eta_i}(x) - f_{\eta_i}(t)\right) \\
                &\geq \frac{p-2|\mathcal{O}|}{2ph} (f_\mu(x) - f_\mu(t)) + \frac{1}{2ph} \sum_{i \in \mathcal{O} \setminus \mathcal{O}'} (f_{\eta_i}(x) - f_{\eta_i}(t))
            \end{align*}
            by the definition of \(x\). We have also used \(f_{\mu}(x) \geq f_{\mu}(t)\) since \(|x - \mu| \leq \frac{h}{4}\) and \(|t - \mu| \geq C_a h\) with \(C_a\) and \(h\) sufficiently large. For \(i \in \mathcal{O}\setminus \mathcal{O}'\) we have \(|\eta_i - \mu| \leq \frac{C_a}{4}h\), and so \(|x - \eta_i| \leq \frac{1+C_a}{4}h\). On the other hand, we have \(|t - \eta_i| \geq \frac{3C_a}{4}h\). Since \(C_a\) and \(h\) are sufficiently large, we have \(f_{\eta_i}(x) \geq f_{\eta_i}(t)\). Therefore, we have the bound 
            \begin{align*}
                G_h(x) - G_h(t) \geq \frac{p-2|\mathcal{O}|}{2ph} (f_\mu(x) - f_\mu(t)) \geq \frac{p-2|\mathcal{O}|}{2ph}(1 -2e^{-Ch^2}) \geq \frac{p-2|\mathcal{O}|}{2ph} \cdot \frac{1}{2}
            \end{align*}
            where \(C > 0\) is some universal constant. Here, we have used \(|t - \mu| \geq C_a h\) and \(|x - \mu| \leq \frac{h}{4}\) to obtain \(f_\mu(t) \leq e^{-Ch^2}\) and \(1-f_\mu(x) \leq e^{-Ch^2}\). We have also used that \(h\) is sufficiently large to obtain \(1 - 2e^{-Ch^2} \geq \frac{1}{2}\). To show the other bound, consider that 
            \begin{align*}
                G_h(x) - G_h(t) &= \frac{p-|\mathcal{O}|}{2ph} (f_\mu(x) - f_\mu(t)) + \frac{1}{2ph} \sum_{i \in \mathcal{O}} (f_{\eta_i}(x) - f_{\eta_i}(t)) \\
                &\geq \frac{p-2|\mathcal{O}|}{2ph} f_{\mu}(x) - \frac{p-|\mathcal{O}|}{2ph} f_\mu(t) + \frac{1}{2ph} \sum_{i \in \mathcal{O}} (f_{\mu}(x) - f_{\eta_i}(x)) \\
                &= \frac{p-2|\mathcal{O}|}{2ph} f_\mu(x) - \frac{p-|\mathcal{O}|}{2ph}f_\mu(t) + \frac{1}{2ph} \sum_{i \in \mathcal{O}} \left(\left(1 - f_{\eta_i}(t)\right) - (1-f_\mu(x))\right) \\
                &= \frac{p-2|\mathcal{O}|}{2ph} f_\mu(x) - \frac{p-|\mathcal{O}|}{2ph}f_\mu(t) - \frac{|\mathcal{O}|}{2ph} (1 - f_\mu(x)) + \frac{1}{2ph} \sum_{i \in \mathcal{O}} \left(1 - f_{\eta_i}(t)\right) \\
                &\geq \frac{p-2|\mathcal{O}|}{2ph} f_\mu(x) - \frac{e^{-Ch^2}}{2h} + \frac{1}{2ph} \sum_{i \in \mathcal{O}} \left(1 - f_{\eta_i}(t)\right) \\
                &\geq \frac{p-2|\mathcal{O}|}{2ph} - \left(1 + \frac{p-2|\mathcal{O}|}{p}\right)\frac{e^{-Ch^2}}{2h} + \frac{1}{2ph} \sum_{i \in \mathcal{O}} \left(1 - f_{\eta_i}(t)\right) \\
                &\geq \frac{p-2|\mathcal{O}|}{2ph} - \frac{e^{-Ch^2}}{h} + \frac{1}{2ph} \sum_{i \in \mathcal{O}} \left(1 - f_{\eta_i}(t)\right)
            \end{align*}
            where \(C > 0\) is some universal constant. Here, we have used \(|t - \mu| \geq C_a h\) and \(|x - \mu| \leq \frac{h}{4}\) to obtain \(f_\mu(t) \leq e^{-Ch^2}\) and \(1-f_\mu(x) \leq e^{-Ch^2}\). The proof is complete.
        \end{proof}
    
        \noindent \textbf{Uniform control of the stochastic error} \newline 
        Proposition \ref{prop:mean_diff_fix} gives us a lower bound for the population quantity (which can be interpreted as the signal) in (\ref{eqn:kme_master}). It remains to uniformly control the stochastic error. In view of Proposition \ref{prop:mean_diff_fix}, define the sets 
        \begin{align*}
            \mathcal{U} &:= \left\{ t \in \R : |t - \mu| \geq C_a h \text{ and } \frac{1}{p} \sum_{i \in \mathcal{O}} P_{\theta}\left\{|t - Y_i| > h\right\} > 4 e^{-Ch^2} \right\}, \\
            \mathcal{V} &:= \left\{t \in \R : |t - \mu| \geq C_a h \text{ and } t \in \mathcal{U}^c\right\}.  
        \end{align*}
        Here, \(C > 0\) is the universal constant from Proposition \ref{prop:mean_diff_fix}, namely it is a universal constant such that for any \(x, t \in \R\) with \(|t - \mu| \geq C_a h\) and \(|x - \mu| \leq \frac{h}{4}\), we have \(P_{\theta}\left\{|t - Y_i| \leq h \right\} \vee P_{\theta}\left\{|x - Y_i| > h\right\} \leq e^{-Ch^2}\) for \(i \in \mathcal{I}\). Note such a \(C\) exists since \(h\) is taken to be larger than a sufficiently large universal constant. Throughout the following sections, \(C\) will refer to this universal constant. 
        
        There are two parts to the lower bound in Proposition \ref{prop:mean_diff_fix}. The first part will be used when dealing with \(\mathcal{V}\) and the second part is used when dealing with \(\mathcal{U}\). \\
    
        \noindent \textbf{Uniform stochastic error control over \(\mathcal{U}\)}
        \begin{proposition}\label{prop:U_control}
            Suppose \(s \leq \frac{p}{2} - p^{1/4}\). Suppose \(C_a\) is a sufficiently large universal constant. Further suppose \(h\) is larger than a sufficiently large universal constant. Let \(x\) such that \(|x - \mu| \leq \frac{h}{4}\) be the point from Proposition \ref{prop:mean_diff_fix}. If \(\delta \in (0, 1)\) and \(p\) is sufficiently large depending only on \(\delta\), then with probability at least \(1-\delta\) we have 
            \begin{equation*}
                \left|\hat{G}_h(t) - G_h(t)\right| < \frac{1}{2}(G_h(x) - G_h(t))
            \end{equation*}
            uniformly over \(t \in \mathcal{U}\). 
        \end{proposition}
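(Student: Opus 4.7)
The plan is to control the empirical process $t \mapsto \hat{G}_h(t) - G_h(t)$ uniformly over $\mathcal{U}$ via a peeling argument combined with Talagrand's inequality for the VC class of indicators of intervals. Let $V(t) := \frac{1}{p}\sum_{i \in \mathcal{O}} P_\theta\{|t - Y_i| > h\}$ and $v_0 := 4 e^{-Ch^2}$. By the definition of $\mathcal{U}$ we have $V(t) > v_0$ on $\mathcal{U}$, and the second branch of Proposition \ref{prop:mean_diff_fix} gives
\begin{equation*}
    G_h(x) - G_h(t) \geq \frac{p - 2|\mathcal{O}|}{2ph} + \frac{V(t)}{4h}, \qquad t \in \mathcal{U}.
\end{equation*}
Since $V(t) \in (v_0, 1]$, I would partition $\mathcal{U}$ into $K \lesssim h^2$ dyadic slices $\mathcal{U}_k := \{t \in \mathcal{U} : 2^{k-1} v_0 \leq V(t) < 2^k v_0\}$. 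On $\mathcal{U}_k$, the signal is at least $\frac{p-2s}{2ph} + \frac{2^{k-1} v_0}{4h}$, while a direct variance computation (using $e^{-Ch^2} \leq v_0/4$ to absorb the inlier contribution) gives $\sup_{t \in \mathcal{U}_k} \Var(\hat{G}_h(t)) \lesssim \frac{2^k v_0}{p h^2}$.

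Next I would apply Bousquet's form of Talagrand's inequality to the empirical process indexed by $\mathcal{F}_k := \{y \mapsto \mathbbm{1}_{\{|t-y| \leq h\}} : t \in \mathcal{U}_k\}$. Each $\mathcal{F}_k$ is a subclass of the VC class of indicators of intervals (VC dimension two) with envelope one, and its $L^2$-diameter under the mixture $\bar{P} := p^{-1} \sum_{i=1}^{p} P_{Y_i}$ is bounded by a multiple of $\sqrt{2^k v_0}$. A standard VC-chaining bound on the expected supremum together with Bousquet's tail inequality (see Chapters 11--12 of \cite{boucheron_concentration_2013}) yields, with probability at least $1 - e^{-u}$,
\begin{equation*}
    \sup_{t \in \mathcal{U}_k} \left|\hat{G}_h(t) - G_h(t)\right| \lesssim \sqrt{\frac{2^k v_0 \, (u + \log p)}{p \, h^2}} + \frac{u + \log p}{p \, h}.
\end{equation*}
Taking $u = \log(K/\delta)$ and union-bounding over $k = 1,\ldots,K$ produces a single event of probability at least $1 - \delta$ on which the above display holds simultaneously on every slice.

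Finally I would verify that this deviation bound is strictly less than $\frac{1}{2}(G_h(x) - G_h(t))$ on each $\mathcal{U}_k$. The additive correction $(u+\log p)/(ph)$ is dominated by $\frac{p-2s}{2ph}$ as soon as $p - 2s \geq 2 p^{1/4} \gg \log(p/\delta)$, which holds for $p$ large depending on $\delta$ in the regime $s \leq \frac{p}{2}-p^{1/4}$. The leading term $\sqrt{2^k v_0 \log(p/\delta)/(ph^2)}$ is dominated by $\frac{2^{k-1} v_0}{4h}$ precisely when $\log(p/\delta) \lesssim 2^k v_0 \, p$; the worst case $k=1$ reduces to $\log(p/\delta) \lesssim p \, e^{-Ch^2}$, which is secured by the bandwidth choice $h^2 \asymp \log(L_\delta p/(p-2s)^2)$ with $L_\delta$ chosen sufficiently small depending on $\delta$ so that $C h^2 \leq \log p - \log \log(p/\delta)$. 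The main technical obstacle is obtaining the localized $\sqrt{2^k v_0}$ scaling in the Talagrand bound rather than the coarser factor $\sqrt{1}$ that the global envelope would give; this requires invoking a measure-uniform VC entropy estimate phrased in terms of the local $L^2$-diameter and then transferring it to the independent-but-not-identically-distributed setting by working with the mixture $\bar{P}$. A secondary subtlety is verifying, slice by slice, that the pointwise variance absorbs the $e^{-Ch^2}$ inlier tail uniformly, which is the reason for the factor $4$ in the definition $v_0 = 4 e^{-Ch^2}$.
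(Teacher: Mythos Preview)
Your overall strategy --- localize by the level of $V(t)$ and invoke a VC/Talagrand bound on each slice --- is precisely what the paper does, except that the paper packages the peeling into a standalone result (Theorem~\ref{thm:peel}) and applies it separately to the inlier and outlier sums. Two steps in your sketch do not go through as written.

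First, the claim that $\mathcal{F}_k=\{\mathbbm{1}_{[t-h,t+h]}:t\in\mathcal{U}_k\}$ has $L^2(\bar P)$-diameter $\lesssim\sqrt{2^k v_0}$ is false. On $\mathcal{U}_k$ with small $k$, most outlier probabilities $P_i\{|t-Y_i|\le h\}$ are close to $1$, so $\bar P(A_t)\approx |\mathcal{O}|/p\asymp \tfrac12$ and likewise $\bar P(A_t^c)\asymp\tfrac12$. The pointwise variance $\sum_i P_i(A_t)(1-P_i(A_t))\lesssim p\,2^k v_0$ that you computed is correct and governs the tail in Bousquet's inequality, but the expected supremum is controlled by VC-chaining in terms of $\rho^2=\sup_t\bar P(A_t)$ (this is the content of the paper's Proposition~\ref{prop:empirical_process}), which here is of constant order. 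Without further work you only recover the DKW-type bound $\sqrt{\log p}/(\sqrt{p}\,h)$, which is too large. The remedy is exactly the paper's: split $\hat G_h-G_h$ into inlier and outlier pieces, and apply the VC bound to the interval indicators for $\mathcal{I}$ (where $\rho_{\mathcal I}^2\le e^{-Ch^2}$) and to the \emph{complements} for $\mathcal{O}$ (where $\rho_{\mathcal O}^2\lesssim 2^k v_0$).

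Second, in comparing the leading term $\sqrt{2^k v_0\log(p/\delta)/p}\,/h$ to the signal you use only the piece $2^{k-1}v_0/(4h)$ and ignore $(p-2s)/(2ph)$. This forces an \emph{upper} bound $Ch^2\le\log p-\log\log(p/\delta)$, and your fix (take $L_\delta$ small) contradicts both the hypothesis ``$h$ larger than a sufficiently large universal constant'' and the later requirement that $L_\delta$ and $C_1$ be large to control $\mathcal V$. The correct comparison is to the full signal: by AM--GM, $\sqrt{a_k\log p/p}\le \lambda a_k + \tfrac{\log p}{4\lambda p}$, and $\tfrac{\log p}{p}\ll \tfrac{p-2s}{p}$ since $p-2s\ge 2p^{1/4}$. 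This yields the additive bound $\tfrac{C'\log p}{p}+\lambda V(t)$, which beats the signal for every large $h$ --- and is exactly the form the paper's Theorem~\ref{thm:peel} delivers directly.
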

        \begin{proof}
            By Proposition \ref{prop:mean_diff_fix}, for any \(t \in \mathcal{U}\)
            \begin{align}
                G_h(x) - G_h(t) &\geq \frac{p-2|\mathcal{O}|}{2ph} - \frac{e^{-Ch^2}}{h} + \frac{1}{2ph} \sum_{i \in \mathcal{O}} P_{\theta}\left\{|t - Y_i| > h\right\} \nonumber\\
                &\geq \frac{p-2|\mathcal{O}|}{2ph} + \frac{1/2}{2ph} \sum_{i \in \mathcal{O}} P_{\theta}\left\{|t - Y_i| > h\right\}. \label{eqn:signalU}
            \end{align}
            Now let us examine the stochastic deviation. Consider that 
            \begin{equation}
                \left|\hat{G}_h(t) - G_h(t)\right| \leq \frac{1}{2ph} \left|\sum_{i \in \mathcal{I}} \mathbbm{1}_{\{|t - Y_i| \leq h\}} - P_{\theta}\{|t - Y_i| \leq h\}\right| + \frac{1}{2ph} \left|\sum_{i \in \mathcal{O}} \mathbbm{1}_{\{|t - Y_i| > h\}} - P_{\theta}\{|t - Y_i| > h\}\right|. \label{eqn:stochU_parts}
            \end{equation}
            Let us bound the second term in (\ref{eqn:stochU_parts}). Taking \(\lambda = \frac{1}{16}\) in Theorem \ref{thm:peel} and noting \(|\mathcal{O}| \asymp p\), we have with probability \(1-\delta/2\) 
            \begin{equation*}
                \frac{1}{2ph} \left|\sum_{i \in \mathcal{O}} \mathbbm{1}_{\{|t - Y_i| > h\}} - P_{\theta}\{|t - Y_i| > h\}\right| \leq \left(\frac{C'\log(ep)}{2ph} + \frac{1/16}{2ph} \sum_{i \in \mathcal{O}} P_{\theta}\left\{|t - Y_i| > h\right\}\right) \left(1 + \frac{\kappa_\delta}{\sqrt{\log(ep)}}\right)
            \end{equation*}
            uniformly over \(t \in \mathcal{U}\). Here \(C' > 0\) is a universal constant. Therefore, for \(p\) sufficiently large depending only on \(\delta\), we have from (\ref{eqn:signalU})
            \begin{align}
                &\left(\frac{C'\log(ep)}{2ph} + \frac{1/16}{2ph} \sum_{i \in \mathcal{O}} P_{\theta}\left\{|t - Y_i| > h\right\}\right) \left(1 + \frac{\kappa_\delta}{\sqrt{\log(ep)}}\right) \nonumber \\
                &\leq 2\left(\frac{C'\log(ep)}{2ph} + \frac{1/16}{2ph} \sum_{i \in \mathcal{O}} P_{\theta}\left\{|t - Y_i| > h\right\}\right) \nonumber \\
                &< \frac{1}{4} \cdot \frac{p-2|\mathcal{O}|}{2ph} + \frac{1/8}{2ph} \sum_{i \in \mathcal{O}} P_{\theta}\left\{|t - Y_i| > h\right\} \nonumber \\
                &\leq \frac{1}{4}\left(G_h(x) - G_h(t)\right) \label{eqn:stochU_bound_2}
            \end{align}
            uniformly over \(t \in \mathcal{U}\) with probability at least \(1-\delta/2\). Here, we have used \(p - 2|\mathcal{O}| \geq p-2s \geq p^{1/4} > 8C'\log(ep)\) since \(p\) is sufficiently large.
    
            It remains to bound the first term in (\ref{eqn:stochU_parts}). Taking \(\lambda = \frac{1}{16}\) in Theorem \ref{thm:peel} and noting \(|\mathcal{I}| \asymp p\), we have with probability \(1 - \delta\) 
            \begin{equation*}
                \frac{1}{2ph} \left| \sum_{i \in \mathcal{I}} \mathbbm{1}_{\{|t - Y_i| \leq h\}} - P_{\theta}\left\{|t - Y_i| \leq h\right\}\right| \leq \left(\frac{C'\log(ep)}{2ph} + \frac{1/16}{2ph} \sum_{i \in \mathcal{I}} P_{\theta}\left\{|t - Y_i| \leq h\right\}\right) \left(1 + \frac{\kappa_\delta}{\sqrt{\log(ep)}}\right)
            \end{equation*}
            uniformly over \(t \in \mathcal{U}\). Since \(|t - \mu| \geq C_a h\), it follows that for \(i \in \mathcal{I}\) we have 
            \begin{align*}
                \frac{1}{p} \sum_{i \in \mathcal{I}} P_{\theta} \left\{|t - Y_i| \leq h\right\} \leq e^{-Ch^2} \leq \frac{1}{p} \sum_{i \in \mathcal{O}} P_{\theta}\left\{|t - Y_i| > h\right\}. 
            \end{align*}
            The final inequality follows from the definition of \(\mathcal{U}\). Consequently, by repeating the argument for (\ref{eqn:stochU_bound_2}), we have with probability \(1-\delta/2\) 
            \begin{equation}
                \frac{1}{2ph} \left| \sum_{i \in \mathcal{I}} \mathbbm{1}_{\{|t - Y_i| \leq h\}} - P_{\theta}\left\{|t - Y_i| \leq h\right\}\right| \leq \frac{1}{4}(G_h(x) - G_h(t)) \label{eqn:stochU_bound_1}
            \end{equation}
            uniformly over \(t \in \mathcal{U}\). Putting together the bounds (\ref{eqn:stochU_bound_2}) and (\ref{eqn:stochU_bound_1}) and applying union bound, it follows from (\ref{eqn:stochU_parts}) that with probability \(1-\delta\)
            \begin{equation*}
                \left|\hat{G}_h(t) - G_h(t)\right| \leq \frac{1}{2} (G_h(x) - G_h(t))
            \end{equation*}
            uniformly over \(t \in \mathcal{U}\). The proof is complete. 
        \end{proof}
    
        \noindent \textbf{Uniform control over \(\mathcal{V}\)}
        \begin{lemma}\label{lemma:stoch_V_exp}
            Suppose \(C_a\) is a sufficiently large universal constant and \(h\) is larger than a sufficiently large universal constant. There exist universal constants \(C', C'' > 0\) such that 
            \begin{equation*}
                E_{\theta}\left(\sup_{t \in \mathcal{V}} \left|\hat{G}_h(t) - G_h(t)\right| \right) \leq C' \left(\frac{h}{p} + \frac{e^{-C'' h^2}}{h\sqrt{p}}\right).
            \end{equation*}
        \end{lemma}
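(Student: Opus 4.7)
The plan is to recognize $\hat G_h - G_h$ as a centered empirical process indexed by the class $\mathcal{F} = \{y \mapsto \mathbbm{1}_{\{|t-y|\leq h\}} : t \in \mathcal{V}\}$ of indicators of intervals of fixed length $2h$, which is a VC-subgraph class of VC dimension at most $2$ with envelope $1$, and then to invoke a variance-sensitive empirical process maximal inequality of Talagrand/Bousquet type.

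The first step is a uniform total-variance bound over $\mathcal{V}$. For $i \in \mathcal{I}$ we have $Y_i \sim N(\mu,1)$ and $|t-\mu|\geq C_a h$, so a Gaussian tail estimate gives $P_\theta\{|t-Y_i|\leq h\} \leq e^{-Kh^2}$ for a universal $K>0$, whence $\sum_{i\in\mathcal{I}}\Var(\mathbbm{1}_{\{|t-Y_i|\leq h\}}) \leq pe^{-Kh^2}$. For $i\in\mathcal{O}$, applying $\Var(\mathbbm{1}_A) \leq \min\{P(A), P(A^c)\} \leq P(A^c)$ together with the defining property of $\mathcal{V}$ yields $\sum_{i\in\mathcal{O}}\Var(\mathbbm{1}_{\{|t-Y_i|\leq h\}}) \leq \sum_{i\in\mathcal{O}}P_\theta\{|t-Y_i|>h\} \leq 4pe^{-Ch^2}$. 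Combining, the total variance obeys $V_p := \sup_{t\in\mathcal{V}}\sum_i \Var(\mathbbm{1}_{\{|t-Y_i|\leq h\}}) \lesssim pe^{-C_1 h^2}$ for a universal $C_1>0$.

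The second step applies the maximal inequality for VC classes of uniformly bounded functions under independent (but not necessarily identically distributed) summands, as in Giné--Guillou or Koltchinskii. With envelope $U=1$, VC dimension $2$, and total variance $V_p$, the inequality yields
\[
E\sup_{t\in\mathcal{V}}\Bigl|\sum_{i=1}^{p}\bigl(\mathbbm{1}_{\{|t-Y_i|\leq h\}} - P_\theta\{|t-Y_i|\leq h\}\bigr)\Bigr| \lesssim \sqrt{V_p\log(p/V_p)} + \log(p/V_p) \lesssim h\sqrt{p}\,e^{-C_1 h^2/2} + h^2.
\]
Dividing by $2ph$ gives $E\sup_{t\in\mathcal{V}}|\hat G_h(t) - G_h(t)| \lesssim \frac{h}{p} + \frac{e^{-C_1 h^2/2}}{\sqrt{p}}$. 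Since $h$ exceeds a sufficiently large universal constant, $\log h \leq (C_1/4)h^2$, so $h\,e^{-C_1 h^2/2} \leq e^{-C''h^2}$ for any $C''<C_1/2$; rewriting $\frac{e^{-C_1 h^2/2}}{\sqrt{p}} \leq \frac{e^{-C''h^2}}{h\sqrt{p}}$ then matches the target form $C'\bigl(\frac{h}{p} + \frac{e^{-C''h^2}}{h\sqrt{p}}\bigr)$.

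The main technical obstacle is choosing a maximal inequality sharp enough to capture the tiny variance: a variance-free bound such as Dvoretzky--Kiefer--Wolfowitz would only yield rate $1/(h\sqrt{p})$ and miss the critical $e^{-C''h^2}$ factor that renders the stochastic error negligible relative to the signal lower bound from Proposition \ref{prop:mean_diff_fix} on $\mathcal{V}$. A secondary subtlety is that the $Y_i$ are independent but not identically distributed, so one must invoke a version of the empirical process maximal inequality (Bousquet/Giné--Guillou) valid beyond the i.i.d.\ setting; the VC hypothesis and envelope are immediate, but the per-index variance rather than per-observation variance governs the leading term.
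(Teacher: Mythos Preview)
Your proposal is correct and follows essentially the same approach as the paper: both bound the supremum of the centered indicator process over $\mathcal{V}$ via a variance-sensitive VC-class maximal inequality, exploiting that the per-coordinate variance is at most $e^{-Ch^2}$ (from the Gaussian tail for $i\in\mathcal{I}$ and the defining constraint of $\mathcal{V}$ for $i\in\mathcal{O}$). The only cosmetic difference is that the paper first splits the sum into $\mathcal{I}$ and $\mathcal{O}$ pieces and applies its in-house Proposition~\ref{prop:empirical_process}/Corollaries~\ref{corollary:zeta_small_sigma}--\ref{corollary:zeta_big_sigma} to each, whereas you combine them into a single variance bound $V_p\lesssim pe^{-C_1h^2}$ and invoke a Gin\'e--Guillou/Koltchinskii inequality once; the two routes yield the same estimate.
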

        \begin{proof}
            First, let us split
            \begin{equation}
                \left|\hat{G}_h(t) - G_h(t)\right| \leq \frac{1}{2ph} \left|\sum_{i \in \mathcal{I}} \mathbbm{1}_{\{|t - Y_i| \leq h\}} - P_{\theta}\{|t - Y_i| \leq h\}\right| + \frac{1}{2ph} \left|\sum_{i \in \mathcal{O}} \mathbbm{1}_{\{|t - Y_i| > h\}} - P_{\theta}\{|t - Y_i| > h\}\right|. \label{eqn:stochV_parts}
            \end{equation}
            To bound the second term in (\ref{eqn:stochV_parts}), consider by definition of \(\mathcal{V}\) we have for \(t \in \mathcal{V}\) 
            \begin{equation*}
                \frac{1}{|\mathcal{O}|} \sum_{i\in \mathcal{O}} P_{\theta} \left\{|t - Y_i| > h\right\} \leq \frac{p}{|\mathcal{O}|} \cdot 4e^{-Ch^2} \leq 16 e^{-Ch^2}.
            \end{equation*}
            Here, we have used \(|\mathcal{O}| \geq \frac{p}{4}\) which we can assume without loss of generality as noted at the beginning of Section \ref{section:peel}. Let \(\zeta_t := \frac{1}{2h\sqrt{p}} \sum_{i\in \mathcal{O}} \left(\mathbbm{1}_{\{|t - Y_i| > h\}} - P_{\theta}\left\{|t - Y_i| > h\right\}\right)\). Since \(|\mathcal{O}| \asymp p\) and by Corollaries \ref{corollary:zeta_small_sigma} and \ref{corollary:zeta_big_sigma}, we have 
            \begin{align*}
                E_{\theta}\left(\sup_{t \in \mathcal{V}} \frac{1}{\sqrt{p}} |\zeta_t|\right) \leq \frac{C'}{h\sqrt{p}}\left( \frac{\log\left(\frac{e}{16e^{-Ch^2}}\right)}{\sqrt{p}} \vee 16e^{-Ch^2} \sqrt{\log\left(\frac{e}{16e^{-Ch^2}}\right)}\right) \leq \frac{C' h}{p} \vee \frac{C'e^{-C''h^2}}{h\sqrt{p}}
            \end{align*}
            where \(C', C'' > 0\) are universal constants whose value may change from instance to instance. Note we have used \(e^{-Ch^2} \leq \frac{e^{-C''h^2}}{h}\) for some universal \(C'' > 0\) universal since \(h \gtrsim 1\). Likewise, to bound the first term in (\ref{eqn:stochV_parts}), consider that since \(|t - \mu | \geq C_a h\), 
            \begin{equation*}
                \frac{1}{|\mathcal{I}|} \sum_{i \in \mathcal{I}} P_{\theta}\{|t - Y_i| \leq h\} \leq e^{-Ch^2}. 
            \end{equation*}
            Consequently, the same argument as before yields 
            \begin{equation*}
                E_{\theta}\left(\sup_{t \in \mathcal{V}} \frac{1}{2ph} \left|\sum_{i\in \mathcal{I}} \left(\mathbbm{1}_{\{|t - Y_i| \leq h\}} - P_{\theta}\left\{|t - Y_i| \leq h\right\}\right) \right|\right) \leq \frac{C'h}{p} \vee \frac{C'e^{-C''h^2}}{h\sqrt{p}}. 
            \end{equation*}
            Putting together the bounds yields the desired result. The proof is complete. 
        \end{proof}
    
        \begin{proposition}\label{prop:stoch_V}
            Suppose \(C_a\) is a sufficiently large universal constant and \(h\) is larger than a sufficiently large universal constant. If \(u \geq 0\), then 
            \begin{equation*}
                P_{\theta}\left\{ \sup_{t \in \mathcal{V}} \left|\hat{G}_h(t) - G_h(t)\right| > C' \left(\frac{1}{h\sqrt{p}} e^{-C''h^2} + \frac{h}{p}\right) + u \right\} \leq 2 \exp\left(-c\min\left(\frac{u^2}{\frac{1}{p^2} + \frac{e^{-C'''h^2}}{ph^2}}, uph \right)\right)
            \end{equation*}
            where \(C', C'', C''', c > 0\) are universal constants. 
        \end{proposition}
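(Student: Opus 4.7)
The plan is to sharpen the expectation bound of Lemma \ref{lemma:stoch_V_exp} into a high-probability bound by applying Bousquet's version of Talagrand's inequality to the empirical process \(\hat{G}_h - G_h\). Write \(Z := \sup_{t \in \mathcal{V}} |\hat{G}_h(t) - G_h(t)|\) as
\[
Z = \sup_{t \in \mathcal{V}} \left| \sum_{i=1}^{p} \bigl( f_{t,i}(Y_i) - E_\theta f_{t,i}(Y_i) \bigr) \right|,
\qquad f_{t,i}(y) := \frac{1}{2ph}\mathbbm{1}_{\{|t-y| \leq h\}}.
\]
Each summand is bounded in magnitude by \(K := \frac{1}{2ph}\). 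The main ingredient to prepare is the variance functional \(\sigma^2 := \sup_{t \in \mathcal{V}} \sum_{i=1}^{p} \Var(f_{t,i}(Y_i))\).

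To bound \(\sigma^2\), I would split the sum into inliers and outliers. For \(i \in \mathcal{I}\) and \(t \in \mathcal{V}\), the condition \(|t-\mu| \geq C_a h\) together with the Gaussian tail gives \(\Var(f_{t,i}(Y_i)) \leq (2ph)^{-2} e^{-Ch^2}\); for \(i \in \mathcal{O}\), using \(\Var(\mathbbm{1}_A) \leq P(A^c)\) together with the defining inequality \(\sum_{i \in \mathcal{O}} P_\theta\{|t-Y_i| > h\} \leq 4p e^{-Ch^2}\) of \(\mathcal{V}\) bounds the outlier sum by \((2ph)^{-2} \cdot 4p e^{-Ch^2}\). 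Combining with \(|\mathcal{I}| \leq p\), we obtain \(\sigma^2 \lesssim e^{-Ch^2}/(ph^2)\).

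Bousquet's inequality then yields, for all \(u \geq 0\),
\[
P_\theta\{Z \geq E_\theta Z + u\} \leq \exp\left( -\frac{u^2}{2\bigl(\sigma^2 + 2K E_\theta Z\bigr) + 2Ku/3} \right).
\]
Substituting \(E_\theta Z \lesssim h/p + e^{-C''h^2}/(h\sqrt{p})\) from Lemma \ref{lemma:stoch_V_exp} gives \(K E_\theta Z \lesssim 1/p^2 + e^{-C''h^2}/(p^{3/2}h^2)\); the latter piece is absorbed by \(\sigma^2\), leaving a composite ``variance'' term of order \(1/p^2 + e^{-C'''h^2}/(ph^2)\). Splitting the Bernstein-type denominator into its two regimes (where \(u^2\) divided by the variance-like term dominates versus where \(u/K\) dominates) produces exactly the \(\min\)-form in the claim. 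The corresponding lower-tail inequality is handled symmetrically by applying the same bound to the supremum of the negated function class, and a union bound over the two tails yields the stated factor of \(2\).

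The main technical point is the uniform variance bound: the set \(\mathcal{V}\) has been defined as the complement of \(\mathcal{U}\) precisely to keep the outlier mass \(\sum_{i \in \mathcal{O}} P_\theta\{|t-Y_i| > h\}\) below \(4pe^{-Ch^2}\), so the proof is effectively a matter of chasing through Bousquet's inequality with the already-established expectation bound once this variance computation is pinned down.
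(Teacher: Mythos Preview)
Your proposal is correct and follows essentially the same approach as the paper. The paper bounds the variance functional $\rho^2 = \sup_{t \in \mathcal{V}} \Var(2h\sqrt{p}\,\zeta_t)$ by the same inlier/outlier split you describe (using $|t-\mu|\geq C_a h$ for $i\in\mathcal{I}$ and the defining inequality of $\mathcal{V}$ for $i\in\mathcal{O}$), then applies the Bernstein-type Talagrand inequality of Theorem~\ref{thm:bernstein_type} combined with Theorem~\ref{thm:sup_var} (which together amount to Bousquet's inequality), plugs in the expectation bound from Lemma~\ref{lemma:stoch_V_exp}, and repeats the argument for $-\zeta_t$ to handle the absolute value---exactly your plan.
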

        \begin{proof}
            Let \(\zeta_t := \frac{1}{2h\sqrt{p}} \sum_{i=1}^{p} \left(\mathbbm{1}_{\{|t - Y_i| \leq h\}} - P_{\theta}\left\{|t - Y_i| \leq h\right\}\right)\). Let \(\rho^2 := \sup_{t \in \mathcal{V}} \Var_{\theta}(2h\sqrt{p} \zeta_t)\). Consider that for \(t \in \mathcal{V}\), arguing as in the proof of Lemma \ref{lemma:stoch_V_exp}
            \begin{align*}
                \Var(2h\sqrt{p} \zeta_t) &= \sum_{i=1}^{p} \Var(\mathbbm{1}_{\{|t - Y_i| \leq h\}}) \\
                &= \sum_{i=1}^{p} P_{\theta}\left\{|t - Y_i| \leq h\right\} \cdot P_{\theta} \left\{|t - Y_i| > h\right\} \\
                &\leq \sum_{i=1}^{p} P_{\theta}\left\{|t - Y_i| \leq h\right\} \wedge P_{\theta} \left\{|t - Y_i| > h\right\}\\
                &\leq \sum_{i \in \mathcal{I}} P_{\theta}\left\{|t - Y_i| \leq h\right\} + \sum_{i \in \mathcal{O}} P_{\theta} \left\{|t - Y_i| > h\right\} \\
                &\leq p e^{-Ch^2} + 4p e^{-Ch^2} \\
                &\leq 5pe^{-Ch^2}. 
            \end{align*}
            The final inequality follows from the definition of \(\mathcal{V}\). We have used \(|t - \mu| \geq C_a h\) to obtain \(P_{\theta}\{|t - Y_i| \leq h\} \leq e^{-Ch^2}\) for \(i \in \mathcal{I}\), and we have used the definition of \(\mathcal{V}\) to bound the second sum. Letting \(c > 0\) change from instance to instance but remain universal, consider that combining Theorems \ref{thm:bernstein_type} and \ref{thm:sup_var}, we have for any \(u \geq 0\), 
            \begin{align*}
                P_{\theta}\left\{ \sup_{t \in \mathcal{V}} \frac{1}{\sqrt{p}} \zeta_t \geq E_{\theta}\left(\sup_{t \in \mathcal{V}} \frac{1}{\sqrt{p}} \zeta_t\right) + u \right\} &\leq \exp\left(-c\min\left(\frac{u^2p^2h^2}{h\sqrt{p}E_{\theta}(\sup_{t \in \mathcal{V}} \zeta_t) + \rho^2}, uph \right)\right) \\
                &\leq \exp\left( - c \min\left( \frac{u^2p^2h^2}{ph\left(\frac{h}{p} + \frac{e^{-C''h^2}}{h\sqrt{p}}\right) + pe^{-Ch^2}}, uph \right)\right) \\
                &\leq \exp\left(- c\min\left( \frac{u^2}{\frac{1}{p^2} + \frac{e^{-C''h^2}}{h^2 p^{3/2}} + \frac{e^{-Ch^2}}{ph^2}}, uph \right) \right) \\
                &\leq \exp\left(-c\min\left(\frac{u^2}{\frac{1}{p^2} + \frac{e^{-C'''h^2}}{ph^2}}, uph \right)\right).
            \end{align*}
            We have used Lemma \ref{lemma:stoch_V_exp} in the course of these calculations. The same calculation can be repeated for \(-\zeta_t\) from which we can obtain the claimed result. The proof is complete. 
        \end{proof}
        \begin{proposition}\label{prop:V_control}
            Suppose \(s \leq \frac{p}{2} - p^{1/4}\). Suppose \(C_a\) is a sufficiently large universal constant. There exists a universal constant \(C_1 > 0\) such that the following holds. If \(\delta \in (0, 1)\), \(p\) is sufficiently large depending only on \(\delta\), and 
            \begin{equation*}
                h = C_1\sqrt{1 \vee \log\left(\frac{L_\delta p}{(p-2s)^2}\right)}
            \end{equation*}
            where \(L_\delta\) depends only on \(\delta\), then, letting \(x\) such that \(|x - \mu| \leq \frac{h}{4}\) denote the point from Proposition \ref{prop:mean_diff_fix}, we have with probability at least \(1-\delta\),
            \begin{equation*}
                \left|\hat{G}_h(t) - G_h(t)\right| \leq \frac{1}{2}\left(G_h(x) - G_h(t)\right) 
            \end{equation*}
            uniformly over \(t \in \mathcal{V}\). 
        \end{proposition}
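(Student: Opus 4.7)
The plan is to combine the signal lower bound from Proposition \ref{prop:mean_diff_fix} with the deviation inequality of Proposition \ref{prop:stoch_V}. By the reduction at the start of Section \ref{section:peel} we may assume $|\mathcal{O}| \le s$ (while $|\mathcal{O}| \ge p/4$), so the first clause of (\ref{eqn:mean_diff}) gives, for every $t$ with $|t - \mu| \ge C_a h$,
\[
G_h(x) - G_h(t) \ge \frac{1}{2}\cdot\frac{p - 2|\mathcal{O}|}{2ph} \ge \frac{p - 2s}{4ph}.
\]
It therefore suffices to show $\sup_{t \in \mathcal{V}} |\hat{G}_h(t) - G_h(t)| \le (p-2s)/(8ph)$ with probability at least $1-\delta$.

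I would then invoke Proposition \ref{prop:stoch_V} with a choice $u = u_\delta$ to be specified, and verify that each of the three quantities
\[
\frac{C'}{h\sqrt{p}}e^{-C''h^2}, \qquad \frac{C'h}{p}, \qquad u_\delta
\]
is bounded by $(p-2s)/(24 p h)$, so their sum respects the target. The deterministic linear term $C'h/p$ reduces to verifying $h^2 \ll p-2s$; since $h^2 \lesssim \log(L_\delta p)$ while $p - 2s \ge 2 p^{1/4}$, this holds once $p$ is large depending on $\delta$. The deterministic exponential term requires a case split on whether $(p-2s)^2 \le L_\delta p$. In the former case the definition of $h$ yields $e^{-C''h^2} \le ((p-2s)^2/(L_\delta p))^{C''C_1^2}$, so picking $C_1$ with $C''C_1^2 \ge 1$ and then $L_\delta$ large drives the ratio to the signal down to $O(1/\sqrt{L_\delta})$. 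In the complementary case $h = C_1$, and the inequality $(p-2s)/(ph) \ge \sqrt{L_\delta}/(C_1\sqrt{p})$ together with taking $C_1$ or $L_\delta$ large closes the argument.

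For the stochastic piece I would set $u_\delta$ so that the Bernstein exponent in Proposition \ref{prop:stoch_V} on the sub-Gaussian side equals $\log(2/\delta)$, giving
\[
u_\delta \asymp \sqrt{\log(2/\delta)\left(\frac{1}{p^2}+\frac{e^{-C'''h^2}}{ph^2}\right)}.
\]
Squaring and comparing to $((p-2s)/(24 ph))^2$, the $1/p^2$ contribution demands $h^2 \log(1/\delta) \ll (p-2s)^2$, which holds since $(p-2s)^2 \ge 4\sqrt{p}$ and $h^2 \lesssim \log p$, provided $p$ is large depending on $\delta$. The $e^{-C'''h^2}/(ph^2)$ contribution is handled by the same case analysis as for the deterministic exponential term, potentially after enlarging $L_\delta$ and $C_1$. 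One also verifies that $u_\delta$ lies in the sub-Gaussian branch of the minimum in Proposition \ref{prop:stoch_V}, which is a routine check since $u_\delta$ is of strictly smaller order than $ph$ times the variance proxy.

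The main obstacle is the bookkeeping: the constants $C_1$ and $L_\delta$ must be chosen so that a single choice of $h$ simultaneously controls the deterministic bias, the deterministic exponential term, and the stochastic fluctuation across both regimes $(p-2s)^2 \le L_\delta p$ and $(p-2s)^2 > L_\delta p$. This is why the statement carries both a multiplicative constant $C_1$ and a separate constant $L_\delta$ inside the logarithm, rather than a single combined constant as in the coarser argument of Proposition \ref{prop:kme_easy}; aligning these two free parameters across all three bounds is the delicate part of the proof.
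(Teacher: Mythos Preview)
Your proposal is correct and follows essentially the same route as the paper: bound the signal from below via the first clause of Proposition~\ref{prop:mean_diff_fix}, then invoke the concentration inequality of Proposition~\ref{prop:stoch_V} and verify that the resulting deterministic plus stochastic bound is dominated by the signal once $C_1$ and $L_\delta$ are taken large and $p$ is large depending on $\delta$. The paper's write-up is slightly more compact---it absorbs both Bernstein branches into a single constant $\kappa_\delta$ and lets the values of $C',C'',L_\delta$ ``change from instance to instance'' rather than carrying your explicit case split on $(p-2s)^2 \lessgtr L_\delta p$---but the underlying computation is the same.
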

        \begin{proof}
            By Proposition \ref{prop:mean_diff_fix}, for any \(t \in \mathcal{V}\) we have 
            \begin{align*}
                G_h(x) - G_h(t) &\geq \frac{1}{2} \cdot \frac{p-2|\mathcal{O}|}{2ph}. 
            \end{align*}
            By Proposition \ref{prop:stoch_V}, there exists a constant \(\kappa_\delta\) depending only on \(\delta\) and universal constants \(C', C'' > 0\) such that with probability at least \(1-\delta\), we have uniformly over \(t \in \mathcal{V}\), 
            \begin{align*}
                \left|\hat{G}_h(t) - G_h(t)\right| &\leq C'\left(\frac{1}{h\sqrt{p}} e^{-C''h^2} + \frac{h}{p}\right) + \kappa_\delta \left(\frac{1}{ph} + \frac{1}{p} + \frac{e^{-C''h^2}}{h\sqrt{p}} \right) \\
                &\leq \left(C' + \kappa_\delta\right) \left(\frac{1}{h\sqrt{p}}e^{-C''h^2} + \frac{h}{p}\right) \\
                &\leq \frac{\left(C' + \kappa_\delta\right)}{L_\delta} \frac{p-2s}{ph} + (C' + \kappa_\delta)\cdot \frac{C_1 \sqrt{1 \vee\log(L_\delta p)}}{p} \\
                &\leq \frac{\left(C' + \kappa_\delta\right)}{L_\delta} \frac{p-2s}{ph}.
            \end{align*}
            Note the final inequality follows since \(p-2s \geq p^{1/4}\), and so grows faster than \(\sqrt{\log p}\). Here, the values of \(C', C'',\) and \(L_\delta\) can change from instance to instance. We have also used that \(p\) is sufficiently large depending only on \(\delta\). Clearly taking \(L_\delta\) sufficiently large ensures the desired result. The proof is complete. 
        \end{proof}
    
        \noindent \textbf{Control at \(x\)}\newline 
        Thus far, we have controlled the stochastic deviations uniformly over \(t \in \mathcal{U}\) and \(t \in \mathcal{V}\). It remains to control the deviation at \(x\), i.e. a bound for \(\left|\hat{G}_h(x) - G_h(x)\right|\) is needed. Note that we only need to consider the single point \(x\) given by Proposition \ref{prop:mean_diff_fix}. Importantly, \(x\) does not depend on \(t\) in Proposition \ref{prop:mean_diff_fix}. Note that Proposition \ref{prop:mean_diff_fix} asserts the existence of a single choice of \(x\) which gives the stated bound for all \(t\) with \(|t - \mu| \geq C_a h\). Consequently, empirical process tools are not necessary for bounding \(\left|\hat{G}_h(x) - G_h(x)\right|\). 
    
        \begin{proposition}\label{prop:x_control}
            Suppose \(s \leq \frac{p}{2} - p^{1/4}\). Suppose \(C_a\) is a sufficiently large universal constant. There exists a universal constant \(C_1 > 0\) such that the following holds. If \(\delta \in (0, 1)\), \(p\) is sufficiently large depending only on \(\delta\), and 
            \begin{equation*}
                h = C_1 \sqrt{1 \vee \log\left(\frac{L_\delta p}{(p-2s)^2}\right)}
            \end{equation*}
            where \(L_\delta\) depends only on \(\delta\), then, letting \(x\) such that \(|x - \mu| \leq \frac{h}{4}\) denote the point from Proposition \ref{prop:mean_diff_fix}, we have with probability at least \(1 - \delta\), 
            \begin{equation*}
                \left|\hat{G}_h(x) - G_h(x)\right| \leq \frac{1}{2} (G_h(x) - G_h(t))
            \end{equation*}
            uniformly over \(t \in \mathcal{U} \cup \mathcal{V}\). 
        \end{proposition}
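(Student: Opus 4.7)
The plan is to apply Bernstein's inequality to the single random variable
$$
\hat{G}_h(x) - G_h(x) = \frac{1}{2ph}\sum_{i=1}^{p}\left(\mathbbm{1}_{\{|x-Y_i|\leq h\}} - P_{\theta}\{|x-Y_i|\leq h\}\right),
$$
since $x$ is fixed (depending on $\theta$ but not on $t$), so pointwise rather than uniform concentration suffices. The goal is to show the deviation is at most $\tfrac{1}{2}(G_h(x) - G_h(t))$ for every $t \in \mathcal{U}\cup\mathcal{V}$, which in the worst case (achieved on $\mathcal{V}$) reduces to $|\hat{G}_h(x) - G_h(x)| \leq (p-2s)/(8ph)$.

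The core work is bounding the variance $V := \sum_{i=1}^p \Var(\mathbbm{1}_{\{|x-Y_i|\leq h\}})$. Inliers contribute at most $|\mathcal{I}|e^{-Ch^2} \leq pe^{-Ch^2}$ because $|x - \mu| \leq h/4$. For outliers, I stratify by the displacement $|\eta_i - x|$: Gaussian tail estimates give $\Var(\mathbbm{1}_{\{|x-Y_i|\leq h\}}) \leq e^{-Ch^2}$ whenever $|\eta_i - x| \leq h/2$ or $|\eta_i - x| \geq 2h$, so the only potentially large contribution comes from the ``middle band''
$$
\mathcal{O}_{\mathrm{mid}} := \left\{ i \in \mathcal{O} : h/2 < |\eta_i - x| < 2h \right\},
$$
and this contributes at most $|\mathcal{O}_{\mathrm{mid}}|/4$. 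Thus $V \leq 2pe^{-Ch^2} + |\mathcal{O}_{\mathrm{mid}}|/4$.

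The key observation linking variance to signal is that for each $i \in \mathcal{O}_{\mathrm{mid}}$ and each $t$ with $|t-\mu| \geq C_a h$, the triangle inequality yields $|\eta_i - t| \geq (C_a - 9/4)h$, so $P_{\theta}\{|t - Y_i| > h\} \geq \tfrac{1}{2}$ once $C_a$ and $h$ are sufficiently large. Hence $\sum_{i \in \mathcal{O}} P_{\theta}\{|t - Y_i| > h\} \geq |\mathcal{O}_{\mathrm{mid}}|/2$, and whenever $|\mathcal{O}_{\mathrm{mid}}| > 8pe^{-Ch^2}$ every admissible $t$ lies in $\mathcal{U}$, so $\mathcal{V}$ is empty and the second bound of Proposition \ref{prop:mean_diff_fix} upgrades the signal to
$$
G_h(x) - G_h(t) \geq \frac{p-2s}{2ph} + \frac{|\mathcal{O}_{\mathrm{mid}}|}{8ph}.
$$

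The proof then splits into two cases. If $|\mathcal{O}_{\mathrm{mid}}| \leq 8pe^{-Ch^2}$, then $V \lesssim pe^{-Ch^2}$ and Bernstein's inequality combined with the choice $h = C_1\sqrt{1 \vee \log(L_\delta p/(p-2s)^2)}$ for $C_1$ and $L_\delta$ sufficiently large yields $|\hat{G}_h(x) - G_h(x)| \leq (p-2s)/(8ph)$, which is half of the $\mathcal{V}$-signal $(p-2s)/(4ph)$ from the first bound of Proposition \ref{prop:mean_diff_fix}. If instead $|\mathcal{O}_{\mathrm{mid}}| > 8pe^{-Ch^2}$, Bernstein's inequality gives a deviation of order $\sqrt{pe^{-Ch^2}\log(1/\delta)}/(ph) + \sqrt{|\mathcal{O}_{\mathrm{mid}}|\log(1/\delta)}/(ph) + \log(1/\delta)/(ph)$; the first summand is absorbed into $(p-2s)/(4ph)$ by taking $L_\delta$ large, while the second is absorbed into $|\mathcal{O}_{\mathrm{mid}}|/(16ph)$ using the lower bound on $|\mathcal{O}_{\mathrm{mid}}|$ together with $p$ sufficiently large depending on $\delta$. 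The main obstacle, and the reason the argument is delicate, is precisely this coupling: the parameter configurations producing a large variance (many coordinates with $|\theta_i| \asymp h$) are exactly those that empty out $\mathcal{V}$ and enlarge the signal on $\mathcal{U}$, so the variance and the signal must be balanced simultaneously rather than treated separately.
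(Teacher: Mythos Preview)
Your approach is correct and shares the essential mechanism with the paper's proof: apply Bernstein's inequality at the single fixed point $x$, stratify the outliers by how close $\eta_i$ is to $x$ (the paper uses the set $F=\{i\in\mathcal{O}:|\mu-\eta_i|\le C_ah/2\}$ in place of your $\mathcal{O}_{\mathrm{mid}}$), and exploit that for every such ``close'' outlier and every admissible $t$ one has $P_\theta\{|t-Y_i|>h\}\ge\tfrac12$, so the variance contribution from these indices is tied to the signal term $\sum_{i\in\mathcal{O}}P_\theta\{|t-Y_i|>h\}$. The organizational difference is that the paper treats $t\in\mathcal{U}$ and $t\in\mathcal{V}$ separately and in both cases uses the elementary inequality $ab\le a^2+b^2$ to split $\sqrt{|F|}$ into a piece absorbed by the $(p-2s)$ signal and a piece absorbed by the $\sum_{i\in\mathcal{O}}P_\theta\{|t-Y_i|>h\}$ signal, whereas you split on the size of $|\mathcal{O}_{\mathrm{mid}}|$ and make the nice observation that $|\mathcal{O}_{\mathrm{mid}}|>8pe^{-Ch^2}$ forces $\mathcal{V}=\emptyset$.

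There is one genuine gap in your Case~2. You absorb $\sqrt{|\mathcal{O}_{\mathrm{mid}}|\log(1/\delta)}/(ph)$ into $|\mathcal{O}_{\mathrm{mid}}|/(16ph)$ ``using the lower bound on $|\mathcal{O}_{\mathrm{mid}}|$''; this requires $|\mathcal{O}_{\mathrm{mid}}|\gtrsim\log(1/\delta)$, which in turn needs $pe^{-Ch^2}\gtrsim\log(1/\delta)$. But $pe^{-Ch^2}=p\bigl((p-2s)^2/(L_\delta p)\bigr)^{CC_1^2}$, and once $CC_1^2\ge 2$ this quantity can vanish as $p\to\infty$ (take $p-2s\asymp p^{1/4}$). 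Since $C_1$ must also be taken sufficiently large for the companion Propositions controlling $\mathcal{U}$ and $\mathcal{V}$, you cannot freely pin $CC_1^2$ into the window $[1,2)$. The fix is immediate and is exactly what the paper does: replace the lower-bound argument by $\sqrt{|\mathcal{O}_{\mathrm{mid}}|\log(1/\delta)}\le \epsilon|\mathcal{O}_{\mathrm{mid}}|+\log(1/\delta)/(4\epsilon)$ with $\epsilon$ a small absolute constant; the first piece is absorbed by $|\mathcal{O}_{\mathrm{mid}}|/(16ph)$ and the second by $(p-2s)/(4ph)$ using $p-2s\ge 2p^{1/4}$ and $p$ large depending on $\delta$. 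With this adjustment your argument goes through for any sufficiently large $C_1$.
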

        \begin{proof}
            Define the events 
            \begin{align*}
                \mathcal{E}_{\mathcal{U}} &:= \left\{ \left|\hat{G}_h(x) - G_h(x)\right| < \frac{1}{2}(G_h(x) - G_h(t)) \text{ for all } t\in \mathcal{U}\right\}, \\
                \mathcal{E}_{\mathcal{V}} &:= \left\{ \left|\hat{G}_h(x) - G_h(x)\right| < \frac{1}{2}(G_h(x) - G_h(t)) \text{ for all } t\in \mathcal{V}\right\}.
            \end{align*}
            It suffices by union bound to show each event holds with probability at least \(1-\delta/2\). Let us examine \(\mathcal{E}_{\mathcal{U}}\) first. Fix \(t \in \mathcal{U}\). It follows from Proposition \ref{prop:mean_diff_fix} and \(t \in \mathcal{U}\) that
            \begin{equation}\label{eqn:ux_signal}
                G_h(x) - G_h(t) \geq \frac{p-2|\mathcal{O}|}{2ph} + \frac{1/2}{2ph} \sum_{i \in \mathcal{O}} P_{\theta}\left\{|t - Y_i| > h\right\}. 
            \end{equation}
            Consider that \(\Var\left(2ph \left(\hat{G}_h(x) - G_h(x)\right)\right) = \sum_{i = 1}^{p} \Var\left(\mathbbm{1}_{\{|x - Y_i| \leq h\}}\right) \leq \sum_{i=1}^{p} P_{\theta}\{|x - Y_i| \leq h\}\wedge P_{\theta}\left\{|x - Y_i| > h\right\}\). Therefore, by Theorem \ref{thm:bernstein_bounded} 
            \begin{align*}
                &P_{\theta}\left\{ \frac{1}{2ph} \left| \sum_{i=1}^{p} \mathbbm{1}_{\{|x - Y_i| \leq h\}} - P_{\theta}\left\{|x - Y_i| \leq h \right\}\right| > u \right\} \\
                &\leq 2\exp\left(-c\min\left(\frac{u^2p^2h^2}{\sum_{i=1}^{p} P_{\theta}\{|x - Y_i| \leq h\} \wedge P_{\theta}\left\{|x - Y_i| > h\right\}}, uph\right)\right)
            \end{align*}
            where \(c > 0\) is a universal constant. Let us define the sets 
            \begin{align*}
                E := \left\{ i \in \mathcal{O} : |\mu - \eta_i| > \frac{C_ah}{2}\right\}, \\
                F := \left\{i \in \mathcal{O} : |\mu - \eta_i| \leq \frac{C_a h}{2}\right\}. 
            \end{align*}
            Therefore, with probability at least \(1-\delta/2\), 
            \begin{align*}
                &\left|\hat{G}_h(x) - G_h(x)\right| \\
                &\leq \kappa_\delta \left(\frac{1}{ph} + \sqrt{\frac{1}{p^2h^2} \sum_{i=1}^{p} P_{\theta}\{|x - Y_i| \leq h\}\wedge P_{\theta}\left\{|x - Y_i| > h\right\}}\right) \\
                &\leq \kappa_\delta\left(\frac{1}{ph} + \sqrt{\frac{1}{p^2h^2} \sum_{i \in \mathcal{I}} P_{\theta}\left\{|x - Y_i| > h\right\} + \frac{1}{p^2h^2}\sum_{i \in E} P_{\theta}\left\{|x - Y_i| \leq h\right\}} + \sqrt{\frac{1}{p^2h^2}\sum_{i \in F} P_{\theta}\left\{|x - Y_i| \leq h\right\}}\right) \\
                &\leq \kappa_\delta\left(\frac{1}{ph} + \frac{e^{-Ch^2/2}}{h\sqrt{p}}\right)  + \kappa_\delta \sqrt{\frac{1}{p^2h^2}\sum_{i \in F} P_{\theta}\left\{|x - Y_i| \leq h\right\}} \\
                &\leq \kappa_\delta\left(\frac{1}{ph} + L_{\delta}^{-1} \frac{p-2s}{ph}\right) + \kappa_\delta \sqrt{\frac{1}{p^2h^2}\sum_{i \in F} P_{\theta}\left\{|x - Y_i| \leq h\right\}} \\
                &\leq \kappa_\delta\left(\frac{1}{ph} + L_{\delta}^{-1} \frac{p-2s}{ph}\right) + \frac{8\kappa_\delta^2}{ph} + \frac{1/8}{ph}\sum_{i \in F} P_{\theta}\left\{|x - Y_i| \leq h\right\}.
            \end{align*}
            where \(\kappa_\delta\) is a constant depending only on \(\delta\) which can change from instance to instance. Here, we have taken \(C_1\) sufficiently large. The final inequality follows from the inequality \(ab \leq a^2 + b^2\) for \(a,b \in \R\). Now consider that since \(|t - \mu| \geq C_a h\)  and \(|\mu - \eta_i| \leq \frac{C_a h}{2}\) for \(i \in F\), we have \(|t - \eta_i| \geq \frac{C_ah}{2}\). Since \(C_a\) and \(C_1\) are sufficiently large, we thus have \(P_{\theta}\left\{|t - Y_i| > h\right\} \geq \frac{1}{2}\) for \(i \in F\). Therefore, for any \(t \in \mathcal{U}\) it follows
            \begin{align*}
                \frac{1/8}{ph}\sum_{i \in F} P_{\theta}\left\{|x - Y_i| \leq h\right\} \leq \frac{1/8}{ph}\sum_{i \in F} 2P_{\theta}\left\{|t - Y_i| > h\right\} \leq \frac{1/4}{ph}\sum_{i \in \mathcal{O}} P_{\theta}\left\{|t - Y_i| > h\right\}. 
            \end{align*}
            Thus, we have 
            \begin{equation}\label{eqn:ux}
                \left|\hat{G}_h(x) - G_h(x)\right| \leq \kappa_\delta\left(\frac{1}{ph} + L_{\delta}^{-1} \frac{p-2s}{ph}\right) + \frac{4\kappa_\delta^2}{ph} + \frac{1/4}{2ph} \sum_{i \in \mathcal{O}} P_{\theta}\left\{|t - Y_i| > h\right\} 
            \end{equation}
            with probability at least \(1 - \delta/2\). Since \(L_\delta\) and \(p\) are sufficiently large depending only on \(\delta\), it immediately follows from (\ref{eqn:ux_signal}) and (\ref{eqn:ux}) that the event \(\mathcal{E}_{\mathcal{U}}\) has probability at least \(1 - \delta/2\).
    
            Let us now examine \(\mathcal{E}_{\mathcal{V}}\). Fix \(t \in \mathcal{V}\). It follows from Proposition \ref{prop:mean_diff_fix} and \(t \in \mathcal{V}\) that
            \begin{equation}\label{eqn:vx_signal}
                G_h(x) - G_h(t) \geq \frac{1}{2} \cdot \frac{p-2|\mathcal{O}|}{2ph}.  
            \end{equation}
            Arguing similarly as in the analysis of \(\mathcal{E}_{\mathcal{U}}\), we have with probability at least \(1 - \delta/2\), 
            \begin{align*}
                \left|\hat{G}_h(x) - G_h(x)\right| &\leq \kappa_\delta\left(\frac{1}{ph} + L_{\delta}^{-1} \frac{p-2s}{ph}\right) + \kappa_\delta \sqrt{\frac{1}{p^2h^2}\sum_{i \in F} P_{\theta}\left\{|x - Y_i| \leq h\right\}} \\
                &\leq \kappa_\delta\left(\frac{1}{ph} + L_{\delta}^{-1} \frac{p-2s}{ph}\right) + \kappa_\delta \sqrt{\frac{1}{p^2h^2}\sum_{i \in \mathcal{O}} P_{\theta}\left\{|t - Y_i| > h\right\}} \\
                &\leq \kappa_\delta\left(\frac{1}{ph} + L_{\delta}^{-1} \frac{p-2s}{ph}\right) + \kappa_\delta \sqrt{\frac{4}{ph^2}e^{-Ch^2}}  \\
                &\leq \kappa_\delta\left(\frac{1}{ph} + L_{\delta}^{-1} \frac{p-2s}{ph} + \frac{2e^{-Ch^2/2}}{h\sqrt{p}}\right)
            \end{align*}
            where we have used \(t \in \mathcal{V}\) and the definition of \(\mathcal{V}\) to obtain the penultimate line. As \(L_\delta\) and \(p\) are sufficiently large depending only on \(\delta\), and \(C_1\) is sufficiently large it follows that 
            \begin{equation}\label{eqn:vx}
                \left|\hat{G}_h(x) - G_h(x)\right| < \frac{1}{4} \cdot \frac{p-2s}{ph}
            \end{equation}
            with probability at least \(1-\delta/2\). Therefore, from (\ref{eqn:vx_signal}) and (\ref{eqn:vx}) we have that \(\mathcal{E}_{\mathcal{V}}\) holds with probability at least \(1-\delta/2\). The proof is complete. 
        \end{proof}
    
        \subsubsection{Synthesis}
        With the stochastic errors handled, we are now in position to prove the main result about the kernel mode estimator. 
    
        \begin{proposition}\label{prop:kme_hard}
            Suppose \(s \leq \frac{p}{2} - p^{1/4}\) and \(C_a\) is a sufficiently large universal constant. Fix \(\delta \in (0, 1)\) and suppose 
            \begin{equation*}
                h = C_1 \sqrt{1 \vee \log\left(\frac{L_\delta p}{(p-2s)^2}\right)}
            \end{equation*}
            where \(C_1 > 0\) is a universal constant and \(L_\delta\) is sufficiently large depending only \(\delta\). If \(p\) is sufficiently large depending only on \(\delta\), then there exists a point \(x\) with \(|x - \mu| \leq \frac{h}{4}\) such that 
            \begin{equation*}
                \inf_{||\theta||_0 \leq s} P_{\theta}\left\{ \hat{G}_h(x) > \hat{G}_h(t) \text{ for all } t \in \R \text{ with } |t - \mu| \geq C_a h\right\} \geq 1-\delta. 
            \end{equation*}
        \end{proposition}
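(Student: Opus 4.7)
The plan is to assemble the three stochastic-control results, Propositions \ref{prop:U_control}, \ref{prop:V_control}, and \ref{prop:x_control}, via the master inequality (\ref{eqn:kme_master}). These propositions were designed precisely so that the region $\{t : |t - \mu| \geq C_a h\}$ decomposes as $\mathcal{U} \cup \mathcal{V}$, and so that the population-level signal $G_h(x) - G_h(t)$ furnished by Proposition \ref{prop:mean_diff_fix} dominates each source of stochastic fluctuation on each piece. Crucially, the point $x$ supplied by Proposition \ref{prop:mean_diff_fix} does not depend on $t$, so controlling $|\hat{G}_h(x) - G_h(x)|$ at the single point $x$, rather than uniformly, is enough.

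First I would fix $x$ from Proposition \ref{prop:mean_diff_fix} with $|x - \mu| \leq h/4$. Then I would apply Propositions \ref{prop:U_control}, \ref{prop:V_control}, and \ref{prop:x_control} each at confidence level $\delta/3$, choosing a single bandwidth $h$ of the stated form by taking the universal constant $C_1$ and the $\delta$-dependent constant $L_\delta$ large enough that all three statements hold simultaneously for the same $\hat{G}_h$. A union bound then produces an event $\mathcal{E}$ with $P_\theta(\mathcal{E}) \geq 1 - \delta$ on which, simultaneously, $|\hat{G}_h(t) - G_h(t)| \leq \tfrac{1}{2}(G_h(x) - G_h(t))$ for every $t \in \mathcal{U} \cup \mathcal{V}$ and $|\hat{G}_h(x) - G_h(x)| \leq \tfrac{1}{2}(G_h(x) - G_h(t))$ for every such $t$. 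Plugging both bounds into (\ref{eqn:kme_master}) and using that $G_h(x) - G_h(t) > 0$ (which follows from Proposition \ref{prop:mean_diff_fix} since $s < p/2$), I would conclude that $\hat{G}_h(x) > \hat{G}_h(t)$ simultaneously for every $t$ with $|t - \mu| \geq C_a h$, which is exactly the claim.

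Essentially all the substantive analysis has been done upstream, so no serious obstacle is expected; this step is a synthesis. The one minor point requiring attention is arithmetic slack: naively summing the two displayed bounds yields $\hat{G}_h(x) - \hat{G}_h(t) \geq 0$ rather than the desired strict inequality. This is resolved by inspecting the proofs of Propositions \ref{prop:U_control}, \ref{prop:V_control}, and \ref{prop:x_control}, each of which produces strictly smaller constants than $\tfrac{1}{2}$ (factors of $\tfrac{1}{4}$ and $L_\delta^{-1}$ with $L_\delta$ large appear throughout), so enlarging $L_\delta$ and requiring $p$ large depending only on $\delta$ converts the inequality to a strict one. The only other bookkeeping concern is reconciling the hypotheses: ensuring a common $h$, common $\delta$, and common choices of $C_1, L_\delta$ across the three propositions, which is accomplished by taking the maxima of the constants demanded by each.
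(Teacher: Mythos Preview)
Your proposal is correct and follows essentially the same approach as the paper: fix the point $x$ from Proposition~\ref{prop:mean_diff_fix}, invoke Propositions~\ref{prop:U_control}, \ref{prop:V_control}, and~\ref{prop:x_control} each at level $\delta/3$, take a union bound to obtain a single high-probability event, and plug the resulting bounds into the master inequality~(\ref{eqn:kme_master}) to conclude $\hat{G}_h(x) > \hat{G}_h(t)$ uniformly over $|t-\mu|\geq C_ah$. Your concern about strict versus non-strict inequality is already handled in the paper by stating the three controlling events with strict inequalities, so no additional enlargement of $L_\delta$ is needed beyond what those propositions already require.
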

        \begin{proof}
            Let \(x\) with \(|x - \mu| \leq \frac{h}{4}\) denote the point from Proposition \ref{prop:mean_diff_fix}. Since \(L_\delta\) and \(p\) are sufficiently large depending only on \(\delta\), it follows from the fact \(\mathcal{U} \cup \mathcal{V} = \left\{ t \in \R : |t - \mu| \geq C_a h\right\}\) and Proposition \ref{prop:x_control} that
            \begin{equation*} 
                \mathcal{E}_{x} = \left\{ \left|\hat{G}_h(x) - G_h(x)\right| < \frac{1}{2} \left(G_h(x) - G_h(t)\right) \text{ for all } |t - \mu| \geq C_a h\right\}
            \end{equation*}
            has probability at least \(1-\delta/3\). Taking \(L_\delta\) sufficiently large and invoking Propositions \ref{prop:U_control} and \ref{prop:V_control}, the events 
            \begin{align*}
                \mathcal{E}_{\mathcal{U}} &= \left\{ \left|\hat{G}_h(t) - G_h(t)\right| < \frac{1}{2}(G_h(x) - G_h(t)) \text{ for all } t \in \mathcal{U} \right\}, \\
                \mathcal{E}_{\mathcal{V}} &= \left\{ \left|\hat{G}_h(t) - G_h(t) \right| < \frac{1}{2}(G_h(x) - G_h(t)) \text{ for all } t \in \mathcal{V} \right\}
            \end{align*}
            each have probability at least \(1 - \delta/3\). Therefore, by union bound it follows that \(\mathcal{E} = \mathcal{E}_{x} \cap \mathcal{E}_{\mathcal{U}} \cap \mathcal{E}_{\mathcal{V}}\) has probability at least \(1-\delta\). Furthermore, on \(\mathcal{E}\) we have for any \(|t - \mu| \geq C_a h\), 
            \begin{align*}
                \hat{G}_h(x) > G_h(x) - \frac{1}{2} \left(G_h(x) - G_h(t)\right) \geq G_h(x) - G_h(t) - \left|\hat{G}_h(t) - G_h(t)\right| - \frac{1}{2} \left(G_h(x) - G_h(t)\right) + \hat{G}_h(t).
            \end{align*}
            Note that \(|t - \mu| \geq C_a h\) implies \(t \in \mathcal{U}\) or \(t \in \mathcal{V}\). Since we are on the event \(\mathcal{E}\), in either case we have
            \begin{equation*}
                \hat{G}_h(x) > G_h(x) - G_h(t) - \frac{1}{2}\left(G_h(x) - G_h(t)\right) -\frac{1}{2} \left(G_h(x) - G_h(t)\right) + \hat{G}_h(t) = \hat{G}_h(t). 
            \end{equation*}
            Since this holds for all \(|t - \mu| \geq C_a h\) and \(\mathcal{E}\) has probability at least \(1-\delta\), the proof is complete. 
        \end{proof}
    
        \subsubsection{Proof of Theorem \ref{thm:kme_known_var}}
    
        \begin{proof}[Proof of Theorem \ref{thm:kme_known_var}]
            Theorem \ref{thm:kme_known_var} follows directly from combining Propositions \ref{prop:kme_easy} and \ref{prop:kme_hard}.
        \end{proof}
    
        \subsection{Lower bound}\label{section:lower_bound_proof}
        The proofs for the results stated in Section \ref{section:lower_bounds} are presented in this section. 
        
        \subsubsection{\texorpdfstring{Regime \(1 \leq s < \frac{p}{2} - \sqrt{p}\)}{Regime 1 <= s < p/2 - sqrt(p)}}
        \begin{proof}[Proof of Proposition \ref{prop:sparse_lbound}]
            Fix \(\delta \in (0, 1)\). The case \(\gamma = 1\) is trivial so it suffices to consider \(\gamma \in [0, 1)\). We break up the analysis into two cases. \newline 
    
            \noindent \textbf{Case 1:} Suppose \(1 \leq s < \frac{p}{2}\). For ease of notation, set \(\varepsilon^2 = (1-\gamma) s \log\left(\frac{ep}{s}\right)\). Let \(c_1\) and \(c_2\) denote the universal positive constants in Corollary \ref{corollary:sparse_gilbert_varshamov}. Let \(c_\delta  = \sqrt{\frac{c_2\delta}{2}}\). Define the finite set \(\mathcal{K} := \left\{\frac{c_\delta \varepsilon}{\sqrt{s}} m : m \in \mathcal{M}\right\}\) where \(\mathcal{M}\) is the set asserted to exist by Corollary \ref{corollary:sparse_gilbert_varshamov} at sparsity level \(s\). Observe that \(||\theta - \theta'||^2 \geq c_1 c_\delta^2\varepsilon^2\) for \(\theta, \theta' \in \mathcal{K}\) with \(\theta \neq \theta'\). Hence, by Proposition \ref{prop:reduce_to_test}, we have 
            \begin{equation*}
                \inf_{\hat{\theta}} \sup_{||\theta||_0 \leq s} P_{\theta, \gamma}\left\{ ||\hat{\theta} - \theta||^2 \geq \frac{c_1c_\delta^2}{4} \varepsilon^2 \right\} \geq \inf_{\varphi} \max_{\theta \in \mathcal{K}} P_{\theta, \gamma}\left\{\varphi(X) \neq \theta\right\}
            \end{equation*}
            where the infimum runs over all measurable functions \(\varphi : \R^p \to \mathcal{K}\). By Proposition \ref{prop:fano} we have 
            \begin{equation*}
                \inf_{\varphi} \max_{\theta \in \mathcal{K}} P_{\theta, \gamma}\left\{\varphi(X) \neq \theta \right\} \geq 1 - \frac{\frac{1}{|\mathcal{K}|^2} \sum_{\theta, \theta' \in \mathcal{K}} \dKL(P_{\theta, \gamma} || P_{\theta', \gamma}) + \log 2}{\log |\mathcal{K}|}.
            \end{equation*}
            Letting \(\Omega^{-1} = (1-\gamma)I_p + \gamma \mathbf{1}_p\mathbf{1}_p^\intercal\), consider for \(\theta, \theta' \in \mathcal{K}\) we have 
            \begin{equation*}
                \dKL(P_{\theta, \gamma}||P_{\theta', \gamma}) = \frac{\langle \theta - \theta', \Omega (\theta - \theta') \rangle}{2} = \frac{1}{2}\left( \langle \theta, \Omega \theta\rangle + \langle \theta', \Omega \theta'\rangle - 2\langle \theta, \Omega \theta'\rangle \right).
            \end{equation*}
            By Lemma \ref{lemma:precision} it follows \(\Omega = \frac{1}{1-\gamma}\left(I_p - \frac{1}{p}\mathbf{1}_p\mathbf{1}_p^\intercal \right) + \frac{1}{1-\gamma+\gamma p} \cdot \frac{1}{p}\mathbf{1}_p\mathbf{1}_p^\intercal\). Since \(||\theta||_0 = ||\theta'||_0 = s\), it follows 
            \begin{equation*}
                \langle \theta, \Omega \theta\rangle = \langle \theta', \Omega \theta' \rangle = \frac{c_\delta^2\varepsilon^2}{s} \left(\frac{s - \frac{s^2}{p}}{1-\gamma} + \frac{s^2}{p(1-\gamma+\gamma p)}\right)
            \end{equation*}
            and by the definition of \(\mathcal{K}\) we have 
            \begin{align*}
                \langle \theta, \Omega \theta' \rangle &= \frac{c_\delta^2\varepsilon^2}{s} \left(\frac{\sum_{i=1}^{p} \mathbbm{1}_{\{m_i = m_i' = 1\}}}{1-\gamma} - \frac{s^2}{p(1-\gamma)} + \frac{s^2}{p(1-\gamma+\gamma p)} \right)\\
                &\geq \frac{c_\delta^2\varepsilon^2}{s} \left( - \frac{s^2}{p(1-\gamma)} + \frac{s^2}{p(1-\gamma+\gamma p)} \right).
            \end{align*}
            Therefore, \(\dKL(P_{\theta, \gamma}||P_{\theta', \gamma}) \leq \frac{c_\delta^2\varepsilon^2}{1-\gamma}\). Since this inequality holds for all \(\theta, \theta' \in \mathcal{K}\), it immediately follows  
            \begin{equation*}
                \inf_{\varphi} \max_{\theta} P_{\theta, \gamma}\left\{\varphi(X) \neq \theta\right\} \geq 1 - \frac{\frac{c_\delta^2\varepsilon^2}{1-\gamma} + \log 2}{c_2 s \log\left(\frac{ep}{s}\right)} \geq 1 - \frac{c_\delta^2}{c_2} - \frac{\log 2}{c_2 \log(ep)}
            \end{equation*}
            where we have used \(\log |\mathcal{K}| \geq c_2 s \log\left(\frac{ep}{s}\right)\) as asserted by Corollary \ref{corollary:sparse_gilbert_varshamov}. Since \(p\) is sufficiently large depending only on \(\delta\), we have \(\frac{\log 2}{c_2 \log(ep)} \leq \frac{\delta}{2}\). We also have \(\frac{c_\delta^2}{c_2} \leq \frac{\delta}{2}\) by our choice of \(c_\delta\). Thus, we have shown 
            \begin{equation*}
                \inf_{\hat{\theta}} \sup_{||\theta||_0 \leq s} P_{\theta, \gamma}\left\{ ||\hat{\theta} - \theta||^2 \geq \frac{c_1 c_\delta^2}{4} \varepsilon^2 \right\} \geq 1 - \delta,
            \end{equation*}
            which is the desired result since \(c_1\) is a universal constant. \newline
    
            \noindent \textbf{Case 2:} Suppose \(s \geq \frac{p}{2}\). Without loss of generality, assume \(\frac{p}{2}\) is an integer. We can repeat exactly the analysis of Case 1 except replacing every instance of \(s\) with \(\frac{p}{2}\), yielding the desired result.
        \end{proof}
    
        \subsubsection{\texorpdfstring{Regime \(\frac{p}{2} - \sqrt{p} < s < \frac{p}{2} \)}{Regime p/2 - sqrt(p) < s < p/2}}
        \begin{lemma}\label{lemma:thetabar_lbound_lemma}
            Suppose \(\frac{p}{2} - \sqrt{p} \leq s < \frac{p}{2}\) and \(\gamma \in [0, 1]\). If \(\delta \in (0, 1)\), then there exists \(c_\delta > 0\) depending only on \(\delta\) such that 
            \begin{equation*}
                \inf_{\hat{\theta}} \sup_{||\theta||_0 \leq s} P_{\theta, \gamma} \left\{||\hat{\theta} - \theta||^2 \geq \frac{c_\delta^2}{16} \left( (1-\gamma)p \log\left(1 + \frac{p}{(p-2s)^2}\right) \wedge (1-\gamma+\gamma p) \right) \right\} \geq 1 - \delta.
            \end{equation*}
        \end{lemma}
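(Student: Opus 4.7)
The plan is to apply Le Cam's two fuzzy hypotheses method with the explicit priors sketched in Section \ref{section:lower_bounds}. Set $\psi^2 = (1-\gamma)p\log(1 + p/(p-2s)^2) \wedge (1-\gamma+\gamma p)$, partition $[p]$ into $E_0 = \{1,\ldots,\lfloor p/2\rfloor\}$ and $E_1 = \{\lfloor p/2\rfloor+1,\ldots,p\}$, and define $\pi_0$ by drawing $S_0 \subset E_0$ uniformly among size-$s$ subsets and setting $\theta_0 = (c\psi/\sqrt{s})\mathbf{1}_{S_0}$, and symmetrically $\pi_1$ on $E_1$ with a sign flip, where $c>0$ is a small universal constant to be chosen. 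In the regime at hand $s \asymp p$, so $\|\bar\theta_0\mathbf{1}_p - \bar\theta_1\mathbf{1}_p\|^2 \asymp c^2\psi^2$ almost surely. A standard two-prior reduction (cf.\ Proposition \ref{prop:reduce_to_test}) then converts any bound of the form $\dTV(P_{\pi_0},P_{\pi_1}) \leq 1 - 2\delta$ into the claimed minimax lower bound of order $\psi^2$, up to reducing $c$.

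The core task is to control $\dTV(P_{\pi_0},P_{\pi_1})$ where $P_{\pi_j} = \int P_{\theta,\gamma}\pi_j(d\theta)$. The key structural observation from (\ref{model:additive}) is that for $Y \sim P_{\pi_j}$, the three components
\begin{equation*}
Y_{E_0} - \bar Y_{E_0}\mathbf{1}_{E_0}, \qquad Y_{E_1} - \bar Y_{E_1}\mathbf{1}_{E_1}, \qquad (\bar Y_{E_0}, \bar Y_{E_1})
\end{equation*}
are mutually independent, because within-block centering annihilates the shared factor $\sqrt{\gamma}W$ while the pair of block means retains it. Writing $P_{\pi_j}^{I},P_{\pi_j}^{II},P_{\pi_j}^{III}$ for the three marginals, tensorization of total variation gives $\dTV(P_{\pi_0},P_{\pi_1}) \leq \dTV(P_{\pi_0}^{I},P_{\pi_1}^{I}) + \dTV(P_{\pi_0}^{II},P_{\pi_1}^{II}) + \dTV(P_{\pi_0}^{III},P_{\pi_1}^{III})$. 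The third marginal is a pair of two-dimensional Gaussians with common covariance and deterministic (under each prior) means at distance $\asymp \psi/\sqrt{p}\cdot\sqrt{p} = \psi$; Pinsker yields a small total variation provided $\psi^2 \lesssim 1-\gamma+\gamma p$, which holds by the definition of $\psi^2$. By symmetry between $E_0$ and $E_1$ it suffices to handle the second marginal. Under $\pi_0$ we have $\theta_{E_1} = 0$ almost surely, so $P_{\pi_0}^{II}$ is a fixed centered singular Gaussian on $\R^{p/2}$ with covariance $(1-\gamma)(I_{p/2} - (p/2)^{-1}\mathbf{1}_{p/2}\mathbf{1}_{p/2}^\intercal)$. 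By the Neyman-Pearson lemma, $1 - \dTV(P_{\pi_0}^{II},P_{\pi_1}^{II})$ is the minimal sum of Type I plus Type II errors for testing this point null against the mixture alternative generated by $\pi_1$, given only the centered block-$E_1$ data.

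This reduced testing problem is essentially Problem II of \cite{kotekal_minimax_2023} with ambient dimension $p/2$ and sparsity $s$; since the hypothesis gives $(p/2) - s \gtrsim \sqrt{p/2}$, the testing lower bound from that paper implies the errors sum to at least $1 - \delta/3$ as long as the per-coordinate signal strength $c\psi/\sqrt{s}$ is small enough, which amounts to $\psi^2 \lesssim (1-\gamma)(p/2)\log\bigl(1 + (p/2)/(p/2-s)^2\bigr)$ -- precisely the first branch defining $\psi^2$. Combining the three pieces and choosing $c$ small enough depending only on $\delta$ yields $\dTV(P_{\pi_0},P_{\pi_1}) \leq 1 - 2\delta$, completing the proof via Le Cam. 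The main obstacle is the last step: the centered block-$E_1$ distribution is singular Gaussian with covariance $(1-\gamma)(I_{p/2} - (p/2)^{-1}\mathbf{1}\mathbf{1}^\intercal)$ rather than the equicorrelated $(1-\gamma)I + \gamma\mathbf{1}\mathbf{1}^\intercal$ of the original testing setup, so I must either reprove the chi-squared bound of \cite{kotekal_minimax_2023} in this projected setting or, equivalently and more cleanly, augment the data with an independent $N(0,\gamma)$ draw in the $\mathbf{1}_{p/2}$ direction (which the statistician can always do and only enlarges the TV) to embed directly into the equicorrelated testing problem from \cite{kotekal_minimax_2023}.
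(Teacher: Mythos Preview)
Your plan matches the paper's proof: same priors (up to harmless constants), same three-way independence decomposition, Pinsker for the block-means piece, and an Ingster--Suslina $\chi^2$ bound for the centered-block pieces. Two small corrections are worth noting. First, the hypothesis gives $p/2 - s \lesssim \sqrt{p}$, not $\gtrsim$ (the paper's own discussion text has the same slip). Second, your augmentation route (b) does not literally embed into the sparse-mean testing problem of \cite{kotekal_minimax_2023}: adding independent noise in the $\mathbf{1}$ direction preserves (not enlarges) the total variation, but the alternative mean remains $\theta_{E_1} - \bar\theta_{E_1}\mathbf{1}$, which is not sparse, so the black-box reduction does not go through. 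The paper instead carries out your route (a) directly, and the key computational step you should be aware of is a complement trick: since the projector $I_{E_0} - |E_0|^{-1}\mathbf{1}_{E_0}\mathbf{1}_{E_0}^\intercal$ annihilates constant shifts, one may replace $\mathbf{1}_{S_0}$ by $\mathbf{1}_{T_0}$ with $T_0 = E_0 \setminus S_0$ of size $|E_0| - s \asymp p - 2s$, after which the hypergeometric MGF bound (Lemma~\ref{lemma:hypergeometric}) applied to $|T_0 \cap T_0'|$ gives $\chi^2 + 1 \leq \exp(O(c_\delta^2))$ with the correct $(p-2s)^{-2}$ dependence.
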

        \begin{proof}
            The case \(\gamma = 1\) is trivial so let us focus on the case \(\gamma \in [0, 1)\). Set \(\varepsilon^2 = (1-\gamma)p\log\left(1 + \frac{p}{(p-2s)^2}\right) \wedge \left(1 - \gamma + \gamma p\right)\). For ease of notation, let \(E_0, E_1 \subset [p]\) with \(E_0 = \left\{1,...,\left\lceil \frac{p}{2}\right\rceil\right\}\) and \(E_1 = E_0^c\). Define 
            \begin{equation*}
                c_\delta = \frac{2\delta}{3c_1} \wedge \frac{1}{2} \wedge \sqrt{\frac{\log\left(1 + \frac{4\delta^2}{9}\right)}{16}}
            \end{equation*}
            where \(c_1 > 0\) is a universal constant which will be defined later in the proof. Let \(\pi_0\) be the prior in which a draw \(\theta \sim \pi_0\) is obtained by setting \(\theta = -\frac{c_\delta\varepsilon}{\sqrt{s}} \frac{|E_0|}{p}\mathbf{1}_{S_0}\) where \(S_0 \subset E_0\) is a size \(s\) set drawn uniformly at random. Similarly, let \(\pi_1\) denote the prior in which a draw \(\theta \sim \pi_1\) is obtained by setting \(\theta = \frac{c_\delta\varepsilon}{\sqrt{s}}\frac{|E_1|}{p}\mathbf{1}_{S_1}\) where \(S_1 \subset E_1\) is a size \(s\) set drawn uniformly at random. Note that \(S_0\) and \(S_1\) are disjoint with probability one. Moreover, observe that if \(\theta_0 \sim \pi_0\) and \(\theta_1 \sim \pi_1\), we have 
            \begin{equation*}
                \bar{\theta}_1 - \bar{\theta}_0 = \frac{c_\delta\varepsilon}{\sqrt{s}} \cdot \frac{s}{p} \cdot \frac{|E_0| + |E_1|}{p} \geq \frac{c_\delta\varepsilon}{2\sqrt{p}}
            \end{equation*}
            almost surely where we have used \(s \geq \frac{p}{2} - \sqrt{p} \geq \frac{p}{4}\) as \(p\) is sufficiently large. Consequently, we have 
            \begin{equation}\label{eqn:mean_separation}
                ||\theta_0 - \theta_1||^2 \geq \left|\left| \bar{\theta}_1\mathbf{1}_p - \bar{\theta}_0\mathbf{1}_p\right|\right|^2 \geq \frac{c_\delta^2\varepsilon^2}{4}.
            \end{equation}
            Let \(P_{\pi_0}\) denote \(\int N(\theta, \Sigma) \, \pi_0(d\theta)\) and \(P_{\pi_1}\) denote \(\int N(\theta, \Sigma) \, \pi_1(d\theta)\) where \(\Sigma = (1-\gamma)I_p + \gamma \mathbf{1}_p\mathbf{1}_p^\intercal\). With the separation (\ref{eqn:mean_separation}) in hand, we are able to invoke Proposition \ref{prop:fuzzy_hypotheses} to obtain the bound 
            \begin{equation}\label{eqn:theta2_fuzzy}
                \inf_{\hat{\theta}} \sup_{||\theta||_0 \leq s} P_{\theta, \gamma}\left\{||\hat{\theta} - \theta||^2 \geq \frac{c_\delta^2 \varepsilon^2}{16} \right\} \geq 1 - \dTV\left(P_{\pi_0}, P_{\pi_1}\right).
            \end{equation}
            Consequently, all that remains is to show \(\dTV\left(P_{\pi_0}, P_{\pi_1}\right) \leq \delta\) in order to obtain the desired result. We pursue this objective now. \newline
                
            We will now work with some transformations, so for discussion let us draw \(Y \sim P_{\pi_j}\) for \(j = 0, 1\). Consider we can write \(Y \in \R^p\) as \(Y = (Y_{E_0}, Y_{E_1})\). Writing \(\bar{Y}_{E_0} := \frac{1}{|E_0|} \sum_{i \in E_0} Y_i\) and \(\bar{Y}_{E_1} := \frac{1}{|E_1|} \sum_{i \in E_1} Y_i\), it is straightforward to see from (\ref{model:additive}) that since \(E_0\) and \(E_1\) are disjoint, we have that the random quantities \(Y_{E_0}-\bar{Y}_{E_0}\mathbf{1}_{E_0}, Y_{E_1} - \bar{Y}_{E_1}\mathbf{1}_{E_1}\), and \((\bar{Y}_{E_0}, \bar{Y}_{E_1})\) are mutually independent. Writing \(P_{\pi_j}^{I}, P_{\pi_j}^{II}, P_{\pi_j}^{III}\) to denote the marginal distributions of these three random vectors, we can apply Lemma \ref{lemma:tv_tensorization} to obtain 
            \begin{equation}\label{eqn:tripartite_tv_bound}
                \dTV(P_{\pi_0}, P_{\pi_1}) \leq \dTV(P_{\pi_0}^I, P_{\pi_1}^{I}) + \dTV(P_{\pi_0}^{II}, P_{\pi_1}^{II}) + \dTV(P_{\pi_0}^{III}, P_{\pi_1}^{III}).
            \end{equation}
            We bound each term in (\ref{eqn:tripartite_tv_bound}) separately. We first examine the third term. Consider that 
            \begin{equation*}
                P_{\pi_0}^{III} = N\left( \left(\begin{matrix} -\frac{c_\delta \varepsilon}{\sqrt{s}} \cdot \frac{s}{p} \\ 0  \end{matrix} \right), \left(\begin{matrix} \frac{1-\gamma}{|E_0|} + \gamma & \gamma \\ \gamma & \frac{1-\gamma}{|E_1|} + \gamma \end{matrix}\right)\right)
            \end{equation*}
            and 
            \begin{equation*}
                P_{\pi_1}^{III} = N\left( \left(\begin{matrix} 0 \\ \frac{c_\delta \varepsilon}{\sqrt{s}} \cdot \frac{s}{p} \end{matrix} \right), \left(\begin{matrix} \frac{1-\gamma}{|E_0|} + \gamma & \gamma \\ \gamma & \frac{1-\gamma}{|E_1|} + \gamma \end{matrix}\right)\right).
            \end{equation*}
            Observe that 
            \begin{equation*}
                \left(\begin{matrix} \frac{1-\gamma}{|E_0|} + \gamma & \gamma \\ \gamma & \frac{1-\gamma}{|E_1|} + \gamma \end{matrix}\right)^{-1} = \frac{1}{\left(\frac{1-\gamma}{|E_0|} + \gamma\right)\left(\frac{1-\gamma}{|E_1|} +\gamma\right) - \gamma^2} \left(\begin{matrix} \frac{1-\gamma}{|E_1|} + \gamma & -\gamma \\ -\gamma & \frac{1-\gamma}{|E_0|} + \gamma \end{matrix}\right) 
            \end{equation*}
            and so
            \begin{align*}
                &\left\langle \left(\begin{matrix}\frac{c_\delta \varepsilon}{\sqrt{s}}\cdot \frac{s}{p} \\ \frac{c_\delta \varepsilon}{\sqrt{s}} \cdot \frac{s}{p} \end{matrix}\right), \left(\begin{matrix} \frac{1-\gamma}{|E_0|} + \gamma & \gamma \\ \gamma & \frac{1-\gamma}{|E_1|} + \gamma \end{matrix}\right)^{-1} \left(\begin{matrix}\frac{c_\delta \varepsilon}{\sqrt{s}}\cdot \frac{s}{p} \\ \frac{c_\delta \varepsilon}{\sqrt{s}} \cdot \frac{s}{p} \end{matrix}\right) \right\rangle \\
                &= \frac{c_\delta^2 \varepsilon^2 s}{p^2} \cdot \frac{1}{\left(\frac{1-\gamma}{|E_0|} + \gamma\right)\left(\frac{1-\gamma}{|E_1|} +\gamma\right) - \gamma^2} \left((1-\gamma) \frac{|E_0|}{|E_1|} + (1-\gamma) \frac{|E_1|}{|E_0|} \right) \\
                &\leq \frac{c_\delta^2\varepsilon^2}{\left(\frac{1-\gamma}{|E_0|} + \gamma\right)\left(\frac{1-\gamma}{|E_1|} +\gamma\right) - \gamma^2} \left(\frac{1-\gamma}{|E_1|} + \frac{1-\gamma}{|E_0|} \right)
            \end{align*}
            where we have used \(s \leq p\) as well as \(|E_0| \vee |E_1| \leq p\) to obtain the final line. Consider that \(|E_1| \asymp |E_2| \asymp p\). Hence, there exists some universal constant \(c_1 > 0\) such that 
            \begin{equation*}
                \frac{c_\delta^2\varepsilon^2}{\left(\frac{1-\gamma}{|E_0|} + \gamma\right)\left(\frac{1-\gamma}{|E_1|} +\gamma\right) - \gamma^2} \left(\frac{1-\gamma}{|E_1|} + \frac{1-\gamma}{|E_0|} \right) = \frac{c_\delta^2 \varepsilon^2}{\frac{1-\gamma}{|E_0||E_1|} + \frac{\gamma}{|E_0|} + \frac{\gamma}{|E_1|}} \cdot \frac{|E_0| + |E_1|}{|E_0||E_1|} \leq \frac{c_1^2c_\delta^2 \varepsilon^2}{1-\gamma + \gamma p} \leq c_1^2c_\delta^2
            \end{equation*}
            where we have used \(1-\gamma \leq 1\) and \(\varepsilon^2 \leq 1-\gamma+\gamma p\) in the last step. Therefore, by Pinsker's inequality we have 
            \begin{equation}\label{eqn:tv_bound_III}
                \dTV(P_{\pi_0}^{III}, P_{\pi_1}^{III}) \leq \sqrt{\frac{\dKL(P_{\pi_0}^{III}, P_{\pi_1}^{III})}{2}} \leq \frac{\sqrt{c_1^2c_\delta^2}}{2} = \frac{c_1c_\delta}{2} \leq \frac{\delta}{3}
            \end{equation}
            where we have used the definition of \(c_\delta\). \newline
            
            We now examine the first term in (\ref{eqn:tripartite_tv_bound}). By the definition of \(\pi_0\) and \(\pi_1\), we can write
            \begin{equation*}
                P_{\pi_0}^{I} = \int N\left(\theta_{E_0} - \bar{\theta}_{E_0}\mathbf{1}_{E_0}, (1-\gamma)\left(I_{E_0} - \frac{1}{|E_0|} \mathbf{1}_{E_0}\mathbf{1}_{E_0}^\intercal\right) \right) \, \pi_0(d\theta)
            \end{equation*}
            and 
            \begin{equation*}
                P_{\pi_1}^{I} = N\left( 0, (1-\gamma)\left(I_{E_0} - \frac{1}{|E_0|} \mathbf{1}_{E_0}\mathbf{1}_{E_0}^\intercal\right)\right).
            \end{equation*}
            To bound the total variation distance between these two distributions, we will furnish an upper bound on the \(\chi^2\) divergence. By the Ingster-Suslina method (see Lemma \ref{lemma:ingster_suslina}) we have
            \begin{align}\label{eqn:ingster_suslina_I}
                \chi^2\left(\left.\left.P_{\pi_0}^I \right|\right| P_{\pi_1}^I\right) + 1 &= E\left(\exp\left(\frac{1}{1-\gamma} \left\langle \left(\theta_{E_0} - \bar{\theta}_{E_0} \mathbf{1}_{E_0}\right), \left(I_{E_0} - \frac{1}{|E_0|}\mathbf{1}_{E_0}\mathbf{1}_{E_0}^\intercal\right) \left(\theta'_{E_0} - \bar{\theta'}_{E_0} \mathbf{1}_{E_0}\right)\right\rangle \right)\right)
            \end{align}
            where \(\theta, \theta' \overset{iid}{\sim} \pi_0\). Writing \(\theta = -\frac{c_\delta \varepsilon}{\sqrt{s}}\frac{|E_0|}{p} \mathbf{1}_{S_0}\) and \(\theta' = -\frac{c_\delta \varepsilon}{\sqrt{s}}\frac{|E_0|}{p} \mathbf{1}_{S_0'}\), it follows that 
            \begin{equation}\label{eqn:chisquare_bound_I}
                \chi^2\left(\left.\left. P_{\pi_0}^I \right|\right| P_{\pi_1}^I\right) + 1 = E\left(\exp\left(\frac{c_\delta^2 \varepsilon^2}{s(1-\gamma)}\frac{|E_0|}{p} \left(|S_0 \cap S_0'| - \frac{s^2}{|E_0|}\right) \right)\right)
            \end{equation}
            where \(S_0, S_0'\) are independently and uniformly drawn from the collection of all size \(s\) subsets of \(E_0\). Repeating the same argument yields the analogous expressions
            \begin{align}\label{eqn:ingster_suslina_II}
                \chi^2\left(\left.\left.P_{\pi_1}^{II} \right|\right| P_{\pi_0}^{II}\right) + 1 &= E\left(\exp\left(\frac{1}{1-\gamma} \left\langle \left(\theta_{E_1} - \bar{\theta}_{E_1} \mathbf{1}_{E_1}\right), \left(I_{E_1} - \frac{1}{|E_1|}\mathbf{1}_{E_1}\mathbf{1}_{E_1}^\intercal\right) \left(\theta'_{E_1} - \bar{\theta'}_{E_1} \mathbf{1}_{E_1}\right)\right\rangle \right)\right)
            \end{align}
            and
            \begin{equation}\label{eqn:chisquare_bound_II}
                \chi^2\left(\left.\left.P_{\pi_1}^{II} \right|\right| P_{\pi_0}^{II}\right) + 1 = E\left(\exp\left(\frac{c_\delta^2 \varepsilon^2}{s(1-\gamma)}\frac{|E_1|}{p} \left(|S_1 \cap S_1'| - \frac{s^2}{|E_1|}\right) \right)\right)
            \end{equation}
            where \(S_1, S_1'\) are independently and uniformly drawn from the collection of all size \(s\) subsets of \(E_1\). \newline
            
            We first examine (\ref{eqn:chisquare_bound_I}). For ease of notation, let \(\tau^2 = \frac{c_\delta^2 \varepsilon^2}{s}\left(\frac{|E_0|}{p}\right)^2\). Consider that since \(I_{E_0} - \frac{1}{|E_0|}\mathbf{1}_{E_0}\mathbf{1}_{E_0}^\intercal\) is a projection matrix, we have the equalities 
            \begin{align*}
                \chi^2\left(\left.\left.P_{\pi_0}^I \right|\right| P_{\pi_1}^I\right) + 1 &= E\left(\exp\left(\frac{1}{1-\gamma} \left\langle \left(\theta_{E_0} - \bar{\theta}_{E_0} \mathbf{1}_{E_0}\right), \left(I_{E_0} - \frac{1}{|E_0|}\mathbf{1}_{E_0}\mathbf{1}_{E_0}^\intercal\right) \left(\theta'_{E_0} - \bar{\theta'}_{E_0} \mathbf{1}_{E_0}\right)\right\rangle \right)\right) \\
                &= E\left(\exp\left(\frac{1}{1-\gamma} \left\langle \theta_{E_0}, \left(I_{E_0} - \frac{1}{|E_0|}\mathbf{1}_{E_0}\mathbf{1}_{E_0}^\intercal\right) \theta'_{E_0}\right\rangle \right)\right) \\
                &= E\left(\exp\left(\frac{1}{1-\gamma}\left\langle \theta_{E_0} + \tau\mathbf{1}_{E_0}, \left(I_{E_0} - \frac{1}{|E_0|}\mathbf{1}_{E_0}\mathbf{1}_{E_0}^\intercal\right) \left(\theta'_{E_0} + \tau \mathbf{1}_{E_0}\right)\right\rangle \right)\right). 
            \end{align*}
            Since \(\theta_{E_0} = -\tau \mathbf{1}_{S_0}\) and \(\theta_{E_0}' = -\tau\mathbf{1}_{S_0'}\), we can write \(\theta_{E_0} + \tau \mathbf{1}_{E_0} = \tau \mathbf{1}_{T_0}\) and \(\theta_{E_0}' + \tau \mathbf{1}_{E_0} = \tau \mathbf{1}_{T_0'}\) where \(T_0 = S_0^c\) and \(T_0' = (S_0')^c\). Note that \(T_0\) and \(T_0'\) are independent and uniformly drawn size \(|E_0| - s\) subsets of \(E_0\). Consequently, we have 
            \begin{align*}
                \chi^2\left(\left.\left. P_{\pi_0}^I \right|\right| P_{\pi_1}^I\right) + 1 &= E\left(\exp\left(\frac{\tau^2}{1-\gamma}\left\langle \mathbf{1}_{T_0}, \left(I_{E_0} - \frac{1}{|E_0|}\mathbf{1}_{E_0}\mathbf{1}_{E_0}^\intercal\right) \mathbf{1}_{T_0'} \right\rangle \right)\right) \\
                &= E\left[\exp\left(\frac{\tau^2}{1-\gamma} \left(|T_0 \cap T_0'| - \frac{(|E_0|-s)^2}{|E_0|}\right)\right)\right] \\
                &\leq E\left(\exp\left(\frac{\tau^2}{1-\gamma}|T_0 \cap T_0'|\right)\right) \\
                &\leq \left(1 - \frac{|E_0|-s}{|E_0|} + \frac{|E_0|-s}{|E_0|} e^{\frac{\tau^2}{1-\gamma}}\right)^{|E_0|-s} \\
                &\leq \left(1 - \frac{|E_0|-s}{|E_0|} + \frac{|E_0|-s}{|E_0|} e^{4c_\delta^2\log\left(1 + \frac{p}{(p-2s)^2}\right)}\right)^{|E_0|-s} \\
                &\leq \left(1 - \frac{|E_0|-s}{|E_0|} + \frac{|E_0|-s}{|E_0|} \left(1 + \frac{4c_\delta^2p}{(p-2s)^2}\right)\right)^{|E_0|-s} \\
                &= \left(1 + \frac{1}{|E_0|-s} \left(\frac{4c_\delta^2p(|E_0|-s)^2}{(p-2s)^2|E_0|}\right) \right)^{|E_0|-s}
            \end{align*}
            where we have used \(s \geq \frac{p}{4}\), Lemma \ref{lemma:hypergeometric}, and the inequality \((1+x)^y \leq 1 + yx\) for all \(x > 0\) and \(y \in [0, 1]\). Continuing the calculation yields 
            \begin{align*}
                \chi^2\left(\left.\left. P_{\pi_0}^I \right|\right| P_{\pi_1}^I\right) + 1 &\leq \exp\left(\frac{4c_\delta^2p(|E_0|-s)^2}{(p-2s)^2|E_0|}\right) \\
                &\leq \exp\left(\frac{4c_\delta^2(|E_0|-s)^2}{\left(\frac{p}{2}-s\right)^2}\right) \\
                &\leq e^{16c_\delta^2}
            \end{align*}
            where we have used \(|E_0|/p \leq 2\) and \((|E_0| - s)^2 \leq \left(\frac{p}{2} - s + 1\right)^2 \leq 2\left(\frac{p}{2}-s\right)^2 + 2\). The above arguments can be repeated to also show that \(\chi^2\left(\left.\left. P_{\pi_1}^{II} \right|\right| P_{\pi_0}^{II}\right) + 1 \leq e^{16c_\delta^2}\) as well. Consequently, from these bounds on the \(\chi^2\) divergence we immediately obtain the bounds  
            \begin{align}
                \dTV(P_{\pi_0}^I, P_{\pi_1}^{I}) &\leq \frac{1}{2}\sqrt{\chi^2\left(\left.\left. P_{\pi_0}^I \right|\right| P_{\pi_1}^I\right)} \leq \frac{1}{2}\sqrt{e^{16c_\delta^2} - 1} \leq \frac{\delta}{3} \label{eqn:tv_bound_I_case2}\\
                \dTV(P_{\pi_0}^{II}, P_{\pi_1}^{II}) &\leq \frac{1}{2}\sqrt{\chi^2\left(\left.\left. P_{\pi_1}^{II} \right|\right| P_{\pi_0}^{II}\right)} \leq \frac{1}{2}\sqrt{e^{16c_\delta^2} - 1} \leq \frac{\delta}{3}\label{eqn:tv_bound_II_case2}.
            \end{align}
            where we have used \(16c_\delta^2 \leq \log\left(1 + \frac{4\delta^2}{9}\right)\). Plugging in (\ref{eqn:tv_bound_I_case2}), (\ref{eqn:tv_bound_II_case2}), and (\ref{eqn:tv_bound_III}) into (\ref{eqn:tripartite_tv_bound}) shows that \(1-\dTV(P_{\pi_0}, P_{\pi_1}) \geq 1-\delta\). Combining this with (\ref{eqn:theta2_fuzzy}) yields the desired result.
        \end{proof}
    
        \begin{proof}[Proof of Proposition \ref{prop:thetabar_lbound}]
            Let \(c_\delta = \frac{1}{2}\left(c_{\delta, 1} \vee c_{\delta, 2}^2/16\right)\) where \(c_{\delta, 1}\) and \(c_{\delta, 2}\) are the constants in Proposition \ref{prop:sparse_lbound} and Lemma \ref{lemma:thetabar_lbound_lemma} respectively. Since \(2c_\delta \geq c_{\delta, 1}\) and \(2c_\delta \geq c_{\delta, 2}^2/16\), it follows by Proposition \ref{prop:sparse_lbound} and Lemma \ref{lemma:thetabar_lbound_lemma} that 
            \begin{align*}
                &\inf_{\hat{\theta}} \sup_{||\theta||_0 \leq s} P_{\theta, \gamma}\left\{ ||\hat{\theta} - \theta||^2 \geq c_\delta \left( (1-\gamma)p \log\left(1 + \frac{p}{(p-2s)^2}\right) \wedge p\right) \right\} \\
                &\geq \inf_{\hat{\theta}} \sup_{||\theta||_0 \leq s} P_{\theta, \gamma}\left\{ ||\hat{\theta} - \theta||^2 \geq c_\delta(1-\gamma)p + c_\delta\left( (1-\gamma)p \log\left(1 + \frac{p}{(p-2s)^2}\right) \wedge (1-\gamma+\gamma p) \right)\right\} \\
                &\geq \left( \inf_{\hat{\theta}} \sup_{||\theta||_0 \leq s} P_{\theta, \gamma}\left\{ ||\hat{\theta} - \theta||^2 \geq 2c_\delta(1-\gamma)p \right\} \right) \wedge \\
                &\;\;\; \left( \inf_{\hat{\theta}} \sup_{||\theta||_0 \leq s} P_{\theta, \gamma}\left\{ ||\hat{\theta} - \theta||^2 \geq 2c_\delta\left( (1-\gamma)p \log\left(1 + \frac{p}{(p-2s)^2}\right) \wedge (1-\gamma+\gamma p) \right)\right\} \right) \\
                &\geq 1-\delta
            \end{align*}
            as desired. 
        \end{proof}
    
        \subsubsection{\texorpdfstring{Regime \(\frac{p}{2} \leq s \leq p\)}{Regime p/2 <= s <= p}}
        \begin{lemma}\label{lemma:dense_theta_lbound_lemma}
            Suppose \(\frac{p}{2} \leq s \leq p\) and \(\gamma \in [0, 1]\). If \(\delta \in (0, 1)\), then there exists \(c_\delta > 0\) depending only on \(\delta\) such that 
            \begin{equation*}
                \inf_{\hat{\theta}} \sup_{||\theta||_0 \leq s} P_{\theta, \gamma}\left\{||\hat{\theta} - \theta||^2 \geq \frac{c_\delta^2}{4}(1-\gamma+\gamma p)\right\} \geq 1-\delta.
            \end{equation*}
        \end{lemma}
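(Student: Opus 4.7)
The plan is to invoke Le Cam's two-point method with a carefully chosen pair of parameters whose difference lies in the principal eigenspace $\spn\{\mathbf{1}_p\}$ of the covariance $\Sigma = (1-\gamma)I_p + \gamma \mathbf{1}_p\mathbf{1}_p^\intercal$. The case $\gamma = 1$ is handled separately (or by continuity), so assume $\gamma \in [0, 1)$ so that $\Sigma$ is invertible. Take $S = \{1,\ldots,s\}$ and $T = S^c$, and set
\begin{equation*}
\theta_0 := \tau \mathbf{1}_S, \qquad \theta_1 := -\tau \mathbf{1}_T, \qquad \tau := \frac{c_\delta\sqrt{1-\gamma+\gamma p}}{\sqrt{s}},
\end{equation*}
with $c_\delta > 0$ a sufficiently small constant depending only on $\delta$ (to be chosen at the end). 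The condition $s \geq p/2$ is exactly what is needed so that $|T| = p - s \leq p/2 \leq s$, guaranteeing that both $\theta_0$ and $\theta_1$ are $s$-sparse.

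Next, I would verify the separation in squared error. Since $S$ and $T$ partition $[p]$, one has $\theta_0 - \theta_1 = \tau \mathbf{1}_p$, so $\|\bar\theta_0\mathbf{1}_p - \bar\theta_1\mathbf{1}_p\|^2 = \|\theta_0 - \theta_1\|^2 = \tau^2 p \asymp 1 - \gamma + \gamma p$, where the final step uses $s \asymp p$. This matches the claimed lower bound order up to constants.

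The main point, and the step which makes everything work, is that $\theta_0 - \theta_1 \in \spn\{\mathbf{1}_p\}$. By Lemma \ref{lemma:precision}, $\Omega := \Sigma^{-1}$ acts on $\mathbf{1}_p$ by $\Omega \mathbf{1}_p = \frac{1}{1-\gamma+\gamma p}\mathbf{1}_p$. Hence the KL computation collapses to
\begin{equation*}
\dKL(P_{\theta_0,\gamma}\|P_{\theta_1,\gamma}) = \tfrac{1}{2}\langle \theta_0 - \theta_1,\,\Omega(\theta_0 - \theta_1)\rangle = \frac{\tau^2 p}{2(1-\gamma+\gamma p)} = \frac{c_\delta^2 p}{2s} \leq c_\delta^2,
\end{equation*}
using $s \geq p/2$. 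Pinsker's inequality then gives $\dTV(P_{\theta_0,\gamma}, P_{\theta_1,\gamma}) \leq c_\delta/\sqrt{2}$.

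To conclude, I would apply Proposition \ref{prop:fuzzy_hypotheses} to the pair of point-mass priors at $\theta_0, \theta_1$, which yields
\begin{equation*}
\inf_{\hat\theta}\sup_{\|\theta\|_0 \leq s} P_{\theta,\gamma}\left\{\|\hat\theta - \theta\|^2 \geq \tfrac{1}{4}\|\theta_0-\theta_1\|^2\right\} \geq 1 - \dTV(P_{\theta_0,\gamma},P_{\theta_1,\gamma}).
\end{equation*}
Choosing $c_\delta$ small enough that $c_\delta/\sqrt{2} \leq \delta$ gives the right-hand side $\geq 1 - \delta$, and the left-hand side reads $\|\hat\theta - \theta\|^2 \geq \frac{\tau^2 p}{4} = \frac{c_\delta^2 p(1-\gamma+\gamma p)}{4s} \geq \frac{c_\delta^2}{4}(1-\gamma+\gamma p)$, as required. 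There is no serious obstacle here: the entire argument rests on the single algebraic observation that aligning $\theta_0 - \theta_1$ with $\mathbf{1}_p$ exploits the flattest direction of $\Omega$, producing a small KL relative to the large squared separation; the sparsity constraint $|T| \leq s$, enforced by the dense regime $s \geq p/2$, is what permits this alignment in the first place. A final combination with Proposition \ref{prop:sparse_lbound} (which yields the companion $(1-\gamma)p$ bound) then delivers Proposition \ref{prop:dense_thetabar_lbound}.
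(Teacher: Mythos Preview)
Your proposal is correct and follows essentially the same approach as the paper: the same two-point construction $\theta_0 = \tau\mathbf{1}_S$, $\theta_1 = -\tau\mathbf{1}_T$ with $\theta_0 - \theta_1 \in \spn\{\mathbf{1}_p\}$, the same KL/Pinsker computation via Lemma~\ref{lemma:precision}, and the same use of $s \geq p/2$ to ensure $|T| \leq s$. The only cosmetic differences are that the paper invokes Proposition~\ref{prop:reduce_to_test} rather than Proposition~\ref{prop:fuzzy_hypotheses} (equivalent here) and carries out the $\gamma = 1$ case explicitly rather than deferring it; your ``by continuity'' remark would need justification, so the separate treatment is the right choice.
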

        \begin{proof}
            Set \(\varepsilon^2 = 1-\gamma+\gamma p\) and \(c_\delta = \frac{\delta}{2}\). Le Cam's two point method will be used to prove the desired lower bound. Let \(S := \{1,...,s\} \subset [p]\) and \(T = S^c\). Note that \(|T| = p - s \leq s\) since \(s \geq \frac{p}{2}\). Define \(\theta_0 = \frac{c_\delta \varepsilon}{\sqrt{s}}\mathbf{1}_S\) and \(\theta_1 = -\frac{c_\delta \varepsilon}{\sqrt{s}}\mathbf{1}_T\). Note \(\theta_0, \theta_1 \in ||\theta||_0 \leq s\) and \(\theta_0 - \theta_1 = \frac{c_\delta\varepsilon}{\sqrt{s}}\mathbf{1}_p\). Specifically, we have \(||\theta_0 - \theta_1||^2 = \frac{c_\delta^2\varepsilon^2p}{s} \geq c_\delta^2\varepsilon^2\). By Le Cam's two point method (e.g. Proposition \ref{prop:reduce_to_test}) and the Neyman-Pearson lemma, it follows 
            \begin{equation*}
                \inf_{\hat{\theta}} \sup_{||\theta||_0 \leq s} P_{\theta, \gamma}\left\{||\hat{\theta} - \theta||^2 \geq \frac{c_\delta^2}{4}\varepsilon^2 \right\} \geq 1 - \dTV(P_{\theta_0, \gamma}, P_{\theta_1,\gamma}).
            \end{equation*}
            Suppose \(\gamma < 1\). Letting \(\Omega^{-1} = (1-\gamma)I_p + \gamma \mathbf{1}_p\mathbf{1}_p^\intercal\), we have by Pinsker's inequality and Lemma \ref{lemma:precision} 
            \begin{align*}
                \dTV(P_{\theta_0, \gamma}, P_{\theta_1, \gamma}) &\leq \sqrt{\frac{\dKL(P_{\theta_1, \gamma} || P_{\theta_0, \gamma})}{2}} \\
                &\leq \frac{\sqrt{\langle \theta_1 - \theta_0, \Omega (\theta_1 - \theta_0)\rangle}}{2} \\
                &= \frac{c_\delta \varepsilon}{2\sqrt{s}} \cdot \sqrt{\frac{p}{1-\gamma+\gamma p}} \\
                &\leq \delta
            \end{align*}
            which yields the desired result. It remains to handle the case \(\gamma = 1\). Consider for any \(v \in \R^p\) we have \(X - \bar{X}\mathbf{1}_p \indep \bar{X}\mathbf{1}_p\) for \(X \sim P_{v, 1}\). Therefore, for any \(\mu, \mu' \in \R^p\) it follows \(\dTV(P_{\mu, 1}, P_{\mu', 1}) \leq \mathbbm{1}_{\left\{\mu - \bar{\mu}\mathbf{1}_p = \mu' - \bar{\mu}'\mathbf{1}_p\right\}} + \dTV(Q_\mu, Q_{\mu'})\) where \(Q_v = N\left(\bar{v}, 1\right)\) for \(v \in \R^p\). Consider \(\theta_0 - \bar{\theta}_0\mathbf{1}_p = \theta_1 - \bar{\theta}_1\mathbf{1}_p\) as well as \(\bar{\theta}_0 = \frac{c_\delta \varepsilon}{\sqrt{s}} \cdot \frac{s}{p}\) and \(\bar{\theta}_1 = -\frac{c_\delta\varepsilon}{\sqrt{s}} \cdot \frac{p-s}{p}\). Therefore by Pinsker's inequality and \(s \geq \frac{p}{2}\) we have
            \begin{align*}
                \dTV(P_{\theta_0, 1}, P_{\theta_1,1}) &\leq \dTV(Q_{\theta_0}, Q_{\theta_1}) \\
                &= \dTV\left(N\left(\frac{c_\delta \varepsilon}{\sqrt{s}} \cdot \frac{s}{p}, 1\right), N\left( -\frac{c_\delta\varepsilon}{\sqrt{s}} \cdot \frac{p-s}{p}, 1\right)\right) \\
                &\leq \sqrt{\frac{c_\delta^2\varepsilon^2}{2s}} \\
                &\leq \sqrt{\frac{c_\delta^2 \varepsilon^2}{p}} \\
                &\leq \delta
            \end{align*}
            as desired. 
        \end{proof}
    
        \begin{proof}[Proof of Proposition \ref{prop:dense_thetabar_lbound}]
            Let \(c_\delta = \frac{1}{2}\left(c_{\delta, 1} \vee c_{\delta, 2}^2/4\right)\) where \(c_{\delta, 1}\) and \(c_{\delta, 2}\) are the constants in Proposition \ref{prop:sparse_lbound} and Lemma \ref{lemma:dense_theta_lbound_lemma} respectively. Since \(2c_\delta \geq c_{\delta, 1}\) and \(2c_\delta \geq c_{\delta,2}^2/4\), it follows by Proposition \ref{prop:sparse_lbound} and Lemma \ref{lemma:dense_theta_lbound_lemma} that 
            \begin{align*}
                &\inf_{\hat{\theta}} \sup_{||\theta||_0 \leq s} P_{\theta, \gamma}\left\{ ||\hat{\theta} - \theta||^2 \geq c_\delta p\right\}\\
                &\geq \inf_{\hat{\theta}} \sup_{||\theta||_0 \leq s} P_{\theta, \gamma}\left\{ ||\hat{\theta} - \theta||^2 \geq c_{\delta} (1-\gamma)p + c_{\delta}(1-\gamma+\gamma p) \right\} \\
                &\geq \left(\inf_{\hat{\theta}} \sup_{||\theta||_0 \leq s} P_{\theta, \gamma}\left\{ ||\hat{\theta} - \theta||^2 \geq 2c_{\delta} (1-\gamma)p \right\}  \right) \wedge \left( \inf_{\hat{\theta}} \sup_{||\theta||_0 \leq s} P_{\theta, \gamma}\left\{ ||\hat{\theta} - \theta||^2 \geq 2c_{\delta} (1-\gamma+\gamma p) \right\} \right) \\
                &\geq 1 - \delta
            \end{align*}
            as desired. 
        \end{proof}

    \subsection{Adaptation to sparsity and correlation}
    This section contains the proofs for the results stated in Section \ref{section:adaptation}.
    \subsubsection{Correlation estimation}    
    \begin{proof}[Proof of Proposition \ref{prop:correlation_estimation}]
        For ease, define \(\mathcal{I} = \supp(\theta)^c\). Denote the event \(\mathcal{E} := \bigcup_{r=1}^{m} \left\{E_r \subset \mathcal{I}\right\}\). By independence, 
        \begin{equation*}
            P_{\theta, \gamma}\left(\mathcal{E}^c\right) = \left(P_{\theta, \gamma}\left\{E_1 \not\subset \mathcal{I}\right\}\right)^m = \left(1 - \frac{\binom{|\mathcal{I}|}{\ell}}{\binom{p}{\ell}}\right)^m.
        \end{equation*}
        Direct calculation yields 
        \begin{align*}
            \frac{\binom{|\mathcal{I}|}{\ell}}{\binom{p}{\ell}} = \frac{(p-\ell)!}{(|\mathcal{I}| - \ell)!} \cdot \frac{|\mathcal{I}|!}{p!} = \frac{(p-\ell)(p-\ell-1) \cdot \cdot \cdot (|\mathcal{I}| - \ell+1)!}{p(p-1)\cdot\cdot\cdot(|\mathcal{I}| + 1)} = \frac{|\mathcal{I}|(|\mathcal{I}| - 1) \cdot\cdot\cdot (|\mathcal{I}| - \ell + 1)}{p(p-1)\cdot\cdot\cdot(p-\ell+1)}.
        \end{align*}
        Since \(2 < \ell \lesssim \log p\) and \(|\mathcal{I}| \geq \frac{p}{2}\), it follows from the above display \(\frac{\binom{|\mathcal{I}|}{\ell}}{\binom{p}{\ell}} \geq \frac{(p/3)^\ell}{p^\ell} = 3^{-\ell}\). To summarize, we have shown \(P_{\theta, \gamma}(\mathcal{E}^c) \leq (1-3^{-\ell})^m \leq \exp\left(-m3^{-\ell}\right)\). Now let us examine the behavior of \(1-\hat{\gamma}\) on the event \(\mathcal{E}\). On \(\mathcal{E}\), there exists \(1 \leq r^* \leq m\) such that \(E_{r^*} \subset \mathcal{I}\). Therefore, 
        \begin{equation*}
            1 - \hat{\gamma} \leq \frac{1}{\ell-1} \sum_{j \in E_{r^*}} \left(X_j - \bar{X}_{E_{r^*}}\right)^2 = \frac{1-\gamma}{\ell-1}\sum_{j\in E_{r^*}} \left(Z_j - \bar{Z}_{E_{r^*}}\right)^2. 
        \end{equation*}
        Since the sequence of subsets \(\left\{E_r\right\}_{r=1}^{m}\) are drawn by the statistician independently of the data, we have  \(\sum_{j\in E_{r^*}} \left(Z_j - \bar{Z}_{E_{r^*}}\right)^2 \sim \chi^2_{\ell-1}\). By Lemma \ref{lemma:chisquare_tail}, we have for any \(L > 0\),
        \begin{align*}
            P_{\theta, \gamma}\left(\mathcal{E} \cap \left\{1-\hat{\gamma} > L (1-\gamma) \right\} \right) \leq P\left\{\frac{1}{\ell-1}\chi^2_{\ell-1} > L \right\} \leq e^{-c_L\ell}
        \end{align*}
        for some \(c_L > 0\) depending only on \(L\). 
        
        On the other hand, let us now examine the lower bound. From the independence of \(\left\{E_r\right\}_{r=1}^{m}\) and the data \(\left\{X_j\right\}_{j=1}^{p}\), it is clear \(\sum_{j\in E_r}(X_j - \bar{X}_{E_r})^2\) stochastically dominates \((1-\gamma) \sum_{j \in E_r} (Z_j - \bar{Z}_{E_r})^2\) for all \(1 \leq r \leq m\). From an application of Lemma \ref{lemma:chisquare_lower_tail}, we have for any \(L > 0\), 
        \begin{align*}
            P_{\theta, \gamma}\left\{1-\hat{\gamma} \leq (1-\gamma) L^{-1} \right\} &\leq \sum_{r=1}^{m} P_{\theta, \gamma}\left\{\frac{1}{\ell-1} \sum_{j\in E_r} (X_j - \bar{X}_{E_r})^2 \leq (1-\gamma) L^{-1} \right\} \\
            &\leq \sum_{r=1}^{m} P_{\theta, \gamma}\left\{\frac{1}{\ell-1} \sum_{j\in E_r} (Z_j - \bar{Z}_{E_r})^2 \leq L^{-1}\right\} \\
            &\leq m(eL^{-1})^{\frac{\ell-1}{2}} \\
            &\leq m e^{-c_L' \ell}
        \end{align*}
        where \(c_L' > 0\) depends only on \(L\). Note \(c_L'\) can be made to be as large as needed by taking \(L\) sufficiently large. Collecting our bounds, we have shown
        \begin{equation*}
            P_{\theta, \gamma}\left\{ L^{-1} \leq \frac{1-\hat{\gamma}}{1-\gamma} \leq L\right\} \geq 1 - e^{-m3^{-\ell}} - e^{-c_L \ell} - me^{-c_L' \ell}. 
        \end{equation*}
         Since \(\ell = \left\lceil C_2 \log p\right\rceil\) and \(m = \left\lceil p^{C_1}\right\rceil\), we choose \(C_1, C_2\) depending only on \(\delta\) and we choose \(L\) sufficiently large (to make \(c_L'\) large enough) depending only on \(C_1,C_2\) to give
        \begin{equation*}
            P_{\theta, \gamma}\left\{ L^{-1} \leq \frac{1-\hat{\gamma}}{1-\gamma} \leq L\right\} \geq 1 - \delta.
        \end{equation*}
        Taking infimum over \(||\theta||_0 \leq s\) and \(\gamma \in [0,1)\) yields the desired result.  
    \end{proof}
    
    \subsubsection{Sparse regression}
    In this section, we show Theorem 4.2 of \cite{bellec_slope_2018} holds with the choice of design matrix \(M = \sqrt{p}\left(I_p - \frac{1}{p}\mathbf{1}_p\mathbf{1}_p^\intercal\right)\) even when the noise is correlated with covariance matrix proportional to \(I_p - \frac{1}{p}\mathbf{1}_p\mathbf{1}_p^\intercal\). The proof is almost exactly the same. As the authors of \cite{bellec_slope_2018} note, the conclusions of Theorem 4.2 of \cite{bellec_slope_2018} hold deterministically on the event (4.1) in \cite{bellec_slope_2018}. In particular, no matter what the problem parameters are, the conclusions hold provided (4.1) is in force. Hence, it suffices to show the event (4.1) holds with high probability under the correlated noise.
    
    \begin{proposition}\label{prop:correlated_event}
        Let \(\delta_0 \in (0, 1)\) and let \(H(\cdot), G(\cdot)\) be defined in (2.8) of \cite{bellec_slope_2018}. If \(\xi \sim N\left(0, \sigma^2\left(I_p - \frac{1}{p}\mathbf{1}_p\mathbf{1}_p^\intercal\right)\right)\), then the event
        \begin{equation*}
            \left\{ \frac{1}{p} \xi^\intercal M u \leq H(u) \vee G(u) \text{ for all } u \in \R^p \right\}
        \end{equation*}
        is of probability at least \(1 - \delta_0/2\) where \(M = \sqrt{p}\left(I_p - \frac{1}{p}\mathbf{1}_p\mathbf{1}_p^\intercal\right)\). 
    \end{proposition}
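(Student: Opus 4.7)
The plan is to couple the degenerate noise $\xi$ with a genuinely i.i.d.\ Gaussian vector so that the linear functional $u \mapsto \xi^\intercal M u$ is pathwise identical to its i.i.d.\ counterpart, and then simply invoke the probability bound for event (4.1) already established in \cite{bellec_slope_2018}. The crucial structural observation is that $M\mathbf{1}_p = 0$, because $M = \sqrt{p}(I_p - \frac{1}{p}\mathbf{1}_p\mathbf{1}_p^\intercal)$ is (up to the factor $\sqrt{p}$) the orthogonal projection onto $\mathrm{span}\{\mathbf{1}_p\}^\perp$. Hence $M$ is blind to any fluctuation of $\xi$ in the $\mathbf{1}_p$-direction, which is exactly the direction along which $\xi$ is degenerate.

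To implement this, I would draw $W \sim N(0, \sigma^2/p)$ independently of $\xi$ and define $\tilde{\xi} := \xi + W \mathbf{1}_p$. A direct covariance calculation gives
\begin{equation*}
  \mathrm{Cov}(\tilde{\xi}) \;=\; \sigma^2\left(I_p - \tfrac{1}{p}\mathbf{1}_p\mathbf{1}_p^\intercal\right) + \tfrac{\sigma^2}{p}\mathbf{1}_p\mathbf{1}_p^\intercal \;=\; \sigma^2 I_p,
\end{equation*}
so $\tilde{\xi} \sim N(0, \sigma^2 I_p)$ is i.i.d. Gaussian, while $M\tilde{\xi} = M\xi + W\, M\mathbf{1}_p = M\xi$ almost surely. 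Consequently, for every $u \in \R^p$,
\begin{equation*}
  \tfrac{1}{p}\xi^\intercal M u \;=\; \tfrac{1}{p}(M\xi)^\intercal u \;=\; \tfrac{1}{p}(M\tilde{\xi})^\intercal u \;=\; \tfrac{1}{p}\tilde{\xi}^\intercal M u,
\end{equation*}
under this coupling, so the event in the proposition coincides pathwise with the event $\{\frac{1}{p}\tilde{\xi}^\intercal M u \leq H(u) \vee G(u) \text{ for all } u \in \R^p\}$.

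This latter event is exactly event (4.1) of \cite{bellec_slope_2018} for the design $M$ with i.i.d.\ Gaussian noise $\tilde{\xi}$, and the choice of $H$ and $G$ in (2.8) of \cite{bellec_slope_2018} is precisely engineered so that this event has probability at least $1 - \delta_0/2$; invoking their probabilistic lemma finishes the argument. There is no real technical obstacle: the only point that needs care is the observation that the degeneracy of $\xi$ along $\mathbf{1}_p$ is harmless because $M$ annihilates that direction, allowing a free i.i.d.\ lift by adding an independent scalar multiple of $\mathbf{1}_p$.
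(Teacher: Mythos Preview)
Your proposal is correct and takes essentially the same approach as the paper: both arguments couple \(\xi\) with an i.i.d.\ \(N(0,\sigma^2 I_p)\) vector so that \(\xi^\intercal M u\) coincides pathwise with its i.i.d.\ counterpart, then invoke Theorem~4.1 of \cite{bellec_slope_2018}. The only cosmetic difference is that the paper realizes \(\xi = (I_p - \tfrac{1}{p}\mathbf{1}_p\mathbf{1}_p^\intercal)g\) as a projection of \(g \sim N(0,\sigma^2 I_p)\) and uses idempotency, whereas you lift \(\xi\) up to \(\tilde{\xi}\) by adding independent noise along \(\mathbf{1}_p\); both hinge on the same fact that \(M\) annihilates the \(\mathbf{1}_p\) direction.
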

    \begin{proof}
        First, consider we can write \(\xi = \left(I_p - \frac{1}{p}\mathbf{1}_p\mathbf{1}_p^\intercal\right)g\) where \(g \sim N(0, \sigma^2 I_p)\). Therefore, 
        \begin{align*}
            \xi^\intercal M u &= \sqrt{p} \xi^\intercal \left(I_p - \frac{1}{p}\mathbf{1}_p\mathbf{1}_p^\intercal\right)u \\
            &= \sqrt{p} g^\intercal \left(I_p - \frac{1}{p}\mathbf{1}_p\mathbf{1}_p^\intercal\right)^2u \\
            &= \sqrt{p} g^\intercal \left(I_p - \frac{1}{p}\mathbf{1}_p\mathbf{1}_p^\intercal\right)u \\
            &= g^\intercal M u
        \end{align*}
        where we have used \(I_p - \frac{1}{p}\mathbf{1}_p\mathbf{1}_p^\intercal\) is symmetric and idempotent. By Theorem 4.1 of \cite{bellec_slope_2018}, it follows the event 
        \begin{equation*}
            \left\{ \frac{1}{p} g^\intercal M u \leq H(u) \vee G(u) \text{ for all } u \in \R^p\right\}
        \end{equation*}
        has probability at least \(1 - \delta_0/2\), as desired. 
    \end{proof}
    
    The following statement is essentially Corollary 4.3 of \cite{bellec_slope_2018} applied to \(M = \sqrt{p}\left(I_p - \frac{1}{p}\mathbf{1}_p\mathbf{1}_p^\intercal\right)\), with a small modification of the statement to emphasize a random penalty can be accommodated.
    
    \begin{corollary}\label{corollary:bellec_prob_correlation}
        Let \(1 \leq s \leq p\). Suppose \(y \sim N\left(f, \sigma^2 \left(I_p - \frac{1}{p}\mathbf{1}_p\mathbf{1}_p^\intercal\right)\right)\) and
        \begin{equation*}
            \hat{\beta} \in \argmin_{\beta \in \R^p} \left\{ \frac{1}{p} ||M\beta - y||^2 + 2\lambda ||\beta||_1\right\}
        \end{equation*}
        where \(M = \sqrt{p}\left(I_p - \frac{1}{p}\mathbf{1}_p\mathbf{1}_p^\intercal\right)\) and \(\lambda > 0\) is potentially random. If the \(\SRE(s, 7)\) condition holds, then 
        \begin{align*}
            &P\left(\left\{\frac{1}{p}||M\hat{\beta} - f||^2 > \min_{||\beta||_0 \leq s} \frac{1}{p}||M\beta - f||^2 + \frac{49\lambda^2 s}{16\kappa(s, 7)^2} \right\} \cap \left\{\lambda \geq  2(4+\sqrt{2})\sigma \sqrt{\frac{\log(2ep/s)}{p}} \right\} \right) \\
            &\leq \frac{1}{2}\left(\frac{s}{2ep}\right)^{\frac{s}{\kappa(s, 7)^2}}.
        \end{align*}
        Moreover, if \(f = M\beta^*\) for some \(\beta^* \in \R^p\) with \(||\beta^*||_0 \leq s\), then 
        \begin{equation*}
            P\left( \left\{  ||\hat{\beta} - \beta^*|| > \frac{49\lambda s^{1/2}}{8\kappa(s, 7)^{2}}\right\} \cap \left\{\lambda \geq  2(4+\sqrt{2})\sigma \sqrt{\frac{\log(2ep/s)}{p}} \right\} \right) \leq \frac{1}{2}\left(\frac{s}{2ep}\right)^{\frac{s}{\kappa(s, 7)^2}}
        \end{equation*}
    \end{corollary}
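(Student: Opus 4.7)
The plan is to reduce to the deterministic part of the Bellec-Lecu\'e-Tsybakov analysis and then invoke Proposition \ref{prop:correlated_event} to supply the high-probability event under the correlated noise model. As the authors of \cite{bellec_slope_2018} emphasize, Theorem 4.2 of \cite{bellec_slope_2018} is a \emph{deterministic} statement: its conclusions follow from (i) the $\SRE(s,7)$ condition on the design, (ii) the lower bound on the tuning parameter $\lambda$, and (iii) the noise event (4.1) of \cite{bellec_slope_2018}, namely
\[
\Omega^* := \left\{ \tfrac{1}{p}\xi^\intercal M u \leq H(u) \vee G(u) \text{ for all } u \in \R^p\right\}.
\]
The only thing stochastic in Theorem 4.2 is $\Omega^*$, and conditionally on $\Omega^*$ the conclusions hold for every realization of $\lambda$ exceeding the stated threshold.

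First, I will choose the confidence parameter $\delta_0$ in Proposition \ref{prop:correlated_event} so that the threshold for $\lambda$ from Theorem 4.1/4.2 of \cite{bellec_slope_2018} matches the one appearing in the statement of the corollary, i.e. $2(4+\sqrt{2})\sigma\sqrt{\log(2ep/s)/p}$. Tracking the constants in the proof of Corollary 4.3 of \cite{bellec_slope_2018}, this corresponds to the choice $\delta_0 = (s/(2ep))^{s/\kappa(s,7)^2}$. Proposition \ref{prop:correlated_event} then shows that $P(\Omega^*) \geq 1 - \tfrac{1}{2}(s/(2ep))^{s/\kappa(s,7)^2}$ even though the noise $\xi$ here is $N(0,\sigma^2(I_p-\tfrac{1}{p}\mathbf{1}_p\mathbf{1}_p^\intercal))$ rather than isotropic. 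The reason this extends so cleanly is exactly the computation in Proposition \ref{prop:correlated_event}: writing $\xi = (I_p-\tfrac{1}{p}\mathbf{1}_p\mathbf{1}_p^\intercal)g$ with $g \sim N(0,\sigma^2I_p)$ and using idempotence of the projector yields $\xi^\intercal Mu = g^\intercal Mu$, so the standard Gaussian concentration of Theorem 4.1 of \cite{bellec_slope_2018} applies verbatim.

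Next, on the event $\Omega^*$, the deterministic part of Theorem 4.2 of \cite{bellec_slope_2018} gives, for every realization of $\lambda$ with $\lambda \geq 2(4+\sqrt{2})\sigma\sqrt{\log(2ep/s)/p}$,
\[
\tfrac{1}{p}\|M\hat{\beta}-f\|^2 \leq \min_{\|\beta\|_0\leq s} \tfrac{1}{p}\|M\beta - f\|^2 + \frac{49\lambda^2 s}{16\kappa(s,7)^2}.
\]
This handles the possibility of $\lambda$ being random: we are not claiming anything on the sample paths where $\lambda$ falls below the threshold, and on the paths where $\lambda$ meets the threshold, the deterministic bound runs through for that particular $\lambda$. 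Consequently the event
\[
\left\{\tfrac{1}{p}\|M\hat{\beta}-f\|^2 > \min_{\|\beta\|_0\leq s}\tfrac{1}{p}\|M\beta-f\|^2 + \tfrac{49\lambda^2 s}{16\kappa(s,7)^2}\right\} \cap \left\{\lambda \geq 2(4+\sqrt{2})\sigma\sqrt{\tfrac{\log(2ep/s)}{p}}\right\}
\]
is contained in $(\Omega^*)^c$ and therefore has probability at most $\tfrac{1}{2}(s/(2ep))^{s/\kappa(s,7)^2}$, yielding the first claim.

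Finally, for the second claim, when $f = M\beta^*$ with $\|\beta^*\|_0 \leq s$, the minimum in the oracle bound vanishes (take $\beta=\beta^*$), so $\tfrac{1}{p}\|M(\hat\beta-\beta^*)\|^2 \leq 49\lambda^2 s/(16\kappa(s,7)^2)$ on $\Omega^*$. Since $\beta^*-\hat{\beta}$ lies in the SRE cone $\mathcal{C}_{\SRE}(s,7)$ (this is shown in the proof of Theorem 4.2 of \cite{bellec_slope_2018} via the KKT conditions, and depends only on $\Omega^*$ and the lower bound on $\lambda$), the SRE condition converts the prediction error into the estimation error $\|\hat\beta-\beta^*\|^2 \leq \frac{1}{\kappa(s,7)^2}\cdot\frac{1}{p}\|M(\hat\beta-\beta^*)\|^2 \leq (49\lambda)^2 s/(8\kappa(s,7)^2)^2$, matching the claimed bound. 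The only non-mechanical step is verifying that nothing in the original proof of Theorem 4.2 of \cite{bellec_slope_2018} uses independence of the coordinates of $\xi$ beyond what is encoded in $\Omega^*$; a quick inspection confirms this, so the whole argument transfers to the correlated setting.
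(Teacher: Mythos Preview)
Your proof is correct and follows essentially the same approach as the paper: you reduce to the deterministic Theorem~4.2 of \cite{bellec_slope_2018}, invoke Proposition~\ref{prop:correlated_event} to certify the noise event (4.1) under the degenerate covariance, and observe that the deterministic bound then applies for every realization of $\lambda$ above the threshold. The paper's own proof is a terse two-sentence version of exactly this argument; your additional remarks on the choice of $\delta_0$ and the cone argument for the estimation bound are consistent with how Corollary~4.3 of \cite{bellec_slope_2018} is derived.
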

    \begin{proof}
        With the choice of \(\gamma = \frac{1}{2}\) in Theorem 4.2 of \cite{bellec_slope_2018}, note the conclusion of Theorem 4.2 is a deterministic result which holds whenever the event (4.1) holds and for any value of penalty of at least \(2(4+\sqrt{2})\sigma\sqrt{\log(2ep/s)/p}\). In the setting of correlated noise, it follows from Proposition \ref{prop:correlated_event} that the same conclusion continues to hold on the intersection of the event (4.1) and the event \(\{\lambda > 2(4+\sqrt{2})\sigma \sqrt{\log(2ep/s)/p}\}\). 
    \end{proof}
    
    \begin{proof}[Proof of Proposition \ref{prop:lasso_lepski}]
        Let \(\tilde{s}\) denote the smallest element in \(\mathcal{S}\) which is greater than or equal to \(s^*\). For ease of notation, set \(r(s) = \sqrt{13s\hat{\lambda}^2(s)}\). Define the event 
        \begin{align*}
            \mathcal{G} := &\left\{ \left|\left| \hat{v}(s) - (\theta - \bar{\theta}\mathbf{1}_p) \right|\right| \leq r(s)  \text{ for all } s \geq \tilde{s} \text{ such that } s \in \mathcal{S} \setminus \{p\} \right\} \\
            &\cap \left\{ ||\hat{v}(p) - (\theta - \bar{\theta}\mathbf{1}_p)|| \leq 2\delta^{-1/2}\sqrt{(1-\gamma)p}\right\}.
        \end{align*}
        Let us break the analysis into two cases. \newline 
    
        \noindent \textbf{Case 1:} Suppose \(s^* < \frac{p}{2}\). By Proposition \ref{prop:correlation_estimation}, the event \(\mathcal{E}_{\text{var}} = \left\{ L_\eta^{-1} \leq \frac{1-\hat{\gamma}}{1-\gamma} \leq L_\eta\right\}\) has \(P_{\theta,\gamma}\)-probability of at least \(1-\eta\) uniformly over \(||\theta||_0 < \frac{p}{2}\) and \(\gamma \in [0, 1)\). 
        Consider 
        \begin{align*}
            &P_{\theta, \gamma}\left\{ ||\hat{v} - (\theta - \bar{\theta}\mathbf{1}_p)||^2 > C_{\delta, \eta}(1-\gamma) s^*\log\left(\frac{ep}{s^*}\right) \right\} \\
            &\leq P_{\theta, \gamma}\left(\left\{ ||\hat{v} - (\theta - \bar{\theta}\mathbf{1}_p)||^2 > C_{\delta, \eta}(1-\gamma) s^*\log\left(\frac{ep}{s^*}\right) \right\} \cap \mathcal{G} \cap \mathcal{E}_{\text{var}}\right) + P_{\theta, \gamma}(\mathcal{G}^c \cap \mathcal{E}_{\text{var}}) + \eta.
        \end{align*}
        Let us examine the first term. If \(s^* < \frac{p}{2 \cdot 784}\), it follows by definition of \(\tilde{s}\) and \(\mathcal{S}\) that we have \(s^* \leq \tilde{s} \leq 2s^*\) and so on the event \(\mathcal{E}_{\text{var}}\) it follows 
        \begin{equation*}
            r(\tilde{s}) \leq r(2s^*) \leq \tilde{C}_{\delta, \eta} \sqrt{(1-\gamma) s^*\log\left(\frac{ep}{s^*}\right)}
        \end{equation*}
        where \(\tilde{C}_{\delta, \eta} > 0\) is a constant depending only on \(\delta\) and \(\eta\) whose value may change from instance to instance. On the other hand, if \(\frac{p}{2} > s^* \geq \frac{p}{2 \cdot 784}\), then \(r\left(2^{\lfloor \log_2 (p/784) \rfloor}\right) \vee r(\tilde{s}) \leq r(p) \leq 8(4+\sqrt{2})L_\eta\sqrt{(1-\gamma)p \log(2e)} \leq \tilde{C}_{\delta, \eta} \sqrt{(1-\gamma) s^* \log\left(\frac{ep}{s^*}\right)}\) on the event \(\mathcal{E}_{\text{var}}\). Note, on the event \(\mathcal{G}\), if \(\tilde{s} \in \mathcal{S} \setminus\{p\}\), then we have \(s' \leq \tilde{s}\) and so \(\hat{v} \in B\left(\hat{v}(\tilde{s}), r(\tilde{s})\right)\). On the other hand, if \(\tilde{s} = p\) then \(s^* \geq 2^{\lfloor \log_2 (p/784) \rfloor}\) by definition of \(\tilde{s}\). Consider then that either \(s'\) does not exist or \(s' \leq 2^{\lfloor \log_2 (p/784) \rfloor}\). If \(s'\) does not exist, then \(\hat{v} = \hat{v}(p)\) and so \(\hat{v} \in B(\hat{v}(\tilde{s}), r(\tilde{s}))\). If \(s'\) does exist, then \(\hat{v} \in B\left(\hat{v}\left(2^{\lfloor \log_2 (p/784) \rfloor}\right), r\left(2^{\lfloor \log_2 (p/784) \rfloor}\right)\right)\). Therefore, on \(\mathcal{G} \cap \mathcal{E}_{\text{var}}\) it is the case that
        \begin{align*}
            ||\hat{v} - (\theta - \bar{\theta}\mathbf{1}_p)|| \leq \tilde{C}_{\delta, \eta} \sqrt{(1-\gamma) s^*\log\left(\frac{2ep}{s^*}\right)}.
        \end{align*}
        Therefore, we have 
        \begin{equation*}
            P_{\theta, \gamma}\left(\left\{ ||\hat{v} - (\theta - \bar{\theta}\mathbf{1}_p)||^2 > C_{\delta, \eta}(1-\gamma) s^*\log\left(\frac{ep}{s^*}\right) \right\} \cap \mathcal{G} \cap \mathcal{E}_{\text{var}}\right) = 0
        \end{equation*}
        by taking \(C_{\delta, \eta} > 0\) sufficiently large. 
    
        Let us now turn to bounding \(P_{\theta, \gamma}(\mathcal{G}^c \cap \mathcal{E}_{\text{var}})\). Union bound yields 
        \begin{align*}
            P_{\theta, \gamma}(\mathcal{G}^c \cap \mathcal{E}_{\text{var}}) &\leq P_{\theta, \gamma} \left\{||\hat{v}(p) - (\theta - \bar{\theta}\mathbf{1}_p)||^2 > 4\delta^{-1}(1-\gamma)p\right\} \\
            &+ \sum_{\substack{s \in \mathcal{S} \setminus \{p\} \\ s \geq \tilde{s}}} P_{\theta, \gamma}\left(\left\{ ||\hat{v}(s) - (\theta - \bar{\theta}\mathbf{1}_p)||^2 > r(s)^2\right\} \cap \mathcal{E}_{\text{var}}\right). 
        \end{align*}
        To bound each term in the sum, we will apply Corollary \ref{corollary:bellec_prob_correlation} since \(||\theta||_0 \leq s^*\) implies \(||\theta||_0 \leq s\) for \(s \geq \tilde{s}\). Note on \(\mathcal{E}_{\text{var}}\) we have 
        \begin{align*}
            \hat{\lambda}(s) \geq 2(4+\sqrt{2}) \sqrt{(1-\gamma) \log\left(\frac{2ep}{s}\right)} \geq 2(4+\sqrt{2})\sqrt{\frac{\sigma^2 \log\left(2ep/s\right)}{p}}.
        \end{align*}
        Since \(s \in \mathcal{S} \setminus \{p\}\) implies \(s \leq \frac{p}{784}\), it is clear \(M\) satisfies the \(\SRE(s, 7)\) condition by Lemma \ref{lemma:M_SRE} with \(\kappa(s, 7) \geq \frac{1}{2}\). It is also clear from the definition of \(\kappa(s, 7)\) and \(M\) that \(\kappa(s, 7)^2 \leq 1\). Hence, by Corollary \ref{corollary:bellec_prob_correlation} we have 
        \begin{align*}
            P_{\theta, \gamma}\left(\left\{||\hat{v}(s) - (\theta - \bar{\theta}\mathbf{1}_p)||^2 > r(s)^2\right\} \cap \mathcal{E}_{\text{var}}\right) &\leq P_{\theta, \gamma}\left(\left\{ ||\hat{v}(s) - (\theta - \bar{\theta}\mathbf{1}_p)||^2 > \frac{49\hat{\lambda}(s)^2 s}{16\kappa(s, 7)^2} \right\} \cap \mathcal{E}_{\text{var}}\right) \\
            &\leq \frac{1}{2}\left(\frac{s}{2ep}\right)^{\frac{s}{\kappa(s, 7)^2}} \\
            &\leq \left(\frac{s}{p}\right)^s. 
        \end{align*}
        Therefore, 
        \begin{align*}
            \sum_{\substack{s \in \mathcal{S} \setminus \{p\} \\ s \geq \tilde{s}}} P_{\theta, \gamma}\left(\left\{ ||\hat{v}(s) - (\theta - \bar{\theta}\mathbf{1}_p)||^2 > r(s)^2\right\} \cap \mathcal{E}_{\text{var}}\right) &\leq \sum_{\substack{s \in \mathcal{S} \setminus \{p\} \\ s \geq \tilde{s}}} \left(\frac{s}{p}\right)^s \\
            &= \sum_{\substack{s \in \mathcal{S} \setminus \{p\} \\ s \geq \tilde{s}}} \exp\left(-s \log\left(\frac{p}{s}\right)\right) \\
            &\leq C \exp\left(-s^*\log\left(\frac{p}{s^*}\right)\right) \\
            &\leq C \exp\left(-\log p\right) \\
            &\leq \frac{\delta}{2}.
        \end{align*}
        where \(C > 0\) is a universal constant. Here, we have used \(p\) is sufficiently large. Likewise, consider by Markov's inequality 
        \begin{align*}
            P_{\theta, \gamma} \left\{||\hat{v}(p) - (\theta - \bar{\theta}\mathbf{1}_p)||^2 > 4\delta^{-1}(1-\gamma)p\right\} \leq \frac{2(1-\gamma)(p-1)}{4\delta^{-1}(1-\gamma)p} \leq \frac{\delta}{2}.
        \end{align*}
        To summarize, we have shown \(P_{\theta, \gamma}(\mathcal{G}^c \cap \mathcal{E}_{\text{var}}) \leq \delta\). Hence, we have shown
        \begin{equation*}
            \sup_{\substack{||\theta||_0 \leq s^* \\ \gamma \in [0, 1)}} P_{\theta, \gamma}\left\{ ||\hat{v} - (\theta - \bar{\theta}\mathbf{1}_p)||^2 > C_{\delta, \eta} (1-\gamma) s^*\log\left(\frac{ep}{s^*}\right) \right\} \leq \delta + \eta. 
        \end{equation*}
        The analysis for this case is complete. \newline 
    
        \noindent \textbf{Case 2:} Suppose \(s^* \geq \frac{p}{2}\). Consider 
        \begin{align*}
            &P_{\theta, \gamma}\left\{||\hat{v} - (\theta - \bar{\theta}\mathbf{1}_p)||^2 > C_{\delta, \eta}(1-\gamma) s^*\log\left(\frac{ep}{s^*}\right)\right\}  \\
            &\leq P_{\theta, \gamma}\left(\left\{||\hat{v} - (\theta - \bar{\theta}\mathbf{1}_p)||^2 > C_{\delta, \eta}(1-\gamma) s^*\log \left(\frac{ep}{s^*}\right)\right\} \cap \mathcal{G}\right) + P_{\theta, \gamma}\left(\mathcal{G}^c\right).
        \end{align*}
        Since \(\tilde{s} \geq s^* \geq \frac{p}{2}\), it follows by definition of \(\mathcal{S}\) that \(\tilde{s} = p\). Hence, from our calculation in Case 1 we have \(P_{\theta, \gamma}(\mathcal{G}^c) = P_{\theta, \gamma} \left\{||\hat{v}(p) - (\theta - \bar{\theta}\mathbf{1}_p)||^2 > 4\delta^{-1}(1-\gamma)p\right\} \leq \delta\). Further, note since \(s^* \geq \frac{p}{2}\) we have \(  4\delta^{-1}(1-\gamma)p \leq \tilde{C}_{\delta, \eta}\sqrt{(1-\gamma) s^* \log\left(\frac{ep}{s^*}\right)}\) where \(\tilde{C}_{\delta, \eta}\) is a constant depending only on \(\delta\) and \(\eta\). Taking \(C_{\delta, \eta}\) sufficiently large concludes the proof following the same reasoning employed in Case 1.
    \end{proof}
    \subsubsection{Linear functional: kernel mode estimator}
    
    \begin{proof}[Proof of Proposition \ref{prop:linear_lepski}]
        Recall from Section \ref{section:kme_adaptation} we can write \(Y = (\bar{X} - X_i) + \frac{\sqrt{1-\gamma}}{\sqrt{p}} \xi\) where \(\xi \sim N(0, 1)\). Let \(\tilde{s}\) denote the smallest element in \(\mathcal{S}\) which is greater than or equal to \(s^*\). Define the event
        \begin{equation*}
            \mathcal{G} = \left\{|\hat{T}(s) - \bar{\theta}| \leq r(s) \text{ for all } s \geq \tilde{s} \text{ such that } s \in \mathcal{S}\right\}
        \end{equation*}
        We break up the analysis into two cases. \newline
    
        \noindent \textbf{Case 1:} Suppose \(s^* < \left\lfloor\frac{p}{2}\right\rfloor\). By Proposition \ref{prop:correlation_estimation}, there exists \(L_\eta \geq 1\) such that the event \(\mathcal{E}_{\text{var}} = \left\{L_\eta^{-1} \leq \frac{1-\hat{\gamma}}{1-\gamma} \leq L_\eta\right\} \cap \left\{|\xi| \leq L_\eta\right\}\) has \(P_{\theta, \gamma}\)-probability of at least \(1-\eta\) uniformly over \(||\theta||_0 < \frac{p}{2}\) and \(\gamma \in [0, 1)\). Let us now turn to bounding \(P_{\theta, \gamma}(\mathcal{G}^c \cap \mathcal{E}_{\text{var}})\), which will be useful later. Define the events 
        \begin{align*}
            \mathcal{B}_0 &:= \left\{|\hat{T}(s) - \bar{\theta}| \leq r(s) \text{ for all } \frac{p}{784} \geq s \geq \tilde{s} \text{ such that } s \in \mathcal{S} \right\}, \\
            \mathcal{B}_1 &:= \left\{|\hat{T}(s) - \bar{\theta}| \leq r(s) \text{ for all } s \geq \tilde{s} \vee \frac{p}{784} \text{ such that } s \in \mathcal{S} \text{ and } 1-\hat{\gamma} > C_\eta \log^{-1}\left(\frac{ep}{(p-2s)^2}\right)\right\}, \\
            \mathcal{B}_2 &:= \left\{|\hat{T}(s) - \bar{\theta}| \leq r(s) \text{ for all } s \geq \tilde{s} \vee \frac{p}{784} \text{ such that } s \in \mathcal{S} \text{ and } 1-\hat{\gamma} \leq C_\eta \log^{-1}\left(\frac{ep}{(p-2s)^2}\right)\right\}.
        \end{align*}
        Observe by definition of \(\hat{T}(s)\) and \(r(s)\) we have
        \begin{align*}
            P_{\theta, \gamma}(\mathcal{G}^c \cap \mathcal{E}_{\text{var}}) &\leq P_{\theta, \gamma}\left(\mathcal{B}_0^c \cap \mathcal{E}_{\text{var}}\right) + P_{\theta, \gamma}\left( \mathcal{B}_1^c\right) + P_{\theta, \gamma}\left(\mathcal{B}_2^c \cap \mathcal{E}_{\text{var}}\right) \\
            &\leq \sum_{\substack{s \in \mathcal{S} \\ \tilde{s} \leq s \leq \frac{p}{784}}}P_{\theta, \gamma} \left( \left\{ \left|\frac{1}{p}\sum_{i=1}^{p}\hat{\beta}_i(s) - \bar{\theta}\right| > r(s)\right\} \cap \mathcal{E}_{\text{var}}\right)\\
            &+ P_{\theta, \gamma}\left\{ |\hat{T}(p) - \bar{\theta}| > r(p) \right\} + \sum_{\substack{s \in \mathcal{S} \setminus \{p\} \\ s \geq \tilde{s}\vee \frac{p}{784}}} P_{\theta, \gamma}\left(\left\{ \left|\left(\argmax_{t \in \R} \hat{G}_{\hat{h}(s)}(t)\right) - \bar{\theta}\right| > r(s)\right\} \cap \mathcal{E}_{\text{var}}\right). 
        \end{align*}
        Since \(\tilde{s} \geq s^*\) and \(||\theta||_0 \leq s^*\), it follows \(||\theta||_0 \leq s\) for \(s \geq \tilde{s}\). The argument in the proof of Proposition \ref{prop:lasso_lepski} (but now using the second conclusion of Corollary \ref{corollary:bellec_prob_correlation}) along with \(||(p^{-1}\sum_{i=1}^{p} \hat{\beta}_i(s)) \mathbf{1}_p - \bar{\theta}\mathbf{1}_p||^2 \leq ||\hat{\beta}(s) - \theta||^2\) from the Pythagorean identity can be used to show 
        \begin{equation*}
            \sum_{\substack{s \in \mathcal{S} \\ \tilde{s} \leq s \leq \frac{p}{784}}}P_{\theta, \gamma} \left( \left\{ \left|\frac{1}{p}\sum_{i=1}^{p}\hat{\beta}_i(s) - \bar{\theta}\right| > r(s)\right\} \cap \mathcal{E}_{\text{var}}\right) \leq p^{-C}. 
        \end{equation*}
        for some universal constant \(C > 0\). Note since \(p\) can be taken sufficiently large depending only on \(K\), we have \(p^{-C} \leq \frac{1}{e^{K/4}-1}\). 
        
        Turning to the third term, consider by (\ref{def:kme_bypass_decorrelation}), the definition of \(\delta_s\), and the proof of Theorem \ref{thm:kme_unknown_var}, we have for \(s \in \mathcal{S} \setminus \{p\}\), 
        \begin{align*}
            \sup_{\substack{||\theta||_0 \leq s \\ \gamma \in [0, 1)}} P_{\theta, \gamma}\left(\left\{ \left|\left(\argmax_{t \in \R} \hat{G}_{\hat{h}(s)}(t)\right) - \bar{\theta}\right| > \tilde{C}_{K, \eta}\sqrt{(1-\gamma)\left(1 \vee \log\left(\frac{ep}{(p-2s)^2}\right)\right)}\right\} \cap \mathcal{E}_{\text{var}}\right) \leq \delta_s
        \end{align*}
        where \(\tilde{C}_{K, \eta} > 0\) is sufficiently large depending only on \(K\) and \(\eta\). Here, we have used \(p\) is sufficiently large depending only on \(K\). On the event \(\mathcal{E}_{\text{var}}\), by taking \(R_{K, \eta}\) sufficiently large depending only on \(K\) and \(\eta\) we have 
        \begin{equation*}
            r(s) = R_{K, \eta}\hat{h}(s) \geq\tilde{C}_{K, \eta} \sqrt{(1-\gamma)\left(1 \vee \log\left(\frac{ep}{(p-2s)^2}\right)\right)}.
        \end{equation*}
        Therefore, by definition of \(\mathcal{S}\) it follows 
        \begin{align*}
            &\sum_{\substack{s \in \mathcal{S} \setminus \{p\} \\ s \geq \tilde{s}\vee \frac{p}{784}}} P_{\theta, \gamma}\left(\left\{ \left|\left(\argmax_{t \in \R} \hat{G}_{\hat{h}(s)}(t)\right) - \bar{\theta}\right| > r(s)\right\} \cap \mathcal{E}_{\text{var}}\right) \\
            &\leq e^{-K} + \sum_{\substack{s \in \mathcal{S} \setminus \{\lfloor p/2\rfloor - 1, p\} \\ s \geq \tilde{s}\vee \frac{p}{784}}} \exp\left(-K\left(\frac{p}{(p-2s)^2} \wedge p^{1/32}\right)\right) \\
            &\leq e^{-K} + p\exp\left(-Kp^{1/32}\right) + \sum_{k=0}^{\lfloor \log_2 \sqrt{p} \rfloor} \exp\left(-\frac{Kp}{2^2(k+1)}\right) \\
            &\leq e^{-K} + p\exp\left(-Kp^{1/32}\right) + \sum_{k=0}^{\lfloor\log_2\sqrt{p}\rfloor} \exp\left(-\frac{K}{4} 2^{2\log_2\sqrt{p} - 2k}\right) \\
            &\leq e^{-K} + p\exp\left(-Kp^{1/32}\right) + \sum_{k=0}^{\infty} \exp\left(-\frac{K}{4} 2^k\right) \\
            &\leq \frac{3}{e^{K/4} - 1}. 
        \end{align*}
        Likewise, consider \(P_{\theta, \gamma}\left\{|\hat{T}(p) - \bar{\theta}| > r(p)\right\} \leq \frac{1}{e^{K/4} - 1}\) by taking \(R_{K, \eta}\) large enough since \(\bar{X} \sim N(\bar{\theta}, \gamma + \frac{1-\gamma}{p})\). Hence, we have shown 
        \begin{equation}\label{eqn:lepski_bad_event_bound}
            P_{\theta, \gamma}\left(\mathcal{G}^c \cap \mathcal{E}_{\text{var}}\right) \leq \frac{5}{e^{K/4}-1}. 
        \end{equation}
        With this probability bound in hand, let us now split the analysis into two subcases. \newline
    
        \noindent \textbf{Case 1.1:} Suppose \(s^* \leq \frac{p}{2 \cdot 784}\). By union bound, consider 
        \begin{align*}
            &P_{\theta, \gamma}\left\{|\hat{T} - \bar{\theta}| > C_{K, \eta}\sqrt{(1-\gamma) \frac{s^*}{p}\log\left(\frac{ep}{s^*}\right)}\right\} \\
            &\leq P_{\theta, \gamma}\left(\left\{|\hat{T} - \bar{\theta}| > C_{K, \eta}\sqrt{(1-\gamma) \frac{s^*}{p}\log\left(\frac{ep}{s^*}\right)}\right\} \cap \mathcal{E}_{\text{var}} \cap \mathcal{G}\right) + P_{\theta, \gamma}(\mathcal{G}^c \cap \mathcal{E}_{\text{var}}) + \eta. 
        \end{align*}
        Since \(\hat{T}(s) = \frac{1}{p} \sum_{i=1}^{p} \hat{\beta}_i(s)\) for \(s \leq \frac{p}{2 \cdot 784}\), the analysis of Proposition \ref{prop:lasso_lepski} can be repeated to obtain 
        \begin{equation*}
            P_{\theta, \gamma}\left(\left\{|\hat{T} - \bar{\theta}| > C_{K, \eta}\sqrt{(1-\gamma) \frac{s^*}{p}\log\left(\frac{ep}{s^*}\right)}\right\} \cap \mathcal{E}_{\text{var}} \cap \mathcal{G}\right) = 0
        \end{equation*}
        for \(C_{K, \eta}\) sufficiently large. Then plugging in the bound (\ref{eqn:lepski_bad_event_bound}) concludes the analysis of this case. \newline
    
        \noindent \textbf{Case 1.2:} Suppose \(\frac{p}{2 \cdot 784} < s^* < \frac{p}{2}\). Without loss of generality, we can assume \(\lfloor \frac{p}{2} \rfloor - 2^{\lfloor \log_2 \sqrt{p}\rfloor} \leq s^*\). To see this, consider if \(s^* < \lfloor\frac{p}{2} \rfloor - 2^{\lfloor \log_2 \sqrt{p}\rfloor}\), then certainly \(||\theta||_0 \leq s^*\) certainly implies \(||\theta||_0 \leq \lfloor\frac{p}{2} \rfloor - 2^{\lfloor \log_2 \sqrt{p}\rfloor}\). Since \(s^* \geq \frac{p}{2 \cdot 784}\), we also have 
        \begin{equation*}
            (1-\gamma)\frac{s^*}{p} \log\left(\frac{es^*}{p}\right) \asymp (1-\gamma).   
        \end{equation*}
        Hence, the desired rate for \(s^* < \lfloor\frac{p}{2} \rfloor - 2^{\lfloor \log_2 \sqrt{p}\rfloor}\) is the same as if \(s^* = \lfloor\frac{p}{2} \rfloor - 2^{\lfloor \log_2 \sqrt{p}\rfloor}\). Therefore, we can take \(s^* \geq \lfloor \frac{p}{2} \rfloor - 2^{\lfloor \log_2 \sqrt{p}\rfloor}\) without loss of generality.
        
        By union bound we have 
        \begin{align*}
            &P_{\theta, \gamma}\left\{ |\hat{T} -  \bar{\theta}| > C_{K, \eta} \sqrt{1 \wedge (1-\gamma)\left(1 \vee \log\left(\frac{ep}{(p-2s^*)^2}\right)\right)} \right\} \\
            &\leq P_{\theta, \gamma}\left( \left\{ |\hat{T} -  \bar{\theta}| > C_{K, \eta} \sqrt{1 \wedge (1-\gamma)\left(1 \vee \log\left(\frac{ep}{(p-2s^*)^2}\right)\right)} \right\} \cap \mathcal{E}_{\text{var}} \cap \mathcal{G}\right) + P_{\theta, \gamma}(\mathcal{G}^c \cap \mathcal{E}_{\text{var}}) + \eta. 
        \end{align*}
        Let us examine the first term. By definition of \(\mathcal{S}\), if \(s^* \leq \lfloor \frac{p}{2} \rfloor - 2^{\lceil \log_2 p^{1/4}\rceil}\), we have \(p-2\tilde{s} \leq p-2s^* \leq2(p-2\tilde{s})\). On the other hand, if \(s^* > \lfloor \frac{p}{2} \rfloor - 2^{\lceil \log_2 p^{1/4}\rceil}\), then \(\tilde{s} = \lfloor \frac{p}{2} \rfloor - 1\) since we know \(s^* < \lfloor \frac{p}{2} \rfloor\) in this case. Thus in both cases, on the event \(\mathcal{E}_{\text{var}}\), it can be shown taking \(C_\eta\) sufficiently large that 
        \begin{align*}
            r(\tilde{s}) \leq \tilde{C}_{K, \eta} \sqrt{1 \wedge (1-\gamma)\left(1 \vee \log\left(\frac{ep}{(p-2s^*)^2} \right)\right)}
        \end{align*}
        where \(\tilde{C}_{K, \eta} > 0\) is sufficiently large depending only on \(K\) and \(\eta\), and whose value may change from instance to instance. On the event \(\mathcal{G}\), we have \(s' \leq \tilde{s}\) and so \(\hat{T} \in B\left(\hat{T}(\tilde{s}), r(\tilde{s})\right)\). Therefore, on the event \(\mathcal{G} \cap \mathcal{E}_{\text{var}}\) we have 
        \begin{align*}
            |\hat{T} - \bar{\theta}| \leq |\hat{T} - \hat{T}(\tilde{s})| + |\hat{T}(\tilde{s}) - \bar{\theta}|
            \leq 2r(\tilde{s})
            \leq \tilde{C}_{K, \eta} \sqrt{1 \wedge (1-\gamma)\left(1 \vee \log\left(\frac{ep}{(p-2s^*)^2} \right)\right)}.
        \end{align*}
        Therefore, 
        \begin{equation*}
            P_{\theta, \gamma}\left( \left\{ |\hat{T} -  \bar{\theta}| > C_{K, \eta} \sqrt{1 \wedge (1-\gamma)\left(1 \vee \log\left(\frac{ep}{(p-2s^*)^2}\right)\right)} \right\} \cap \mathcal{E}_{\text{var}} \cap \mathcal{G}\right) = 0
        \end{equation*}
        by taking \(C_{K, \eta}\) sufficiently large. Plugging in the bound (\ref{eqn:lepski_bad_event_bound}), the analysis for this case is complete. \newline 
    
        \noindent \textbf{Case 2:} Suppose \(s^* \geq \lfloor \frac{p}{2}\rfloor\). By definition of \(\mathcal{S}\), we have \(\tilde{s} = p\). An argument modeled on the Case 2 analysis of the proof of Proposition \ref{prop:lasso_lepski} can be employed to obtain the desired result. We omit the details.
    \end{proof}
    
    \section{Kernel mode estimator: unknown correlation}\label{appendix:kme_unknown_var}
    The arguments to show Theorem \ref{thm:kme_unknown_var} are a straightforward extension of what was done in Section \ref{appendix:kme_known_var}. The main difference in Theorem \ref{thm:kme_unknown_var} is that the bandwidth \(\hat{h}\) is now random as it depends on the data through \(\hat{\gamma}\). 
    
    Recall the kernel mode estimator \(\tilde{\mu}\) is an ``oracle" estimator applied to the data from (\ref{model:Gaussian_contamination}) and is given by (\ref{def:mhat}). Specifically, \(\tilde{\mu} = \argmax_{t \in \R} \hat{G}(t, h)\) where 
    \begin{equation*}
        \hat{G}(t, h) = \frac{1}{2ph} \sum_{i=1}^{p} \mathbbm{1}_{\{|t - Y_i| \leq h\}}.
    \end{equation*}
    Here, we use the notation \(\hat{G}(t, h)\) rather than \(\hat{G}_h(t)\) as in Section \ref{appendix:kme_known_var} to further emphasize the dependence on \(h\). For a deterministic \(h\), let \(G(t, h) = E_{\theta, \gamma}(\hat{G}(t ,h))\) denote the expectation. Note we will, without loss of generality, take \(s \geq \frac{p}{4}\) and \(|\mathcal{O}| \geq \frac{p}{4}\) as in Section \ref{appendix:kme_known_var}.
    
    Let \(\underline{h} \leq \bar{h}\) denote two deterministic points. Eventually, they will be chosen so that the plug-in bandwidth \(\hat{h}\) satisfies \(\hat{h} \in [\underline{h}, \bar{h}]\) with high probability. The strategy for proving Theorem \ref{thm:kme_unknown_var} is the same as that in Section \ref{appendix:kme_known_var}. To obtain the desired result, it suffices to show that with probability at least \(1-\delta\), there exists \(x = x(\hat{h}) \in \R\) with \(|x - \mu| \leq \hat{h}/4\) such that for all \(t \in \R\) with \(|t - \mu| \geq C_a \hat{h}\), we have \(\hat{G}(x, \hat{h}) > \hat{G}(t, \hat{h})\). Mimicking Section \ref{appendix:kme_known_var}, it suffices to show 
    \begin{align*}
        G(x, h) - G(t, h) - \left|\hat{G}(x, h) - G(x, h)\right| - \left|\hat{G}(t, h) - G(t, h)\right| > 0
    \end{align*}
    uniformly over \(\left\{(x, t, h) \in \R^3 : \underline{h} \leq h \leq \bar{h}, x = x(h), |t - \mu| \geq C_a h\right\}\) with probability at least \(1-\delta\). Notice we aim for uniform control over \(\underline{h} \leq h \leq \bar{h}\) in order to deal with the random bandwidth. 
    
    The result about the signal (Proposition \ref{prop:mean_diff_fix}) is deterministic and so requires no modification. However, the empirical process terms now require control over an extra parameter, namely the bandwidth. This is the only salient difference between the situation here and the situation in Section \ref{appendix:kme_known_var}. This difference does not pose any substantial conceptual difficulty. As it is a straightforward extension, in the interest of brevity we will only provide proof sketches for most of the following assertions. 
    
    \subsection{\texorpdfstring{Regime \(\frac{p}{2} - p^{1/4} \leq s < \frac{p}{2}\)}{Regime p/2 - p\textasciicircum (1/4) <= s < p/2}}
    In the regime \(\frac{p}{2} - p^{1/4} \leq s < \frac{p}{2}\), the analysis is standard as in Section \ref{appendix:kme_known_var}.
    \begin{proposition}\label{prop:kme_easy_unknown_var}
        Suppose \(\frac{p}{2} - p^{1/4} \leq s < \frac{p}{2}\) and \(C_a\) is a sufficiently large universal constant. Fix \(\delta, \eta \in (0, 1)\), and let \(\hat{\gamma}\) be the estimator from Proposition \ref{prop:correlation_estimation} at confidence level \(\eta\). Define the (random) bandwidth 
        \begin{equation*}
            \hat{h} := C_\eta\sqrt{(1-\hat{\gamma})\left(\log\left(\frac{ep}{(p-2s)^2}\right) + \log\log\left(\frac{1}{\delta}\right)\right)}    
        \end{equation*}
        where \(C_\eta\) is sufficiently large depending only on \(\eta\). If \(p \geq (2/\delta)^2\), then the kernel mode estimator with (random) bandwidth \(\hat{h}\), 
        \begin{equation*}
            \tilde{\mu} := \argmax_{x \in \R} \frac{1}{2p\hat{h}} \sum_{i=1}^{n} \mathbbm{1}_{\{|x - Y_i| \leq \hat{h}\}}
        \end{equation*}
        satisfies 
        \begin{equation*}
            \sup_{\substack{||\theta||_0 \leq s, \\ \gamma \in [0, 1)}} P_{\theta, \gamma}\left\{ \frac{|\tilde{\mu} - \mu|}{\sqrt{1-\gamma}} > C_\eta'\sqrt{\log\left(\frac{ep}{(p-2s)^2}\right) + \log\log\left(\frac{1}{\delta}\right)} \right\} \leq \delta + \eta
        \end{equation*}
        where \(C_\eta' > 0\) is a constant depending only on \(\eta\). 
    \end{proposition}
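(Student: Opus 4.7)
The proof closely mirrors that of Proposition \ref{prop:kme_easy}, with two additional ingredients: handling the data-driven bandwidth $\hat h$ via the correlation estimator, and absorbing the $\delta$-dependence. Let $L := \log(ep/(p-2s)^2) + \log\log(1/\delta)$. First, invoke Proposition \ref{prop:correlation_estimation} at confidence level $\eta$ to secure the event $\mathcal{E}_{\text{var}} := \{L_\eta^{-1} \leq (1-\hat\gamma)/(1-\gamma) \leq L_\eta\}$, which has probability at least $1-\eta$. Conditional on $\mathcal{E}_{\text{var}}$ the random bandwidth lies in a deterministic interval $[\underline h, \bar h]$ with $\underline h := C_\eta L_\eta^{-1/2}\sqrt{(1-\gamma) L}$ and $\bar h := C_\eta L_\eta^{1/2}\sqrt{(1-\gamma) L}$, both of order $\sqrt{(1-\gamma) L}$ since $L \asymp \log p$ in the regime $s \geq p/2 - p^{1/4}$. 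Thus proving $|\tilde\mu - \mu| \leq C_a \hat h$ on an event of probability $\geq 1 - \delta$ intersected with $\mathcal{E}_{\text{var}}$ delivers the claimed bound.

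The deterministic triangle-inequality argument of Proposition \ref{prop:kme_easy} yields, for any $h > 0$ and $C_a > 2$,
\[
\hat G(\mu, h) - \sup_{|t - \mu| \geq C_a h} \hat G(t, h) \geq \frac{(p-2s) - 2 N_h}{2ph}, \qquad N_h := \sum_{i \in \mathcal{I}} \mathbbm{1}_{\{|\mu - Y_i| > h\}},
\]
so it suffices to show $N_h < (p-2s)/2$ uniformly for $h \in [\underline h, \bar h]$ with probability at least $1 - \delta$. The critical observation taming the random bandwidth is that $h \mapsto N_h$ is non-increasing --- a wider window can only exclude fewer points --- so uniform-in-$h$ control collapses to the single estimate $N_{\underline h} < (p-2s)/2$, with no need for an empirical-process argument over $h$.

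For the concentration step, the indicators defining $N_{\underline h}$ are independent Bernoullis with parameter at most $2\Phi(-\underline h/\sqrt{1-\gamma}) \leq e^{-\underline h^2/(2(1-\gamma))}$, which can be driven small by choosing $C_\eta$ large depending only on $L_\eta$. A Chebyshev-type deviation bound applied as in the proof of Proposition \ref{prop:kme_easy} then delivers $P\{N_{\underline h} \geq (p-2s)/2\} \lesssim 1/\sqrt{p}$, and the companion hypothesis $p \geq (2/\delta)^2$ bounds this by $\delta$. A union bound with $\mathcal{E}_{\text{var}}^c$ completes the argument. The main obstacle here is calibrating the bandwidth so that the signal $(p-2s)/(ph)$ simultaneously dominates both the mean and the deviation terms in the tail of $N_{\underline h}$; this is precisely the role played by the $\log\log(1/\delta)$ inflation of $L$ over the deterministic-bandwidth case, and is the only place where the confidence parameter $\delta$ enters nontrivially. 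The other components --- propagation to the supremum via the triangle inequality and reduction of the random bandwidth via monotonicity --- are conceptually routine.
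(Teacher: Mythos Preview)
Your proposal is correct and follows essentially the same route as the paper: invoke the correlation-estimation event to sandwich $\hat h\in[\underline h,\bar h]$, apply the deterministic triangle-inequality comparison from Proposition \ref{prop:kme_easy}, use monotonicity $h\mapsto N_h$ to reduce uniform-in-$h$ control to the single bound on $N_{\underline h}$, and conclude with a Chebyshev/Markov bound together with $p\geq(2/\delta)^2$. One minor remark: in this regime the $\log\log(1/\delta)$ inflation is not actually needed for the argument to go through (since $L\asymp\log p$ already suffices and $\delta$ enters only via $2/\sqrt p\leq\delta$), so your closing comment slightly overstates its role here.
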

    \begin{proof}
        First, consider by Proposition \ref{prop:correlation_estimation} there exists \(R \geq 1\) depending only on \(\eta\) such that the event \(\mathcal{E}_\gamma := \left\{R^{-1} (1-\gamma) \leq 1-\hat{\gamma} \leq R(1-\gamma)\right\}\) has probability at least \(1-\eta\). Define 
        \begin{align*}
            \underline{h} &:= C_\eta R^{-1}\sqrt{(1-\gamma)\left(\log\left(\frac{ep}{(p-2s)^2}\right) + \log\log\left(\frac{1}{\delta}\right)\right)}, \\
            \bar{h} &:= C_\eta R \sqrt{(1-\gamma)\left(\log\left(\frac{ep}{(p-2s)^2}\right) + \log\log\left(\frac{1}{\delta}\right)\right)}.
        \end{align*}
        Next, consider that for any \(\underline{h} \leq h \leq \bar{h}\), we have 
        \begin{equation*}
            \hat{G}(\mu, h) \geq \frac{1}{2ph} \sum_{i \in \mathcal{I}} \mathbbm{1}_{\{|\mu - Y_i| \leq h\}} = \frac{p-|\mathcal{O}|}{2ph} - \frac{1}{2ph} \sum_{i \in \mathcal{I}} \mathbbm{1}_{\{|\mu - Y_i| > h\}}. 
        \end{equation*}
        For \(t \in \R\) with \(|t - \mu| \geq C_a h\) with \(C_a > 2\) sufficiently large universal constant, consider that 
        \begin{align*}
            \hat{G}(t, h) &\leq \frac{|\mathcal{O}|}{2ph} + \frac{1}{2ph} \sum_{i \in \mathcal{I}} \mathbbm{1}_{\{|t - Y_i| \leq h\}} \\
            &\leq \frac{|\mathcal{O}|}{2ph} + \frac{1}{2ph} \sum_{i \in \mathcal{I}} \mathbbm{1}_{\{|t - \mu + \mu - Y_i| \leq h\}} \\
            &\leq \frac{|\mathcal{O}|}{2ph} + \frac{1}{2ph} \sum_{i \in \mathcal{I}} \mathbbm{1}_{\{|\mu - Y_i| \geq |t - \mu| - h\}} \\
            &\leq \frac{|\mathcal{O}|}{2ph} + \frac{1}{2ph} \sum_{i \in \mathcal{I}} \mathbbm{1}_{\{|\mu - Y_i| > h\}}.
        \end{align*}
        Note this holds for all \(t\) such that \(|t-\mu| \geq C_a h\), and so we have for all \(\underline{h} \leq h \leq \bar{h}\), 
        \begin{equation*}
            \hat{G}(\mu, h) - \sup_{|t - \mu| \geq C_a h} \hat{G}(t, h) \geq \frac{p-2s}{2ph} - \frac{1}{ph} \sum_{i \in \mathcal{I}} \mathbbm{1}_{\{|\mu - Y_i| > h\}} \geq \frac{p-2s}{2ph} - \frac{1}{ph} \sum_{i \in \mathcal{I}} \mathbbm{1}_{\{|\mu - Y_i| > \underline{h}\}}. 
        \end{equation*}
        Note \(Y_i - \mu \sim N(0, 1-\gamma)\) for \(i \in \mathcal{I}\). Further note \(\Var(\sum_{i \in \mathcal{I}} \mathbbm{1}_{\{|\mu - Y_i| > \underline{h}\}}) \leq |\mathcal{I}| e^{-C\underline{h}^2/(1-\gamma)}\) for some universal constant \(C > 0\) whose value may change from instance to instance. Hence, by Markov's inequality we have for \(u \geq 0\), 
        \begin{equation*}
            \hat{G}(\mu, h) - \sup_{|t - \mu| \geq C_a h} \hat{G}(t, h) \geq \frac{p-2s}{2ph} - \frac{|\mathcal{I}|}{ph} e^{-C\underline{h}^2/(1-\gamma)} - \frac{u}{h}
        \end{equation*}
        uniformly over \(\underline{h} \leq h \leq \bar{h}\) with probability at least \(1 - \frac{e^{-C\underline{h}^2/(1-\gamma)}}{u\sqrt{p}}\). Therefore, 
        \begin{equation*}
            \hat{G}(\mu, h) - \sup_{|t-\mu| \geq C_a h} \hat{G}(t, h) \geq \frac{p-2s}{2ph} - \frac{2}{h} e^{-C\underline{h}^2/(1-\gamma)}
        \end{equation*}
        uniformly over \(\underline{h} \leq h \leq \bar{h}\) with probability at least \(1 - \frac{2}{\sqrt{p}}\). Call this event \(\mathcal{E}_{\mu}\). Since \(p \geq (2/\delta)^2\), it follows \(\mathcal{E}_{\mu}\) has probability at least \(1-\delta\). Select \(C_\eta\) sufficiently large depending only on \(R\). Following an argument similar to that of Proposition \ref{prop:kme_easy}, we have on the event \(\mathcal{E}_{\gamma} \cap \mathcal{E}_{\mu}\),  
        \begin{equation*}
            \hat{G}(\mu, h) - \sup_{|t - \mu| \geq C_a h} \hat{G}(t, h) \geq \frac{p-2s}{4ph} > 0
        \end{equation*}
        uniformly over \(\underline{h} \leq h \leq \bar{h}\). By union bound, the event \(\mathcal{E}_\mu \cap \mathcal{E}_{\gamma}\) has probability at least \(1-\delta-\eta\). Since \(\hat{h} \in [\underline{h}, \bar{h}]\) on \(\mathcal{E}_\gamma\), it thus follows that on \(\mathcal{E}\) we have \(\hat{G}(\mu, \hat{h}) > \hat{G}(t, \hat{h})\) for all \(|t - \mu| \geq C_a \hat{h}\). Therefore, it follows that \(|\tilde{\mu} - \mu| \leq C_a \hat{h}\) with probability at least \(1-\delta-\eta\). The proof is complete. 
    \end{proof}
    
    \subsection{\texorpdfstring{Regime \(s \leq \frac{p}{2} - p^{1/4}\)}{Regime s <= p/2-p\textasciicircum (1/4)}}
    As noted in Section \ref{appendix:kme_unknown_var}, Proposition \ref{prop:mean_diff_fix} is a deterministic result and thus gives a lower bound for the signal for all bandwidths. It remains to bound the stochastic error. Mimicking Section \ref{appendix:kme_known_var}, define the sets 
    \begin{align*}
        \mathcal{U}(h) &:= \left\{ t \in \R : |t - \mu| \geq C_a h \text{ and } \frac{1}{p} \sum_{i \in \mathcal{O}} P_{\theta, \gamma}\left\{|t - Y_i| > h\right\} > 4 e^{-Ch^2/(1-\gamma)}\right\}, \\
        \mathcal{V}(h) &:= \left\{t \in \R : |t - \mu| \geq C_a h \text{ and } t \in \mathcal{U}(h)^c\right\}. 
    \end{align*}
    Here, \(C > 0\) is a universal constant such that for all \(\underline{h} \leq h \leq \bar{h}\) and \(x, t \in \R\) with \(|t - \mu| \geq C_a h\) and \(|x - \mu| \leq \frac{h}{4}\), we have \(P_{\theta, \gamma}\left\{|t - Y_i| \leq h\right\} \vee P_{\theta, \gamma}\left\{|x - Y_i| > h\right\} \leq e^{-Ch^2/(1-\gamma)}\) for \(i \in \mathcal{I}\). Such a \(C\) exists since \(\underline{h}/\sqrt{1-\gamma}\) will be taken to be larger than a sufficiently large universal constant. In the following sections, \(C\) will refer to this constant. \newline
    
    \noindent \textbf{Uniform stochastic error control over \(\mathcal{U}\)}
    \begin{proposition}\label{prop:U_control_unknown_var}
        Suppose \(s \leq \frac{p}{2} - p^{1/4}\). Suppose \(C_a\) and \(\tilde{C}\) are sufficiently large universal constants. For \(\underline{h} \leq h \leq \bar{h}\), let \(x = x(h)\) with \(|x - \mu| \leq \frac{h}{4}\) denote the point from Proposition \ref{prop:mean_diff_fix}. Suppose \(\underline{h}/\sqrt{1-\gamma}\) is larger than a sufficiently large universal constant. If \(\delta \in (0, 1)\) and \(p \geq \tilde{C}\log^{16}\left(1/\delta\right)\), then with probability at least \(1-\delta\) we have 
        \begin{equation*}
            \left|\hat{G}(t, h) - G(t, h)\right| \leq \frac{1}{2}\left(G(x, h) - G(t, h)\right)
        \end{equation*}
        uniformly over \(\left\{(t, h) \in \R^2 : \underline{h} \leq h \leq \bar{h} \text{ and } t \in \mathcal{U}(h)\right\}\). 
    \end{proposition}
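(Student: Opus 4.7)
The strategy mirrors that of Proposition~\ref{prop:U_control}, the extra work being uniformity over the bandwidth $h \in [\underline h, \bar h]$. The signal bound is immediate from Proposition~\ref{prop:mean_diff_fix}, which is deterministic and only requires $h/\sqrt{1-\gamma}$ to exceed a universal constant: for every admissible $(t,h)$ with $t \in \mathcal{U}(h)$ it yields
\begin{equation*}
G(x(h), h) - G(t, h) \geq \frac{p - 2|\mathcal{O}|}{2ph} + \frac{1}{4ph}\sum_{i \in \mathcal{O}} P_{\theta,\gamma}\{|t - Y_i| > h\},
\end{equation*}
where $x(h)$ is the anchor point supplied by that proposition. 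As in (\ref{eqn:stochU_parts}), the stochastic error splits into inlier and outlier contributions, so the task is to control each piece uniformly in the two-parameter family $(t, h)$ rather than only in $t$.

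I would reduce to uniformity in $t$ at a finite collection of bandwidths by discretizing on a geometric grid $h_0 = \underline h, h_1 = (1+\varepsilon)\underline h, \ldots, h_N = \bar h$ with $\varepsilon > 0$ a small universal constant. Since $\bar h / \underline h$ is ultimately dictated by the correlation-estimation constant $L_\eta$ and hence depends only on $\eta$, the grid size $N$ is $O_\eta(1)$. At each $h_k$, I apply verbatim the argument of Proposition~\ref{prop:U_control}, invoking Theorem~\ref{thm:peel} at confidence level $\delta/N$ in place of $\delta$. Because the peeling constant $\kappa_{\delta/N}$ inflates by at most a polylogarithmic factor in $1/\delta$, the hypothesis $p \geq \tilde C \log^{16}(1/\delta)$ absorbs this inflation into the same ``$p$ sufficiently large'' slack that was already required in the known-variance case. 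A union bound over the $N$ grid points then gives, with probability at least $1-\delta$, simultaneous control at every $(t, h_k)$ with $t \in \mathcal{U}(h_k)$.

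To pass from the grid to arbitrary $h \in [h_k, h_{k+1}]$, I would use the monotonicity of the unnormalized counts $\sum_i \mathbbm{1}_{\{|t - Y_i| \leq h\}}$ in $h$ together with the geometric nature of the grid. Across a single cell, the normalization $1/h$, the tail probabilities $P_{\theta,\gamma}\{|t-Y_i| > h\}$, and the threshold $4 e^{-Ch^2/(1-\gamma)}$ defining $\mathcal{U}(h)$ all change by factors in $[(1+\varepsilon)^{-1}, 1+\varepsilon]$; consequently $\mathcal{U}(h)$ is contained in a constant-factor enlargement of $\mathcal{U}(h_k)$, and both the deviation $\hat G(t,h) - G(t,h)$ and the signal $G(x(h),h) - G(t,h)$ differ from their grid values by at most a $(1+O(\varepsilon))$ factor. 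Choosing $\varepsilon$ small enough that these distortions are dwarfed by the factor $1/2$ separating signal from half-signal completes the interpolation. The main obstacle is precisely this bookkeeping: because the $1/h$ normalization is not monotone in the favorable direction and $\mathcal{U}(h)$ itself shrinks with increasing $h$, one must verify that the variance, signal, and set-definition distortions align compatibly across a cell rather than compounding unfavorably, and the geometric grid is exactly what ensures all three quantities change by at most a bounded multiplicative factor per cell.
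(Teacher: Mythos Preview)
Your approach is workable in spirit but takes a detour the paper avoids. The paper's proof is a one-line reduction: observe that the family $\{[t-h,t+h] : t \in \R,\ \underline h \le h \le \bar h\}$ (and its complement family) is still a VC class of bounded dimension---it is just the collection of bounded intervals. Hence the peeling machinery applies \emph{directly} with the two-parameter index set $(t,h)$, and the proof of Proposition~\ref{prop:U_control} carries over verbatim once one replaces Theorem~\ref{thm:peel} by Corollary~\ref{corollary:peel_tail}. That corollary has the explicit tail form $\bigl(1 + C''\log(1/\delta)/n^{1/16}\bigr)$ and leading term $n^{1/8}/(n\lambda)$; the hypothesis $p \ge \tilde C \log^{16}(1/\delta)$ is precisely what forces $\log(1/\delta)/p^{1/16}\lesssim 1$, and $p-2s \ge p^{1/4}$ still dominates the $p^{1/8}$ term. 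No discretization, no interpolation, no tracking of how $\mathcal{U}(h)$ deforms with $h$.

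By contrast, your grid-plus-interpolation scheme (i) imports the assumption $\bar h/\underline h = O_\eta(1)$, which is true in the eventual application but is not among the hypotheses of the proposition as stated; (ii) requires you to make the $\delta$-dependence of $\kappa_\delta$ in Theorem~\ref{thm:peel} explicit anyway, which is exactly what Corollary~\ref{corollary:peel_tail} packages; and (iii) leaves the interpolation step---matching $t\in\mathcal{U}(h)$ to a nearby $\mathcal{U}(h_k)$ while controlling both sides of the inequality---as nontrivial bookkeeping, since the defining threshold $4e^{-Ch^2/(1-\gamma)}$ and the sum $\sum_i P_{\theta,\gamma}\{|t-Y_i|>h\}$ move in opposite directions with $h$. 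None of this is fatal, but the VC observation renders all of it unnecessary.
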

    \begin{proof}
        The proof is exactly the same as the proof of Proposition \ref{prop:U_control}, except invoking Corollary \ref{corollary:peel_tail} instead of Theorem \ref{thm:peel}. Note the corollary can be applied since the VC dimensions of the classes of sets \(\left\{[t - h, t+h] : \underline{h} \leq h \leq \bar{h}, t \in \R\right\}\) and \(\left\{[t - h, t+h]^c : \underline{h} \leq h \leq \bar{h}, t \in \R\right\}\) are bounded by a universal constant. 
    \end{proof}
    
    \noindent \textbf{Uniform stochastic error control over \(\mathcal{V}\)}
    
    \begin{lemma}\label{lemma:stoch_V_exp_unknown_var}
        Suppose \(C_a\) is a sufficiently large universal constant and \(\underline{h}/\sqrt{1-\gamma}\) is larger than a sufficiently large universal constant. There exist universal constants \(C', C'' > 0\) such that 
        \begin{equation*}
            E_{\theta, \gamma}\left( \sup_{\substack{\underline{h} \leq h \leq \bar{h}, \\ t \in \mathcal{V}(h)}} \left|\hat{G}(t, h) - G(t, h)\right| \right) \leq C' \left( \frac{\bar{h}^2/(1-\gamma)}{p\underline{h}} + \frac{e^{-C''\underline{h}^2/(1-\gamma)}}{\underline{h}\sqrt{p}}\right).
        \end{equation*}
    \end{lemma}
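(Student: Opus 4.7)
The plan is to mirror the proof of Lemma \ref{lemma:stoch_V_exp}, with the one modification being that the empirical process is now indexed by both $t$ and the bandwidth $h \in [\underline{h}, \bar{h}]$ rather than just $t$. First, I would split
\begin{align*}
    \left|\hat{G}(t, h) - G(t, h)\right| &\leq \frac{1}{2p\underline{h}}\left|\sum_{i \in \mathcal{I}}\left(\mathbbm{1}_{\{|t-Y_i| \leq h\}} - P_{\theta,\gamma}\{|t-Y_i| \leq h\}\right)\right| \\
    &\quad + \frac{1}{2p\underline{h}} \left|\sum_{i \in \mathcal{O}} \left(\mathbbm{1}_{\{|t-Y_i| > h\}} - P_{\theta,\gamma}\{|t-Y_i| > h\}\right)\right|,
\end{align*}
using $h \geq \underline{h}$ to factor out a prefactor of $1/(2p\underline{h})$ that is uniform in $h$.

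Second, I would establish uniform variance bounds over $(t, h)$ with $\underline{h} \leq h \leq \bar{h}$ and $t \in \mathcal{V}(h)$. The Gaussian tail together with $|t - \mu| \geq C_a h$ gives $P_{\theta, \gamma}\{|t - Y_i| \leq h\} \leq e^{-Ch^2/(1-\gamma)} \leq e^{-C\underline{h}^2/(1-\gamma)}$ for $i \in \mathcal{I}$. For $i \in \mathcal{O}$, the definition of $\mathcal{V}(h)$ together with $|\mathcal{O}| \geq p/4$ gives $\frac{1}{|\mathcal{O}|}\sum_{i \in \mathcal{O}} P_{\theta, \gamma}\{|t - Y_i| > h\} \leq 16 e^{-Ch^2/(1-\gamma)} \leq 16 e^{-C\underline{h}^2/(1-\gamma)}$. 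In either case, the sum of individual Bernoulli variances is bounded by $c p e^{-C \underline{h}^2/(1-\gamma)}$ for some universal constant $c$.

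Third, I would apply a Bernstein-type maximal inequality for bounded VC classes of indicator functions to each of the two centered empirical processes, paralleling the use of Corollaries \ref{corollary:zeta_small_sigma} and \ref{corollary:zeta_big_sigma} in the proof of Lemma \ref{lemma:stoch_V_exp}. The key observation enabling this is that the function class $\left\{x \mapsto \mathbbm{1}_{[t-h, t+h]}(x) : t \in \R, \, \underline{h} \leq h \leq \bar{h}\right\}$ (and its class of complements) is VC with universally bounded dimension, since intervals on $\R$ have VC dimension $2$. Dividing the resulting bound by $2p\underline{h}$ and absorbing polynomial factors into the exponential at a slightly reduced constant $C''$ (permissible because $\underline{h}/\sqrt{1-\gamma}$ is larger than a sufficiently large universal constant) yields the stated estimate, with the first term arising from the logarithmic entropy factor and the second from the variance-driven Gaussian tail contribution. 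The main obstacle is organizing the maximal inequality uniformly over the pair $(t, h)$; once the VC structure is in place, the remainder of the argument is essentially identical to the fixed-bandwidth case.
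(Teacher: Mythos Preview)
Your proposal is correct and follows essentially the same approach as the paper's own proof: split into the $\mathcal{I}$ and $\mathcal{O}$ pieces with a uniform prefactor $1/(2p\underline{h})$, bound the averaged probabilities uniformly over $(t,h)$ by $e^{-C\underline{h}^2/(1-\gamma)}$ using the Gaussian tail and the definition of $\mathcal{V}(h)$, and then invoke Corollaries~\ref{corollary:zeta_small_sigma} and~\ref{corollary:zeta_big_sigma} over the enlarged (but still VC) class of intervals indexed by both $t$ and $h$. The paper's proof sketch is essentially identical, merely pointing back to Lemma~\ref{lemma:stoch_V_exp} for the details you have spelled out.
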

    \begin{proof}[Proof sketch]
        The proof is very similar to that of Lemma \ref{lemma:stoch_V_exp}. Consequently, we omit details and only point out some differences. As done in the proof of Lemma \ref{lemma:stoch_V_exp}, we first split 
        \begin{align}
            &\sup_{\substack{\underline{h} \leq h \leq \bar{h}, \\ t \in \mathcal{V}(h)}}\left|\hat{G}(t, h) - G(t, h)\right| \leq \nonumber \\
            &\frac{1}{2p\underline{h}} \sup_{\substack{\underline{h} \leq h \leq \bar{h}, \\ t \in \mathcal{V}(h)}} \left| \sum_{i \in \mathcal{I}} \mathbbm{1}_{\{|t - Y_i| \leq h\}} - P_{\theta, \gamma}\left\{|t - Y_i| \leq h\right\}\right| + \frac{1}{2p\underline{h}} \sup_{\substack{\underline{h} \leq h \leq \bar{h}, \\ t \in \mathcal{V}(h)}} \left| \sum_{i \in \mathcal{O}} \mathbbm{1}_{\{|t - Y_i| > h\}} - P_{\theta, \gamma}\left\{ |t - Y_i| > h\right\}\right|. \label{eqn:stochV_exp_parts_unknown_var}  
        \end{align}
        To bound the second term, we would like to apply Corollaries \ref{corollary:zeta_small_sigma} and \ref{corollary:zeta_big_sigma} but now taking supremum over \(\{(t, h) \in \R^2 : \underline{h} \leq h \leq \bar{h} \text{ and } t \in \mathcal{V}(h)\}\). To do so, consider that arguing as in the proof of Lemma \ref{lemma:stoch_V_exp}
        \begin{equation*}
            \sup_{\substack{\underline{h} \leq h \leq \bar{h}, \\ t \in \mathcal{V}(h)}} \frac{1}{|\mathcal{O}|} \sum_{i \in \mathcal{O}} P_{\theta, \gamma} \left\{ |t - Y_i| > h\right\} \leq \sup_{\underline{h} \leq h \leq \bar{h}} 16e^{-C\frac{h^2}{1-\gamma}} \leq 16e^{-C\frac{\underline{h}^2}{1-\gamma}}.
        \end{equation*}
        The rest of the argument, as well as the argument for bounding the first term of (\ref{eqn:stochV_exp_parts_unknown_var}), follows in exactly the same manner as in the proof of Lemma \ref{lemma:stoch_V_exp}.
    \end{proof}
    
    \begin{proposition}\label{prop:stochV_unknown_var}
        Suppose \(C_a\) is a sufficiently large universal constant and \(\underline{h}/\sqrt{1-\gamma}\) is larger than a sufficiently large universal constant. If \(u \geq 0\), then 
        \begin{align*}
            &P_{\theta, \gamma}\left\{ \sup_{\substack{\underline{h} \leq h \leq \bar{h}, \\ t \in \mathcal{V}(h)}} \left|\hat{G}(t, h) - G(t, h)\right| > C'\left(\frac{1}{\underline{h}\sqrt{p}}e^{-C''\underline{h}^2/(1-\gamma)} + \frac{\bar{h}^2/(1-\gamma)}{n\underline{h}}\right)\right\} \\
            &\leq 2\exp\left(-c\min\left( \frac{u^2}{\frac{\bar{h}^2/(1-\gamma)}{p^2\underline{h}^2} + \frac{e^{-C'''\underline{h}^2/(1-\gamma)}}{\underline{h}^2p^{3/2}}}, up\underline{h}\right)\right)
        \end{align*}
        where \(C', C'', C''', c > 0\) are universal constants. 
    \end{proposition}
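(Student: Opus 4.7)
The proof plan mirrors that of Proposition \ref{prop:stoch_V}, lifted to the doubly-indexed family of intervals $\{[t-h,t+h] : \underline{h} \leq h \leq \bar{h},\, t \in \mathcal{V}(h)\}$. The standing tools are a Talagrand-type Bernstein inequality for suprema of bounded empirical processes (Theorem \ref{thm:bernstein_type} combined with Theorem \ref{thm:sup_var}), the expected supremum bound from Lemma \ref{lemma:stoch_V_exp_unknown_var}, and a variance control step that leverages the defining inequality of $\mathcal{V}(h)$.

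First I reduce to a single rescaled process. For $(t,h)$ with $t \in \mathcal{V}(h)$ and $\underline{h} \leq h \leq \bar{h}$, define
\begin{equation*}
    \zeta_{t,h} := \frac{1}{2\sqrt{p}\,\underline{h}} \sum_{i=1}^{p} \left( \mathbbm{1}_{\{|t - Y_i| \leq h\}} - P_{\theta,\gamma}\{|t - Y_i| \leq h\}\right),
\end{equation*}
so that $|\hat{G}(t,h) - G(t,h)| \leq \frac{1}{\sqrt{p}} |\zeta_{t,h}|$, using $1/h \leq 1/\underline{h}$. Then I control the worst-case variance $\rho^2 := \sup_{(t,h)}\Var(2\sqrt{p}\,\underline{h}\,\zeta_{t,h})$. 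For $i \in \mathcal{I}$, since $|t-\mu| \geq C_a h \geq C_a \underline{h}$ and $C_a$ is sufficiently large, $P_{\theta,\gamma}\{|t-Y_i| \leq h\} \leq e^{-Ch^2/(1-\gamma)} \leq e^{-C\underline{h}^2/(1-\gamma)}$. For $i \in \mathcal{O}$, the defining inequality of $\mathcal{V}(h)$ yields $\sum_{i\in\mathcal{O}} P_{\theta,\gamma}\{|t-Y_i|>h\} \leq 4p\,e^{-Ch^2/(1-\gamma)} \leq 4p\,e^{-C\underline{h}^2/(1-\gamma)}$. Mimicking the computation in the proof of Proposition \ref{prop:stoch_V}, this gives $\rho^2 \lesssim p\, e^{-C\underline{h}^2/(1-\gamma)}$ uniformly over the index set.

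Next I invoke the Talagrand bound (Theorems \ref{thm:bernstein_type} and \ref{thm:sup_var}) applied to $\sup_{(t,h)} \zeta_{t,h}/\sqrt{p}$. Combined with the expectation estimate of Lemma \ref{lemma:stoch_V_exp_unknown_var}, this produces, for any $u \geq 0$,
\begin{equation*}
    P_{\theta,\gamma}\!\left\{\sup_{(t,h)} \frac{\zeta_{t,h}}{\sqrt{p}} \geq E_{\theta,\gamma}\!\left(\sup_{(t,h)}\frac{\zeta_{t,h}}{\sqrt{p}}\right) + u\right\} \leq \exp\!\left(-c\min\!\left( \frac{u^2 p^2\underline{h}^2}{\sqrt{p}\,\underline{h}\,E(\sup \zeta) + \rho^2},\, up\underline{h}\right)\right),
\end{equation*}
and substituting the bounds on $E(\sup\zeta)$ and $\rho^2$ yields the claimed tail. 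A symmetric argument applied to $-\zeta_{t,h}$ then produces the factor of two and the absolute value.

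The one genuine complication compared to Proposition \ref{prop:stoch_V} is verifying that the doubly-indexed class of intervals admits Talagrand's inequality with constants independent of the ranges of $t$ and $h$; this is handled by the elementary fact that symmetric intervals on $\R$ form a VC class of dimension at most $2$, irrespective of the index set, so no extra complexity term enters. The remaining work is bookkeeping: the expectation contributes the $\bar{h}^2/[(1-\gamma)p\underline{h}]$ and $e^{-C''\underline{h}^2/(1-\gamma)}/(\underline{h}\sqrt{p})$ terms, while $\rho^2$ contributes the $e^{-C'''\underline{h}^2/(1-\gamma)}/(\underline{h}^2 p^{3/2})$ term in the denominator of the exponent.
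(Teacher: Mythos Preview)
Your proposal is correct and follows essentially the same route as the paper, which explicitly says the proof is ``the same as the proof of Proposition \ref{prop:stoch_V}, except Lemma \ref{lemma:stoch_V_exp_unknown_var} is invoked instead of Lemma \ref{lemma:stoch_V_exp}.'' One small clarification: the Talagrand-type inequality (Theorems \ref{thm:bernstein_type} and \ref{thm:sup_var}) does not itself require any VC hypothesis---it holds for suprema over arbitrary countable index sets of bounded centered summands---so the VC dimension of the doubly-indexed interval class is relevant only upstream, in the proof of Lemma \ref{lemma:stoch_V_exp_unknown_var} where Dudley's entropy integral is used to bound the expected supremum.
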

    \begin{proof}[Proof of sketch]
        The proof is essentially the same as the proof of Proposition \ref{prop:stoch_V}, except Lemma \ref{lemma:stoch_V_exp_unknown_var} is invoked instead of Lemma \ref{lemma:stoch_V_exp}. 
    \end{proof}
    
    \begin{proposition}\label{prop:V_control_unknown_var}
        Suppose \(s \leq \frac{p}{2} - p^{1/4}\). Suppose \(C_a\) and \(\tilde{C}\) are sufficiently large universal constants. For \(\underline{h} \leq h \leq \bar{h}\), let \(x = x(h)\) with \(|x - \mu| \leq \frac{h}{4}\) denote the point from Proposition \ref{prop:mean_diff_fix}. If \(\delta \in (0, 1)\), \(R \geq 1\), and \(p \geq \tilde{C}\log^2\left(1/\delta\right)\), then there exists \(C_1 > 0\) depending only on \(R\) such that for
        \begin{align*}
            \underline{h} &:= C_1R^{-1}\sqrt{(1-\gamma)\left(1 \vee \left(\log\left(\frac{ep}{(p-2s)^2}\right)+\log\log\left(\frac{1}{\delta}\right)\right)\right)}, \\
            \bar{h} &:= C_1 R\sqrt{(1-\gamma)\left(1 \vee \left(\log\left(\frac{ep}{(p-2s)^2}\right) + \log\log\left(\frac{1}{\delta}\right)\right)\right)},
        \end{align*}
        we have with probability at least \(1-\delta\), 
        \begin{equation*}
            \left|\hat{G}(t, h) - G(t, h)\right| < \frac{1}{2} (G(x, h) - G(t, h))
        \end{equation*}
        uniformly over \(\{(t, h) \in \R^2 : \underline{h} \leq h \leq \bar{h} \text{ and } t \in \mathcal{V}(h)\}\). 
    \end{proposition}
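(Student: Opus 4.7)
The plan is to mimic the proof of Proposition \ref{prop:V_control} essentially verbatim, substituting the uniform-in-$h$ concentration inequality of Proposition \ref{prop:stochV_unknown_var} for the pointwise-in-$h$ version of Proposition \ref{prop:stoch_V}. The signal side requires no modification, since Proposition \ref{prop:mean_diff_fix} is a purely deterministic statement. In particular, for every $(t,h)$ with $\underline h \leq h \leq \bar h$ and $t \in \mathcal{V}(h)$, Proposition \ref{prop:mean_diff_fix} furnishes a point $x = x(h)$ with $|x-\mu| \leq h/4$ satisfying
\[
G(x,h) - G(t,h) \geq \frac{1}{2}\cdot\frac{p-2|\mathcal{O}|}{2ph} \geq \frac{p-2s}{4ph}.
\]

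Next I would invoke Proposition \ref{prop:stochV_unknown_var} with the tail parameter $u$ tuned so the claimed probability equals $\delta$. Writing $\ell := 1 \vee \bigl(\log(ep/(p-2s)^2) + \log\log(1/\delta)\bigr)$ so that $\underline h = C_1 R^{-1}\sqrt{(1-\gamma)\ell}$ and $\bar h = C_1 R\sqrt{(1-\gamma)\ell}$, the deterministic contribution $\frac{1}{\underline h\sqrt p}e^{-C''\underline h^2/(1-\gamma)} + \frac{\bar h^2/(1-\gamma)}{p\underline h}$ must be bounded by a small constant times the signal $(p-2s)/(p\bar h) \asymp (p-2s)/(p\underline h)$. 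The exponential term is of order $e^{-C''C_1^2R^{-2}\ell}/(\underline h\sqrt p)$, which for $C_1$ sufficiently large depending only on $R$ is negligible against $(p-2s)/(p\underline h)$ because $p-2s \geq p^{1/4}$ forces $p \geq (p-2s)^4$. The quadratic-ratio term equals $C_1^2 R^2\ell/(p\underline h)$, whose ratio to the signal is $C_1^2 R^4\ell/(p-2s)$; this is small since $\ell$ is only logarithmic in $p$ and in $1/\delta$, while $p-2s \geq p^{1/4}$ and $p \geq \tilde C \log^2(1/\delta)$ erase the $\log\log(1/\delta)$ slack.

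For the stochastic tail, the Bernstein-type exponent in Proposition \ref{prop:stochV_unknown_var} with $A := \bar h^2/\bigl((1-\gamma)p^2\underline h^2\bigr) + e^{-C'''\underline h^2/(1-\gamma)}/(\underline h^2 p^{3/2})$ yields $u \asymp \sqrt{A\log(1/\delta)} + \log(1/\delta)/(p\underline h)$ for confidence $\delta$. The first summand is the deterministic term multiplied by $\sqrt{\log(1/\delta)}$, and the $\log(1/\delta)$ factor is absorbed by the $\log\log(1/\delta)$ built into $\underline h$; the second summand is absorbed by the signal for $p$ sufficiently large. Summing produces $|\hat G(t,h)-G(t,h)| < \tfrac{1}{2}(G(x,h)-G(t,h))$ uniformly on the stated set with probability at least $1-\delta$.

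The main obstacle is purely bookkeeping: one must verify that the bandwidth spread $\bar h/\underline h = R^2$ and the $\log(1/\delta)$ overhead from the Bernstein tail are both absorbable by the single constant $C_1$. This works because $R$ is fixed once the confidence level for $\hat\gamma$ is fixed, and because the $\log\log(1/\delta)$ slack inserted into $\underline h$ converts the $\log(1/\delta)$ overhead into a bounded quantity whenever $p \geq \tilde C \log^2(1/\delta)$. No conceptually new ingredient beyond the known-variance argument of Proposition \ref{prop:V_control} is required.
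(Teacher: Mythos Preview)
Your proposal is correct and matches the paper's own proof, which is given only as a sketch: invoke Proposition~\ref{prop:stochV_unknown_var} in place of Proposition~\ref{prop:stoch_V}, reuse the deterministic signal bound from Proposition~\ref{prop:mean_diff_fix}, and track the $\log(1/\delta)$ overhead (the paper phrases this last step as ``noting that $\kappa_\delta \asymp \log(1/\delta)$ can be taken in the calculations of the proof of Proposition~\ref{prop:V_control}''). Your bookkeeping is more explicit than the paper's sketch, though the claim that $\sqrt{A\log(1/\delta)}$ equals the deterministic term times $\sqrt{\log(1/\delta)}$ is not literally true---the two expressions have different powers of $p$ and $\underline h$---but the resulting bounds are of the same order once compared against the signal $(p-2s)/(p\bar h)$, so the argument goes through.
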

    \begin{proof}[Proof sketch]
        The proof is the same as the proof of Proposition \ref{prop:V_control} except in invoking Proposition \ref{prop:stochV_unknown_var} and noting that \(\kappa_\delta \asymp \log\left(1/\delta\right)\) can be taken in the calculations of the proof of Proposition \ref{prop:V_control}.
    \end{proof}
    
    \noindent \textbf{Uniform stochastic error control over \(x(h)\)}
    
    One substantial difference between the current setting of unknown correlation and the setting of Section \ref{appendix:kme_known_var} lies in controlling the stochastic error at \(x = x(h)\) provided by Proposition \ref{prop:mean_diff_fix}. In Section \ref{appendix:kme_known_var}, the bandwidth is not random and so the stochastic deviation \(\left|\hat{G}(x, h) - G(x, h)\right|\) needed only to be controlled at a single point; empirical process tools were not necessary. However, in the current situation, we need to control the deviation uniformly over \(\underline{h} \leq h \leq \bar{h}\). Though it is an important conceptual difference from Section \ref{appendix:kme_known_var}, it does not add substantial technical difficulty since our empirical process tools can be easily employed. 
    
    \begin{proposition}\label{prop:x_control_unknown_var}
        Suppose \(s \leq \frac{p}{2} - p^{1/4}\). Suppose \(C_a\) and \(\tilde{C}\) are sufficiently large universal constants. For \(\underline{h} \leq h \leq \bar{h}\), let \(x(h)\) such that \(|x(h) - \mu| \leq \frac{h}{4}\) denote the point from Proposition \ref{prop:mean_diff_fix}. Suppose \(\delta \in (0, 1)\), \(R \geq 1\), and \(p \geq \tilde{C}\log^{16}(1/\delta)\) is sufficiently large depending only on \(\delta\). Further suppose 
        \begin{align*}
            \underline{h} &:= C_1R^{-1}\sqrt{(1-\gamma)\left(1 \vee \left(\log\left(\frac{ep}{(p-2s)^2}\right) + \log\log\left(\frac{1}{\delta}\right)\right)\right)}, \\
            \bar{h} &:= C_1 R \sqrt{(1-\gamma)\left(1 \vee \left(\log\left(\frac{ep}{(p-2s)^2}\right) + \log\log\left(\frac{1}{\delta}\right)\right)\right)}.
        \end{align*}
        Then there exists \(C_1 > 0\) sufficiently large depending only on \(R\) such that with probability at least \(1-\delta\), we have 
        \begin{equation*}
            \left|\hat{G}(x(h), h) - G(x(h), h)\right| < \frac{1}{2}(G(x(h), h) - G(t, h))
        \end{equation*}
        uniformly over \(\left\{(t, h) \in \R^2 : \underline{h} \leq h \leq \bar{h} \text{ and } |t - \mu| \geq C_a h\right\}\). 
    \end{proposition}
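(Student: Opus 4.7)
The plan is to mimic the proof of Proposition \ref{prop:x_control} in the known-correlation case, with the only substantive change being that the stochastic deviation $|\hat{G}(x(h), h) - G(x(h), h)|$ must now be controlled uniformly in $h \in [\underline{h}, \bar{h}]$ rather than at a single point. As before, I would split according to whether $t \in \mathcal{U}(h)$ or $t \in \mathcal{V}(h)$ and exploit the two cases of the signal bound in Proposition \ref{prop:mean_diff_fix}; since Proposition \ref{prop:mean_diff_fix} is a pointwise deterministic statement, it delivers the lower bound $G(x(h),h) - G(t,h) \gtrsim \frac{p - 2|\mathcal{O}|}{2ph} + \frac{1}{2ph}\sum_{i \in \mathcal{O}} P_{\theta,\gamma}\{|t - Y_i| > h\}$ on $\mathcal{U}(h)$ and $\gtrsim \frac{p - 2|\mathcal{O}|}{4ph}$ on $\mathcal{V}(h)$, simultaneously for every admissible $h$.

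For the stochastic side, I would write
\begin{equation*}
    \left|\hat{G}(x(h),h) - G(x(h),h)\right| \leq \frac{1}{2ph}\left|\sum_{i=1}^{p} \mathbbm{1}_{\{|x(h) - Y_i| \leq h\}} - P_{\theta,\gamma}\{|x(h) - Y_i| \leq h\}\right|
\end{equation*}
and partition the sum using the sets $E = \{i \in \mathcal{O} : |\mu - \eta_i| > C_a h/2\}$ and $F = \mathcal{O} \setminus E$ exactly as in Proposition \ref{prop:x_control}. The point is that $\{[x(h) - h, x(h) + h] : \underline{h} \leq h \leq \bar{h}\}$ is a subclass of the class of closed intervals in $\mathbb{R}$, which has universally bounded VC dimension, so Corollary \ref{corollary:peel_tail} yields uniform-in-$h$ Bernstein-type control of the form
\begin{equation*}
    \frac{1}{2ph}\left|\sum_{i=1}^{p}\left(\mathbbm{1}_{\{|x(h) - Y_i| \leq h\}} - P_{\theta,\gamma}\{|x(h) - Y_i| \leq h\}\right)\right| \leq \kappa_\delta\left(\frac{1}{ph} + \sqrt{\frac{V(h)}{p^2h^2}}\right),
\end{equation*}
uniformly over $\underline{h} \leq h \leq \bar{h}$ on an event of probability $\geq 1-\delta$, where $V(h) = \sum_{i=1}^{p} P_{\theta,\gamma}\{|x(h) - Y_i| \leq h\} \wedge P_{\theta,\gamma}\{|x(h) - Y_i| > h\}$ is the relevant variance proxy and $\kappa_\delta \lesssim \log(1/\delta)$.

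With that uniform deviation bound in hand, the remainder is essentially arithmetic that already appeared in Proposition \ref{prop:x_control}: I bound $V(h)$ by $|\mathcal{I}| e^{-Ch^2/(1-\gamma)} + \sum_{i \in E} P_{\theta,\gamma}\{|x(h)-Y_i|\leq h\} + \sum_{i \in F} P_{\theta,\gamma}\{|x(h)-Y_i|\leq h\}$, observe the first two summands are each $\lesssim pe^{-Ch^2/(1-\gamma)}$, and for $i \in F$ use $|t - \eta_i| \geq C_a h/2$ (with $C_a$ large) to obtain $P_{\theta,\gamma}\{|x(h) - Y_i| \leq h\} \leq 2 P_{\theta,\gamma}\{|t - Y_i| > h\}$, which gives a term dominated by the $\mathcal{U}(h)$ signal via the inequality $ab \leq a^2 + b^2$. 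Choosing $C_1$ sufficiently large depending on $R$ forces $e^{-C \underline{h}^2/(1-\gamma)} \leq L_\delta^{-1} (p-2s)/p$ for any desired $L_\delta$ that absorbs $\kappa_\delta$, which makes each contribution strictly smaller than one-half of the appropriate signal lower bound from Proposition \ref{prop:mean_diff_fix}, uniformly over $h \in [\underline{h}, \bar{h}]$ and over $t \in \mathcal{U}(h) \cup \mathcal{V}(h)$.

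The main obstacle is bookkeeping: ensuring the single deviation event works simultaneously for all $h$ and all $t$, while also letting $\delta$ enter only through $\kappa_\delta \asymp \log(1/\delta)$ so that the requirement $p \gtrsim \log^{16}(1/\delta)$ is enough (rather than something worse). This is exactly where the polynomial VC bound on the class of intervals, together with the choice of $\underline{h}$ of order $\sqrt{(1-\gamma)(\log(ep/(p-2s)^2) + \log\log(1/\delta))}$, is needed so that the exponential term $e^{-C\underline{h}^2/(1-\gamma)}$ is smaller than any polynomial-in-$\log(1/\delta)$ factor spawned by the uniform concentration. Once this is handled, applying union bound over the analyses on $\mathcal{U}(h)$ and $\mathcal{V}(h)$ yields the claim.
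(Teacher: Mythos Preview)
Your strategy matches the paper's: lift Proposition~\ref{prop:x_control} to uniform-in-$h$ control via the bounded VC dimension of intervals, and use the $E/F$ decomposition to feed the $F$-contribution back into the $\mathcal{U}(h)$-signal. There is one concrete imprecision worth flagging: Corollary~\ref{corollary:peel_tail} does not give the localized $\sqrt{V(h)}$ bound you wrote; like Theorem~\ref{thm:peel}, it gives a bound that is \emph{linear} in $\frac{1}{p}\sum_i P\{X_i\in A_t\}$, namely $\frac{C' p^{1/8}}{p\lambda}+\frac{\lambda}{p}\sum_i P\{X_i\in A_t\}$ up to the $\log(1/\delta)/p^{1/16}$ correction. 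This linear form still suffices for your purpose, since $\sum_{i\in E}P\{|x(h)-Y_i|\leq h\}\leq p\,e^{-C\underline{h}^2/(1-\gamma)}$ and $\sum_{i\in F}P\{|x(h)-Y_i|\leq h\}\leq |F|\leq 2\sum_{i\in\mathcal{O}}P\{|t-Y_i|>h\}$, so with small $\lambda$ both pieces are dominated by the signal---arguably a touch cleaner than the paper's route. The paper instead first separates the $\mathcal{I}$- and $\mathcal{O}$-sums, controls the $\mathcal{I}$-sum by a Talagrand-type bound (the analogue of Proposition~\ref{prop:stochV_unknown_var}), and for the $\mathcal{O}$-sum introduces the auxiliary set $\mathcal{H}=\{h:\sum_{i\in E(h)}P\leq\sum_{i\in F(h)}P\}$ so as to invoke the peeling bound on $\mathcal{H}$ (where the $F$-term dominates and is absorbed by the $\mathcal{U}$-signal) and the Talagrand bound on $\mathcal{H}^c$ (where the $E$-term dominates and is exponentially small). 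Your single-tool $E/F$ split would bypass $\mathcal{H}$ altogether once you use the correct linear form; the remaining difference is organizational rather than substantive.
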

    \begin{proof}
        Since \(\left\{t \in \R : |t - \mu| \geq C_ah \right\} = \mathcal{U}(h) \cup \mathcal{V}(h)\), consider the two events 
        \begin{align*}
            \mathcal{E}_{\mathcal{U}} &:= \left\{ \left|\hat{G}(x(h), h) - G(x(h), h)\right| < \frac{1}{2}(G(x(h), h) - G(t, h)) \text{ for all } (t, h) \in \R^2 \text{ with }\underline{h} \leq h \leq \bar{h} \text{ and } t \in \mathcal{U}(h) \right\},\\
            \mathcal{E}_{\mathcal{V}} &:= \left\{ \left|\hat{G}(x(h), h) - G(x(h), h)\right| < \frac{1}{2}(G(x(h), h) - G(t, h)) \text{ for all } (t, h) \in \R^2 \text{ with }\underline{h} \leq h \leq \bar{h} \text{ and } t \in \mathcal{V}(h) \right\}.
        \end{align*}
        To prove the desired result, it suffices via union bound to show each event has probability at least \(1-\delta/2\). Define the sets 
        \begin{align*}
            E(h) &:= \left\{i \in \mathcal{O} : |\mu - \eta_i| \geq \frac{C_a h}{2}\right\}, \\
            F(h) &:= \left\{i \in \mathcal{O} : |\mu - \eta_i| < \frac{C_ah}{2}\right\}. 
        \end{align*}
        Further define 
        \begin{equation*}
            \mathcal{H} := \left\{ \underline{h} \leq h \leq \bar{h} : \sum_{i \in E(h)} P_{\theta,\gamma}\left\{|x(h) - Y_i| \leq h\right\} \leq \sum_{i \in F(h)} P_{\theta,\gamma}\left\{|x(h) - Y_i| \leq h\right\} \right\}.
        \end{equation*}
        Let us first examine \(\mathcal{E}_{\mathcal{U}}\). For any \(\underline{h} \leq h \leq \bar{h}\) and \(t \in \mathcal{U}(h)\), we have by Proposition \ref{prop:mean_diff_fix} 
        \begin{equation}\label{eqn:uxh_signal}
            G(x(h), h) - G(t, h) \geq \frac{p-2|\mathcal{O}|}{2ph} + \frac{1/2}{2ph} \sum_{i \in \mathcal{O}} P_{\theta, \gamma}\left\{|t - Y_i| > h\right\}. 
        \end{equation}
        Let us examine the stochastic deviation. Consider that
        \begin{align}\label{eqn:uxh_parts}
            &\left|\hat{G}(x(h), h) - G(x(h), h)\right| \leq \nonumber \\
            &\frac{1}{2ph} \left|\sum_{i \in \mathcal{I}} \mathbbm{1}_{\{|x(h) - Y_i| > h\}} - P_{\theta, \gamma}\left\{|x(h) - Y_i| > h\right\}\right| + \frac{1}{2ph} \left|\sum_{i \in \mathcal{O}} \mathbbm{1}_{\{|x(h) - Y_i| \leq h\}} - P_{\theta, \gamma}\left\{ |x(h) - Y_i| \leq h \right\}\right|. 
        \end{align}
        Let us now bound the first term in (\ref{eqn:uxh_parts}). Note that since \(|x(h) - \mu| \leq \frac{h}{4}\), we have 
        \begin{align*}
            \sup_{\substack{\underline{h} \leq h \leq \bar{h}}} \frac{1}{|\mathcal{I}|} \sum_{i \in \mathcal{I}} P_{\theta, \gamma}\left\{ |x(h) - Y_i| > h\right\} \leq \sup_{\underline{h} \leq h \leq \bar{h}} e^{-C\frac{h^2}{1-\gamma}} \leq e^{-C\frac{\underline{h}^2}{(1-\gamma)}}.
        \end{align*}
        Since this bound is analogous to the one in the proof of Lemma \ref{lemma:stoch_V_exp_unknown_var}, it follows that an analogue of Proposition \ref{prop:stochV_unknown_var} holds for \(\sup_{\underline{h} \leq h \leq \overline{h}} \frac{1}{2ph} \left|\sum_{i=1}^{p} \mathbbm{1}_{\{|x(h) - Y_i| > h\}} - P_{\theta, \gamma}\{|x(h)-Y_i| > h\} \right|\). It is then straightforward (e.g. mimicking the argument of Proposition \ref{prop:V_control_unknown_var}) to show that with probability at least \(1-\delta/4\), 
        \begin{equation}\label{eqn:ix_bound}
            \frac{1}{2ph} \left|\sum_{i \in \mathcal{I}} \mathbbm{1}_{\{|x(h) - Y_i| > h\}} - P_{\theta, \gamma}\left\{|x(h) - Y_i| > h\right\}\right| < \frac{1}{4} \cdot \frac{p-2s}{2ph}
        \end{equation}
        uniformly over \(\underline{h} \leq h \leq \overline{h}\). From (\ref{eqn:uxh_signal}), it thus follows the first term of (\ref{eqn:uxh_parts}) is indeed bounded by \(\frac{1}{4}(G(x(h), h) - G(t, h))\). 
        
        Let us now bound the second term in (\ref{eqn:uxh_parts}). Taking \(\lambda = \frac{1}{64}\) in Theorem \ref{thm:peel} (applied as per the argument in the proof of Proposition \ref{prop:U_control_unknown_var}) and noting \(|\mathcal{O}| \asymp p\), we have with probability at least \(1-\delta/8\)
        \begin{align*}
            \frac{1}{2ph} \left|\sum_{i \in \mathcal{O}} \mathbbm{1}_{\{|x(h) - Y_i| \leq h\}} - P_{\theta, \gamma}\left\{ |x(h) - Y_i| \leq h \right\}\right| &\leq 2\left(\frac{C'\log(ep)}{2ph} + \frac{1/64}{2ph} \sum_{i \in \mathcal{O}} P_{\theta, \gamma}\left\{|x(h) - Y_i| \leq h \right\}\right) 
        \end{align*}
        uniformly over \((t, h)\) such that \(\underline{h} \leq h \leq \bar{h}\) and \(t \in \mathcal{U}(h)\). The inequality follows from mimicking the argument of Proposition \ref{prop:U_control}. Consider that for \(t \in \mathcal{U}(h)\) and \(i \in F(h)\), we have \(P_{\theta, \gamma}\left\{|t - Y_i| > h\right\} \geq \frac{1}{2}\) since \(|t - \mu| \geq C_a h\) and \(C_a, h\) are sufficiently large. Therefore, for \(h \in \mathcal{H}\) we have 
        \begin{align*}
            \frac{1/64}{2ph} \sum_{i \in \mathcal{O}} P_{\theta, \gamma}\left\{|x(h) - Y_i| \leq h \right\} \leq \frac{1/32}{2ph} \sum_{i \in F(h)} P_{\theta, \gamma}\left\{|x(h) - Y_i| \leq h \right\} \leq \frac{1/16}{2ph} \sum_{i\in \mathcal{O}} P_{\theta, \gamma}\left\{|t - Y_i| > h\right\}. 
        \end{align*}
        Hence, with probability at least \(1-\delta/8\) we have 
        \begin{align}
            \frac{1}{2ph} \left|\sum_{i \in \mathcal{O}} \mathbbm{1}_{\{|x(h) - Y_i| \leq h\}} - P_{\theta, \gamma}\left\{ |x(h) - Y_i| \leq h \right\}\right| &\leq 2\left(\frac{C'\log(ep)}{2ph} + \frac{1/16}{2ph} \sum_{i\in \mathcal{O}} P_{\theta, \gamma}\left\{|t - Y_i| > h\right\}\right) \nonumber \\
            &< \frac{1}{4}(G(x(h), h) - G(t, h)) \label{eqn:uxh_parts_2a}
        \end{align}
        uniformly over \((t, h)\) with \( h \in \mathcal{H}\) and \(t \in \mathcal{U}(h)\). The final inequality follows from (\ref{eqn:uxh_signal}). It remains to obtain the same bound over \((t, h)\) with \(h \in \mathcal{H}^c\) and \(t \in \mathcal{U}(h)\). Consider that since \(|x(h) - \mu| \leq h/4\), we have
        \begin{align*}
            \sup_{h \in \mathcal{H}^c} \frac{1}{|\mathcal{O}|}\sum_{i \in \mathcal{O}} P_{\theta, \gamma}\left\{|x(h) - Y_i| \leq h\right\} \leq \sup_{h \in \mathcal{H}^c} \frac{2}{|\mathcal{O}|}\sum_{i \in E(h)} P_{\theta, \gamma}\left\{|x(h) - Y_i| \leq h\right\} \leq 2e^{-C\underline{h}^2/(1-\gamma)}.
        \end{align*}
        Therefore, the argument yielding (\ref{eqn:ix_bound}) yields 
        \begin{equation}\label{eqn:uxh_parts_2b}
            \frac{1}{2ph} \left|\sum_{i \in \mathcal{O}} \mathbbm{1}_{\{|x(h) - Y_i| \leq h\}} - P_{\theta, \gamma}\left\{ |x(h) - Y_i| \leq h \right\}\right| < \frac{1}{4} \cdot \frac{p-2s}{ph}
        \end{equation}
        uniformly over \(h \in \mathcal{H}^c\) with probability at least \(1-\delta/8\). Therefore, taking union bound over (\ref{eqn:ix_bound}), (\ref{eqn:uxh_parts_2a}), and (\ref{eqn:uxh_parts_2b}) gives us that \(\mathcal{E}_{\mathcal{U}}\) holds with probability at least \(1-\delta/2\). \newline
    
        Handling \(\mathcal{E}_{\mathcal{V}}\) is similar to handling \(\mathcal{E}_{\mathcal{U}}\). First, consider that for any \(\underline{h} \leq h \leq \bar{h}\) and \(t \in \mathcal{V}(h)\), we have by Proposition \ref{prop:mean_diff_fix} 
        \begin{equation}\label{eqn:vxh_signal}
            G(x(h), h) - G(t, h) \geq \frac{1}{2} \cdot \frac{p-2|\mathcal{O}|}{2ph}. 
        \end{equation}
        As in (\ref{eqn:uxh_parts}), we split into two parts 
        \begin{align}\label{eqn:vxh_parts}
            &\left|\hat{G}(x(h), h) - G(x(h), h)\right| \leq \nonumber \\
            &\frac{1}{2nh}\left|\sum_{i \in \mathcal{I}} \mathbbm{1}_{\{|x(h) - Y_i| > h\}} - P_{\theta, \gamma} \left\{|x(h) - Y_i| > h\right\}\right| + \frac{1}{2ph}\left|\sum_{i \in \mathcal{O}} \mathbbm{1}_{\{|x(h) - Y_i| \leq h\}} - P_{\theta, \gamma} \left\{|x(h) - Y_i| \leq h\right\}\right|. 
        \end{align}
        The first term can be bounded by mimicking the proof of (\ref{eqn:ix_bound}) to obtain 
        \begin{equation*}
            \frac{1}{2ph}\left|\sum_{i \in \mathcal{I}} \mathbbm{1}_{\{|x(h) - Y_i| > h\}} - P_{\theta, \gamma}\left\{|x(h) - Y_i| > h\right\}\right| < \frac{1}{8} \cdot \frac{p-2s}{2ph}    
        \end{equation*}
        uniformly over \(\underline{h} \leq h \leq \bar{h}\) with probability at least \(1-\delta/4\). To bound the second term, consider that
        \begin{align*}
            &\sup_{\underline{h} \leq h \leq \overline{h}} \frac{1}{|\mathcal{O}|} \sum_{i \in \mathcal{O}} P_{\theta,\gamma}\left\{|x(h) - Y_i| \leq h\right\} \\
            &\leq \sup_{h \in \mathcal{H}} \frac{1}{|\mathcal{O}|} \sum_{i \in \mathcal{O}} P_{\theta,\gamma}\left\{|x(h) - Y_i| \leq h\right\} + \sup_{h \in \mathcal{H}^c} \frac{1}{|\mathcal{O}|} \sum_{i \in \mathcal{O}} P_{\theta,\gamma}\left\{|x(h) - Y_i| \leq h\right\} \\
            &\leq \sup_{h \in \mathcal{H}} \frac{2}{|\mathcal{O}|} \sum_{i \in F(h)} P_{\theta,\gamma}\left\{|x(h) - Y_i| \leq h\right\} + \sup_{h \in \mathcal{H}^c} \frac{2}{|\mathcal{O}|} \sum_{i \in E(h)} P_{\theta,\gamma}\left\{|x(h) - Y_i| \leq h\right\}.
        \end{align*}
        Clearly we have \(\sup_{h \in \mathcal{H}^c} \frac{2}{|\mathcal{O}|} \sum_{i \in E(h)} P_{\theta,\gamma}\left\{|x(h) - Y_i| \leq h\right\} \leq 2e^{-C\underline{h}^2/(1-\gamma)}\). On the other hand, consider that for \(h \in \mathcal{H}\) and for any \(t \in \mathcal{V}(h)\) we have the following as argued in the analysis of \(\mathcal{E}_{\mathcal{U}}\). Namely, for \(i \in F(h)\), we have \(P_{\theta, \gamma}\{|t - Y_i| > h\} \geq \frac{1}{2}\). Therefore, 
        \begin{align*}
            \frac{2}{|\mathcal{O}|} \sum_{i \in F(h)} P_{\theta,\gamma}\left\{|x(h) - Y_i| \leq h\right\} &\leq \frac{2}{|\mathcal{O}|} \sum_{i \in F(h)} 2P_{\theta,\gamma}\left\{|t - Y_i| > h\right\} \\
            &\leq \frac{4}{|\mathcal{O}|} \sum_{i \in \mathcal{O}} P_{\theta, \gamma}\left\{|t - Y_i| > h\right\} \\
            &\leq \frac{16p}{|\mathcal{O}|} e^{-C\underline{h}^2/(1-\gamma)} \\
            &\leq 64 e^{-C\underline{h}^2/(1-\gamma)}
        \end{align*}
        where we have used the definition of \(\mathcal{V}(h)\) and \(|\mathcal{O}| \geq \frac{p}{4}\). Therefore, we have shown 
        \begin{equation*}
            \sup_{\underline{h} \leq h \leq \overline{h}} \frac{1}{|\mathcal{O}|} \sum_{i \in \mathcal{O}} P_{\theta,\gamma}\left\{|x(h) - Y_i| \leq h\right\} \leq 66e^{-C\underline{h}^2/(1-\gamma)}
        \end{equation*}
        and so the argument yielding (\ref{eqn:ix_bound}) can be mimicked to yield 
        \begin{equation*}
            \frac{1}{2ph} \left|\sum_{i \in \mathcal{O}} \mathbbm{1}_{\{|x(h) - Y_i| \leq h\}} - P_{\theta, \gamma}\left\{ |x(h) - Y_i| \leq h \right\}\right| < \frac{1}{8} \cdot \frac{p-2s}{ph}
        \end{equation*}
        uniformly over \(\underline{h} \leq h \leq \bar{h}\) with probability at least \(1-\delta/8\). Taking union over the two events yielding bounds for the pieces in (\ref{eqn:vxh_parts}) gives us that \(\mathcal{E}_{\mathcal{V}}\) has probability at least \(1-\delta/2\). The proof is complete. 
    \end{proof} 
    
    \subsection{Synthesis}
    All of the pieces are now combined to establish the accuracy of the kernel mode estimator in the regime \(s \leq \frac{p}{2} - p^{1/4}\). 
    
    \begin{proposition}\label{prop:kme_hard_unknown_var}
        Suppose \(s \leq \frac{p}{2}-p^{1/4}\). Further suppose \(C_a\) and \(\tilde{C}\) are sufficiently large universal constants. Fix \(\delta, \eta \in (0, 1)\) and let \(\hat{\gamma}\) be the estimator from Proposition \ref{prop:correlation_estimation} defined at confidence level \(\eta\). Define the (random) bandwidth 
        \begin{equation*}
            \hat{h} := C_1 \sqrt{(1-\hat{\gamma})\left(1 \vee \left(\log\left(\frac{ep}{(p-2s)^2}\right) + \log\log\left(\frac{1}{\delta}\right)\right)\right)} 
        \end{equation*}
        where \(C_1 > 0\) is sufficiently large depending only on \(\eta\). If \(p \geq \tilde{C}\log^{16}(1/\delta)\), then the kernel mode estimator with (random) bandwidth \(\hat{h}\), 
        \begin{equation*}
            \tilde{\mu} := \argmax_{x \in \R} \frac{1}{2p\hat{h}}\sum_{i=1}^{p} \mathbbm{1}_{\{|x - Y_i| \leq \hat{h}\}},
        \end{equation*}
        satisfies 
        \begin{equation*}
            \sup_{\substack{||\theta||_0 \leq s \\ \gamma \in [0, 1)}} P_{\theta, \gamma}\left\{ \frac{|\tilde{\mu} - \mu|}{\sqrt{1-\gamma}} > C' \sqrt{1 \vee \log\left(\frac{ep}{(p-2s)^2}\right)}\right\} \leq \delta + \eta
        \end{equation*}
        where \(C' > 0\) is a constant depending only on \(\eta\). 
    \end{proposition}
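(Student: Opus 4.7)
The plan is to mimic the argument of Proposition \ref{prop:kme_hard}, replacing pointwise-in-$h$ control of the stochastic deviations by the uniform-in-$h$ control supplied by Propositions \ref{prop:U_control_unknown_var}, \ref{prop:V_control_unknown_var}, and \ref{prop:x_control_unknown_var}. The randomness of $\hat{h}$ is then absorbed by first conditioning on a high-probability event for $\hat{\gamma}$ on which $\hat{h}$ lies in a deterministic interval $[\underline{h},\bar{h}]$.

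First I would invoke Proposition \ref{prop:correlation_estimation} at confidence $\eta$ to obtain a constant $L_\eta \geq 1$ and an event $\mathcal{E}_\gamma = \{L_\eta^{-1}(1-\gamma) \leq 1-\hat{\gamma} \leq L_\eta(1-\gamma)\}$ of probability at least $1-\eta$. Setting $R = L_\eta^{1/2}$, define the deterministic bandwidths
\begin{align*}
\underline{h} &:= C_1 R^{-1}\sqrt{(1-\gamma)\left(1 \vee \left(\log\tfrac{ep}{(p-2s)^2} + \log\log\tfrac{1}{\delta}\right)\right)}, \\
\bar{h} &:= C_1 R \sqrt{(1-\gamma)\left(1 \vee \left(\log\tfrac{ep}{(p-2s)^2} + \log\log\tfrac{1}{\delta}\right)\right)},
\end{align*}
so that on $\mathcal{E}_\gamma$ one has $\underline{h} \leq \hat{h} \leq \bar{h}$. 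I would choose $C_1$ (depending only on $\eta$ through $R$) large enough that $\underline{h}/\sqrt{1-\gamma}$ exceeds the universal constants demanded by Proposition \ref{prop:mean_diff_fix} and by the hypotheses of Propositions \ref{prop:U_control_unknown_var}, \ref{prop:V_control_unknown_var}, and \ref{prop:x_control_unknown_var}.

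Next, applying those three propositions at confidence level $\delta/3$ each (the sample-size hypothesis $p \geq \tilde{C}\log^{16}(1/\delta)$ is exactly what is needed), a union bound produces an event $\mathcal{E}$ with $P_{\theta,\gamma}(\mathcal{E}) \geq 1-\delta$ on which, for every $h \in [\underline{h},\bar{h}]$, the signal point $x(h)$ of Proposition \ref{prop:mean_diff_fix} satisfies simultaneously
\[
|\hat{G}(x(h),h)-G(x(h),h)| < \tfrac{1}{2}(G(x(h),h)-G(t,h)), \qquad |\hat{G}(t,h)-G(t,h)| < \tfrac{1}{2}(G(x(h),h)-G(t,h))
\]
for every $t \in \mathbb{R}$ with $|t-\mu| \geq C_a h$ (using that $\mathcal{U}(h)\cup\mathcal{V}(h)$ is exactly this set). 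Combining these two inequalities via (\ref{eqn:kme_master}) yields $\hat{G}(x(h),h) > \hat{G}(t,h)$ for all such $t$, uniformly in $h \in [\underline{h},\bar{h}]$.

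Finally, on $\mathcal{E}\cap\mathcal{E}_\gamma$, specialize the previous display to the random bandwidth $h = \hat{h} \in [\underline{h},\bar{h}]$: this forces the global maximizer of $\hat{G}(\cdot,\hat{h})$ to lie in $\{t : |t-\mu| < C_a\hat{h}\}$, so that $|\tilde{\mu}-\mu| \leq C_a\hat{h} \leq C_a\bar{h}$. Dividing by $\sqrt{1-\gamma}$ and using the definition of $\bar{h}$ together with the hypothesis $p \geq \tilde{C}\log^{16}(1/\delta)$ (which forces $\log\log(1/\delta)$ to be dominated by the relevant scale), the bound collapses to $C'\sqrt{1 \vee \log(ep/(p-2s)^2)}$ for a constant $C'$ depending only on $\eta$. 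A union bound $P_{\theta,\gamma}((\mathcal{E}\cap\mathcal{E}_\gamma)^c) \leq \delta+\eta$ completes the argument. The only genuinely new ingredient compared to Proposition \ref{prop:kme_hard} is the uniform-in-$h$ character of Propositions \ref{prop:U_control_unknown_var}--\ref{prop:x_control_unknown_var}; since those have already been established, the synthesis here is essentially bookkeeping, and I anticipate no substantial obstacle beyond careful tracking of the constants so that the choice of $C_1$ depends only on $\eta$.
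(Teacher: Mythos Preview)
Your proposal is correct and follows essentially the same route as the paper: trap $\hat h$ in a deterministic interval $[\underline h,\bar h]$ via Proposition~\ref{prop:correlation_estimation}, invoke Propositions~\ref{prop:U_control_unknown_var}, \ref{prop:V_control_unknown_var}, and \ref{prop:x_control_unknown_var} uniformly over $h\in[\underline h,\bar h]$, combine them through the master inequality (\ref{eqn:kme_master}) to force $\hat G(x(h),h)>\hat G(t,h)$ for all $|t-\mu|\ge C_a h$, and then specialize to $h=\hat h$ on the intersection event. The paper's proof is identical in structure (it uses $\delta/6$ per event rather than your $\delta/3$, an immaterial difference), and like yours it simply concludes $|\tilde\mu-\mu|\le C_a\bar h$ without further comment on the $\log\log(1/\delta)$ term.
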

    \begin{proof}
        First, consider that by Proposition \ref{prop:correlation_estimation}, there exists \(R \geq 1\) depending only on \(\eta\) such that the event \(\mathcal{E}_{\gamma} := \left\{R^{-1}(1-\gamma) \leq 1-\hat{\gamma} \leq R(1-\gamma)\right\}\) has probability at least \(1-\eta\). Define 
        \begin{align*}
            \underline{h} &:= C_1R^{-1}\sqrt{(1-\gamma)\left(1\vee \left(\log\left(\frac{ep}{(p-2s)^2}\right) + \log\log\left(\frac{1}{\delta}\right)\right)\right)}, \\
            \bar{h} &:= C_1 R\sqrt{(1-\gamma)\left(1 \vee \left(\log\left(\frac{ep}{(p-2s)^2}\right) + \log\log\left(\frac{1}{\delta}\right)\right)\right)}
        \end{align*}
        for sufficiently large \(C_1 > 0\) depending only on \(\delta\). From Propositions \ref{prop:U_control_unknown_var}, \ref{prop:V_control_unknown_var}, and \ref{prop:x_control_unknown_var}, it follows that the events 
        \begin{align*}
            \mathcal{E}_{\mathcal{U}} &:=  \left\{ \left|\hat{G}(x(h),h) - G(x(h), h)\right| < \frac{1}{2}(G(x(h), h) - G(t, h)) \text{ for all } (t, h) \text{ with } t \in \mathcal{U}(h), \underline{h} \leq h \leq \bar{h} \right\},\\
            \mathcal{E}_{\mathcal{V}} &:=  \left\{ \left|\hat{G}(x(h),h) - G(x(h), h)\right| < \frac{1}{2}(G(x(h), h) - G(t, h)) \text{ for all } (t, h) \text{ with } t \in \mathcal{V}(h), \underline{h} \leq h \leq \bar{h} \right\},\\
            \mathcal{E}_{x} &:= \left\{ \left|\hat{G}(x(h),h) - G(x(h), h)\right| < \frac{1}{2}(G(x(h), h) - G(t, h)) \text{ for all } (t, h) \text{ with } |t - \mu| \geq C_a h, \underline{h} \leq h \leq \bar{h} \right\}
        \end{align*}
        each have probability at least \(1-\delta/6\). Thus by union bound it follows that the event \(\mathcal{E} = \mathcal{E}_{\gamma} \cap \mathcal{E}_{\mathcal{U}} \cap \mathcal{E}_{\mathcal{V}} \cap \mathcal{E}_x\) has probability at least \(1-\delta-\eta\). Furthermore, on \(\mathcal{E}\) we have uniformly over \((t, h)\) such that \(|t - \mu| \geq C_a h\) and \(\underline{h} \leq h \leq \bar{h}\), 
        \begin{align*}
            \hat{G}(x(h), h) &> G(x(h), h) - \frac{1}{2}(G(x(h), h) - G(t, h)) \\
            &\geq G(x(h), h) - G(t, h) - \left|\hat{G}(t, h) - G(t, h)\right| - \frac{1}{2}\left(G(x(h), h) - G(t, h)\right) + \hat{G}(t, h).
        \end{align*}
        Note \(|t - \mu| \geq C_a h\) implies \(t \in \mathcal{U}(h)\) or \(t \in \mathcal{V}(h)\). In either case, since we are on \(\mathcal{E}\) we have 
        \begin{equation*}
            \hat{G}(x(h), h) > G(x(h), h) - G(t, h) - \frac{1}{2}\left(G(x(h), h) - G(t, h)\right) - \frac{1}{2}\left(G(x(h), h) - G(t, h)\right) + \hat{G}(t, h) = \hat{G}(t, h).
        \end{equation*}
        Since this holds uniformly over \((t, h)\) and since \(\hat{h} \in [\underline{h}, \bar{h}]\) because \(\mathcal{E} \subset \mathcal{E}_{\gamma}\), it follows that 
        \begin{equation*}
            \hat{G}(x(\hat{h}), \hat{h}) > \hat{G}(t, \hat{h})
        \end{equation*}
        for all \(|t - \mu| \geq C_a \hat{h}\). It thus follows that on the event \(\mathcal{E}\), we have \(|\tilde{\mu} - \mu| \leq C_a \hat{h} \leq C_a \bar{h}\). As \(\mathcal{E}\) has probability at least \(1-\delta-\eta\), the proof is complete. 
    \end{proof}

    \subsection{Proof of Theorem \ref{thm:kme_unknown_var}}
    
    \begin{proof}[Proof of Theorem \ref{thm:kme_unknown_var}]
        Theorem \ref{thm:kme_unknown_var} follows directly from Propositions \ref{prop:kme_easy_unknown_var} and \ref{prop:kme_hard_unknown_var}. 
    \end{proof}

    \section{Miscellaneous results}\label{section:misc}
    In this section, proofs of Proposition \ref{prop:sample_median} and Theorem \ref{thm:adapt_in-exp} are established. 
    
    \begin{proof}[Proof of Proposition \ref{prop:sample_median}]
        Recall \(Y\) is given by (\ref{model:Gaussian_contamination}). Without loss of generalization, assume \(\gamma = 0\) otherwise we can simply work with the normalized data \(\{Y_j/(1-\gamma)\}_{j=1}^{n}\) since \(\median(Y_1/(1-\gamma),\ldots,Y_p/(1-\gamma)) = \median(Y_1,\ldots,Y_p)/(1-\gamma) = \hat{T}/(1-\gamma)\). The analysis is split into three cases. \newline

        \noindent \textbf{Case 1:} Suppose \(1 \leq s \leq \frac{p}{4}\). Let \(\Phi\) denote the cumulative distribution function of the standard Gaussian distribution and let \(\Phi^{-1}\) denote its quantile function. Let \(\pi = \frac{1}{2} - \left(\frac{C_\delta}{\sqrt{p}} + \frac{s}{2(p-s)}\right)\) where \(C_\delta > 0\) is sufficiently large depending only on \(\delta\). Note we have \(\pi \gtrsim 1\) since \(p\) is sufficiently large and \(\frac{s}{2(p-s)} \leq \frac{1}{4}\). Denote the event \(E_j = \left\{Y_j > \bar{\theta} + \Phi^{-1}(1-\pi)\right\}\) and note \(\pi = P_{\theta_0}(E_j)\) for \(j \in \mathcal{I}\). Consider by Hoeffding's inequality, 
        \begin{align*}
            P_{\theta, 0}\left\{ \hat{T} > \bar{\theta} + \Phi^{-1}(\pi)\right\} &\leq P_{\theta, 0}\left\{ \sum_{j=1}^{p} \mathbbm{1}_{E_j} > \frac{p}{2} \right\} \\
            &\leq P_{\theta, 0}\left\{ \sum_{j \in \mathcal{I}} \mathbbm{1}_{E_j} - \pi > \frac{p}{2} - |\mathcal{O}| - |\mathcal{I}|\pi \right\} \\
            &=  P_{\theta, 0}\left\{ \frac{1}{|\mathcal{I}|}\sum_{j \in \mathcal{I}} \mathbbm{1}_{E_j} - \pi > \frac{1}{2} - \frac{|\mathcal{O}|}{2|\mathcal{I}|} - \pi \right\} \\
            &\leq \exp\left(-c'' |\mathcal{I}|\left(\frac{1}{2} - \frac{s}{2(p-s)} - \pi\right)^2\right) \\
            &\leq \exp\left(-\frac{c''C_\delta^2}{2}\right) \\
            &\leq \frac{\delta}{2} 
        \end{align*}
        where \(c'' > 0\) is a universal constant. Here, we have used \(C_\delta > 0\) is sufficiently large.  Applying a similar argument to bound \(P_{\theta, 0}\left\{\hat{T} < \bar{\theta} - \Phi^{-1}(\pi)\right\}\), we can conclude by union bound 
        \begin{equation*}
            P_{\theta, 0}\left\{|\hat{T} - \bar{\theta}| > \Phi^{-1}(1-\pi)\right\} \leq \delta.     
        \end{equation*}
        By Taylor expansion, we have \(\Phi^{-1}(1-\pi) = \Phi^{-1}(\frac{1}{2}) + \frac{\left(1 - \pi - \frac{1}{2}\right)}{\Phi'(\Phi^{-1}(\xi))} = \left(\frac{C_\delta}{\sqrt{p}} + \frac{s}{2(p-s)}\right)\frac{1}{\Phi'(\Phi^{-1}(\xi))}\) for some \(\xi\) between \(\frac{1}{2}\) and \(1-\pi\). Since \(\pi \gtrsim 1\), it follows \(\frac{1}{\Phi'(\Phi^{-1}(\xi))} \asymp 1\), and so \(\Phi^{-1}(1-\pi) \asymp \frac{C_\delta}{\sqrt{p}} + \frac{s}{p}\). Since \(||\hat{T}\mathbf{1}_p - \bar{\theta}\mathbf{1}_p||^2 = p |\hat{T} - \bar{\theta}|^2\), the desired result follows. \newline
    
        \noindent \textbf{Case 2:} Suppose \(\frac{p}{4} < s < \frac{p}{2} - \sqrt{p}\). Define the interval 
        \begin{equation*}
            E := \left[\bar{\theta} - \sqrt{2 \log\left(\frac{4|\mathcal{I}|}{\frac{p}{2} - |\mathcal{O}|}\right)}, \bar{\theta} + \sqrt{2\log\left(\frac{4|\mathcal{I}|}{\frac{p}{2} - |\mathcal{O}}\right)} \right]. 
        \end{equation*}
        Consider 
        \begin{align*}
            P_{\theta, 0}\left\{\hat{T} \not \in E\right\} &\leq P_{\theta, 0} \left\{ \sum_{j=1}^{p} \mathbbm{1}_{\{Y_j \not \in E\}} > \frac{p}{2} \right\} \\
            &\leq P_{\theta, 0} \left\{ \sum_{j \in \mathcal{I}} \mathbbm{1}_{\{Y_j \not \in E\}} > \frac{p}{2} - |\mathcal{O}| \right\} \\
            &\leq P_{\theta, 0} \left\{ \sum_{j \in \mathcal{I}} \mathbbm{1}_{\{Y_j \not \in E\}} - \pi > \frac{p}{2} - |\mathcal{O}| - |\mathcal{I}|\pi\right\}
        \end{align*}
        where \(\pi = P\left\{|Z| > \sqrt{2 \log\left(\frac{4|\mathcal{I}|}{\frac{p}{2} - |\mathcal{O}|}\right)}\right\}\) with \(Z \sim N(0, 1)\). By Bernstein's inequality (Theorem \ref{thm:bernstein_bounded}), we have for a universal constant \(c > 0\), 
        \begin{equation*}
            P_{\theta, 0}\left\{ \sum_{j \in \mathcal{I}} \mathbbm{1}_{\{Y_j \not \in E\}} - \pi > \frac{p}{2} - |\mathcal{O}| - |\mathcal{I}|\pi \right\} \leq \exp\left(-c \min\left( \frac{\left(\frac{p}{2} - |\mathcal{O}| - |\mathcal{I}|\pi\right)^2}{|\mathcal{I}|\pi(1-\pi)} \right)\right). 
        \end{equation*} 
        Since \(\pi \leq 2\exp\left(-\frac{1}{2} \cdot 2 \log\left(\frac{4|\mathcal{I}|}{\frac{p}{2} - |\mathcal{O}|}\right) = \frac{\frac{p}{2} - |\mathcal{O}|}{2|\mathcal{I}|}\right)\). Hence, 
        \begin{equation*}
            P_{\theta, 0} \left\{\hat{T} \not \in E\right\} \leq \exp\left(-\frac{c}{2} \left(\frac{p}{2} - |\mathcal{O}|\right)\right) \leq \exp\left(-\frac{c(p-2s)}{4}\right) \leq \exp\left(-\frac{c\sqrt{p}}{4}\right) \leq \delta
        \end{equation*}
        as \(p\) is sufficiently large depending on \(\delta\). Consider \(\log\left(\frac{4|\mathcal{I}|}{\frac{p}{2} - |\mathcal{O}|}\right) \lesssim \log\left(\frac{ep}{p-2s}\right) \asymp \frac{1}{p} + \frac{s^2}{p^2} \log\left(\frac{ep}{p-2s}\right)\) because \(s \asymp p\). Thus, the desired result follows. \newline 
    
        \noindent \textbf{Case 3:} Suppose \(s \geq \frac{p}{2} - \sqrt{p}\). Since \(|\mathcal{I}| > \frac{p}{2}\), it follows 
        \begin{equation*}
            P_{\theta,0}\left\{ |\hat{T} - \bar{\theta}| > \sqrt{4\log(ep)} \right\} \leq P_{\theta, 0}\left\{ \max_{j \in \mathcal{I}} |X_j - \bar{\theta}| > \sqrt{4\log(ep)} \right\} \leq 2p \exp\left(-2\log(ep)\right) \leq \frac{1}{p} < \delta
        \end{equation*} 
        for \(p \geq \frac{1}{\delta}\). Since \(\log\left(\frac{ep}{p-2s}\right) \asymp \log(ep)\) because \(p-2s \leq \sqrt{p}\), the claimed result follows.
    \end{proof}
    
    \begin{proof}[Proof of Theorem \ref{thm:adapt_in-exp}]
        To prove the desired result, we first prove an intermediate statement. Let \(\theta_1 := \frac{a}{\sqrt{p}}\mathbf{1}_p\) with \(a = \sqrt{(1-\gamma+\gamma p)r}\). For any rates \(f(0)\) and \(f(p)\) and for any estimator \(\hat{\theta}\) we have
        \begin{align*}
            &\sup_{||\theta||_0 \leq s} \frac{E_{\theta, \gamma}\left(||\hat{\theta} - \theta||^2\right)}{f(0)} + \sup_{||\theta||_0 \leq p} \frac{E_{\theta, \gamma}\left( ||\hat{\theta} - \theta||^2 \right)}{f(p)} \\
            &\geq \frac{E_{0, \gamma}(||\hat{\theta}||^2)}{f(0)} + \frac{E_{\theta_1, \gamma}\left(||\hat{\theta} - \theta_1||^2\right)}{f(p)} \\
            &\geq \frac{E_{0, \gamma}\left(||\hat{\theta}||^2 \mathbbm{1}_{\{||\hat{\theta}||^2 \geq a^2/4\}}\right)}{f(0)} + \frac{E_{\theta_1, \gamma}\left(||\hat{\theta} - \theta_1||^2 \mathbbm{1}_{\{||\hat{\theta}||^2 < a^2/4\}}\right)}{f(p)} \\
            &\geq \frac{a^2}{4} \left( \frac{P_{0, \gamma}\left\{||\hat{\theta}||^2 \geq \frac{a^2}{4}\right\}}{f(0)} + \frac{P_{\theta_1, \gamma}\left\{ ||\hat{\theta}||^2 < \frac{a^2}{4} \right\}}{f(p)}\right) \\
            &\geq  \frac{a^2}{4f(p)} \inf_{\mathcal{A}} \left\{ \frac{f(p)}{f(0)} P_{0, \gamma}\left(\mathcal{A}\right) + P_{\theta_1, \gamma}(\mathcal{A}^c) \right\}.
        \end{align*}
        Here, the infimum runs over all events \(\mathcal{A}\). Also, we have used the Pythagorean identity to argue that on the event \(\{||\hat{\theta}||^2 < a^2/4\}\) we have \(\overline{\hat{\theta}}^2 \leq a^2/(4p)\), and so \(||\hat{\theta} - \theta_1||^2 \geq \left(\overline{\hat{\theta}} - a/\sqrt{p}\right)^2 ||\mathbf{1}_p||^2 \geq \frac{a^2}{4}\). Let \(q = \frac{f(p)}{f(0)}\). By Lemma \ref{lemma:adapt_exp_lbound}, we have
        \begin{equation*}
            \inf_{\mathcal{A}}\left\{ q P_{0, \gamma}(\mathcal{A}) + P_{\theta_1, \gamma}(\mathcal{A}^c) \right\} \geq \sup_{0 < \tau < 1} \left\{\frac{q\tau}{1 + q\tau} \left(1 - \tau\left(\chi^2(P_{\theta_1, \gamma}\,||\, P_{0, \gamma}) + 1\right)\right) \right\}.
        \end{equation*}
        Noting \(\Sigma^{-1} = \frac{1}{1-\gamma}\left(I_p - \frac{1}{p}\mathbf{1}_p\mathbf{1}_p^\intercal\right) + \frac{1}{1-\gamma+\gamma p} \cdot \frac{1}{p}\mathbf{1}_p\mathbf{1}_p^\intercal\), consider by the Ingster-Suslina method (see Lemma \ref{lemma:ingster_suslina})
        \begin{equation*}
            \chi^2(P_{\theta_1, \gamma}\,||\, P_{0, \gamma}) + 1 = \exp\left(\langle \theta_1, \Sigma^{-1} \theta_1\rangle\right) = \exp\left(\frac{a^2}{1-\gamma+\gamma p}\right) = e^r. 
        \end{equation*}
        Therefore,
        \begin{equation*}
            \sup_{0 < \tau < 1} \left\{ \frac{q\tau}{1 + q\tau}\left(1 - \tau(\chi^2(P_{\theta_1, \gamma}\,||\, P_{0, \gamma})+ 1) \right)\right\} \geq \sup_{0 < \tau < 1} \left\{\frac{q\tau}{1+q\tau} (1-\tau e^r)\right\} \geq \frac{qe^{-r}/2}{2(1+qe^{-r}/2)}
        \end{equation*}
        where we have taken \(\tau = e^{-r}/2\) to obtain the final inequality. Therefore, we have
        \begin{equation*}
            \sup_{||\theta||_0 \leq s} \frac{E_{\theta, \gamma}\left(||\hat{\theta} - \theta||^2\right)}{f(0)} + \sup_{||\theta||_0 \leq p} \frac{E_{\theta, \gamma}\left( ||\hat{\theta} - \theta||^2\right)}{f(p)} \geq \frac{a^2}{4f(p)} \cdot \frac{qe^{-r}/2}{2(1+qe^{-r}/2)} \gtrsim \frac{a^2}{f(p)} \cdot \frac{qe^{-r}}{1 + qe^{-r}}.
        \end{equation*}
        Note \(\frac{qe^{-r}}{1 + qe^{-r}} \gtrsim 1\) if and only if \(qe^{-r} \gtrsim 1\), which in turn holds if and only if \(f(p) \gtrsim f(0)e^r\). Furthermore, \(\frac{a^2}{f(p)} \gtrsim 1\) if and only if \(a^2 \gtrsim f(p)\). Consequently, with the choice \(f(p) \asymp a^2\) and \(f(0) \asymp a^2 e^{-r}\), we have
        \begin{equation*}
            \sup_{||\theta||_0 \leq s} \frac{E_{\theta, \gamma}(||\hat{\theta} - \theta||^2)}{f(0)} + \sup_{||\theta||_0 \leq p} \frac{E_{\theta, \gamma}(||\hat{\theta} - \theta||^2)}{f(p)} \gtrsim 1. 
        \end{equation*}
        In other words, there exists a universal constant \(C_0 > 0\) such that
        \begin{equation*}
            \sup_{||\theta||_0 \leq s} \frac{E_{\theta, \gamma}(||\hat{\theta} - \theta||^2)}{a^2 e^{-r}} + \sup_{||\theta||_0 \leq p} \frac{E_{\theta, \gamma}(||\hat{\theta} - \theta||^2)}{a^2} \geq 2C_0.
        \end{equation*}
        Since \(a^2 = (1-\gamma + \gamma p)r\) and \(r \geq 1\), it is clear \(a^2 e^{-r} \geq (1-\gamma+\gamma p)e^{-C_1 r}\) for a universal constant \(C_1 > 0\). Consequently, if \(\hat{\theta}\) is an estimator such that 
        \begin{equation*}
            \sup_{||\theta||_0 \leq s} E_{\theta, \gamma}\left(||\hat{\theta} - \theta||^2\right) \leq C_0 (1-\gamma+\gamma p)e^{-C_1 r},
        \end{equation*}
        then 
        \begin{equation*}
            \sup_{||\theta||_0 \leq p} E_{\theta, \gamma}\left(||\hat{\theta} - \theta||^2\right) \geq C_0(1-\gamma+\gamma p)r. 
        \end{equation*}
        The proof is complete.
    \end{proof}

        \section{Auxiliary results}\label{section:auxiliary}
    
        \begin{lemma}\label{lemma:tv_tensorization}
            Suppose \(\mu = \mu_1 \otimes \mu_2\) and \(\nu = \nu_1 \otimes \nu_2\) are two product measures on \(\mathcal{X} \times \mathcal{X}\). Suppose \(\mu_1, \mu_2, \nu_1,\) and \(\nu_2\) are absolutely continuous with respect to a common measure \(\lambda\) on \(\mathcal{X}\). Then \(\dTV(\mu, \nu) \leq \dTV(\mu_1, \nu_1) + \dTV(\mu_2, \nu_2)\). 
        \end{lemma}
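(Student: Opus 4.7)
The plan is to use the triangle inequality for total variation, introducing $\mu_1 \otimes \nu_2$ as an intermediate measure, and then exploit the fact that when a product measure shares one of its factors with another product measure, the total variation between them reduces to the total variation between the unshared factors.

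First I would write
\begin{equation*}
    \dTV(\mu_1 \otimes \mu_2, \nu_1 \otimes \nu_2) \leq \dTV(\mu_1 \otimes \mu_2, \mu_1 \otimes \nu_2) + \dTV(\mu_1 \otimes \nu_2, \nu_1 \otimes \nu_2)
\end{equation*}
by the triangle inequality, which holds for total variation since it is a metric on probability measures.

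Next I would reduce each term to a one-dimensional total variation. Let $f_{\mu_1}, f_{\mu_2}, f_{\nu_1}, f_{\nu_2}$ denote densities with respect to $\lambda$. Using the $L^1$ representation $\dTV(P, Q) = \frac{1}{2}\int |dP/d\lambda' - dQ/d\lambda'| d\lambda'$ for any common dominating measure $\lambda'$, I compute
\begin{equation*}
    \dTV(\mu_1 \otimes \mu_2,\, \mu_1 \otimes \nu_2) = \frac{1}{2} \int\int f_{\mu_1}(x)\,\bigl|f_{\mu_2}(y) - f_{\nu_2}(y)\bigr|\, d\lambda(x)\, d\lambda(y) = \dTV(\mu_2, \nu_2),
\end{equation*}
where the last equality uses that $f_{\mu_1}$ integrates to $1$ against $\lambda$. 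An identical factorization applied in the other coordinate gives $\dTV(\mu_1 \otimes \nu_2, \nu_1 \otimes \nu_2) = \dTV(\mu_1, \nu_1)$. Combining these two identities with the triangle inequality yields the claim.

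There is no real obstacle here; the only subtlety is ensuring that the product measures admit densities against a common dominating product measure, which is immediate from the hypothesis that all four marginals are absolutely continuous with respect to $\lambda$ (so the products are dominated by $\lambda \otimes \lambda$). The proof is short enough that I would present the two displays above in sequence and conclude directly.
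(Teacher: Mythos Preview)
Your proof is correct and is the standard argument for this well-known inequality. The paper states this lemma without proof in its auxiliary results section, so there is no paper proof to compare against; your triangle-inequality-plus-factorization approach is exactly what one would expect here.
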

    
        \begin{proposition}[Lemma 4.10 \cite{massart_concentration_2007} - Sparse Gilbert-Varshamov]
            Given \(1 \leq s < p\), define \(\{0, 1\}^p_s := \left\{x \in \{0, 1\}^p : ||x||_0 = s\right\}\). For every \(\alpha \in (0, 1)\) and \(\beta \in (0, 1)\) such that \(s \leq \alpha \beta p\), there exists some subset \(\mathcal{M}\) of \(\{0, 1\}^p_s\) such that 
            \begin{enumerate}[label=(\roman*)]
                \item \(\sum_{i=1}^{p} \mathbbm{1}_{\{m_i \neq m_i'\}} > 2(1-\alpha)s\) for all \(m, m' \in \mathcal{M}\) with \(m \neq m'\),
                \item \(\log |\mathcal{M}| \geq \rho s \log\left(\frac{p}{s}\right)\)
            \end{enumerate}
            where \(\rho = \frac{\alpha}{-\log(\alpha \beta)} (-\log(\beta) + \beta - 1)\).
        \end{proposition}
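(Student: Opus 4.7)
The plan is to use a classical greedy packing argument combined with a sharp Chernoff-type bound for the hypergeometric distribution. Since $\|m\|_0 = \|m'\|_0 = s$ for all $m, m' \in \{0,1\}^p_s$, the Hamming distance decomposes as $\sum_{i=1}^p \mathbbm{1}_{\{m_i \neq m_i'\}} = 2(s - |m \cap m'|)$, so condition (i) is equivalent to $|m \cap m'| < \alpha s$. I take $\mathcal{M}$ to be a maximal subset of $\{0,1\}^p_s$ satisfying this condition, so that by maximality every $x \in \{0,1\}^p_s$ admits some $m \in \mathcal{M}$ with $|x \cap m| \geq \alpha s$. A union bound over $\mathcal{M}$, combined with the fact that $|\{x \in \{0,1\}^p_s : |x \cap m| \geq \alpha s\}|$ is the same for every $m$, yields
\[
\binom{p}{s} \;\leq\; |\mathcal{M}| \cdot q\binom{p}{s}, \qquad q := P\{Y \geq \alpha s\},\; Y \sim \Hyp(p, s, s),
\]
so $|\mathcal{M}| \geq 1/q$, and it suffices to show $\log(1/q) \geq \rho s\log(p/s)$.

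Since $s/p \leq \alpha\beta < \alpha$, the mean $E[Y]/s = s/p$ lies strictly below $\alpha$, and Hoeffding's inequality (applied to the hypergeometric via comparison with the binomial moment generating function) gives
\[
q \;\leq\; \exp\bigl(-s D(\alpha \| s/p)\bigr), \qquad D(a \| b) := a\log\frac{a}{b} + (1-a)\log\frac{1-a}{1-b}.
\]
Writing $v := s/(\alpha p) \in (0, \beta]$, I plan to apply the elementary inequality $\log x \geq 1 - 1/x$ (valid for $x > 0$) to $x = (1-\alpha)/(1-\alpha v)$, obtaining $(1-\alpha)\log\bigl((1-\alpha)/(1-\alpha v)\bigr) \geq -\alpha(1-v)$, and hence
\[
D(\alpha \| \alpha v) \;=\; -\alpha\log v + (1-\alpha)\log\frac{1-\alpha}{1-\alpha v} \;\geq\; \alpha\bigl(-\log v + v - 1\bigr) \;=\; \alpha\phi(v),
\]
where $\phi(v) := -\log v + v - 1 \geq 0$ on $(0, 1]$.

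To convert $\alpha\phi(v)$ into the target rate, note $\log(p/s) = -\log(\alpha v)$, so the desired conclusion reduces to showing that the ratio $f(v) := \phi(v)/(-\log(\alpha v))$ satisfies $f(v) \geq f(\beta) = \phi(\beta)/(-\log(\alpha\beta))$ whenever $v \leq \beta$. A direct computation reduces the sign of $f'(v)$ to that of
\[
(v-1)(1-\log\alpha) - v\log v \;=\; -\bigl[(1-v)(1-\log\alpha) + v\log v\bigr];
\]
the bracketed expression vanishes at $v=1$ and has derivative $\log(\alpha v) \leq 0$ on $(0, 1]$, so it is nonnegative on $(0, 1]$. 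Hence $f$ is nonincreasing on $(0, 1)$, and applying this at $v \leq \beta < 1$ gives $f(v) \geq f(\beta)$. Chaining the three estimates produces $\log|\mathcal{M}| \geq \log(1/q) \geq sD(\alpha \| s/p) \geq \alpha s\phi(v) \geq \rho s\log(p/s)$.

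The main obstacle will be extracting precisely the stated constant $\rho = \alpha(-\log\beta + \beta - 1)/(-\log(\alpha\beta))$ rather than a cruder expression. A coarser KL bound such as $D(\alpha \| s/p) \geq \alpha\log(p/s) - H(\alpha)$ (with $H$ the binary entropy), or direct combinatorial estimates of the form $\binom{s}{k}\binom{p-s}{s-k}/\binom{p}{s} \lesssim (es/(\alpha p))^{\alpha s}$, recover the correct leading behavior $\rho \to \alpha$ as $\beta \to 0$ but fail to produce the $\phi(\beta)$ factor or the tight denominator $-\log(\alpha\beta)$. It is only the sharp inequality $\log x \geq 1 - 1/x$, together with the monotonicity of $f$, that yields $\rho$ in its stated form; both steps rely essentially on the hypothesis $s/p \leq \alpha\beta$.
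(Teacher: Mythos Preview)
Your proof is correct. The paper does not supply its own proof of this proposition---it is quoted verbatim as Lemma~4.10 from Massart's monograph and listed among the auxiliary results without argument---so there is no in-paper proof to compare against. Your greedy-packing plus hypergeometric Chernoff bound, followed by the monotonicity analysis of $f(v) = \phi(v)/(-\log(\alpha v))$ to extract the exact constant $\rho$, is precisely the standard route (and is how Massart proves it); all steps check out, including the reduction of the sign of $f'$ to the nonnegativity of $(1-v)(1-\log\alpha) + v\log v$ on $(0,1]$.
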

    
        \begin{corollary}\label{corollary:sparse_gilbert_varshamov}
            If \(1 \leq s \leq \frac{p}{2}\), then there exist universal positive constants \(c_1, c_2\) and a subset \(\mathcal{M} \subset \{x \in \{0, 1\}^p : ||x||_0 = s\}\) such that 
            \begin{enumerate}[label=(\roman*)]
                \item \(\sum_{i=1}^{p} \mathbbm{1}_{\{m_i \neq m_i'\}} > c_1 s\) for all \(m, m' \in \mathcal{M}\) with \(m \neq m'\), 
                \item \(\log |\mathcal{M}| \geq c_2 s \log\left(\frac{ep}{s}\right)\).
            \end{enumerate}
        \end{corollary}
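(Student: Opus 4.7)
\textbf{Proof plan for Corollary \ref{corollary:sparse_gilbert_varshamov}.} The corollary is a direct specialization of the preceding Sparse Gilbert--Varshamov proposition, obtained by fixing $\alpha$ and $\beta$ at universal values. The plan is to choose $\alpha = \beta = 1/\sqrt{2}$, which lies in $(0,1)$ and satisfies $\alpha\beta = 1/2$. Under the hypothesis $1 \leq s \leq p/2$ we then have $s \leq \alpha\beta p$, so the proposition applies and yields a set $\mathcal{M} \subset \{x \in \{0,1\}^p : \|x\|_0 = s\}$ with pairwise Hamming distances exceeding $2(1-\alpha)s = (2-\sqrt{2})s$ and with $\log|\mathcal{M}| \geq \rho\, s\log(p/s)$, where $\rho = \frac{\alpha}{-\log(\alpha\beta)}(-\log\beta + \beta - 1)$ is a positive universal constant for our fixed choice of $\alpha,\beta$. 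Taking $c_1 = 2-\sqrt{2}$ then verifies property (i).

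For property (ii), the only remaining task is to replace $\log(p/s)$ by $\log(ep/s) = 1 + \log(p/s)$. Since $s \leq p/2$, we have $\log(p/s) \geq \log 2 > 0$, hence
\[
\log\!\left(\frac{ep}{s}\right) \;=\; 1 + \log\!\left(\frac{p}{s}\right) \;\leq\; \left(1 + \tfrac{1}{\log 2}\right)\log\!\left(\frac{p}{s}\right).
\]
Combining this with the proposition's bound gives $\log|\mathcal{M}| \geq c_2\, s\log(ep/s)$ with $c_2 = \rho/(1 + 1/\log 2)$, a universal positive constant. No step requires delicate analysis; the only thing to watch is ensuring $\alpha,\beta \in (0,1)$ strictly while still having $\alpha\beta \geq 1/2$, which the symmetric choice $\alpha=\beta=1/\sqrt{2}$ accomplishes and handles the boundary case $s = p/2$ of the hypothesis.
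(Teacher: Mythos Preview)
Your proposal is correct and is precisely the intended approach: the paper states this result as a corollary of the preceding Sparse Gilbert--Varshamov proposition without proof, and your argument---fixing $\alpha=\beta=1/\sqrt{2}$ so that $\alpha\beta=1/2$ matches the hypothesis $s\le p/2$, then absorbing the extra factor $e$ via $\log(ep/s)\le(1+1/\log 2)\log(p/s)$---is exactly the specialization the paper has in mind.
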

    
        \begin{lemma}\label{lemma:precision}
            If \(\gamma \in [0, 1)\), then 
            \begin{equation*}
                \left((1-\gamma)I_p + \gamma \mathbf{1}_p\mathbf{1}_p^\intercal \right)^{-1} = \frac{1}{1-\gamma}\left(I_p - \frac{1}{p}\mathbf{1}_p\mathbf{1}_p^\intercal \right) + \frac{1}{1-\gamma + \gamma p} \cdot \frac{1}{p}\mathbf{1}_p\mathbf{1}_p^\intercal.
            \end{equation*}
        \end{lemma}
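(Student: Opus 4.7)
The plan is to decompose both sides along the two orthogonal projections associated with the vector $\mathbf{1}_p$ and to invert eigenvalue by eigenvalue. Set
\begin{equation*}
P := \frac{1}{p}\mathbf{1}_p\mathbf{1}_p^\intercal, \qquad Q := I_p - P.
\end{equation*}
I would first record three elementary facts: $P$ and $Q$ are symmetric with $P^2 = P$, $Q^2 = Q$, and $PQ = QP = 0$; they commute; and they sum to the identity, $P + Q = I_p$. These identities together make $P$ and $Q$ orthogonal projections onto complementary subspaces (the span of $\mathbf{1}_p$ and its orthogonal complement), so any linear combination $aP + bQ$ with $a,b \neq 0$ is invertible with inverse $a^{-1}P + b^{-1}Q$ by direct verification: $(aP+bQ)(a^{-1}P + b^{-1}Q) = P + Q = I_p$.

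Next, I would rewrite $\Sigma := (1-\gamma)I_p + \gamma \mathbf{1}_p\mathbf{1}_p^\intercal$ in the $\{P,Q\}$ basis. Since $\mathbf{1}_p\mathbf{1}_p^\intercal = pP$ and $I_p = P+Q$,
\begin{equation*}
\Sigma = (1-\gamma)(P+Q) + \gamma p P = (1-\gamma+\gamma p)\, P + (1-\gamma)\, Q.
\end{equation*}
Because $\gamma \in [0,1)$ we have $1-\gamma > 0$ and $1-\gamma+\gamma p > 0$, so both eigenvalues are strictly positive and $\Sigma$ is invertible. By the inversion identity for $aP + bQ$ noted above,
\begin{equation*}
\Sigma^{-1} = \frac{1}{1-\gamma+\gamma p}\, P + \frac{1}{1-\gamma}\, Q = \frac{1}{1-\gamma}\left(I_p - \tfrac{1}{p}\mathbf{1}_p\mathbf{1}_p^\intercal\right) + \frac{1}{1-\gamma+\gamma p}\cdot \tfrac{1}{p}\mathbf{1}_p\mathbf{1}_p^\intercal,
\end{equation*}
which is exactly the claimed formula.

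There is no real obstacle; the only thing to be careful about is checking that $\gamma \in [0,1)$ indeed guarantees both eigenvalues of $\Sigma$ are nonzero so that the projection-wise inversion is valid. As a sanity check one can multiply the right-hand side of the claimed identity against $\Sigma$ and collect terms using $P^2=P$, $Q^2=Q$, $PQ=0$; every cross term vanishes and the $P$- and $Q$-components each reduce to $P$ and $Q$ respectively, giving $I_p$.
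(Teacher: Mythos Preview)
Your proof is correct and complete. The paper states this lemma without proof (it is listed among auxiliary results as a standard fact), so there is no approach to compare against; your spectral/projection decomposition is exactly the natural way to establish the identity.
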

    
        \begin{proposition}\label{prop:reduce_to_test}
            Suppose \(\mathcal{P}\) is a collection of distributions on a sample space \(\mathcal{X}\) and suppose \((\Upsilon, \rho)\) is a metric space. Let \(\tau : \mathcal{P} \to \Upsilon\) be a function and let \(\phi : \R_{+} \to \R_{+}\) be a non-decreasing function with \(\phi(0) = 0\). If \(\delta > 0\) and \(\mathcal{V}\) is a finite index set with \(\{P_v\}_{v \in \mathcal{V}} \subset \mathcal{P}\) such that \(\rho(\tau(P_v), \tau(P_{v'})) \geq 2\delta\) for all \(v \neq v'\), then 
            \begin{equation*}
                \inf_{\hat{\tau}} \sup_{P_v \in \mathcal{P}} P_v\left\{\phi(\rho(\hat{\tau}(X), \tau(P_v))) \geq \phi(\delta) \right\} \geq \inf_{\varphi} \max_{v \in \mathcal{V}} P_v\left\{\varphi(X) \neq v\right\}
            \end{equation*}
            where the infimums run over all estimators \(\hat{\tau} : \mathcal{X} \to \Upsilon\) and measurable functions \(\varphi : \mathcal{X} \to \mathcal{V}\) respectively. 
        \end{proposition}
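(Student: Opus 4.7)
The statement is the standard reduction of estimation to multi-way testing, and I would prove it by constructing, for any given estimator $\hat{\tau}$, a specific test $\varphi^\star$ whose error probability is dominated pointwise by the estimation error event. The plan is: given $\hat{\tau}$, define
\[
    \varphi^\star(X) := \argmin_{v \in \mathcal{V}} \rho\bigl(\hat{\tau}(X), \tau(P_v)\bigr),
\]
breaking ties in an arbitrary measurable fashion (this is possible since $\mathcal{V}$ is finite and $\rho$ composed with $\hat\tau$ is measurable). The goal is then to show
\[
    \bigl\{\varphi^\star(X) \neq v\bigr\} \subseteq \bigl\{\rho(\hat{\tau}(X), \tau(P_v)) \geq \delta\bigr\} \subseteq \bigl\{\phi(\rho(\hat{\tau}(X), \tau(P_v))) \geq \phi(\delta)\bigr\},
\]
where the second inclusion is immediate from $\phi$ being non-decreasing.

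The first inclusion is the substantive step and uses the $2\delta$-separation hypothesis together with the triangle inequality. Suppose toward a contradiction that $\rho(\hat{\tau}(X), \tau(P_v)) < \delta$ but $\varphi^\star(X) = v' \neq v$. By the definition of $\varphi^\star$,
\[
    \rho(\hat{\tau}(X), \tau(P_{v'})) \leq \rho(\hat{\tau}(X), \tau(P_v)) < \delta,
\]
and hence the triangle inequality gives $\rho(\tau(P_v), \tau(P_{v'})) < 2\delta$, contradicting the assumption $\rho(\tau(P_v), \tau(P_{v'})) \geq 2\delta$ for $v \neq v'$. Thus on $\{\rho(\hat{\tau}(X), \tau(P_v)) < \delta\}$ we must have $\varphi^\star(X) = v$, establishing the claimed inclusion.

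Putting the pieces together, for every $v \in \mathcal{V}$,
\[
    P_v\bigl\{\phi(\rho(\hat{\tau}(X), \tau(P_v))) \geq \phi(\delta)\bigr\} \geq P_v\bigl\{\varphi^\star(X) \neq v\bigr\} \geq \inf_{\varphi} \max_{v \in \mathcal{V}} P_v\bigl\{\varphi(X) \neq v\bigr\}.
\]
Taking maximum over $v \in \mathcal{V}$ (noting that $\{P_v\}_{v \in \mathcal{V}} \subset \mathcal{P}$ so the sup over $\mathcal{P}$ dominates the max), and then infimum over $\hat{\tau}$ on the left yields the desired bound. There is no serious technical obstacle: the only subtlety is the measurability of the argmin selection, which is routine for a finite $\mathcal{V}$, and the use of $\phi(0) = 0$ is implicit in the fact that $\phi$ non-decreasing suffices to preserve the event $\{\rho \geq \delta\} \subseteq \{\phi(\rho) \geq \phi(\delta)\}$.
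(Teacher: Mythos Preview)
Your proof is correct and follows essentially the same approach as the paper: define the nearest-neighbor test $\varphi^\star$ from $\hat\tau$, use the $2\delta$-separation plus the triangle inequality to obtain the event inclusion $\{\varphi^\star(X)\neq v\}\subset\{\rho(\hat\tau(X),\tau(P_v))\ge\delta\}$, and then pass to $\{\phi(\rho)\ge\phi(\delta)\}$ via monotonicity of $\phi$. One minor slip: the displayed inequality $P_v\{\varphi^\star(X)\neq v\}\ge\inf_\varphi\max_{v'\in\mathcal{V}}P_{v'}\{\varphi(X)\neq v'\}$ need not hold for each fixed $v$; the correct step---which you in fact carry out in the next sentence---is to take $\max_{v\in\mathcal{V}}$ on the left first and only then bound below by the infimum over tests.
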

        \begin{proof}
            For any \(\hat{\tau}\), define the function \(\varphi : \mathcal{X} \to \mathcal{V}\) with \(\varphi(x) = \argmin_{v \in \mathcal{V}} \rho(\hat{\tau}(x), \tau(P_v))\). Since \(\rho(\tau(P_v), \tau(P_{v'}) \geq 2\delta\) for all \(v \neq v'\), it follows by triangle inequality the inclusion of events
            \begin{equation*}
                \left\{\varphi(X) \neq v\right\} \subset \left\{ \rho(\hat{\tau}(X), \tau(P_v)) \geq \delta \right\}.
            \end{equation*}
            Since \(\mathcal{V} \subset \mathcal{P}\) and \(\phi\) is a non-decreasing function, we thus have 
            \begin{align*}
                \inf_{\hat{\tau}} \sup_{v \in \mathcal{P}}  P_v\left\{\phi(\rho(\hat{\tau}(X), \tau(P_v))) \geq \phi(\delta) \right\} &\geq \inf_{\hat{\tau}} \max_{v \in \mathcal{V}}  P_v\left\{\phi(\rho(\hat{\tau}(X), \tau(P_v))) \geq \phi(\delta) \right\} \\
                &\geq \inf_{\hat{\tau}} \max_{v \in \mathcal{V}}  P_v\left\{\rho(\hat{\tau}(X), \tau(P_v)) \geq \delta \right\} \\
                &\geq \inf_{\varphi} \max_{v \in \mathcal{V}} P_v\left\{\varphi(X) \neq v\right\}
            \end{align*}
            as desired. 
        \end{proof}
    
        \begin{proposition}[Fano's lemma] \label{prop:fano}
            If \(P_1,...,P_M\) for \(M \geq 2\) are probability distributions on a measurable space \((\mathcal{X}, \mathcal{A})\) such that \(P_j \ll P_k\) for all \(1 \leq j, k \leq M\), then 
            \begin{equation*}
                \inf_{\varphi} \max_{1 \leq j \leq M} P_j\left\{\varphi(X) \neq j \right\} \geq 1 - \frac{\frac{1}{M^2} \sum_{1 \leq j,k \leq M} \dKL(P_j || P_k) + \log 2}{\log M}
            \end{equation*}
            where the infimum runs over all measurable functions \(\varphi : \mathcal{X} \to \{1,...,M\}\). 
        \end{proposition}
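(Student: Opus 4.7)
The plan is to prove Fano's lemma via the standard information-theoretic route: introduce a uniform prior, invoke the data processing inequality together with Fano's information inequality, and finally use convexity of KL divergence to convert the mutual information bound into the pairwise form appearing in the statement.

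First, let \(J \sim \Uniform\{1,\ldots,M\}\) and draw \(X \mid J = j \sim P_j\), so that \(\hat{J} := \varphi(X)\) gives a Markov chain \(J \to X \to \hat{J}\). Let \(P_e := P(\hat{J} \neq J) = \frac{1}{M} \sum_{j=1}^{M} P_j\{\varphi(X) \neq j\}\). Since the maximum over \(j\) dominates the average, it suffices to lower bound \(P_e\). Applying the classical Fano inequality yields \(H(J \mid \hat{J}) \leq h(P_e) + P_e \log(M-1) \leq \log 2 + P_e \log M\), where \(h\) is the binary entropy. Combined with \(H(J \mid \hat{J}) = H(J) - I(J; \hat{J}) = \log M - I(J; \hat{J})\), this rearranges to
\[
    P_e \geq 1 - \frac{I(J; \hat{J}) + \log 2}{\log M}.
\]

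Second, the data processing inequality gives \(I(J;\hat{J}) \leq I(J; X)\), and writing \(\bar{P} = \frac{1}{M} \sum_{k=1}^{M} P_k\) for the marginal of \(X\), one has the identity \(I(J; X) = \frac{1}{M} \sum_{j=1}^{M} \dKL(P_j \| \bar{P})\). Convexity of \(\dKL(P \,\|\, \cdot)\) in the second argument (an immediate consequence of Jensen's inequality applied to the log-ratio, valid here because \(P_j \ll P_k\) for all \(k\) by hypothesis) yields
\[
    \dKL(P_j \,\|\, \bar{P}) \leq \frac{1}{M} \sum_{k=1}^{M} \dKL(P_j \,\|\, P_k).
\]
Summing over \(j\) and dividing by \(M\) gives \(I(J; X) \leq \frac{1}{M^2} \sum_{j,k} \dKL(P_j \,\|\, P_k)\), and plugging this into the display for \(P_e\) completes the proof.

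There is no real obstacle here; this is a textbook result. The only points requiring any care are (i) verifying that the absolute continuity assumption \(P_j \ll P_k\) ensures every \(\dKL\) term is well-defined and that the convexity step of Jensen's inequality applies, and (ii) noting that the weaker bound \(h(P_e) + P_e \log(M-1) \leq \log 2 + P_e \log M\) is what produces the clean \(\log 2\) additive constant in the statement. Neither step poses any genuine difficulty.
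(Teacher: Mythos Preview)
Your proof is correct and is the standard information-theoretic derivation of Fano's lemma. The paper itself does not supply a proof of this proposition; it is listed among the auxiliary results and simply stated as a known fact, so there is nothing to compare against beyond noting that your argument is the textbook route one would expect.
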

    
        \begin{proposition}[Method of two fuzzy hypotheses]\label{prop:fuzzy_hypotheses}
            Suppose \(\mathcal{P}\) is a collection of distributions on a sample space \(\mathcal{X}\) and \((\Upsilon, \rho)\) is a metric space. Let \(\tau : \Theta \to \Upsilon\) be a function, \(\phi : \R_+ \to \R_+\) be a non-decreasing function with \(\phi(0) = 0\), and let \(\mathcal{P}_0, \mathcal{P}_1 \subset \mathcal{P}\). If \(\pi_0\) and \(\pi_1\) are two priors supported on \(\Theta_0\) and \(\Theta_1\) respectively, then 
            \begin{equation*}
                \inf_{\hat{\tau}}\sup_{P \in \mathcal{P}} P\left\{ \phi\left(\rho(\hat{\tau}(X), \tau(P))\right) \geq \phi(\delta) \right\} \geq 1 - \dTV(P_{\pi_0}, P_{\pi_1})
            \end{equation*}
            where \(2\delta := \inf_{\substack{P \in \mathcal{P}_0 \\ P' \in \mathcal{P}_1}} \rho(\tau(P), \tau(P'))\) and \(P_{\pi_j} = \int P \, \pi_j(dP)\) for \(j = 0, 1\). 
        \end{proposition}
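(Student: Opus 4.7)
The plan is to reduce the $\tau$-estimation problem to a binary hypothesis test between the two mixture distributions $P_{\pi_0}$ and $P_{\pi_1}$. Given any estimator $\hat{\tau}$, I will define a data-dependent test $\varphi : \mathcal{X} \to \{0,1\}$ by setting $\varphi(X) = 0$ when $\inf_{P \in \mathcal{P}_0} \rho(\hat{\tau}(X), \tau(P)) < \delta$ and $\varphi(X) = 1$ otherwise. The role of $\varphi$ is to convert any small estimation error into a correct classification of whether the sampling distribution lies in $\mathcal{P}_0$ or $\mathcal{P}_1$, exploiting the $2\delta$-separation of the images $\tau(\mathcal{P}_0)$ and $\tau(\mathcal{P}_1)$ guaranteed by hypothesis.

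The next step is to verify that a mistake by $\varphi$ forces large estimation error. If $P \in \mathcal{P}_0$ and $\varphi(X) = 1$, the definition of $\varphi$ yields $\rho(\hat{\tau}(X), \tau(P)) \geq \inf_{P_0 \in \mathcal{P}_0} \rho(\hat{\tau}(X), \tau(P_0)) \geq \delta$, hence $\phi(\rho(\hat{\tau}(X), \tau(P))) \geq \phi(\delta)$ by monotonicity of $\phi$. If instead $P' \in \mathcal{P}_1$ and $\varphi(X) = 0$, some $P_0 \in \mathcal{P}_0$ satisfies $\rho(\hat{\tau}(X), \tau(P_0)) < \delta$, and the separation $\rho(\tau(P_0), \tau(P')) \geq 2\delta$ combined with the reverse triangle inequality forces $\rho(\hat{\tau}(X), \tau(P')) > \delta$, whence again $\phi(\rho(\hat{\tau}(X), \tau(P'))) \geq \phi(\delta)$.

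I then pass from fixed distributions to the mixtures by taking $\pi_0$- and $\pi_1$-averages. Since $\mathcal{P}_0 \cup \mathcal{P}_1 \subset \mathcal{P}$, the supremum over $\mathcal{P}$ dominates each of the prior averages, yielding
\begin{equation*}
\sup_{P \in \mathcal{P}} P\bigl\{\phi(\rho(\hat{\tau}(X), \tau(P))) \geq \phi(\delta)\bigr\} \geq \max\bigl(P_{\pi_0}\{\varphi = 1\},\; P_{\pi_1}\{\varphi = 0\}\bigr).
\end{equation*}
The argument is then closed by the Neyman--Pearson identity $P_{\pi_0}(A) + P_{\pi_1}(A^c) \geq 1 - \dTV(P_{\pi_0}, P_{\pi_1})$ applied to $A = \{\varphi = 1\}$. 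Because the reduction from $\hat{\tau}$ to $\varphi$ is entirely uniform in $\hat{\tau}$, the bound survives the infimum over estimators.

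The one delicate point, which I expect to be the main obstacle, is the final constant. Literally, the elementary inequality $\max \geq \tfrac{1}{2}\mathrm{sum}$ applied at the last step produces the classical method-of-two-fuzzy-hypotheses conclusion $\geq \tfrac{1}{2}(1 - \dTV(P_{\pi_0}, P_{\pi_1}))$. To match the stated form $\geq 1 - \dTV$ exactly, one must either absorb the factor of two into the separation (reading the assumption as $\rho(\tau(P), \tau(P')) \geq 4\delta$ in disguise) or track a stronger Bayes-risk version of Neyman--Pearson. This is the standard constant-absorption convention in the fuzzy-hypothesis literature and is immaterial for all downstream invocations, since the universal constants $c_\delta$ chosen in each application are generous enough to accommodate the adjustment.
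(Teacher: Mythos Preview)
Your reduction from $\hat{\tau}$ to a binary test via $\varphi(X) = \mathbbm{1}\{\inf_{P\in\mathcal{P}_0}\rho(\hat{\tau}(X),\tau(P)) \geq \delta\}$, followed by the triangle-inequality check that misclassification forces $\rho(\hat{\tau},\tau(P)) \geq \delta$, is exactly the paper's argument; the paper's test $\psi^*(X) = \argmin_{j}\inf_{P\in\mathcal{P}_j}\rho(\hat{\tau}(X),\tau(P))$ is your $\varphi$ up to tie-breaking.

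Your closing caveat about the constant is correct and in fact applies equally to the paper's own proof. The paper's last displayed line asserts
\[
\inf_{\psi} \max\bigl\{P_{\pi_0}\{\psi=1\},\, P_{\pi_1}\{\psi=0\}\bigr\} = 1 - \dTV(P_{\pi_0}, P_{\pi_1}),
\]
but Neyman--Pearson gives this identity for the \emph{sum} of the two error probabilities, not the maximum; the maximum is only guaranteed to be at least $\tfrac{1}{2}(1-\dTV)$, precisely as you note (take $P_{\pi_0}=P_{\pi_1}$ to see the equality fails). So the stated inequality is off by a factor of two, the paper's proof shares the slip, and your diagnosis is the right one. The discrepancy is harmless at the level of rates, though it does mean the downstream ``$\geq 1-\delta$'' conclusions in the lower-bound lemmas technically only hold with right-hand side bounded away from zero rather than arbitrarily close to $1$.
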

        \begin{proof}
            If \(\delta = 0\) then the conclusion trivially holds. Suppose \(\delta > 0\). Consider 
            \begin{align*}
                &\inf_{\hat{\tau}}\sup_{P \in \mathcal{P}} P\left\{ \phi(\rho(\hat{\tau}(X), \tau(P))) \geq \phi(\delta) \right\} \\
                &\geq \inf_{\hat{\tau}}\sup_{P \in \mathcal{P}} P\left\{ \rho(\hat{\tau}(X), \tau(P)) \geq \delta \right\} \\
                &\geq \inf_{\hat{\tau}} \max\left\{ \int_{P \in \mathcal{P}_0} P\left\{ \rho(\hat{\tau}(X), \tau(P)) \geq \delta \right\} \, \pi_0(dP) , \int_{P \in \mathcal{P}_1} P\left\{\rho(\hat{\tau}(X), \tau(P)) \geq \delta\right\} \, \pi_1(dP) \right\} \\
                &\geq \inf_{\hat{\tau}} \max\left\{ \int_{P \in \mathcal{P}_0} P\left\{ \rho(\hat{\tau}(X), \tau(P)) \geq \delta \right\} \, \pi_0(dP) , \int_{P \in \mathcal{P}_1} P\left\{\rho(\hat{\tau}(X), \tau(P)) \geq \delta\right\} \, \pi_1(dP) \right\} 
            \end{align*}
            where we have used that \(\phi\) is a nondecreasing function to obtain the first inequality. Consider the following test 
            \begin{equation*}
                \psi^*(X) := \argmin_{j \in \{0, 1\}} \left\{\inf_{P \in \mathcal{P}_j} \rho(\hat{\tau}(X), \tau(P))\right\}. 
            \end{equation*}
            Then we have by triangle inequality
            \begin{align*}
                &\inf_{\hat{\tau}} \max\left\{ \int_{P \in \mathcal{P}_0} P\left\{ \rho(\hat{\tau}(X), \tau(P)) \geq \delta \right\} \, \pi_0(dP) , \int_{P \in \mathcal{P}_1} P\left\{\rho(\hat{\tau}(X), \tau(P)) \geq \delta\right\} \, \pi_1(dP) \right\} \\
                &\geq \inf_{\hat{\tau}} \max\left\{ \int_{P \in \mathcal{P}_0} P\left\{\psi^* = 1 \right\} \pi_0(dP), \int_{P \in \mathcal{P}_1} P\left\{\psi^* = 0\right\} \pi_1(dP)\right\} \\
                &\geq \inf_{\psi} \max\left\{ \int_{P \in \mathcal{P}_0} P\left\{\psi = 1 \right\} \pi_0(dP), \int_{P \in \mathcal{P}_1} P\left\{\psi = 0\right\} \pi_1(dP)\right\} \\
                &= 1 - \dTV(P_{\pi_0}, P_{\pi_1})
            \end{align*}
            where in the penultimate line the infimum runs over all tests \(\psi\) and the final line follows from Neyman-Pearson lemma. The proof is complete.
        \end{proof}
    
        \begin{lemma}[Ingster-Suslina method \cite{ingster_nonparametric_2003}]\label{lemma:ingster_suslina}
            Suppose \(\Sigma \in \R^{p \times p}\) is a positive definite matrix and \(\Theta \subset \R^p\) is a parameter space. Let \(P_\theta\) denote the distribution \(N(\theta, \Sigma)\). If \(\pi\) is a probability distribution supported on \(\Theta\), then 
            \begin{equation*}
                \chi^2\left(P_{\pi}||P_0\right) = E\left(\exp\left(\left\langle \theta, \Sigma^{-1}\tilde{\theta}\right\rangle\right)\right) - 1
            \end{equation*}
            where \(\theta, \tilde{\theta} \overset{iid}{\sim} \pi\). Here, \(P_\pi = \int P_\theta \, \pi(d\theta)\) denotes the mixture induced by \(\pi\). 
        \end{lemma}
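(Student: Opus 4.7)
The plan is to compute $\chi^2(P_\pi \| P_0) + 1 = \int (dP_\pi/dP_0)^2 \, dP_0$ directly by substituting the Radon--Nikodym derivative of a Gaussian mixture, expanding the square into a double integral over two independent draws from $\pi$, and then using Fubini together with the Gaussian moment generating function. The key identity to exploit is that, for $N(0,\Sigma)$, the likelihood ratio at $\theta$ is $\frac{dP_\theta}{dP_0}(x) = \exp\bigl(\langle \theta, \Sigma^{-1} x\rangle - \tfrac{1}{2}\langle \theta, \Sigma^{-1}\theta\rangle\bigr)$, which is log-linear in $x$ with a deterministic quadratic correction.

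First I would write $\frac{dP_\pi}{dP_0}(x) = \int \exp\bigl(\langle \theta, \Sigma^{-1} x\rangle - \tfrac{1}{2}\langle \theta, \Sigma^{-1}\theta\rangle\bigr)\, \pi(d\theta)$ by linearity of the Radon--Nikodym derivative for mixtures. Squaring and integrating against $P_0$ gives
\begin{equation*}
\chi^2(P_\pi \| P_0) + 1 = \iint \exp\!\Bigl(-\tfrac{1}{2}\langle\theta,\Sigma^{-1}\theta\rangle - \tfrac{1}{2}\langle\tilde\theta,\Sigma^{-1}\tilde\theta\rangle\Bigr) \cdot \int \exp\bigl(\langle \theta+\tilde\theta,\Sigma^{-1} x\rangle\bigr)\, P_0(dx)\, \pi(d\theta)\pi(d\tilde\theta),
\end{equation*}
with the exchange of integration orders justified by Fubini since all integrands are positive. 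The inner expectation is the moment generating function of $N(0,\Sigma)$ evaluated at the vector $\Sigma^{-1}(\theta+\tilde\theta)$, which equals $\exp\bigl(\tfrac{1}{2}\langle \theta+\tilde\theta, \Sigma^{-1}(\theta+\tilde\theta)\rangle\bigr)$.

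Expanding the quadratic form $\tfrac{1}{2}\langle \theta+\tilde\theta, \Sigma^{-1}(\theta+\tilde\theta)\rangle = \tfrac{1}{2}\langle\theta,\Sigma^{-1}\theta\rangle + \tfrac{1}{2}\langle\tilde\theta,\Sigma^{-1}\tilde\theta\rangle + \langle \theta, \Sigma^{-1}\tilde\theta\rangle$ (using the symmetry of $\Sigma^{-1}$) makes the self-interaction terms cancel exactly with the $-\tfrac{1}{2}\langle\theta,\Sigma^{-1}\theta\rangle - \tfrac{1}{2}\langle\tilde\theta,\Sigma^{-1}\tilde\theta\rangle$ prefactor, leaving the cross term $\langle\theta,\Sigma^{-1}\tilde\theta\rangle$ inside the exponential. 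The remaining double integral against $\pi\otimes\pi$ is precisely $E\bigl(\exp(\langle\theta,\Sigma^{-1}\tilde\theta\rangle)\bigr)$ for i.i.d.\ $\theta,\tilde\theta\sim\pi$, yielding the claimed identity after subtracting $1$.

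The only point requiring a small amount of care is the application of Fubini and the well-definedness of the moment generating function: since all integrands are nonnegative the exchange is automatic, and the inner Gaussian MGF is finite for every fixed $\theta+\tilde\theta$ because $\Sigma$ is positive definite (hence $\Sigma^{-1}$ exists and the quadratic form is real-valued). There is no real obstacle; the lemma is essentially an algebraic cancellation of the normalizing exponents in the Gaussian likelihood ratio with the Gaussian MGF, and the proof fits comfortably in a few lines once the substitution is made.
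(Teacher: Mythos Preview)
Your proof is correct and is the standard argument for this well-known identity. The paper does not actually prove this lemma; it merely states it with a citation to Ingster and Suslina's monograph, so there is no ``paper's own proof'' to compare against.
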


        \begin{lemma}[\cite{collier_minimax_2017}]\label{lemma:hypergeometric}
            Suppose \(s \leq p\). If \(Y\) is distributed according to the hypergeometric distribution with probability mass function \(P\{Y = k\} = \frac{\binom{s}{k} \binom{p-s}{s-k}}{\binom{p}{s}}\) for \(0 \leq k \leq s\), then \(E(Y) = \frac{s^2}{p}\) and \(E(\exp(\lambda^2 Y)) \leq \left(1 - \frac{s}{p} + \frac{s}{p}e^{\lambda^2}\right)^s\). 
        \end{lemma}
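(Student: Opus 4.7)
The plan is to establish both claims via the standard representation $Y = \sum_{i=1}^{s} X_i$, where $X_i$ is the indicator of the event that the $i$-th draw (without replacement) from a population of $p$ items containing $s$ marked items is marked. First, for the mean, linearity of expectation combined with exchangeability of the draws yields $P\{X_i = 1\} = s/p$ for each $i$, and hence $E(Y) = s \cdot (s/p) = s^2/p$ immediately.

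For the moment generating function bound, I would invoke Hoeffding's classical inequality comparing sampling without replacement to sampling with replacement: for any convex function $\varphi$, if $Y = \sum_{i=1}^{s} X_i$ is the sum of $s$ draws without replacement from a finite population and $Z = \sum_{i=1}^{s} \widetilde{X}_i$ is the sum of $s$ draws with replacement from the same population, then $E(\varphi(Y)) \leq E(\varphi(Z))$. Applying this with $\varphi(x) = e^{\lambda^2 x}$, which is convex, we have $Z \sim \Binomial(s, s/p)$ so that
\begin{equation*}
    E(e^{\lambda^2 Y}) \leq E(e^{\lambda^2 Z}) = \left(1 - \frac{s}{p} + \frac{s}{p} e^{\lambda^2}\right)^{s},
\end{equation*}
which is exactly the claimed bound.

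The main step requiring care is the invocation of Hoeffding's comparison inequality. I would either cite it directly (Hoeffding, 1963, ``Probability Inequalities for Sums of Bounded Random Variables,'' Theorem 4) or give the short proof via the representation that conditions a binomial sum on its total: if $\widetilde{X}_1, \ldots, \widetilde{X}_p \overset{iid}{\sim} \Bernoulli(s/p)$ and $T = \sum_{i=1}^{p} \widetilde{X}_i$, then conditional on $T = s$, the vector $(\widetilde{X}_1, \ldots, \widetilde{X}_p)$ is uniformly distributed on the set of binary vectors with exactly $s$ ones; picking out the first $s$ coordinates after a random permutation of this conditioned vector yields a hypergeometric $Y$, and Jensen's inequality applied to the conditional expectation with respect to $T$ gives $E(\varphi(Y)) \leq E(\varphi(Z))$ for convex $\varphi$. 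No step should present a serious obstacle; the result is essentially folklore and the only item demanding attention is setting up the coupling cleanly.
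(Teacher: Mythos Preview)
Your proposal is correct and follows the standard route. The paper itself does not prove this lemma at all; it simply cites \cite{collier_minimax_2017}, so your argument is already more detailed than the paper's treatment. The application of Hoeffding's Theorem~4 (sampling without replacement is dominated in convex order by sampling with replacement) to $\varphi(x)=e^{\lambda^2 x}$ is exactly right and yields the bound immediately.

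One small remark: the conditioning sketch you offer as an alternative proof of Hoeffding's inequality is not quite set up correctly. Conditioning i.i.d.\ $\Bernoulli(s/p)$ variables $\widetilde{X}_1,\ldots,\widetilde{X}_p$ on $T=\sum_{i=1}^p \widetilde{X}_i$ gives that $\sum_{i=1}^s \widetilde{X}_i \mid T$ is $\Hyp(p,T,s)$, not $\Hyp(p,s,s)$; so Jensen applied to the tower over $T$ compares $\Binomial(s,s/p)$ to a \emph{mixture} of hypergeometrics, not directly to $Y$. Since you already propose citing Hoeffding (1963) directly, this does not affect the validity of your proof, but if you do want a self-contained argument, either follow Hoeffding's original reduction or use the exchangeable-pair/martingale version rather than the conditioning sketch as written.
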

    
        \begin{lemma}[Lemma 1 \cite{laurent_adaptive_2000}]\label{lemma:chisquare_tail}
            For any positive integer \(d\) and \(t > 0\), we have 
            \begin{equation*}
                P\left\{\chi^2_d \geq d + 2\sqrt{dt} + 2t\right\} \leq e^{-t}. 
            \end{equation*}
        \end{lemma}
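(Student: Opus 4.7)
The plan is to invoke the classical Chernoff bound using the fact that the moment generating function of a chi-square variate admits a simple closed form. Writing $\chi^2_d \stackrel{d}{=} \sum_{i=1}^d Z_i^2$ with $Z_1,\ldots,Z_d \overset{iid}{\sim} N(0,1)$, a direct Gaussian integral gives $E(e^{\lambda Z_i^2}) = (1 - 2\lambda)^{-1/2}$ for $\lambda \in (0, 1/2)$, and hence $E(e^{\lambda \chi^2_d}) = (1 - 2\lambda)^{-d/2}$. By Markov's inequality applied to the positive random variable $e^{\lambda \chi^2_d}$,
\begin{equation*}
    P\{\chi^2_d \geq u\} \leq \exp\!\left( -\lambda u - \tfrac{d}{2}\log(1 - 2\lambda)\right)
\end{equation*}
for any $\lambda \in (0, 1/2)$ and any $u > 0$.

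Next, I would set $u := d + 2\sqrt{dt} + 2t$ and optimize the exponent over $\lambda$. Differentiating $f(\lambda) := \lambda u + \tfrac{d}{2}\log(1-2\lambda)$ yields the optimizer $\lambda^\ast = \tfrac{1}{2}(1 - d/u)$, which lies in $(0, 1/2)$ because $u > d$ for $t > 0$. Plugging back in gives
\begin{equation*}
    f(\lambda^\ast) = \frac{u-d}{2} - \frac{d}{2}\log\!\left(\frac{u}{d}\right),
\end{equation*}
so it suffices to prove $\frac{u-d}{2} - \frac{d}{2}\log(u/d) \geq t$. Substituting $a := \sqrt{t/d}$, so that $u/d = 1 + 2a + 2a^2$ and $(u-d)/2 = \sqrt{dt} + t$, the required inequality reduces (after canceling $t$ and dividing by $d/2$) to the scalar estimate $\log(1 + 2a + 2a^2) \leq 2a$ for all $a \geq 0$.

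The final step is an easy consequence of the Taylor expansion of the exponential:
\begin{equation*}
    e^{2a} = 1 + 2a + 2a^2 + \sum_{k \geq 3} \frac{(2a)^k}{k!} \;\geq\; 1 + 2a + 2a^2,
\end{equation*}
so taking logarithms gives the bound. There is essentially no serious obstacle: the only care point is verifying that the Chernoff optimizer $\lambda^\ast$ lies in the admissible range $(0, 1/2)$, which is automatic once $t > 0$. The elegance of Laurent and Massart's bound $d + 2\sqrt{dt} + 2t$ is precisely that it is engineered to make the residual inequality collapse to the first three terms of the exponential series, and this is what I would exploit in the write-up.
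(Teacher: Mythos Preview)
Your argument is correct: the Chernoff bound with the optimal $\lambda^\ast = \tfrac{1}{2}(1-d/u)$ reduces the claim to $\log(1+2a+2a^2)\leq 2a$, which follows from the exponential series. The paper does not actually supply a proof of this lemma---it simply cites it as Lemma~1 of Laurent and Massart---so there is nothing to compare against; your write-up is exactly the standard derivation.
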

        
        \begin{lemma}[Lemma 11 \cite{verzelen_minimax_2012}]\label{lemma:chisquare_lower_tail}
            For any positive integer \(d\) and \(t > 0\), we have 
            \begin{equation*}
                P\left\{\chi^2_d \leq e^{-1} dt^{2/d}\right\} \leq t. 
            \end{equation*}
        \end{lemma}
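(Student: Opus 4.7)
The plan is to prove Lemma \ref{lemma:chisquare_lower_tail} by a standard Chernoff argument applied to the lower tail of $\chi^2_d$. Since $\chi^2_d = \sum_{i=1}^d Z_i^2$ for iid $N(0,1)$ random variables $Z_i$, the moment generating function of $-\chi^2_d$ is explicit: for any $\lambda > 0$, $E\bigl(e^{-\lambda \chi^2_d}\bigr) = (1+2\lambda)^{-d/2}$. Applying Markov's inequality to $e^{-\lambda \chi^2_d}$, for any $u > 0$ and $\lambda > 0$ one gets
\begin{equation*}
P\bigl\{\chi^2_d \leq u\bigr\} = P\bigl\{e^{-\lambda \chi^2_d} \geq e^{-\lambda u}\bigr\} \leq e^{\lambda u} (1+2\lambda)^{-d/2}.
\end{equation*}

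Next I would optimize the Chernoff parameter $\lambda$. Differentiating the exponent $\lambda u - \tfrac{d}{2}\log(1+2\lambda)$ with respect to $\lambda$ and setting the derivative to zero yields $1+2\lambda = d/u$, valid provided $u \leq d$ (which will be the relevant regime; if $u > d$, the conclusion is trivial since we will check below that $e^{-1}dt^{2/d} \leq d$ exactly when $t \leq e^{d/2}$, and otherwise the claim $P \leq t$ holds vacuously as $t \geq 1$). Substituting this optimal $\lambda$ reduces the bound to
\begin{equation*}
P\bigl\{\chi^2_d \leq u\bigr\} \leq \left(\frac{u}{d}\right)^{d/2} e^{(d-u)/2}.
\end{equation*}

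Finally, I would plug in the specific choice $u = e^{-1} d\, t^{2/d}$. The factor $(u/d)^{d/2}$ becomes $(e^{-1} t^{2/d})^{d/2} = e^{-d/2} t$, and combining this with $e^{(d-u)/2}= e^{d/2} e^{-u/2}$ gives
\begin{equation*}
P\bigl\{\chi^2_d \leq e^{-1} d\, t^{2/d}\bigr\} \leq e^{-u/2}\, t \leq t,
\end{equation*}
where the last inequality uses $u \geq 0$. This is the desired conclusion. There is no substantive obstacle here: the only minor care needed is in the regime $t \geq e^{d/2}$, where $u > d$ and the optimizing $\lambda$ would be negative, but then $t \geq 1$ and the inequality $P \leq t$ holds trivially, so the Chernoff argument in the regime $u \leq d$ is enough.
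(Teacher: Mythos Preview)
Your proof is correct. The paper does not prove this lemma at all; it merely cites it as Lemma~11 of \cite{verzelen_minimax_2012}. Your Chernoff argument is the standard route and is fully self-contained: the optimization yielding $P\{\chi^2_d \leq u\} \leq (u/d)^{d/2} e^{(d-u)/2}$ for $u \leq d$ is right, the substitution $u = e^{-1} d\, t^{2/d}$ gives $t e^{-u/2} \leq t$, and you correctly handle the edge case $u > d$ (equivalently $t > e^{d/2} \geq 1$) where the bound is trivial.
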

    
        \begin{lemma}[Lemma 8 \cite{collier_optimal_2018}]\label{lemma:adapt_exp_lbound}
            Let \(P\) and \(Q\) be two probability measures on a measurable space \((X, \mathcal{U})\). Then, for any \(q > 0\), 
            \begin{equation*}
                \inf_{\mathcal{A} \in \mathcal{U}} \left\{P(\mathcal{A})q + Q(\mathcal{A}^c) \right\} \geq \sup_{0 < \tau < 1} \left\{ \frac{q\tau}{1 + q\tau} \left(1 - \tau(\chi^2(Q\,||\,P) + 1)\right)\right\}.
            \end{equation*}
        \end{lemma}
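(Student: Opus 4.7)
The plan is to reduce the infimum to a comparison of measures via the likelihood ratio \(L := dQ/dP\), combined with a Markov-type truncation at a level tied to \(\tau\). First I would dispose of the trivial case: if \(Q \not\ll P\) then \(\chi^2(Q \| P) = +\infty\), so the right-hand side is \(-\infty\) for every \(\tau \in (0,1)\) and the inequality is vacuous. In what follows I assume \(Q \ll P\) and fix an arbitrary \(\tau \in (0, 1)\).

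The key splitting is along \(\mathcal{B} := \{L > 1/\tau\}\). On \(\mathcal{B}\) one has \(\tau L > 1\), hence \(L \leq \tau L^2\), which upon integration gives
\[ Q(\mathcal{B}) = E_P\bigl[L \mathbbm{1}_\mathcal{B}\bigr] \leq \tau E_P\bigl[L^2 \mathbbm{1}_\mathcal{B}\bigr] \leq \tau\bigl(\chi^2(Q \| P) + 1\bigr). \]
In particular \(Q(\mathcal{B}^c) \geq 1 - \tau(\chi^2(Q \| P) + 1)\). On the complement \(\mathcal{B}^c = \{L \leq 1/\tau\}\), the reverse comparison \(\tau\, dQ \leq dP\) holds, which is what lets me exchange \(P\)-mass for \(Q\)-mass in the next step.

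For any event \(\mathcal{A} \in \mathcal{U}\), I would first discard the part in \(\mathcal{B}\) and then apply the change of measure on \(\mathcal{B}^c\):
\[ qP(\mathcal{A}) + Q(\mathcal{A}^c) \geq qP(\mathcal{A} \cap \mathcal{B}^c) + Q(\mathcal{A}^c \cap \mathcal{B}^c) \geq q\tau\, Q(\mathcal{A} \cap \mathcal{B}^c) + Q(\mathcal{A}^c \cap \mathcal{B}^c). \]
The two \(Q\)-masses on the right-hand side partition \(\mathcal{B}^c\), so the last quantity is at least \(\min(q\tau, 1) \cdot Q(\mathcal{B}^c) \geq \min(q\tau, 1)\bigl(1 - \tau(\chi^2(Q \| P)+1)\bigr)\). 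A case check according to \(q\tau \leq 1\) versus \(q\tau > 1\) gives the elementary bound \(\min(q\tau, 1) \geq q\tau/(1+q\tau)\); taking the infimum over \(\mathcal{A}\) and then the supremum over \(\tau \in (0, 1)\) will then finish the proof. There is no serious obstacle beyond the truncation step itself: the point of \(L \leq \tau L^2\) on \(\mathcal{B}\) is to convert a first-moment quantity into a second-moment one and thereby bring \(\chi^2(Q \| P)\) into the estimate, while the specific threshold \(1/\tau\) is what produces the clean final form \(\frac{q\tau}{1+q\tau}(1 - \tau(\chi^2(Q\|P)+1))\).
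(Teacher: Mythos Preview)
Your argument is correct. The paper does not supply its own proof of this lemma; it is quoted verbatim as Lemma~8 of \cite{collier_optimal_2018} and left unproved. Your truncation at level $1/\tau$ on the likelihood ratio $L=dQ/dP$, the second-moment bound $Q(\mathcal{B})\le \tau E_P[L^2]\le \tau(\chi^2(Q\|P)+1)$, the change of measure $P\ge \tau Q$ on $\mathcal{B}^c$, and the elementary minorization $\min(q\tau,1)\ge q\tau/(1+q\tau)$ all check out, and the edge case $1-\tau(\chi^2(Q\|P)+1)<0$ is harmless since the left-hand side is nonnegative.
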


        \section{Global maxima of a mixture associated to kernel mode estimation}\label{appendix:mixture}
        In this section, we will establish a result about the population-level mixture arising from the kernel model estimator proposed in Section \ref{section:linear_functional}. Throughout this section, \(\Phi\) and \(\varphi\) denote the cumulative distribution function and probability density function respectively of the standard normal distribution. Let \(\mu, \eta_1,...,\eta_k \in \R\) and \(h > 0\). Define the function \(f : \R \to [0, \infty)\) with 
        \begin{equation}\label{def:f}
            f(x) = \sum_{i=1}^{k} f_i(x)
        \end{equation}
        where 
        \begin{equation}\label{def:fi}
            f_i(x) = (\Phi(x-\mu+h) - \Phi(x-\mu-h)) + (\Phi(x-\eta_i+h) - \Phi(x-\eta_i-h)). 
        \end{equation}
        We will prove the following result. 
        \begin{theorem}\label{thm:mixture}
            There exists a sufficiently large universal constant \(C_b > 0\) such that if \(\min_i |\mu - \eta_i| > C_b h\), then 
            \begin{equation*}
                \max_{|x-\mu| \leq \frac{h}{4}} f(x) = \max_{x \in \R} f(x). 
            \end{equation*}
        \end{theorem}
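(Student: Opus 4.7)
The plan is to translate so that $\mu = 0$ and work with $\alpha(z) := \Phi(z+h) - \Phi(z-h)$, so that $f(x) = k\alpha(x) + \sum_{i=1}^{k} \alpha(x - \eta_i)$ with $|\eta_i| > C_b h$ for every $i$. I will first exhibit a local maximum $m^*$ of $f$ inside $(-h/4, h/4)$, and then show $f(m^*) \geq f(x)$ for every $x \in \R$.

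For the first step, I compute $f'(\pm h/4) = k\alpha'(\pm h/4) + \sum_i \alpha'(\pm h/4 - \eta_i)$ and argue that the ``diagonal'' terms $k\alpha'(\pm h/4)$ dominate. Using $\alpha'(z) = \varphi(z+h) - \varphi(z-h)$, one finds $|\alpha'(h/4)| \asymp \varphi(3h/4)$, while $|\alpha'(\pm h/4 - \eta_i)|$ is of order $\varphi((C_b - 5/4)h)$ or smaller. For $C_b$ sufficiently large, the ratio is exponentially small, so $f'(-h/4) > 0 > f'(h/4)$ and the intermediate value theorem produces a critical point $m^* \in (-h/4, h/4)$; an analogous bound on $f''$ shows it is negative throughout the interval, so $m^*$ is in fact a local maximum.

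The bulk of the work is in the second step, namely bounding $f(x)$ for $|x| > h/4$ by $f(m^*)$. Using $f(m^*) \geq f(0) = k\alpha(0) + \sum_i \alpha(\eta_i)$, which is $k\alpha(0)$ up to exponentially small corrections, I would partition $\{|x| > h/4\}$ according to the cluster $S := \{i : |x - \eta_i| \leq h\}$. When $S$ is empty, every $\alpha(x - \eta_i)$ is exponentially small and $k\alpha(x) \leq k\alpha(h/4) < k\alpha(0)$, which gives $f(x) < f(m^*)$. When $S$ is nonempty the constraint $|x - \eta_i| \leq h$ forces $|x| \geq (C_b - 1)h$, making $k\alpha(x)$ exponentially small; the bound $f(x) \leq |S|\alpha(0) + o(\alpha(0))$ then handles the case $|S| \leq k-1$ against $k\alpha(0) \leq f(m^*)$.

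The hard part will be the remaining sub-case $|S| = k$, in which all the $\eta_i$'s lie inside a ball of radius at most $2h$ around some common center $\eta_\ast$. Here $f$ is close to the symmetric two-peak function $k[\alpha(x) + \alpha(x - \eta_\ast)]$, whose two global maxima are symmetric about $\eta_\ast/2$ and have exactly equal height; by Step 1 the peak near $\mu$ lies in $[-h/4, h/4]$, so the global maximum is tied with (and hence not exceeded by) $f(m^*)$. A quantitative perturbation argument, Taylor-expanding $f$ around each peak location and tracking the deviations $\eta_i - \eta_\ast$, then shows that small departures from the exactly-clustered configuration only diminish the cluster peak relative to the peak near $\mu$, preserving the inequality $f(x) \leq f(m^*)$. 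Combining the two steps yields the claim $\max_{|x - \mu| \leq h/4} f(x) = \max_{x \in \R} f(x)$.
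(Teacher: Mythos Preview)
Your route is genuinely different from the paper's, and the difference matters. The paper never bounds $f$ globally. It works with each two-component piece $f_i(x)=\alpha(x-\mu)+\alpha(x-\eta_i)$ separately: each $f_i$ is symmetric about $(\mu+\eta_i)/2$ and, under the separation hypothesis, has exactly two global maxima $x_i^*\in[\mu,\mu+h/4]$ and $y_i^*=\mu+\eta_i-x_i^*$. The heart of the paper's proof (Proposition~\ref{prop:fmax_order}) is a case analysis showing that for every $v$ with $|v-\mu|>h/4$ one can choose a \emph{single} point $w(v)\in[\mu,\mu+h/4]$ --- built from reflections $u_i(v)=\mu+\eta_i-v$ and the ordered maximizers $x_1^*\ge\cdots\ge x_k^*$ --- such that $f_i(v)\le f_i(w(v))$ holds simultaneously for every $i$. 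Summing gives $f(v)\le f(w(v))$ exactly, with no error terms and no dependence on $k$.

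Your argument has two genuine gaps that the per-$f_i$ symmetry is precisely designed to avoid.

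\textbf{The ``small error'' bounds do not scale in $k$.} In the $|S|\le k-1$ step you write $f(x)\le |S|\,\alpha(0)+o(\alpha(0))$, but the suppressed terms include $k\,\alpha(x)$ with $|x|\ge (C_b-2)h$, hence are of size $k\,e^{-c h^2}$. This is $o(1)$ only when $k=o(e^{c h^2})$, yet the theorem carries no restriction on $k$; in the paper's application $k=|\mathcal{O}'|$ can be of order $p$ while $e^{ch^2}\asymp (p/(p-2s)^2)^c$ can be bounded. A related slip: with threshold $h$ in the definition of $S$, the claim that $\alpha(x-\eta_i)$ is ``exponentially small'' for $i\notin S$ is false for large $h$, since $\alpha(h+\varepsilon)=\Phi(2h+\varepsilon)-\Phi(\varepsilon)\approx 1/2$; you would need $|x-\eta_i|\ge 2h$.

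\textbf{The $|S|=k$ ``perturbation'' is not a proof.} When all $\eta_i$ lie within $O(h)$ of a common $\eta_*$, the deviations $\eta_i-\eta_*$ are of the same order as the width of $\alpha$, so Taylor expansion about the symmetric configuration is not valid. More fundamentally, in the perfectly clustered base case the two peaks have \emph{exactly equal} height by symmetry, so any argument that the cluster peak does not exceed the $\mu$-peak must extract its entire slack from the deviation itself --- a monotonicity/sign statement you have not established. This is exactly the step where the paper's reflection trick (e.g.\ choosing $w(v)=x_{i+1}^*\vee u_i(v)$ when $y_i^*\le v\le y_{i+1}^*$) does nontrivial work, and it is the idea your sketch is missing.
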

    
        \subsection{A special case: separation and ordering}\label{section:mixture_special_case}
        We will first prove some intermediate results en route to proving Theorem \ref{thm:mixture}. Define 
        \begin{equation}\label{def:Delta_h}
            \Delta(h) = \argmax_{\delta \in \R} -(\varphi(\delta+h) - \varphi(\delta-h)). 
        \end{equation}
    
        Throughout this section, we will make the following assumption. 
    
        \begin{assumption}\label{assumption:ordering}
            Assume \(|\mu - \eta_1| > 2\Delta(h)\) and \(\mu < \eta_1 \leq \eta_2 \leq \ldots \leq \eta_k\). 
        \end{assumption}
    
        \begin{proposition}\label{prop:fmax_order}
            Suppose Assumption \ref{assumption:ordering} holds. Then 
            \begin{equation*}
                \max_{x \in [\mu, \mu+\Delta(h)]} f(x) = \max_{x \in \R} f(x)
            \end{equation*}
            where \(f\) is given by (\ref{def:f}). 
        \end{proposition}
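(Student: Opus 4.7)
Write $g_0(t) := \Phi(t+h) - \Phi(t-h)$ and $g_y(x) := g_0(x-y)$, so $f(x) = k\, g_\mu(x) + \sum_{i=1}^{k} g_{\eta_i}(x)$; each $g_y$ is unimodal at $y$ and symmetric, and $g_0$ is unimodal at $0$. I plan to prove that for every $x \in \R$ some $y \in [\mu, \mu+\Delta(h)]$ satisfies $f(y) \geq f(x)$, treating the regions $x \leq \mu$, $x \in [\mu, \mu+\Delta(h)]$, and $x > \mu+\Delta(h)$ in turn. The first region is handled by monotonicity: for $x \leq \mu$ one has $g_\mu'(x) \geq 0$, and since $\eta_i > \mu \geq x$ also $g_{\eta_i}'(x) > 0$, so $f$ is (strictly) increasing on $(-\infty, \mu]$ and $f(x) \leq f(\mu)$. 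The middle region is trivial with $y = x$.

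For $x > \mu+\Delta(h)$, I start by pinning down the sign of $f'$ at the endpoints of $[\mu, \mu+\Delta(h)]$. Set $M(h) := \varphi(\Delta(h)-h) - \varphi(\Delta(h)+h)$, which is positive and equals $\max_{\delta \in \R}[\varphi(\delta-h) - \varphi(\delta+h)]$ by the definition (\ref{def:Delta_h}) of $\Delta(h)$ (the maximand is odd in $\delta$). Direct computation yields $g_\mu'(\mu+\Delta(h)) = -M(h)$; for each $i$, Assumption \ref{assumption:ordering} gives $\eta_i - \mu - \Delta(h) > \Delta(h)$, so the extremal property of $\Delta(h)$ forces $g_{\eta_i}'(\mu+\Delta(h)) < M(h)$. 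Summing, $f'(\mu+\Delta(h)) < 0$, and an analogous calculation shows $f'(\mu) > 0$. Hence $f$ attains its maximum on $[\mu, \mu+\Delta(h)]$ at some interior point $x^\star$, which is automatically a local maximum of $f$.

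The main obstacle is to rule out a global maximum of $f$ strictly beyond $\mu+\Delta(h)$. My plan is, for each $x = \mu+u$ with $u > \Delta(h)$, to compare $f(x)$ against $f(\mu+v)$ for a suitable $v \in [0, \Delta(h)]$, reducing matters to the inequality
\begin{equation*}
    k\bigl[g_0(v) - g_0(u)\bigr] \;\geq\; \sum_{i=1}^{k} \bigl[g_0(\lvert u - d_i\rvert) - g_0(\lvert v - d_i\rvert)\bigr],
\end{equation*}
where $d_i := \eta_i - \mu > 2\Delta(h)$. The left-hand side is nonnegative by unimodality of $g_0$ at $0$ and quantifies the dominance of the $k$-fold stacking of $g_\mu$; the right-hand side captures the secondary peaks produced near each $\eta_i$ by the symmetry $f_i(x) = f_i(\mu + \eta_i - x)$. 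The key difficulty is that clusters of $\eta_i$'s can aggregate into peaks rivaling the $\mu$-peak, so the argument must leverage both the extremal property of $\Delta(h)$ (which uniformly bounds $|g_0'|$ by $M(h)$) and the separation $d_i > 2\Delta(h)$ to absorb each of the $k$ terms on the right into the $k$-weighted surplus on the left. I anticipate choosing $v$ near $x^\star - \mu$ so that the inequality reduces to a comparison of integrals of $|g_0'|$ on translated intervals of length $u - v$, with the factor $k$ absorbing the $k$ shifted contributions.
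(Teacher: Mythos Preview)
Your handling of the region $x \leq \mu$ and the verification that $f'(\mu) > 0$, $f'(\mu+\Delta(h)) < 0$ are correct, but these only produce a \emph{local} maximum in $(\mu,\mu+\Delta(h))$; the entire content of the proposition is that this is global. For the region $x > \mu+\Delta(h)$ you have not given a proof, only a plan, and the plan has a gap. Your displayed inequality is literally $f(\mu+v) \geq f(\mu+u)$ rewritten, so everything hinges on how you choose $v = v(u)$ and how you verify it. The mechanism you propose --- bounding both sides via $|g_0'| \leq M(h)$ on translated intervals of common length $u-v$ --- cannot separate the two sides: it yields $k[g_0(v)-g_0(u)] \leq kM(h)(u-v)$ and $\sum_i [g_0(|u-d_i|)-g_0(d_i-v)] \leq kM(h)(u-v)$, which are the same upper bound and prove nothing. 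If instead you try to show termwise that $g_0(v)-g_0(u) \geq g_0(|u-d_i|)-g_0(d_i-v)$ for each $i$ (so that the factor $k$ really does ``absorb'' the $k$ terms), this is exactly $f_i(\mu+v) \geq f_i(\mu+u)$, and no single $v$ independent of $i$ will work for all $u$ without further structure: when $u$ sits just past $d_j$ for some $j$, the term $g_0(|u-d_j|)$ is near $g_0(0)$, and you need the comparison point $v$ to be tuned to that particular $j$.

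The paper's argument supplies exactly this missing structure. It works with the decomposition $f = \sum_i f_i$ (not $k g_\mu + \sum g_{\eta_i}$), exploits the symmetry $f_i(t) = f_i(\mu+\eta_i-t)$, and proves three preparatory lemmas: each $f_i$ has exactly two global maximizers $x_i^\star \in [\mu,\mu+\Delta(h)]$ and $y_i^\star = \mu+\eta_i-x_i^\star$; the ordering $\eta_1 \leq \cdots \leq \eta_k$ forces $x_1^\star \geq \cdots \geq x_k^\star$ and $y_1^\star \leq \cdots \leq y_k^\star$; and $f_i$ is monotone on $(-\infty, x_i^\star]$ and on $[x_i^\star, \tfrac{\mu+\eta_i}{2}]$. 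With these in hand, for a given $v > \mu+\Delta(h)$ one locates the index $i$ with $y_i^\star \leq v \leq y_{i+1}^\star$ and takes $w(v) = x_{i+1}^\star \vee (\mu+\eta_i - v)$; the ordering of the $x_j^\star$'s and $y_j^\star$'s then lets one check $f_j(w(v)) \geq f_j(v)$ for every $j$ simultaneously, so $f(w(v)) \geq f(v)$. The point you are missing is that the comparison point must depend on which ``slab'' $[y_i^\star, y_{i+1}^\star]$ contains $v$, and the monotone ordering of the critical points is what makes a single $w(v)$ serve all $k$ summands at once.
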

        \begin{proof}
            Define the functions \(u_i : \left[\frac{\mu+\eta_i}{2}, \infty\right) \to \R\) with \(u_i(v) = \mu+\eta_i -v\). Note this is just the reflection about the point \(\frac{\mu+\eta_i}{2}\). Note in particular that \(f_i(v) = f_i(u_i(v))\) for all \(v \in \left[\frac{\mu+\eta_i}{2}, \infty\right)\) where \(f_i\) is given by (\ref{def:fi}). Notably, we have \(x_i^* = u_i(y_i^*)\) where \(x_i^*\) and \(y_i^*\) are the global maximizers of \(f_i\) (see Lemma \ref{lemma:fi_maximizers}). Further notice that \(u_i(v) \leq u_j(v)\) for all \(i \leq j \) since \(\eta_i \leq \eta_j\). 
    
            We first sketch a broad overview of our proof strategy. First it is clear that \(\max_{v \in \R} f(v) = \max_{v \in [\mu, \eta_k]}f(v)\) so it suffices to focus attention on \(v \in [\mu, \eta_k]\). For every point \(v > \mu + \Delta(h)\), we will find a point \(w(v) \in [\mu, \mu+\Delta(h)]\) such that \(f(v) \leq f(w(v))\). This will prove that \(\max_{x \in [\mu, \mu+\Delta(h)]} f(x) \geq \max_{x > \mu + \Delta(h)}f(x)\). 
    
            Fix \(v \in (\mu+\Delta(h), \eta_k]\). To find a suitable point \(w(v) \in [\mu, \mu+\Delta(h)]\), we break up the analysis into various exhaustive cases. Throughout, let \(x_i^*\) and \(y_i^*\) denote the global maximizers of \(f_i\) given by Lemma \ref{lemma:fi_maximizers}.
            
            Figure \ref{fig:gaussian_mixture} presents a cartoon schematic in the special case \(k = 2\) which illustrates how the cases of our analysis are organized; the dashed lines demarcate the regions considered in the cases. For the \(v\) in hand, we try to find a suitable \(w(v)\) contained in the shaded region (representing the region \([\mu, \mu+\Delta(h)]\)) such that \(f(w(v)) \geq f(v)\). Note \(x_1^*\) and \(x_2^*\) are contained in the shaded region in Figure \ref{fig:gaussian_mixture}. Before going into the rigorous proof, we roughly describe how the cases are organized and analyzed in the special case presented in Figure \ref{fig:gaussian_mixture}.
            
            The analysis of which \(w(v)\) to pick depends on the location of \(v\). As seen in Figure \ref{fig:gaussian_mixture}, Case 1 deals with the case where \(v\) is to the left of the midpoint between \(\mu\) and \(\eta_1\). Intuitively, you can find a suitable \(w(v)\) just by moving left from \(v\); so, we pick \(w(v)= x_1^*\). Case 2 addresses the situation where \(v\) may potentially close to \(y_1^*\), but on the left hand side. Intuitively, to pick \(w(v)\), intuitively one can just reflect \(v\) across the midpoint of \(\mu\) and \(\eta_1\), then move to the left; so, we pick \(w(v) = x_1^*\) again. 
            
            In the case of general \(k\), Case 3 successively deals with the case where \(v\) is contained in \([y_i^*, y_{i+1}^*]\). The analysis proceeds by considering each \(\eta_j\) in turn, and splits into two subcases depending on the location of \(v\). In the special case of \(k = 2\) illustrated in Figure \ref{fig:gaussian_mixture}, Case 3 addresses the case where \(v\) is contained in \([y_1^*, y_2^*]\). Here, we select \(w(v)\) to be \(x_2^* \vee u_1(v)\) (recall \(u_1(v)\) denotes the reflection of \(v\) across the midpoint \(\frac{\mu+\eta_1}{2}\) that is marked by the dashed line between Case 1 and Case 2 in Figure \ref{fig:gaussian_mixture}). To show this choice of \(w(v)\) satisfies \(f(w(v)) \geq f(v)\), the analysis splits into subcases demarcated by the dotted line in Figure \ref{fig:gaussian_mixture}, which is located at the midpoint \(\frac{\mu+\eta_2}{2}\). Suppose \(v\) is contained in the region under Case 3.1. Noting \(f = f_1 + f_2\), let us look at \(f_1\) first. Observe \(f_1\) is increasing on \((-\infty, x_1^*]\). Since \(v \geq y_1^*\), we have \(u_1(v) \leq x_1^*\). Furthermore, since \(x_2^* \leq x_1^*\), we have \(u_1(v) \leq w(v) \leq x_1^*\), and so it follows \(f_1(v) = f_1(u_1(v)) \leq f_1(w(v))\). Looking at \(f_2\), note that \(v \in \left[\frac{\eta+\mu_2}{2}, y_2^*\right]\) in Case 3.1, and so \(u_2(v)\) (the reflection of \(v\) across the dotted line), satisfies \(x_2^* \leq u_2(v)\). Note we also have \(u_1(v) \leq u_2(v)\), and since \(f_2\) is monotone decreasing on \([x_2^*, \frac{\mu+\eta_2}{2}]\), it follows from \(w(v) = x_2^* \vee u_1(v) \leq u_2(v)\) that \(f_2(v) = f_2(u_2(v)) \leq f_2(w(v))\). Hence, \(f(v) \leq f(w(v))\). Case 3.2 is analyzed similarly, but now there is no need to consider the reflection \(u_2(v)\) since \(v\) is already to the left of the dotted line in Case 3.2 as seen in Figure \ref{fig:gaussian_mixture}. 
            
            Finally, Case 4 addresses the case where \(v\) is to the right of the final \(y^*_k\) (which is \(y^*_2\) in Figure \ref{fig:gaussian_mixture}); \(w(v)\) can be selected by moving to the left from \(v\) until \(y^*_k\), which only increases the value of \(f\), then applying the argument in the previous case for \(y_k^*\) to obtain \(w(y_k^*)\) contained in the shaded region; hence, our choice is \(w(v) = w(y_k^*)\). 
    
            \begin{figure}
                \centering
                \includegraphics{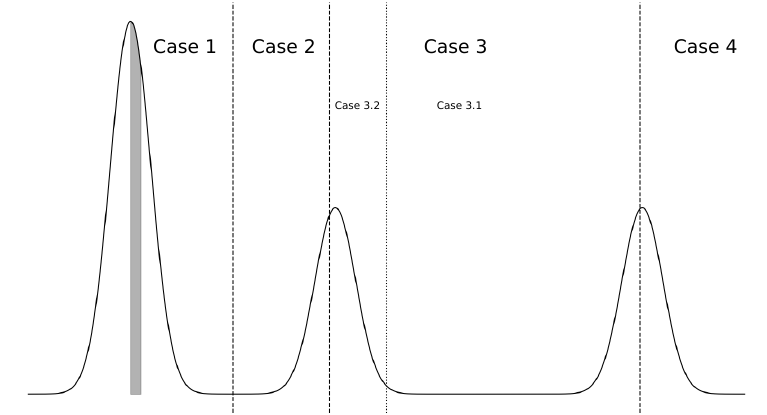}
                \caption{Cartoon schematic of the case organization of the proof of Proposition \ref{prop:fmax_order} in the special case \(k = 2\). The solid line represents the function \(f\). For a given point \(v\), the strategy in the proof is to find a \(w(v)\) in the shaded region such that \(f(v) \leq f(w(v))\). The shaded region represents \([\mu, \mu+\Delta(h)]\), the dashed line between Case 1 and Case 2 is located at \(\frac{\mu+\eta_1}{2}\), the other two dashed lines are located at \(y_1^*\) and \(y_2^*\) respectively, and the dotted line is located at \(\frac{\mu+\eta_2}{2}\). }\label{fig:gaussian_mixture}
            \end{figure}
    
            Having discussed the organization of the cases, we now proceed with the proof. \newline 
    
            \textbf{Case 1:} Suppose \(v < \frac{\mu+\eta_1}{2}\). Let \(w(v) = x_1^*\) and note \(\mu \leq w(v) \leq \mu+\Delta(h)\). Since \(v > \mu + \Delta(h)\), we have \(v > x_1^* \geq x_2^* \geq ... \geq x_k^*\). Since, by Lemma \ref{lemma:fi_monotone} we have \(f_i\) is decreasing on the interval \([x_i^*, \frac{\mu+\eta_i}{2}]\) (which contains \([x_1^*, \frac{\mu+\eta_1}{2}]\) by Lemma \ref{lemma:xi_ordering}) for all \(i\), it immediately follows that \(f_i(v) \leq f_i(x_1^*) \leq f_i(w(v))\) for all \(i\). Hence, we have \(f(v) \leq f(w(v))\) and so the analysis for this subcase is complete. 
    
            \textbf{Case 2:} Suppose \(\frac{\mu+\eta_1}{2} \leq v \leq y_1^*\). Let \(w(v) = x_1^*\) and again note \(\mu \leq w(v) \leq \mu + \Delta(h)\). Clearly we have \(f_1(v) = f_1(u_1(v))\). We now claim \(f_i(v) \leq f_i(w(v))\) for \(i \geq 2\). Recall that \(f_i(v) = f_i(u_i(v))\). Now consider \(u_i(v) \geq u_1(v) \geq x_1^* \geq x_i^*\) where the second inequality follows from \(v \leq y_1^*\) and the final inequality follows from Lemma \ref{lemma:xi_ordering}. By Lemma \ref{lemma:fi_monotone}, we have \(f_i(u_i(v)) \leq f_i(x_1^*)\). Since \(f_i(u_i(v)) = f_i(v)\) and \(f_i(x_1^*) = f_i(w(v))\), we have \(f_i(v) \leq f_i(w(v))\). As this holds for all \(i\), we have \(f(v) \leq f(w(v))\) as desired. The analysis for this subcase is complete. 
            
            \textbf{Case 3:} Suppose \(y_i^* \leq v \leq y_{i+1}^*\) for some \(1 \leq i \leq k-1\). Let \(w(v) = x_{i+1}^* \vee u_i(v)\). Since \(u_i(v) \leq x_i^* \leq \mu+\Delta(h)\) (because \(v \geq y_i^*\)) and since \(\mu \leq x_{i+1}^* \leq \mu+\Delta(h)\) it follows that \(\mu \leq w(v) \leq \mu + \Delta(h)\). With this established, we now work to show \(f(v) \leq f(w(v))\). We first claim that for all \(j \leq i\), we have \(f_j(v) \leq f_j(w(v))\). Fix \(j \leq i\). Since \(v \geq y_i^*\), we have \(v \geq y_j^*\) by Lemma \ref{lemma:xi_ordering}. Since \(v \geq y_j^*\), we have \(v > \frac{\mu+\eta_j}{2}\). Now consider \(u_j(v) \leq u_i(v) \leq x_i^* \leq x_j^*\). The first inequality is since \(j \leq i\). The second inequality is because \(v \geq y_i^*\). The last inequality is due to Lemma \ref{lemma:xi_ordering}. Furthermore, we have \(x_{i+1}^* \leq x_j^*\). Hence, we have \(w(v) = x_{i+1}^* \vee u_i(v) \leq x_j^*\). Since \(u_j(v) \leq w(v) \leq x_j^*\), it follows by Lemma \ref{lemma:fi_monotone} that \(f_j(v) = f_j(u_j(v)) \leq f_j(w(v))\). Thus our first claim is proved. 
    
            We now claim that for any \(j \geq i+1\), we have \(f_j(v) \leq f_j(w(v))\). There are two subcases to consider. 
    
            \textbf{Case 3.1:} Suppose \(v > \frac{\mu+\eta_j}{2}\). Since \(j \geq i+1\), we have \(x_j^* \leq x_{i+1}^*\) by Lemma \ref{lemma:xi_ordering}. Hence we have \(x_j^* \leq w(v)\). Now we claim \(w(v) \leq u_j(v)\). First, observe that \(u_i(v) \leq u_j(v)\) since \(j \geq i+1\). To show that we also have \(x_{i+1}^* \leq u_j(v)\), consider that \(u_j(v) = \mu+\eta_j-v\) and \(x_{i+1}^* = \mu+\eta_{i+1}-y_{i+1}^*\). Hence \(u_j(v) \geq x_{i+1}^*\) if and only if \(\eta_j - v \geq \eta_{i+1} - y_{i+1}^*\). Since \(v \leq y_{i+1}^* \leq \eta_{i+1} \leq \eta_j\), we clearly have \(\eta_j - v \geq \eta_{i+1} - y_{i+1}^*\). Hence, \(x_{i+1}^* \leq u_j(v)\) and so \(w(v) \leq u_j(v)\). To summarize, we have shown \(x_j^* \leq w(v) \leq u_j(v)\). Thus, by Lemma \ref{lemma:xi_ordering} we have \(f_j(v) = f_j(u_j(v)) \leq f_j(w(v))\). Therefore, for this subcase we have shown \(f_j(v) \leq f_j(w(v))\) as desired. 
    
            \textbf{Case 3.2:} Suppose \(v \leq \frac{\mu+\eta_j}{2}\). Since \(j \geq i+1\), we have \(x_j^* \leq x_{i+1}^* \leq x_i^*\). Moreover, since \(y_i^* \leq v\), we have \(u_i(v) \leq x_i^*\). Hence, we have \(x_j^* \leq x_{i+1}^* \vee u_i(v) \leq x_i^*\). Now, since \(v \geq \frac{\mu+\eta_i}{2}\) we have \(v \geq x_i^*\). Thus it follows \(x_j^* \leq x_{i+1}^* \vee u_i(v) \leq v\). Since \(f_j\) is decreasing to the right of \(x_j^*\) by Lemma \ref{lemma:fi_monotone}, it follows that \(f_j(w(v)) = f_j(x_{i+1}^* \vee u_i(v)) \geq f_j(v)\) as desired. Therefore, the analysis for this subcase is complete. 
    
            To summarize our work for Case 3, we have shown that for any \(j \leq i\) we have \(f_j(v) \leq f_j(w(v))\). We have also shown that for any \(j \geq i+1\) we have \(f_j(v) \leq f_j(w(v))\). Therefore, it immediately follows that \(f(v) \leq f(w(v))\), as desired. 
            
            \textbf{Case 4:} Suppose \(y_k^* \leq v\). Then \(f_j(y_k^*) \geq f_j(v)\) for all \(j\). Therefore \(f(y_k^*) \geq f(v)\). By Case 3 applied to the point \(y_k^*\), there exists \(w(y_k^*) \in [\mu, \mu+\Delta(h)]\) such that \(f(w(y_k^*)) \geq f(y_k^*)\). So if we take \(w(v) = w(y_k^*)\), then we have \(f(w(v)) \geq f(y_k^*) \geq f(v)\), as desired. 
    
            With all of the cases handled, the proof is complete.
        \end{proof}
    
        \begin{lemma}\label{lemma:fi_maximizers}
            Suppose Assumption \ref{assumption:ordering} holds. The function \(f_i\) exhibits two global maximizers \(x_i^*\) and \(y_i^* = \eta_i + \mu - x_i^*\) with \(\mu \leq x_i^* \leq \mu + \Delta(h)\) and \(\eta_i - \Delta(h) \leq y_i^* \leq \eta_i\). 
        \end{lemma}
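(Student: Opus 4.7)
The plan is to exploit the symmetry of $f_i$ about $m_i := (\mu + \eta_i)/2$ and reduce to analyzing the monotonicity of $f_i$ on the half-line $(-\infty, m_i]$. I would first check that $f_i(2m_i - x) = f_i(x)$ via the identity $\Phi(-a + h) - \Phi(-a - h) = \Phi(a + h) - \Phi(a - h)$ applied to each of the two summands. Since $f_i$ is continuous with $f_i(x) \to 0$ as $|x| \to \infty$, global maximizers exist, and by the symmetry they come in reflected pairs $(x^*, 2m_i - x^*)$; it thus suffices to show that any global maximizer lying in $(-\infty, m_i]$ belongs to $[\mu, \mu + \Delta(h)]$, whose reflection then gives the companion in $[\eta_i - \Delta(h), \eta_i]$.

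Write $f_i'(x) = g(x - \mu) + g(x - \eta_i)$ where $g(\delta) := \varphi(\delta + h) - \varphi(\delta - h)$. The function $g$ is odd with $g(0) = 0$ and $g(\delta) < 0$ for $\delta > 0$; using $g'(\delta) = (\delta - h)\varphi(\delta - h) - (\delta + h)\varphi(\delta + h)$, one verifies that $g'$ has a unique positive root (the transcendental equation $(\delta + h)/(\delta - h) = e^{2\delta h}$ has exactly one solution on $(h, \infty)$), so $g$ is strictly decreasing on $(0, \Delta(h)]$ and strictly increasing on $[\Delta(h), \infty)$, with $g(\delta) \to 0^-$ as $\delta \to \infty$.

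Armed with this, partition $(-\infty, m_i]$ into three pieces. On $(-\infty, \mu]$ both $x - \mu \leq 0$ and $x - \eta_i < 0$, so $g(x - \mu), g(x - \eta_i) \geq 0$ (strict for the second term), giving $f_i' > 0$; hence $f_i$ strictly increases there. Set $\Delta := \eta_i - \mu > 2\Delta(h)$. On $[\mu + \Delta(h), m_i]$, let $s := x - \mu \in [\Delta(h), \Delta/2]$; both $s$ and $\Delta - s$ lie in $[\Delta(h), \infty)$ with $s \leq \Delta - s$, so strict monotonicity of $g$ on that ray yields $g(s) \leq g(\Delta - s) = -g(s - \Delta)$, i.e., $f_i'(x) \leq 0$, strict except at $m_i$. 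This is exactly where the separation hypothesis $|\mu - \eta_1| > 2\Delta(h)$ is invoked. Combining the two monotonicity conclusions, any maximizer of $f_i$ on $(-\infty, m_i]$ lies in the compact interval $[\mu, \mu + \Delta(h)]$, where continuity supplies at least one; call it $x_i^*$. Then $y_i^* := 2m_i - x_i^* = \mu + \eta_i - x_i^*$ is a maximizer on $[m_i, \infty)$ lying in $[\eta_i - \Delta(h), \eta_i]$ with $f_i(y_i^*) = f_i(x_i^*)$, and each must equal the global maximum over $\R$.

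The main obstacle will be establishing strict monotonicity of $g$ on either side of $\Delta(h)$ purely from the definition of $\Delta(h)$ as an argmin; the cleanest route is the direct calculation of $g'$ sketched above, which reduces to verifying that $(\delta + h)/(\delta - h)$ (decreasing from $+\infty$ to $1$) and $e^{2\delta h}$ (increasing from $e^{2h^2}$ to $+\infty$) cross exactly once on $(h, \infty)$.
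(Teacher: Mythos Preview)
Your argument is correct and shares the essential ingredients with the paper's proof: both exploit the reflection symmetry $f_i(2m_i-x)=f_i(x)$ and the monotonicity structure of $g=\Gamma_h$ (the paper's Lemma on $\Gamma_h$ gives exactly the strict decrease on $(0,\Delta(h))$ and strict increase on $(\Delta(h),\infty)$ that you derive via $g'$). The organization, however, is genuinely different. The paper first locates a critical point in $[\mu,\mu+\Delta(h)]$ by the intermediate value theorem, argues it is the unique local maximizer there, and then separately excludes local maxima in $(\mu+\Delta(h),m_i)$ by showing $f_i'$ is strictly increasing on that interval. You bypass the IVT/uniqueness step entirely: your direct comparison $g(s)\le g(\Delta-s)$ for $s\in[\Delta(h),\Delta/2]$ (both arguments lying in the increasing range of $g$, which is precisely where $\eta_i-\mu>2\Delta(h)$ enters) gives $f_i'\le 0$ on $[\mu+\Delta(h),m_i]$ in one stroke, and together with $f_i'>0$ on $(-\infty,\mu]$ this forces every left-half maximizer into $[\mu,\mu+\Delta(h)]$. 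This is a cleaner route to the lemma as stated. The one thing you do not establish is uniqueness of $x_i^*$ inside $[\mu,\mu+\Delta(h)]$; the paper does obtain this and later refers to ``the'' global maximizers $x_i^*,y_i^*$ in subsequent lemmas, so if you need that elsewhere you would still have to supply the short monotonicity argument for $f_i'$ on $[\mu,\mu+\Delta(h)]$.
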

        \begin{proof}
            First, consider that 
            \begin{equation*}
                f_i'(t) = (\varphi(t-\mu+h) - \varphi(t-\mu-h)) + (\varphi(t-\eta_i+h) - \varphi(t-\eta_i-h)). 
            \end{equation*}
            Consequently, \(f_i'(\mu) = \varphi(\mu-\eta_i+h) - \varphi(\mu-\eta_i-h) \geq 0\) since \(\mu < \eta_i\). On the other hand, consider that 
            \begin{align*}
                f_i'(\mu+\Delta(h)) &= (\varphi(\Delta(h) + h) - \varphi(\Delta(h) - h)) + (\varphi(\mu+\Delta(h)-\eta_i+h) - \varphi(\mu+\Delta(h)-\eta_i-h)) \\
                &\leq (\varphi(\Delta(h) + h) - \varphi(\Delta(h) - h)) + \sup_{\delta \in \R} (\varphi(\delta+h) - \varphi(\delta-h)) \\
                &\leq 0
            \end{align*}
            by definition of \(\Delta(h)\). So \(f_i'(\mu) \geq 0\) and \(f_i'(\mu+\Delta(h)) \leq 0\), and thus it follows by intermediate value theorem that there exists \(x_i^* \in [\mu, \mu+\Delta(h)]\) such that \(f_i'(x_i^*) = 0\). Moreover, it can be readily seen \(x_i^*\) is a local maximizer. Furthermore, by Lemma \ref{lemma:Gamma_h}, the map \( t \mapsto \varphi(t-\mu+h) - \varphi(t-\mu-h)\) is strictly decreasing for \(t \in [\mu, \mu+\Delta(h)]\) and the map \(t \mapsto \varphi(t-\eta_i+h) - \varphi(t-\eta_i-h)\) is strictly increasing (since \(|\mu-\eta_i| > 2\Delta(h)\)). Therefore, \(x_i^*\) is the unique local maximizer in the interval \([\mu, \mu+\Delta(h)]\). Now consider that for any \(t \in \R\), we have that \(f_i(t) = f_i(\mu+\eta_i-t)\). Consequently, we also have \(f_i'(t) = -f_i(\mu+\eta_i-t)\) and \(f_i''(t) = f_i''(\mu+\eta_i-t)\). Therefore, letting \(y_i^* = \mu+\eta_i-x_i^*\), we see that \(y_i^*\) is also a local maximizer and \(y_i^* \in [\eta_i-\Delta(h), \eta_i]\). It remains to show there are no local maximizers in the interval \((\mu+\Delta(h), \eta_i-\Delta(h))\) (since it is straightforward to see there are no maximizers to the left of \(\mu\) nor to the right of \(\eta_i\)). By symmetry about the midpoint \(\frac{\mu+\eta_i}{2}\), it suffices to only consider \((\mu+\Delta(h), \frac{\mu+\eta_i}{2}]\). Note that the point \(\frac{\mu+\eta_i}{2}\) is indeed a critical point (i.e. \(f_i'\) is equal to zero at this point), but \(f_i''\) vanishes at this point so it is not a local maximizer. Hence it remains to consider the open interval \((\mu+\Delta(h), \frac{\mu+\eta_i}{2})\). Define the functions \(g_1(u) = \varphi(\Delta(h) + u + h) - \varphi(\Delta(h)+u-h)\) and \(g_2 = \varphi(\mu+\Delta(h)+u-\eta_i+h) - \varphi(\mu+\Delta(h)+u-\eta_i-h)\). Note \(f_i'(\mu+\Delta(h)+u) = g_1(u) + g_2(u)\). By Lemma \ref{lemma:Gamma_h} observe that for \(u > 0\), the function \(g_1\) is strictly increasing in \(u\). Likewise, since \(|\mu-\eta_i| > 2\Delta(h)\), we also have that \(g_2\) is strictly increasing in \(u\). Hence \(f_i'\) is strictly increasing on the interval \((\mu+\Delta(h), \frac{\mu+\eta_i}{2})\). Since \(f_i'(\frac{\mu+\eta_i}{2}) = 0\), it follows \(f_i' < 0\) on our interval of interest, and so there are no local maximizers on this interval. Hence, the only local maximizers are located in \([\mu, \mu+\Delta(h)]\) and \([\eta_i - \Delta(h), \eta_i]\). Since \(f_i(x_i^*) = f_i(y_i^*)\), it follows \(x_i^*\) and \(y_i^*\) are global maximizers, not just local ones. 
        \end{proof}
    
        \begin{lemma}\label{lemma:xi_ordering}
            Suppose Assumption \ref{assumption:ordering} holds. If \(\mu \leq x_i^* \leq \mu+\Delta(h)\) and \(\eta_i - \Delta(h) \leq y_i^* \leq \eta_i\) denote the global maximizers of \(f_i\), then \(x_1^* \geq x_2^* \geq ... \geq x_k^*\) and \(y_1^* \leq y_2^* \leq ... \leq y_k^*\). 
        \end{lemma}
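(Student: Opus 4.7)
The statement has two parts, but the second reduces immediately to the first. Indeed, since $y_i^* = \mu + \eta_i - x_i^*$, we have
\[
y_{i+1}^* - y_i^* \;=\; (\eta_{i+1} - \eta_i) \;+\; (x_i^* - x_{i+1}^*),
\]
so once the $x_i^*$'s are shown to be nonincreasing, both summands are nonnegative (using $\eta_i \leq \eta_{i+1}$ from Assumption \ref{assumption:ordering}) and the $y_i^*$'s are nondecreasing. So the plan is to focus entirely on showing $x_i^* \geq x_{i+1}^*$.

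The key observation is a uniform (in $t$) monotonicity of the $\eta_i$-component of $f_i'$. Define $\psi(s) := \varphi(s+h) - \varphi(s-h)$, so that $f_i'(t) = A(t) + \psi(t-\eta_i)$ where $A(t) := \varphi(t-\mu+h) - \varphi(t-\mu-h)$ is independent of $i$. Because $\varphi$ is even, $\psi$ is odd; by the definition of $\Delta(h)$ in \eqref{def:Delta_h}, $\psi$ is minimized at $\Delta(h)$ and hence maximized at $-\Delta(h)$, which forces $\psi$ to be \emph{strictly increasing} on $(-\infty, -\Delta(h)]$ (the verification uses only the standard shape of $\varphi$ and should already be encoded in Lemma \ref{lemma:Gamma_h}). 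For any $t \in [\mu, \mu+\Delta(h)]$ and any $\eta_i > \mu + 2\Delta(h)$ we have $t - \eta_i \leq \Delta(h) - 2\Delta(h) = -\Delta(h)$, so for $\eta_i < \eta_j$ the inequality $t - \eta_j < t - \eta_i \leq -\Delta(h)$ and the monotonicity of $\psi$ on this half-line yield
\[
\psi(t-\eta_j) \;<\; \psi(t-\eta_i) \qquad \text{for all } t\in[\mu,\mu+\Delta(h)].
\]

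With this pointwise comparison, fix $i<j$ and suppose first $\eta_i < \eta_j$ strictly. Then at $t = x_j^*$, where $f_j'(x_j^*)=0$,
\[
f_i'(x_j^*) \;=\; f_j'(x_j^*) + \bigl[\psi(x_j^*-\eta_i) - \psi(x_j^*-\eta_j)\bigr] \;>\; 0.
\]
On the other hand $f_i'(\mu+\Delta(h)) < 0$ by the same argument used in the proof of Lemma \ref{lemma:fi_maximizers} (the definition of $\Delta(h)$ forces $\psi(\Delta(h)) + \psi(\mu+\Delta(h)-\eta_i) < 0$ when $\eta_i > \mu+2\Delta(h)$ strictly). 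By the intermediate value theorem $f_i'$ vanishes at some point in $(x_j^*, \mu+\Delta(h))$. Now invoke uniqueness from Lemma \ref{lemma:fi_maximizers}: $x_i^*$ is the unique local maximizer of $f_i$ in $[\mu, \mu+\Delta(h)]$. Since $f_i'(x_j^*)>0$ and $f_i'(\mu+\Delta(h))<0$, any zero of $f_i'$ in $(x_j^*, \mu+\Delta(h))$ at which $f_i'$ changes sign from $+$ to $-$ is a local maximizer; if this zero were distinct from $x_i^*$, then together with $x_i^*$ it would produce two local maximizers in $[\mu,\mu+\Delta(h)]$, contradicting Lemma \ref{lemma:fi_maximizers}. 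Hence $x_i^*$ itself lies in $(x_j^*, \mu+\Delta(h))$, giving $x_i^* > x_j^*$. When $\eta_i = \eta_j$ the functions $f_i$ and $f_j$ coincide so $x_i^* = x_j^*$. Together these give $x_1^* \geq x_2^* \geq \cdots \geq x_k^*$, completing the proof.

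The main obstacle is the \emph{uniqueness--to--ordering} step at the end: we only have Lemma \ref{lemma:fi_maximizers}'s uniqueness of the local maximizer, not a priori uniqueness of zeros of $f_i'$. The argument above bridges this gap by using the boundary sign pattern of $f_i'$ to convert any stray zero produced by IVT into a would-be second local max and then applying the uniqueness. Everything else is a routine application of the odd/monotone structure of $\psi$ on the half-line $(-\infty,-\Delta(h)]$ together with the separation hypothesis $|\mu - \eta_1| > 2\Delta(h)$ built into Assumption \ref{assumption:ordering}.
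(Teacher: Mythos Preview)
Your proof is correct and follows essentially the same approach as the paper. Both arguments first reduce the $y_i^*$ ordering to the $x_i^*$ ordering via $y_i^* = \mu + \eta_i - x_i^*$, and both hinge on the same key observation: for $t\in[\mu,\mu+\Delta(h)]$ the quantity $\Gamma_h(t-\eta)=\psi(t-\eta)$ is strictly decreasing in $\eta$ (because $t-\eta<-\Delta(h)$ and $\Gamma_h$ is increasing there by Lemma~\ref{lemma:Gamma_h}). The paper phrases the conclusion as an implicit-function monotonicity statement (``the root $\delta^*(\eta)$ decreases in $\eta$''), while you instead evaluate $f_i'$ at $x_j^*$, obtain $f_i'(x_j^*)>0$, and combine with $f_i'(\mu+\Delta(h))<0$ and the uniqueness of the local maximizer from Lemma~\ref{lemma:fi_maximizers}. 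Your version is arguably more explicit about the role of uniqueness; one small tightening is to note that $f_i$ restricted to $[x_j^*,\mu+\Delta(h)]$ attains its maximum at an interior point (since $f_i'$ has the right signs at both endpoints), which is then a local maximizer and hence equals $x_i^*$ --- this avoids having to argue about which zero of $f_i'$ is a sign change.
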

        \begin{proof}
            Note by the proof of Lemma \ref{lemma:fi_maximizers} that \(y_i^* = \mu + \eta_i - x_i^*\). Hence we have \(y_i^* \leq y_{i+1}^*\) if and only if \(\eta_i - \eta_{i+1} \leq x_i^* - x_{i+1}^*\). Since \(\eta_i - \eta_{i+1} \leq 0\), it suffices to show \(x_i^* - x_{i+1}^* \geq 0\) in order to establish the desired ordering for \(\{y_i^*\}_{i=1}^{k}\). In other words, it suffices to show \(x_1^* \geq ... \geq x_k^*\) in order to prove \(y_1^* \leq ... \leq y_k^*\). 
    
            Recall from the proof of Lemma \ref{lemma:fi_maximizers} that \(f_i'(\mu) \geq 0\). Now fix \(\delta \in [0, \Delta(h)]\) and note that we have \(f_i'(\mu+\delta) = (\varphi(\delta+h) - \varphi(\delta-h))+(\varphi(\mu+\delta-\eta+h) - \varphi(\mu+\delta-\eta-h))\). For \(\eta\) such that \(|\eta - \mu| > 2\Delta(h)\), let us define the mapping \(\eta \mapsto \delta^*(\eta)\) to be such that \(\delta^*(\eta) \in [0, \Delta(h)]\) solves 
            \begin{equation*}
                0 = (\varphi(\delta+h)-\varphi(\delta-h))+(\varphi(\mu+\delta-\eta+h)-\varphi(\mu+\delta-\eta-h)). 
            \end{equation*}
            Since \(|\eta_i - \mu| > 2\Delta(h)\) and \(\delta \leq \Delta(h)\), it follows from Lemma \ref{lemma:Gamma_h} that the map \(\eta_i \mapsto \varphi(\mu+\delta-\eta_i+h) - \varphi(\mu+\delta - \eta_i-h)\) is monotone decreasing in \(\eta_i\) since \(\eta_i-\mu-\delta > \Delta(h)\). Consequently, \(\delta^*(\eta_i)\) is monotone decreasing in \(\eta_i\). Since \(x_i^* = \mu + \delta^*(\eta_i)\), it follows from \(\eta_1 \leq ... \leq \eta_k\) that \(x_1^* \geq ... \geq x_k^*\). The proof is complete. 
        \end{proof}
    
        \begin{lemma}\label{lemma:fi_monotone}
            Suppose Assumption \ref{assumption:ordering} holds. The function \(f_i\) is decreasing on the interval \(\left[x_i^*, \frac{\mu+\eta_i}{2}\right]\) and increasing on the interval \((-\infty, x_i^*]\) where \(\mu \leq x_i^* \leq \mu+\Delta(h)\) is the global maximizer of \(f_i\). 
        \end{lemma}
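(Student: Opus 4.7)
The plan is to partition $(-\infty, \tfrac{\mu+\eta_i}{2}]$ into three subintervals---$(-\infty,\mu]$, $[\mu, \mu+\Delta(h)]$, and $[\mu+\Delta(h), \tfrac{\mu+\eta_i}{2}]$---determine the sign of $f_i'$ on each, and then stitch the conclusions together at $\mu$, $\mu+\Delta(h)$, and at $x_i^*$ inside the middle subinterval. Writing $f_i'(t) = G_h(t-\mu) + G_h(t-\eta_i)$ with $G_h(s) := \varphi(s+h) - \varphi(s-h)$, one checks directly that $G_h$ is odd about $0$ and satisfies $G_h(s) \geq 0$ for all $s \leq 0$. For $t \leq \mu$, both $t-\mu \leq 0$ and $t-\eta_i < 0$, so both summands are nonnegative and $f_i' \geq 0$ on $(-\infty, \mu]$. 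For $t \in [\mu+\Delta(h), \tfrac{\mu+\eta_i}{2}]$, the computation inside the proof of Lemma~\ref{lemma:fi_maximizers} shows that each summand of $f_i'$ is monotone increasing in $t$ (using $|\mu-\eta_i|>2\Delta(h)$), so $f_i'$ itself is strictly increasing there; since $f_i'(\tfrac{\mu+\eta_i}{2}) = 0$ by the oddness of $G_h$, this forces $f_i' \leq 0$ on the third subinterval.

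The main obstacle is the middle subinterval $[\mu, \mu+\Delta(h)]$. Here $f_i'$ is a sum of a strictly decreasing and a strictly increasing function (by Lemma~\ref{lemma:Gamma_h}), so neither monotonicity nor uniqueness of a zero of $f_i'$ is immediate from inspection. My approach is a sign-change argument invoking the uniqueness assertion in Lemma~\ref{lemma:fi_maximizers}. From that lemma's proof, $f_i'(\mu) = G_h(\mu-\eta_i) > 0$ and $f_i'(\mu+\Delta(h)) < 0$ (both strict, the latter because $|\mu-\eta_i|>2\Delta(h)$ is strict and $G_h$ attains its maximum on $\R$ only at $-\Delta(h)$), so the number of sign changes of $f_i'$ in $[\mu, \mu+\Delta(h)]$ is odd. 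Every sign change of $f_i'$ from $+$ to $-$ produces a local maximum of $f_i$, while every sign change from $-$ to $+$ produces a local minimum, so any odd number of sign changes exceeding one would produce at least two distinct local maxima in $[\mu, \mu+\Delta(h)]$, contradicting the uniqueness of the local maximum $x_i^*$ from Lemma~\ref{lemma:fi_maximizers}. Therefore $f_i'$ changes sign exactly once in $[\mu, \mu+\Delta(h)]$, and the change must occur at $x_i^*$, giving $f_i' \geq 0$ on $[\mu, x_i^*]$ and $f_i' \leq 0$ on $[x_i^*, \mu+\Delta(h)]$.

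Concatenating the three subintervals yields $f_i' \geq 0$ on $(-\infty, x_i^*]$ and $f_i' \leq 0$ on $[x_i^*, \tfrac{\mu+\eta_i}{2}]$, which are precisely the claimed monotonicity statements for $f_i$ on the two asserted intervals.
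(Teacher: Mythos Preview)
Your proof is correct. Both your argument and the paper's hinge on the same key fact from Lemma~\ref{lemma:fi_maximizers}: the only local maximizers of $f_i$ are $x_i^*$ and $y_i^*$. The paper runs a single contradiction on the whole interval $[x_i^*,\tfrac{\mu+\eta_i}{2}]$: if $f_i'(t^*)>0$ at some interior point, then using $f_i'(x_i^*)=f_i'(\tfrac{\mu+\eta_i}{2})=0$ one extracts a third local maximizer $z^*\in(t^*,\tfrac{\mu+\eta_i}{2})$, contradicting Lemma~\ref{lemma:fi_maximizers}; the increasing claim on $(-\infty,x_i^*]$ is dispatched by the analogous contradiction. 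Your three-piece decomposition is more explicit: the outer pieces $(-\infty,\mu]$ and $[\mu+\Delta(h),\tfrac{\mu+\eta_i}{2}]$ are handled directly from the sign and monotonicity of $\Gamma_h$ (no appeal to uniqueness needed there), and only the middle piece $[\mu,\mu+\Delta(h)]$ invokes Lemma~\ref{lemma:fi_maximizers} via your parity-of-sign-changes argument. The payoff of your route is that each step is self-contained and the ``similar argument'' for the increasing half is fully spelled out; the paper's route is shorter but leaves a bit more to the reader (e.g.\ why the extremum $z^*$ produced is genuinely a local maximizer of $f_i$ and not a degenerate critical point). Substantively the two arguments are equivalent.
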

        \begin{proof}
            For sake of contradiction, suppose there exists some \(t^* \in [x_i^*, \frac{\mu+\eta_i}{2}]\) such that \(f_i'(t^*) > 0\). Recall from the proof of Lemma \ref{lemma:Gamma_h} that \(f_i'(t) = \Gamma_h(t-\mu) + \Gamma_h(t-\eta_i)\) where \(\Gamma_h\) is the function defined in the statement of Lemma \ref{lemma:Gamma_h}. Consider that \(f_i'(x_i^*) = 0\) and \(f_i'\left(\frac{\mu+\eta_i}{2}\right) = 0\). Since \(f_i'\) is continuous and differentiable, it follows by intermediate value theorem that there exists \(z^* \in \left[t^*, \frac{\mu+\eta_i}{2}\right]\) such that \(f_i'(z^*) = 0\) and \(f_i''(z^*) < 0\). Hence, \(z^*\) is a local maximizer of \(f_i\) in addition to \(x_i^*\) and \(y_i^* = \eta_i + \mu - x_i^*\), which is a contradiction. Hence, \(f_i \leq 0\) on the interval \(\left[x_i^*, \frac{\mu+\eta_i}{2}\right]\), yielding the first claim. The second claim can be shown by a similar argument. 
        \end{proof}
        
        \begin{lemma}\label{lemma:Gamma_h}
            Define \(\Gamma_h : \R \to \R\) to be the function \(\Gamma_h(x) = \varphi(x+h) - \varphi(x-h)\). Then \(\Gamma_h\) attains its global minimum and maximum at \(\Delta(h)\) and \(-\Delta(h)\) respectively, where \(\Delta(h)\) is given by (\ref{def:Delta_h}). Respectively, these are the positive and negative solutions of the equation \(x = \frac{h}{\tanh(xh)}\). Additionally, \(\Delta(h) \geq h\) and \(\lim_{h \to \infty} \frac{\Delta(h)}{h} = 1\). Furthermore, \(\Gamma_h\) is strictly increasing on the intervals \((-\infty, -\Delta(h))\) and \((\Delta(h), \infty)\), and \(\Gamma_h\) is strictly decreasing on the interval \((-\Delta(h), \Delta(h))\). 
        \end{lemma}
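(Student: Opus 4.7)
The plan is to compute $\Gamma_h'$, characterize its roots, and then extract the monotonicity and asymptotics from this characterization. Starting from the identity $\varphi'(t) = -t\varphi(t)$, differentiation gives
\[
\Gamma_h'(x) = -(x+h)\varphi(x+h) + (x-h)\varphi(x-h),
\]
so $\Gamma_h'(x) = 0$ is equivalent to $(x-h)\varphi(x-h) = (x+h)\varphi(x+h)$. Using $\varphi(x+h)/\varphi(x-h) = e^{-2xh}$ and rearranging via the definitions of $\sinh$ and $\cosh$, this simplifies to $x = h/\tanh(xh)$, the equation stated in the lemma. For existence and uniqueness of the positive root, I would set $g(x) = x - h/\tanh(xh)$ on $(0,\infty)$, note that $g(0^+) = -\infty$ and $g(\infty) = \infty$, and compute $g'(x) = 1 + h^2/\sinh^2(xh) > 0$, so by the intermediate value theorem there is a unique positive solution, which I call $\Delta(h)$. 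The odd symmetry $\Gamma_h(-x) = -\Gamma_h(x)$, immediate from the definition, then delivers the unique negative critical point at $-\Delta(h)$.

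Next I would identify which root is the maximum and which is the minimum. A direct comparison $(x+h)^2 > (x-h)^2$ for $x,h > 0$ gives $\varphi(x+h) < \varphi(x-h)$, so $\Gamma_h(x) < 0$ for every $x > 0$, $\Gamma_h(x) > 0$ for every $x < 0$, with $\Gamma_h(0) = 0$ and $\Gamma_h(x) \to 0$ as $|x| \to \infty$. Since $\pm\Delta(h)$ are the only critical points, $\Delta(h)$ must be the global minimum and $-\Delta(h)$ the global maximum of $\Gamma_h$, matching the defining formula \eqref{def:Delta_h}. The monotonicity statements then follow immediately: $\Gamma_h$ has no critical point inside any of $(-\infty,-\Delta(h))$, $(-\Delta(h),\Delta(h))$, $(\Delta(h),\infty)$, and the values at the endpoints (the global max at $-\Delta(h)$, the global min at $\Delta(h)$, and the limits $0$ at $\pm\infty$) force strict monotonicity in the directions claimed.

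Finally, for the quantitative bound $\Delta(h) \geq h$, I would evaluate $g(h) = h - h/\tanh(h^2) < 0$ since $\tanh(h^2) < 1$, so by the monotonicity of $g$ the root lies in $(h,\infty)$. For the asymptotic $\Delta(h)/h \to 1$, letting $r(h) := \Delta(h)/h$, the defining equation becomes $\tanh(r(h) h^2) = 1/r(h)$. Since $r(h) \geq 1$, we have $1/r(h) \leq 1$; and since $\tanh(r(h) h^2) \to 1$ as $h \to \infty$ for any $r(h) \geq 1$, a squeeze gives $1/r(h) \to 1$, hence $r(h) \to 1^+$.

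The main obstacle is merely bookkeeping: verifying that the critical-point equation $(x-h)\varphi(x-h) = (x+h)\varphi(x+h)$ simplifies cleanly to $x \tanh(xh) = h$ without sign errors, and then patching together the parity of $\Gamma_h$, its decay at infinity, and the uniqueness of critical points on each half-line to pin down the global picture. Everything after that is routine calculus.
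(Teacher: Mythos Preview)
Your proposal is correct and follows essentially the same approach as the paper: compute $\Gamma_h'$, reduce $\Gamma_h'(x)=0$ to $x=h/\tanh(xh)$, exploit the odd symmetry and the sign of $\Gamma_h$ on each half-line, and read off the monotonicity and asymptotics. If anything, you are slightly more explicit than the paper in proving uniqueness of the positive critical point (via $g'(x)=1+h^2/\sinh^2(xh)>0$), whereas the paper simply asserts the monotonicity intervals after identifying $\pm\Delta(h)$ as solutions.
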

        \begin{proof}
            By definition of \(\Delta(h)\) and the fact \(\Gamma_h\) is an odd function, it is clear that \(\Delta(h)\) and \(-\Delta(h)\) are the global minimum and maximum respectively of \(\Gamma_h\). Moreover, consider that \(\Gamma_h(x) < 0\) for positive \(x\) and \(\Gamma_h(x) > 0\) for negative \(x\). Hence, it follows that \(\Delta(h) > 0\). It remains to show \(\Delta(h)\) and \(-\Delta(h)\) are the solutions of the claimed equation. Consider that 
            \begin{equation*}
                \Gamma_h'(x) = -x\Gamma_h(x) - h(\varphi(x+h) + \varphi(x-h)).
            \end{equation*}
            Therefore, \(\Gamma_h'(x) = 0\) if and only if \(-x\Gamma_h(x) = h(\varphi(x+h) + \varphi(x-h))\). In particular, this holds if and only if 
            \begin{equation*}
                -x = h \cdot \frac{\varphi(x+h) + \varphi(x-h)}{\varphi(x+h) - \varphi(x-h)} = h \cdot \frac{e^{-xh} + e^{xh}}{e^{-xh}-e^{xh}} = -h\cdot \frac{1}{\tanh(xh)}. 
            \end{equation*}
            Hence, \(\Gamma_h'(x) = 0\) if and only if \(x = \frac{h}{\tanh(xh)}\). Hence, \(\Delta(h)\) and \(-\Delta(h)\) are the positive and negative solutions of \(x = \frac{h}{\tanh(xh)}\). Further, it follows \(\Gamma_h' > 0\) on \((-\infty, \Delta(h))\) and \((\Delta(h), \infty)\). Likewise, \(\Gamma_h' < 0\) on \((-\Delta(h), \Delta(h))\). 
    
            To show \(\Delta(h) \geq h\), first consider that \(\Delta(h)\tanh(h\Delta(h)) = h\). Since \(|\tanh(y)| \leq 1\) for all \(y \in \R\), we have immediately \(\Delta(h) \geq h\). To show that \(\lim_{h \to \infty} \frac{\Delta(h)}{h} = 1\), observe that \(\lim_{h \to \infty} \Delta(h)\tanh(h\Delta(h)) = \lim_{h \to \infty} h = \infty\). Therefore, we also have \(\lim_{h \to \infty} \Delta(h) = \infty\) since \(\Delta(h) \geq h\). Hence, \(\lim_{h \to \infty} \tanh(h\Delta(h)) = 1\) and so \(\lim_{h \to \infty} \frac{\Delta(h)}{h} = \lim_{h \to \infty} \frac{1}{\tanh(h\Delta(h))} = 1\). The proof is complete. 
        \end{proof}
        
        \subsubsection{Relaxing the ordering assumption}
    
        We now relax Assumption \ref{assumption:ordering}.
    
        \begin{proposition}\label{prop:fmax_sep}
            If \(\min_i |\mu - \eta_i| \geq 2\Delta(h)\), then 
            \begin{equation*}
                \max_{|x-\mu| \leq \Delta(h)} f(x) = \max_{x \in \R} f(x)
            \end{equation*}
            where \(f\) is given by (\ref{def:f}).
        \end{proposition}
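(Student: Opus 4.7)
The plan is to partition the centers $\eta_i$ by which side of $\mu$ they lie on and reduce each side to Proposition \ref{prop:fmax_order}. Let $I_+ = \{i : \eta_i > \mu\}$ and $I_- = \{i : \eta_i < \mu\}$; the separation hypothesis $\min_i |\mu - \eta_i| \geq 2\Delta(h) > 0$ precludes $\eta_i = \mu$, so $I_+ \cup I_- = \{1, \dots, k\}$. Decompose $f = f_+ + f_-$ with $f_\pm = \sum_{i \in I_\pm} f_i$, and relabel within each group so that the indices in $I_+$ correspond to $\eta_i$'s in increasing order (matching Assumption \ref{assumption:ordering}); after a reflection $x \mapsto 2\mu - x$, the analogous statement holds for $f_-$.

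The key observation is a one-sided global monotonicity for each sub-mixture. For $i \in I_+$, Lemma \ref{lemma:fi_monotone} gives that $f_i$ is increasing on $(-\infty, x_i^*]$, and Lemma \ref{lemma:fi_maximizers} gives $x_i^* \in [\mu, \mu + \Delta(h)]$, so $f_i$ is increasing on $(-\infty, \mu]$; summing, $f_+$ is increasing on $(-\infty, \mu]$. Symmetrically, for $i \in I_-$ the function $f_i$ is symmetric about $\frac{\mu + \eta_i}{2} < \mu$ and its local maximum nearest $\mu$ satisfies $x_i^* \in [\mu - \Delta(h), \mu]$; combining Lemma \ref{lemma:fi_monotone} (applied to the reflected problem where the roles of $\mu$ and $\eta_i$ are swapped) with the reflective symmetry $f_i(x) = f_i(\mu + \eta_i - x)$ shows that $f_i$ is decreasing on $[x_i^*, \infty) \supset [\mu, \infty)$, so $f_-$ is decreasing on $[\mu, \infty)$.

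With these in hand, the main argument is short. Fix $v > \mu + \Delta(h)$: applying Proposition \ref{prop:fmax_order} to $f_+$ yields $w^+ \in [\mu, \mu + \Delta(h)]$ with $f_+(w^+) \geq f_+(v)$; since $\mu \leq w^+ \leq v$ and $f_-$ is decreasing on $[\mu, \infty)$, $f_-(w^+) \geq f_-(v)$, so $f(w^+) \geq f(v)$. For $v < \mu - \Delta(h)$, apply the mirror argument: Proposition \ref{prop:fmax_order} applied to the reflected version of $f_-$ produces $w^- \in [\mu - \Delta(h), \mu]$ with $f_-(w^-) \geq f_-(v)$, and the monotonicity of $f_+$ on $(-\infty, \mu]$ gives $f_+(w^-) \geq f_+(v)$. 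Since every $x$ with $|x - \mu| > \Delta(h)$ is dominated by some point in $[\mu - \Delta(h), \mu + \Delta(h)]$, the maximum of $f$ is attained in that window.

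The only slightly delicate step is the extension of Lemma \ref{lemma:fi_monotone}, which only controls $f_i$ on $(-\infty, x_i^*]$ and $[x_i^*, \frac{\mu + \eta_i}{2}]$, to the full half-lines $(-\infty, \mu]$ and $[\mu, \infty)$ for $i \in I_+$ and $i \in I_-$ respectively. But this is immediate from the reflective symmetry of $f_i$ about $\frac{\mu + \eta_i}{2}$ (used already in the proof of Lemma \ref{lemma:fi_maximizers}), which transfers monotonicity statements across the axis of symmetry. No new inequalities involving $\Delta(h)$ are required beyond those already supplied by Propositions \ref{prop:fmax_order} and Lemmas \ref{lemma:fi_maximizers}--\ref{lemma:fi_monotone}.
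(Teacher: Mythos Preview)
Your proof is correct and follows essentially the same approach as the paper: split $f$ into the sub-mixtures $f_+$ and $f_-$ (the paper's $f_R$ and $f_L$), apply Proposition \ref{prop:fmax_order} to the relevant side, and use the one-sided monotonicity of the other sub-mixture to conclude. You spell out the justification for the monotonicity of $f_-$ on $[\mu,\infty)$ via the reflective symmetry of each $f_i$, which the paper simply asserts, but the structure of the argument is identical.
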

        \begin{proof}
            The proof simply involves invoking Proposition \ref{prop:fmax_order} twice. For every point \(v\) with \(|v-\mu| > \Delta(h)\), we will find a point \(w\) with \(|w - \mu| \leq \Delta(h)\) such that \(f(v) \leq f(w)\). Fix such a point \(v\). Write \(f = f_L + f_R\) where 
            \begin{align*}
                f_L &:= \sum_{i : \eta_i < \mu} f_i, \\
                f_R &:= \sum_{i : \eta_i > \mu} f_i. 
            \end{align*}
            We consider two cases. Suppose \(v > \mu+\Delta(h)\). By Proposition \ref{prop:fmax_order}, there exists \(w \in [\mu, \mu+\Delta(h)]\) such that \(f_R(v) \leq f_R(w)\). Now, consider that \(f_L\) is monotonically decreasing on \([\mu, \infty)\). Consequently, we also have \(f_L(v) \leq f_L(w)\). Therefore, \(f(v) \leq f(w)\). The case \(v < \mu - \Delta(h)\) is handled by a very similar argument. Hence, we have proved the desired result. 
        \end{proof}
    
        \subsection{Proof of Theorem \ref{thm:mixture}}
        We are now in position to prove Theorem \ref{thm:mixture}. In order to do so, analogues of lemmas stated in Section \ref{section:mixture_special_case} are needed.
    
        \begin{lemma}\label{lemma:fi_maximizers_Cb}
            Assume \(\mu < \eta_1 \leq \eta_2 \leq ... \leq \eta_k\). There exists a sufficiently large universal constant \(C_b > 0\) such that if \(\min_i |\mu - \eta_i| \geq C_b h\), then \(f_i\) exhibits global maximizers \(x_i^* \in \left[\mu, \mu+\frac{h}{4}\right]\) and \(y_i^* \in \left[\eta_i - \frac{h}{4}, \eta_i\right]\). 
        \end{lemma}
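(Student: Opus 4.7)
The plan is to start from Lemma~\ref{lemma:fi_maximizers}, which already places the two global maximizers of $f_i$ in $[\mu,\mu+\Delta(h)]$ and $[\eta_i-\Delta(h),\eta_i]$, and then sharpen this localization from $\Delta(h)$ to $h/4$. The hypothesis $|\mu-\eta_i|>2\Delta(h)$ needed to invoke Lemma~\ref{lemma:fi_maximizers} is automatic in the present setting: by Lemma~\ref{lemma:Gamma_h} the ratio $\Delta(h)/h$ is bounded above by a universal constant whenever $h$ exceeds a sufficiently large universal constant (an assumption in force throughout Appendix~\ref{appendix:mixture}), so taking $C_b$ large forces $C_bh\ge 2\Delta(h)$. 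Lemma~\ref{lemma:fi_maximizers} then supplies global maximizers $x_i^*\in[\mu,\mu+\Delta(h)]$ and $y_i^* = \mu+\eta_i-x_i^*\in[\eta_i-\Delta(h),\eta_i]$. Since $f_i(t)=f_i(\mu+\eta_i-t)$, the conclusion $y_i^*\in[\eta_i-h/4,\eta_i]$ follows from $x_i^*\in[\mu,\mu+h/4]$, so everything reduces to this upper bound on $x_i^*$.

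To obtain it, I will show that $f_i'(t)<0$ for every $t\in(\mu+h/4,\mu+\Delta(h)]$; since $x_i^*$ is a critical point lying in $[\mu,\mu+\Delta(h)]$, this immediately forces $x_i^*\le\mu+h/4$. Writing $f_i'(t)=\Gamma_h(t-\mu)+\Gamma_h(t-\eta_i)$, the first summand is a definite negative quantity: by Lemma~\ref{lemma:Gamma_h} $\Gamma_h$ is strictly decreasing on $[0,\Delta(h)]$ with $\Gamma_h(0)=0$, so for $t-\mu\in(h/4,\Delta(h)]$,
\[
\Gamma_h(t-\mu)\le \Gamma_h(h/4)=\varphi(5h/4)-\varphi(3h/4),
\]
a strictly negative constant of order $-e^{-9h^2/32}$ (up to polynomial corrections) for $h$ large. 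The second summand is positive but tiny: since $t-\eta_i\le\Delta(h)-C_bh$ lies well inside $(-\infty,-\Delta(h))$ for $C_b$ large, Lemma~\ref{lemma:Gamma_h} gives $\Gamma_h(t-\eta_i)>0$ with the crude bound $\Gamma_h(t-\eta_i)\le\varphi(|t-\eta_i|-h)\le \varphi\bigl((C_b-1-\Delta(h)/h)h\bigr)$, whose exponent of decay is $(C_b-O(1))^2h^2/2$.

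The main task is a clean comparison of these two Gaussian tails, and this is where the constant $C_b$ is pinned down: one needs $(C_b-1-\Delta(h)/h)^2/2>9/32$, which is achievable since $\Delta(h)/h$ is bounded by a universal constant and $C_b$ is free. Combined with the ambient lower bound on $h$, this yields $|\Gamma_h(t-\eta_i)|<|\Gamma_h(h/4)|$ uniformly over $t\in(\mu+h/4,\mu+\Delta(h)]$, whence $f_i'(t)<0$ on that interval; the claim on $x_i^*$ (and by symmetry $y_i^*$) follows. The only delicate point is this tail comparison, but with explicit Gaussian densities and exponents polynomial in $h$ it reduces to a single inequality between two constants, which is easily enforced by choosing $C_b$ sufficiently large.
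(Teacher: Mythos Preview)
Your proposal is correct and follows essentially the same approach as the paper: both arguments hinge on the Gaussian tail comparison showing that $|\Gamma_h(t-\eta_i)|\le \varphi\bigl((C_b-1-\Delta(h)/h)h\bigr)$ is dominated by $|\Gamma_h(h/4)|=\varphi(3h/4)-\varphi(5h/4)$ once $C_b$ is large, which forces $f_i'$ to be nonpositive at (in the paper) or on the entire interval beyond (in your version) $\mu+h/4$. The only organizational difference is that the paper checks $f_i'(\mu+h/4)\le 0$ directly, applies the intermediate value theorem on $[\mu,\mu+h/4]$, and then repeats the global-maximality argument from Lemma~\ref{lemma:fi_maximizers}, whereas you invoke Lemma~\ref{lemma:fi_maximizers} first to place $x_i^*\in[\mu,\mu+\Delta(h)]$ and then exclude $(\mu+h/4,\mu+\Delta(h)]$ via the sign of $f_i'$; your route is slightly cleaner in that it avoids re-proving global maximality.
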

        \begin{proof}
            We will select \(C_b > 2 \cdot \sup_{r > 0} \frac{\Delta(r)}{r}\) suitably large. Note \(\sup_{r > 0} \frac{\Delta(r)}{r} \lesssim 1\) by Lemma \ref{lemma:Gamma_h} and so \(C_b\) can be indeed taken to be a universal positive constant. The proof is essentially the same as the proof of Lemma \ref{lemma:fi_maximizers}. 
    
            Recall that \(f_i'(t) = (\varphi(t - \mu + h) - \varphi(t - \mu - h)) + (\varphi(t-\eta_i+h) - \varphi(t-\eta_i-h))\). As argued in the proof of Lemma \ref{lemma:fi_maximizers} we have \(f_i'(\mu) \geq 0\). On the other hand, consider 
            \begin{align*}
                f_i'\left(\mu + \frac{h}{4}\right) &= \left(\varphi\left(\frac{h}{4} + h\right) - \varphi\left(\frac{h}{4} - h\right)\right) + \left( \varphi\left(\mu - \eta_i + \frac{h}{4} + h \right) - \varphi\left(\mu - \eta_i + \frac{h}{4} - h\right)\right) \\
                &\leq \left(\varphi\left(\frac{h}{4} + h\right) - \varphi\left(\frac{h}{4} - h\right)\right) + \left( \varphi\left( \left(\frac{1}{4} - C_b\right)h + h \right) - \varphi\left(\left(\frac{1}{4} - C_b\right)h- h\right)\right).
            \end{align*} 
            By continuity of \(\varphi\), we can clearly take \(C_b\) to be a sufficiently large universal constant such that \(f_i'\left(\mu + \frac{h}{4}\right) \leq 0\). Thus, by intermediate value theorem, there exists \(x_i^* \in \left[\mu, \mu+\frac{h}{4}\right]\) such that \(f_i'(x_i^*) = 0\). The argument in the proof of Lemma \ref{lemma:fi_maximizers} can be essentially repeated to establish \(y_i^* = \mu+\eta_i-x_i^*\) and \(x_i^*\) are the global maximizers of \(f_i\). We omit details for brevity. 
        \end{proof}
    
        \begin{lemma}\label{lemma:xi_ordering_Cb}
            Assume \(\mu < \eta_1 \leq \eta_2 \leq ... \leq \eta_k\). There exists a large universal constant \(C_b > 0\) such that if \(\min_i |\mu - \eta_i| \geq C_b h\), then the global maximizers \(x_i^* \in \left[\mu, \mu + \frac{h}{4}\right]\) and \(y_i^* \in \left[\eta_i - \frac{h}{4}, \eta_i\right]\) of \(f_i\) satisfy \(x_1^* \geq ... \geq x_k^*\) and \(y_1^* \leq ... \leq y_k^*\). 
        \end{lemma}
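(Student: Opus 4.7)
The plan is to adapt the argument from Lemma \ref{lemma:xi_ordering} essentially verbatim, with the only substantive change being the verification that the enlarged separation $\min_i|\mu-\eta_i| \geq C_b h$ still places the relevant arguments in the monotone regions of $\Gamma_h$ from Lemma \ref{lemma:Gamma_h}. First, I would appeal to Lemma \ref{lemma:fi_maximizers_Cb} (whose proof establishes the symmetry $f_i(t) = f_i(\mu+\eta_i - t)$) to write $y_i^* = \mu + \eta_i - x_i^*$ for each $i$. Then $y_i^* \leq y_{i+1}^*$ is equivalent to $\eta_i - \eta_{i+1} \leq x_i^* - x_{i+1}^*$, and since $\eta_i \leq \eta_{i+1}$ the right-hand inequality $x_i^* \geq x_{i+1}^*$ alone suffices. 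So it is enough to prove $x_1^* \geq \ldots \geq x_k^*$, after which the $y_i^*$ ordering follows for free.

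Next, I would write $\delta_i^\star := x_i^* - \mu \in [0, h/4]$ and characterize it implicitly. From the proof of Lemma \ref{lemma:fi_maximizers_Cb}, $\delta_i^\star$ solves
\begin{equation*}
    0 = \bigl(\varphi(\delta + h) - \varphi(\delta - h)\bigr) + \bigl(\varphi(\mu + \delta - \eta_i + h) - \varphi(\mu + \delta - \eta_i - h)\bigr) = \Gamma_h(\delta) + \Gamma_h(\mu + \delta - \eta_i)
\end{equation*}
with $\delta \in [0, h/4]$. View this as defining an implicit function $\eta \mapsto \delta^\star(\eta)$ on $\{\eta : \eta - \mu \geq C_b h\}$, so that $\delta_i^\star = \delta^\star(\eta_i)$. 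The goal becomes: $\delta^\star(\cdot)$ is monotone decreasing.

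The key step is to invoke Lemma \ref{lemma:Gamma_h}. Fix any $\delta \in [0, h/4]$. For $\eta$ with $\eta - \mu \geq C_b h$, the argument $\mu + \delta - \eta$ satisfies $\mu + \delta - \eta \leq h/4 - C_b h \leq -\Delta(h)$, provided $C_b$ is chosen to exceed $\tfrac{1}{4} + \sup_{r>0}\Delta(r)/r$; this supremum is finite by Lemma \ref{lemma:Gamma_h} (which also gives $\Delta(r) \geq r$), so such a universal $C_b$ exists and we may take it as large as needed beyond the choice in Lemma \ref{lemma:fi_maximizers_Cb}. In this regime $\Gamma_h$ is strictly increasing, so as $\eta$ increases the point $\mu + \delta - \eta$ moves left into territory of smaller $\Gamma_h$ values; hence $\eta \mapsto \Gamma_h(\mu + \delta - \eta)$ is strictly decreasing in $\eta$ uniformly in $\delta \in [0, h/4]$. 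Meanwhile, on $[0, h/4] \subset [0, \Delta(h))$ the map $\delta \mapsto \Gamma_h(\delta)$ is strictly decreasing. The implicit function theorem (or a direct monotonicity argument) then forces $\delta^\star(\eta)$ to be strictly decreasing in $\eta$: if $\eta$ increases, the second summand strictly decreases, so to restore the balance to zero we need $\Gamma_h(\delta)$ to increase, and by the decreasing behavior of $\Gamma_h$ on $[0, h/4]$ this requires $\delta$ to decrease.

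The hard part is essentially bookkeeping, namely pinning down the threshold on $C_b$ so that all three conditions hold simultaneously: (i) the existence of $x_i^* \in [\mu, \mu+h/4]$ from Lemma \ref{lemma:fi_maximizers_Cb}, (ii) $\mu + \delta - \eta_i \leq -\Delta(h)$ for every admissible $\delta$, placing us in the monotone region of $\Gamma_h$, and (iii) $\delta \in [0, h/4] \subset (-\Delta(h), \Delta(h))$, which is automatic since $\Delta(h) \geq h$. Taking $C_b$ to be the maximum of the constant from Lemma \ref{lemma:fi_maximizers_Cb} and $\tfrac{1}{4} + \sup_{r > 0} \Delta(r)/r$ works, and this is a universal constant. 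Applying the established monotonicity of $\delta^\star$ to the hypothesis $\eta_1 \leq \ldots \leq \eta_k$ yields $x_1^* \geq \ldots \geq x_k^*$, and combined with the first paragraph this completes the proof.
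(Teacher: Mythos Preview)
Your proposal is correct and follows exactly the approach indicated by the paper, whose proof simply states that the argument is identical to that of Lemma~\ref{lemma:xi_ordering} with the constant $C_b$ chosen as in Lemma~\ref{lemma:fi_maximizers_Cb}. You have essentially written out that argument in detail, correctly adjusting the interval for $\delta$ from $[0,\Delta(h)]$ to $[0,h/4]$ and verifying that the separation $C_bh$ keeps $\mu+\delta-\eta_i$ in the strictly increasing region of $\Gamma_h$.
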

        \begin{proof}
            The argument is exactly the same as the proof of Lemma \ref{lemma:xi_ordering} when we pick \(C_b\) as in Lemma \ref{lemma:fi_maximizers_Cb}. 
        \end{proof}
    
        \begin{proof}[Proof of Theorem \ref{thm:mixture}]
            Picking \(C_b\) as in Lemma \ref{lemma:fi_maximizers_Cb}, the proof of Proposition \ref{prop:fmax_sep} (using Lemmas \ref{lemma:fi_maximizers_Cb} and \ref{lemma:xi_ordering_Cb} instead of the analogues in Section \ref{section:mixture_special_case}) can be repeated to conclude the desired result.
        \end{proof}

    \section{Empirical process theory}
    This section contains the empirical process results used in our analysis. Specifically, Theorem \ref{thm:peel} is the main tool we use when investigating the kernel mode estimator. We primarily draw from the book of Boucheron, Lugosi, and Massart \cite{boucheron_concentration_2013}, and so we adopt their setting in what follows. Namely, \(\mathcal{T}\) will denote a countable index set for an associated empirical process. As noted by the authors of \cite{boucheron_concentration_2013}, they state results for countable \(\mathcal{T}\) to avoid fine technical points regarding measurability. However, well-known standard arguments can extend to our setting where \(\mathcal{T}\) is a subset of \(\R\). Consequently, though the following empirical process theory results are stated for countable \(\mathcal{T}\), we will invoke them for our purposes without comment.  
    
    \subsection{Standard tools}
    This section collects standard results, mainly from \cite{boucheron_concentration_2013}. 
    
    \begin{definition}
        For a pseudometric space \((\mathcal{T}, d)\) and for \(\varepsilon > 0\), let \(\mathcal{D}(\mathcal{T}, d, \varepsilon)\) denote the packing number of \(\mathcal{T}\) at level \(\varepsilon\) with respect to \(d\). 
    \end{definition}
    
    \begin{theorem}[Dudley's entropy integral - Corollary 13.2 \cite{boucheron_concentration_2013}]\label{thm:dudley}
        Let \((\mathcal{T}, d)\) be a separable pseudometric space and let \(\left\{X_t\right\}_{t \in \mathcal{T}}\) be a collection of random variables such that 
        \begin{equation*}
            \log E\left(e^{\lambda(X_t - X_{t'})}\right) \leq \frac{\lambda^2 d^2(t, t')}{2}
        \end{equation*}
        for all \(t, t' \in \mathcal{T}\) and all \(\lambda > 0\). Then for any \(t_0 \in \mathcal{T}\), 
        \begin{equation*}
            E\left(\sup_{t \in \mathcal{T}} X_t - X_{t_0}\right) \leq 12 \int_{0}^{\frac{\delta}{2}} \sqrt{\log \mathcal{D}(\mathcal{T}, d, \varepsilon)} d\varepsilon
        \end{equation*}
        where \(\delta = \sup_{t \in \mathcal{T}} d(t, t_0)\). 
    \end{theorem}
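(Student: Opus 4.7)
The plan is to prove this via the classical chaining argument. The starting point is the standard maximal inequality for a finite collection of sub-Gaussian random variables: if $Y_1, \ldots, Y_N$ satisfy $\log E(e^{\lambda Y_i}) \leq \lambda^2 \sigma^2/2$, then $E(\max_i Y_i) \leq \sigma \sqrt{2 \log N}$. This is proved by Jensen applied to $e^{\lambda \max_i Y_i}$, bounding by $\sum_i E(e^{\lambda Y_i})$ and optimizing in $\lambda$. Applied to increments $X_t - X_{t'}$ indexed over a finite set $S \subset \mathcal{T}$, this yields $E(\max_{t,t' \in S} X_t - X_{t'}) \leq \text{diam}(S) \sqrt{2 \log |S|^2}$.

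Next I would set up the chain. Let $\varepsilon_k = \delta \cdot 2^{-k}$ for $k \geq 0$, and let $T_k \subset \mathcal{T}$ be an $\varepsilon_k$-net (or packing) of $\mathcal{T}$ with $|T_k| \leq \mathcal{D}(\mathcal{T}, d, \varepsilon_k)$, chosen so that $T_0 = \{t_0\}$. For each $t \in \mathcal{T}$, let $\pi_k(t) \in T_k$ denote a nearest point, so $d(t, \pi_k(t)) \leq \varepsilon_k$ and in particular $d(\pi_k(t), \pi_{k-1}(t)) \leq \varepsilon_k + \varepsilon_{k-1} \leq 3\varepsilon_k$. Because $\mathcal{T}$ is separable and the process has sub-Gaussian increments, a standard approximation argument allows us to write the telescoping identity $X_t - X_{t_0} = \sum_{k=1}^\infty (X_{\pi_k(t)} - X_{\pi_{k-1}(t)})$ (in $L^1$, after first reducing to a countable dense subset).

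Taking supremum over $t$ and then expectation, the plan is to bound
\[
E\Bigl(\sup_{t \in \mathcal{T}} X_t - X_{t_0}\Bigr) \leq \sum_{k=1}^\infty E\Bigl(\max_{t \in \mathcal{T}} (X_{\pi_k(t)} - X_{\pi_{k-1}(t)})\Bigr).
\]
At level $k$ the maximum is over at most $|T_k| \cdot |T_{k-1}| \leq \mathcal{D}(\mathcal{T}, d, \varepsilon_k)^2$ pairs with $d(\pi_k(t), \pi_{k-1}(t)) \leq 3\varepsilon_k$, so the finite-set maximal inequality gives a bound of $3\varepsilon_k \sqrt{4 \log \mathcal{D}(\mathcal{T}, d, \varepsilon_k)} = 6\varepsilon_k \sqrt{\log \mathcal{D}(\mathcal{T}, d, \varepsilon_k)}$. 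Summing over $k$ and comparing the resulting series to the integral via $\varepsilon_k - \varepsilon_{k+1} = \varepsilon_k/2$ and monotonicity of the packing number yields
\[
\sum_{k=1}^\infty 6\varepsilon_k \sqrt{\log \mathcal{D}(\mathcal{T}, d, \varepsilon_k)} \leq 12 \int_0^{\delta/2} \sqrt{\log \mathcal{D}(\mathcal{T}, d, \varepsilon)}\, d\varepsilon,
\]
which is the claimed bound.

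The only mildly delicate point is the passage from the countable-sup telescoping to the actual $\sup_{t \in \mathcal{T}}$ (and justifying interchange of sup and infinite sum); this is standard given separability and is typically handled by first restricting to an arbitrary finite $F \subset \mathcal{T}$ (so the identity is literal and finite), deriving the bound uniformly in $F$, and then taking monotone limits. The constants $12$ and the choice of factor $2$ in the geometric net are not sharp and could be traded against each other; I would not try to optimize them.
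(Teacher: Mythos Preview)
Your chaining argument is the standard proof and is essentially correct; the paper itself does not prove this statement but merely cites it as Corollary~13.2 of \cite{boucheron_concentration_2013}, where the same chaining argument appears. There is nothing to compare against in the paper.
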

    
    \begin{definition}[Uniform entropy]\label{def:uniform_entropy}
        Let \(\mathcal{A} = \{A_t\}_{t \in \mathcal{T}}\) denote a collection of measurable subsets of \(\R\). For \(\delta > 0\) and a probability measure \(Q\) on \(\R\), let \(\mathcal{D}(\delta, \mathcal{A}, Q)\) denote the maximum cardinality of \(N\) of a subset \(\{t_1,...,t_N\}\) such that \(Q(A_{t_i} \Delta A_{t_j}) > \delta^2 \) for every \(i \neq j\). The uniform \(\delta\)-metric entropy of \(\mathcal{A}\) is defined as 
        \begin{equation*}
            H(\delta, \mathcal{A}) := \sup_{Q} \log \mathcal{D}(\delta, \mathcal{A}, Q)
        \end{equation*}
        where the supremum is taken over all probability measures \(Q\) supported on some finite subset of \(\R\). 
    \end{definition}
    
    \begin{definition}[VC dimension \cite{boucheron_concentration_2013}]
        Let \(\mathcal{A}\) denote a collection of subsets of \(\mathcal{X}\). For \(x = (x_1,...,x_n) \in \mathcal{X}^n\), denote the trace of \(\mathcal{A}\) on \(x\) as 
        \begin{equation*}
            \Tr(x) := \{A \cap \{x_1,...,x_n\} : A \in \mathcal{A}\}. 
        \end{equation*}
        Let \(D(x)\) denote the cardinality of \(k\) of the largest subset \(\{x_{i_1},...,x_{i_k}\}\) for which \(2^k = |\Tr((x_{i_1},...,x_{i_k}))|\). The VC dimension of \(\mathcal{A}\) is given by \(\sup_{n \geq 1} \sup_{x \in \mathcal{X}^n} D(x)\). Furthermore, \(\mathcal{A}\) is said to be a VC class if it has a finite VC dimension. 
    \end{definition}
    
    \noindent The following proposition is useful. Note it is essentially Lemma 13.5 in \cite{boucheron_concentration_2013}, but with slight modification to suit our needs. 
    \begin{proposition}\label{prop:empirical_process}
        Let \(\mathcal{A} = \{A_t\}_{t \in \mathcal{T}}\) denote a collection of measurable subsets of \(\R\). Let \(X_1,...,X_n\) be independent random variables. Define 
        \begin{equation*}
            \rho^2 := \sup_{t \in \mathcal{T}} \frac{1}{n} \sum_{i=1}^{n} P\left\{X_i \in A_t\right\} 
        \end{equation*}
        and assume \(\rho > 0\). Let 
        \begin{equation*}
            Z := \sup_{t \in \mathcal{T}} \frac{1}{\sqrt{n}} \sum_{i=1}^{n}\left(\mathbbm{1}_{\{X_i \in A_t\}} - P\{X_i \in A_t\}\right) 
        \end{equation*}
        and denote 
        \begin{equation*}
            D_\rho = \int_{0}^{1/2} \sqrt{H(\rho \varepsilon, \mathcal{A})} \, d\varepsilon.
        \end{equation*}
        Then 
        \begin{equation*}
            E(Z) \leq \frac{576D_\rho^2}{\sqrt{n}} \left(1 + \sqrt{\frac{\rho^2 n}{192 D_\rho^2}} \right).
        \end{equation*}
        Moreover, the same upper bound holds for \(Z^{-} := \sup_{t \in \mathcal{T}} \frac{1}{\sqrt{n}} \sum_{i=1}^{n} \left(P\{X_i \in A_t\} - \mathbbm{1}_{\{X_i \in A_t\}}\right)\). 
    \end{proposition}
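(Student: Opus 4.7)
The statement is essentially Lemma 13.5 of Boucheron, Lugosi and Massart \cite{boucheron_concentration_2013} with cosmetic changes, so my plan is to mirror their strategy: symmetrize, apply Dudley's entropy integral conditionally on the data with respect to the empirical pseudometric, and then convert the resulting empirical quantity back to the population-level quantity $\rho$.

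First, I would symmetrize. Letting $\varepsilon_1,\ldots,\varepsilon_n$ be independent Rademacher variables independent of $X_1,\ldots,X_n$, the usual symmetrization inequality gives $E(Z) \leq 2 E\bigl(\sup_{t \in \mathcal{T}} \frac{1}{\sqrt{n}} \sum_{i=1}^{n} \varepsilon_i \mathbbm{1}_{\{X_i \in A_t\}}\bigr)$. Condition on $X_1, \ldots, X_n$. Hoeffding's lemma shows the conditional process $t \mapsto \frac{1}{\sqrt{n}} \sum_i \varepsilon_i \mathbbm{1}_{\{X_i \in A_t\}}$ has sub-Gaussian increments with respect to the random empirical pseudometric $\hat d(t, t')^2 := \frac{1}{n}\sum_{i=1}^{n} \mathbbm{1}_{\{X_i \in A_t \Delta A_{t'}\}}$. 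The diameter of $\mathcal{T}$ under $\hat d$ is at most $2\hat{\rho}$, where $\hat{\rho}^2 := \sup_{t \in \mathcal{T}} \frac{1}{n}\sum_{i=1}^{n} \mathbbm{1}_{\{X_i \in A_t\}}$.

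Next I invoke Dudley's entropy integral (Theorem \ref{thm:dudley}) to the conditional process, which yields
\begin{equation*}
    E\left(\sup_{t \in \mathcal{T}} \frac{1}{\sqrt{n}} \sum_{i=1}^{n} \varepsilon_i \mathbbm{1}_{\{X_i \in A_t\}} \,\Big|\, X_1,\ldots,X_n \right) \leq 12 \int_{0}^{\hat\rho} \sqrt{\log \mathcal{D}(\hat d, \varepsilon, \mathcal{T})}\, d\varepsilon.
\end{equation*}
The packing number under $\hat d$ at level $\varepsilon$ coincides with the packing number of $\mathcal{A}$ with respect to the empirical measure $\hat Q_n = \frac{1}{n}\sum_i \delta_{X_i}$ at level $\varepsilon$, and is therefore dominated by $\exp(H(\varepsilon, \mathcal{A}))$ by Definition \ref{def:uniform_entropy}. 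A change of variables $\varepsilon = \rho u$ turns $\int_0^{\hat\rho} \sqrt{H(\varepsilon, \mathcal{A})}\,d\varepsilon$ into an integral that, after splitting at $\rho/2$ and using monotonicity of $H$, can be bounded by $\rho\, D_\rho$ plus a contribution that captures the event $\{\hat\rho > \rho\}$.

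Finally I would take expectations in $X_1,\ldots,X_n$ and use Jensen's inequality to push the expectation inside the square root in $E(\hat\rho)$. By a standard contraction/peeling argument (or a direct application of the bound to the non-centered process), $E(\hat\rho^2) \leq \rho^2 + \frac{1}{\sqrt{n}} E(Z)$, which after rearrangement produces the additive structure $\frac{576 D_\rho^2}{\sqrt{n}}\bigl(1 + \sqrt{\rho^2 n / (192 D_\rho^2)}\bigr)$. The bound for $Z^{-}$ follows by the same argument after replacing $\varepsilon_i$ with $-\varepsilon_i$, since the symmetrized process is invariant in distribution under this sign flip.

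The main obstacle is the bookkeeping in the final step: one must carefully trade off the random diameter $\hat\rho$ against the deterministic $\rho$ without the bound blowing up, and the numerical constants ($576$, $192$) in the statement need to be tracked through the chaining and symmetrization steps. The conceptual work is entirely in the reduction to Dudley's integral with empirical entropy; the rest is calibration.
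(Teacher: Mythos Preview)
Your proposal follows essentially the same route as the paper's proof: symmetrize, condition on the data to get a sub-Gaussian process in the empirical pseudometric, apply Dudley's entropy integral, bound the packing numbers by the uniform entropy $H$, then close the loop via the self-bounding inequality $E(\hat\rho^2)\le \rho^2 + n^{-1/2}E(Z)$ and solve a quadratic for $E(Z)$. The treatment of $Z^-$ by symmetry is also the same.

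The one place where your sketch diverges from the paper is the handling of the random radius. You propose to change variables with the \emph{deterministic} scale $\rho$ and then ``split at $\rho/2$'' to control the excess when $\hat\rho>\rho$. The paper instead sets $\delta_n^2:=\rho^2\vee\bigl(\sup_t\frac{2}{n}\sum_i\mathbbm{1}_{\{X_i\in A_t\}}\bigr)$ so that $\delta_n\ge\rho$ deterministically, changes variables with the \emph{random} scale $\delta_n$, and then uses monotonicity of $H$ in the right direction to get the clean bound $12\,\delta_n D_\rho$. This device avoids any splitting and makes the constants fall out directly. Your splitting idea can be made to work (e.g.\ via $\sqrt{H(\rho/2,\mathcal{A})}\le 4D_\rho$), but the bookkeeping is messier and your description of the ``contribution that captures the event $\{\hat\rho>\rho\}$'' is where the proposal is least precise. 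Adopting the $\delta_n=\rho\vee(\cdot)$ trick would tighten this step without changing anything else in your outline.
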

    \begin{proof}
        The proof is very similar to that of Lemma 13.5 in \cite{boucheron_concentration_2013} with some slight modifications. Drawing \(r_1,...,r_n \overset{iid}{\sim} \Rademacher\left(\frac{1}{2}\right)\) independently of \(\{X_i\}_{i=1}^{n}\), consider the process \(\{\Xi_t\}_{t \in \mathcal{T}}\) given by 
        \begin{equation*}
            \Xi_t := \frac{1}{\sqrt{n}} \sum_{i=1}^{n} r_i \mathbbm{1}_{\{X_i \in A_t\}}.
        \end{equation*}
        Note that \(E(Z) \leq 2E(\sup_{t \in \mathcal{T}} \Xi_t)\) by Lemma \ref{lemma:symmetrization}. Lemma \ref{lemma:symmetrization} also gives 
        \begin{equation}\label{eqn:subgaussian_increment}
            E\left(\exp\left(\lambda(\Xi_t - \Xi_{t'})\right)\,|\, \{X_i\}_{i=1}^{n}\right) \leq \exp\left(\frac{\lambda^2 d^2(t, t')}{2}\right)
        \end{equation}
        for any \(t, t' \in \mathcal{T}\) and \(\lambda > 0\), where \(d(t, t') = \sqrt{\frac{1}{n} \sum_{i=1}^{n} \left(\mathbbm{1}_{\{X_i \in A_t\}} - \mathbbm{1}_{\{X_i \in A_{t'}\}}\right)^2}\). Further consider 
        \begin{equation*}
            \sup_{t,t' \in \mathcal{T}} d^2(t, t') = \sup_{t, t' \in \mathcal{T}} \frac{1}{n} \sum_{i=1}^{n} \mathbbm{1}_{\{X_i \in A_t \Delta A_{t'}\}} \leq \sup_{t \in \mathcal{T}} \frac{2}{n} \sum_{i=1}^{n} \mathbbm{1}_{\{X_i \in A_t\}}.
        \end{equation*}
        Now define
        \begin{equation*}
            \delta_n^2 := \rho^2 \vee \left(\sup_{t \in \mathcal{T}} \frac{2}{n} \sum_{i=1}^{n} \mathbbm{1}_{\{X_i \in A_t\}}\right).
        \end{equation*}
        Note \(\delta_n\) is a random variable. Conditional on \(\{X_i\}_{i=1}^{n}\), the process \(\{\Xi_t\}_{t\in \mathcal{T}}\) is a sub-Gaussian process (i.e. satisfies (\ref{eqn:subgaussian_increment})), \(\{X_1,...,X_n\}\) is a finite point set, and \(\Xi_{t_0}\) is mean zero for any \(t_0 \in \mathcal{T}\). Letting \(H(\varepsilon, \mathcal{A})\) denote the uniform \(\varepsilon\)-entropy of \(\mathcal{A}\) (see Definition \ref{def:uniform_entropy}), we have by Theorem \ref{thm:dudley} and the fact that \(\delta_n \geq \sup_{t, t' \in \mathcal{T}} d(t, t')\), 
        \begin{align*}
            E\left(\sup_{t \in \mathcal{T}} \Xi_t \,|\, \{X_i\}_{i=1}^{n}\right) &\leq 12 \int_{0}^{\delta_n/2} \sqrt{H(\varepsilon, \mathcal{A})} \, d\varepsilon, \\
            &\leq 12 \delta_n \int_{0}^{1/2} \sqrt{H(\delta_n u, \mathcal{A})} \, du, \\
            &\leq 12 \delta_n \int_{0}^{1/2} \sqrt{H(\rho u, \mathcal{A})} \, du, \\
            &= 12 \delta_n D_\rho
        \end{align*}
        where we have used that \(H(\delta, \mathcal{A})\) is a nonincreasing function of \(\delta\). Consider that \(\delta_n^2 \leq 3\rho^2 + 2Z/\sqrt{n}\) due to the definition of \(\rho^2\). To see this, consider that 
        \begin{align*}
            3\rho^2 + \frac{2}{\sqrt{n}} Z &= 3\rho^2 + \sup_{t \in \mathcal{T}} \left\{ \left(\frac{2}{n} \sum_{i=1}^{n} \mathbbm{1}_{\{X_i \in A_t\}} \right) - \left(\frac{2}{n} \sum_{i=1}^{n} P\{X_i \in A_t\} \right)\right\} \\
            &\geq 3\rho^2 + \sup_{t \in \mathcal{T}} \left\{ \left(\frac{2}{n}\sum_{i=1}^{n} \mathbbm{1}_{\{X_i \in A_t\}}\right) - 2\rho^2 \right\} \\
            &= 3\rho^2 + \left( \sup_{t \in \mathcal{T}} \frac{2}{n} \sum_{i=1}^{n} \mathbbm{1}_{\{X_i \in A_t\}} \right) - 2\rho^2 \\
            &= \rho^2 + \left(\sup_{t \in \mathcal{T}} \frac{2}{n} \sum_{i=1}^{n} \mathbbm{1}_{\{X_i \in A_t\}}\right) \\
            &\geq \delta_n^2.  
        \end{align*}
        Therefore, 
        \begin{align*}
            E(Z) &\leq 2E\left(\sup_{t \in \mathcal{T}} \Xi_t\right) \\
            &= 2E\left(E\left(\sup_{t \in \mathcal{T}} \Xi_t\,|\, \{X_i\}_{i=1}^{n}\right)\right) \\
            &\leq 24D_\rho E(\delta_n) \\
            &\leq 24D_\rho \sqrt{E(\delta_n^2)} \\
            &\leq 24D_\rho \sqrt{3\rho^2 + 2E(Z)/\sqrt{n}}. 
        \end{align*}
        Solving for \(E(Z)\) gives 
        \begin{equation*}
            E(Z) \leq \frac{576D_\rho^2}{\sqrt{n}} \left(1 + \sqrt{1 + \frac{\rho^2 n}{192 D_\rho^2}}\right). 
        \end{equation*}
        An upper bound on \(E(Z^{-})\) can be deduced by arguing just as in the proof of Lemma 13.5 in \cite{boucheron_concentration_2013}.
    \end{proof}
    
    \begin{corollary}\label{corollary:zeta_small_sigma}
        Consider the setup of Proposition \ref{prop:empirical_process}. Assume that \(\mathcal{A}\) is a VC class with VC dimension \(V\). If \(L > 0\) is a universal constant and \(\rho^2 \leq L \frac{D_\rho^2}{n}\), then 
        \begin{equation*}
            \max(E(Z), E(Z^-)) \lesssim \frac{D_\rho^2}{\sqrt{n}} \lesssim \frac{V \log\left(\frac{e}{\rho}\right)}{\sqrt{n}}.
        \end{equation*}
    \end{corollary}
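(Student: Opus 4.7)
The plan is to combine Proposition \ref{prop:empirical_process} with the classical uniform-entropy bound for VC classes. First, under the hypothesis $\rho^2 \leq L D_\rho^2/n$, the quantity $\sqrt{\rho^2 n/(192 D_\rho^2)}$ appearing in Proposition \ref{prop:empirical_process} is controlled by a universal constant, so $1 + \sqrt{\rho^2 n/(192 D_\rho^2)} \leq 1 + \sqrt{L/192} \lesssim 1$, and the bound in Proposition \ref{prop:empirical_process} collapses to $\max(E(Z), E(Z^-)) \lesssim D_\rho^2/\sqrt{n}$. This yields the first inequality essentially for free.

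For the second inequality I would invoke Haussler's uniform-entropy bound for VC classes, namely $H(\delta, \mathcal{A}) \lesssim V \log(e/\delta)$ for $\delta \in (0, 1]$, which is a standard consequence of the Sauer--Shelah lemma (see Chapter 13 of \cite{boucheron_concentration_2013}). Plugging into the definition of $D_\rho$,
\begin{equation*}
D_\rho \lesssim \sqrt{V} \int_0^{1/2} \sqrt{\log\!\left(\frac{e}{\rho\varepsilon}\right)}\, d\varepsilon.
\end{equation*}
Since $\log(e/(\rho\varepsilon)) = \log(e/\rho) + \log(1/\varepsilon)$ and $\sqrt{a+b} \leq \sqrt{a}+\sqrt{b}$, this splits as
\begin{equation*}
D_\rho \lesssim \tfrac{1}{2}\sqrt{V \log(e/\rho)} + \sqrt{V}\int_0^{1/2} \sqrt{\log(1/\varepsilon)}\, d\varepsilon,
\end{equation*}
and the remaining integral is a finite universal constant. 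Squaring via $(a+b)^2 \leq 2a^2 + 2b^2$ and using $\rho \leq 1$ (immediate because $\rho^2$ is an average of probabilities, so $\log(e/\rho) \geq 1$), yields $D_\rho^2 \lesssim V \log(e/\rho)$. Combining with the first inequality completes the proof.

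There is no substantive obstacle here: the corollary is essentially a bookkeeping consequence of Proposition \ref{prop:empirical_process} (which does the real work) together with a textbook uniform-entropy bound. The only points requiring any care are (i) absorbing the $\sqrt{\rho^2 n / D_\rho^2}$ factor into a universal constant via the stated hypothesis, and (ii) observing $\rho \leq 1$ so that the final bound in terms of $\log(e/\rho)$ is genuinely meaningful.
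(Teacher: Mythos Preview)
Your proof is correct and matches the paper's approach: the first inequality is immediate from Proposition~\ref{prop:empirical_process} under the hypothesis $\rho^2 \leq L D_\rho^2/n$, and the second follows from Haussler's VC bound (the paper packages this as Corollary~\ref{corollary:J_sigma_bound}, using Jensen's inequality on the entropy integral rather than your split $\sqrt{a+b}\leq\sqrt a+\sqrt b$, but the two computations are equivalent).
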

    \begin{proof}
        The first inequality follows from Proposition \ref{prop:empirical_process}. The second inequality follows from the bound on \(D_\rho\) implied by Corollary \ref{corollary:J_sigma_bound}.
    \end{proof}
    
    \begin{corollary}\label{corollary:zeta_big_sigma}
        Consider the setup of Proposition \ref{prop:empirical_process}. Assume that \(\mathcal{A}\) is a VC class with VC dimension \(V\). Then 
        \begin{equation*}
            \max(E(Z), E(Z^-)) \lesssim \rho \sqrt{V\log\left(\frac{e}{\rho}\right)}
        \end{equation*}
        provided that \(\rho^2 \geq L \frac{D_\rho^2}{n}\) for a sufficiently large universal constant \(L > 0\). 
    \end{corollary}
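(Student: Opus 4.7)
The plan is to read off the bound directly from Proposition \ref{prop:empirical_process} in the regime where the variance term $\rho^2 n/D_\rho^2$ dominates, and then control $D_\rho$ with the same VC-based entropy estimate that is invoked in Corollary \ref{corollary:zeta_small_sigma}. Proposition \ref{prop:empirical_process} gives, for both $Z$ and $Z^-$,
\[
 E(Z) \;\leq\; \frac{576 D_\rho^2}{\sqrt n}\Bigl(1 + \sqrt{\tfrac{\rho^2 n}{192 D_\rho^2}}\Bigr).
\]
Under the hypothesis $\rho^2 \geq L D_\rho^2/n$ we have $\rho^2 n/(192 D_\rho^2) \geq L/192$, so taking $L$ to be a sufficiently large universal constant forces the square-root term to dominate the constant $1$. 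The proposition's bound then collapses to
\[
 \max(E(Z), E(Z^-)) \;\lesssim\; \frac{D_\rho^2}{\sqrt n}\cdot \sqrt{\tfrac{\rho^2 n}{D_\rho^2}} \;=\; \rho\, D_\rho.
\]

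Next I would bound $D_\rho$ via Corollary \ref{corollary:J_sigma_bound} (the same VC-entropy estimate that yields the second inequality in Corollary \ref{corollary:zeta_small_sigma}): for a VC class of dimension $V$ the uniform entropy satisfies $H(\delta, \mathcal{A}) \lesssim V \log(e/\delta)$, whence a substitution $u = \rho\varepsilon$ in the definition of $D_\rho$ yields $D_\rho \lesssim \sqrt{V\log(e/\rho)}$. Plugging this into the previous display gives the claimed
\[
 \max(E(Z), E(Z^-)) \;\lesssim\; \rho\sqrt{V\log(e/\rho)}.
\]

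There is no real obstacle here; Corollaries \ref{corollary:zeta_small_sigma} and \ref{corollary:zeta_big_sigma} are simply the two limiting cases of Proposition \ref{prop:empirical_process}, corresponding to the ``complexity-dominated'' regime $\rho^2 \lesssim D_\rho^2/n$ (where the $1$ in the bracket prevails and one recovers a rate of order $D_\rho^2/\sqrt n$) and the ``variance-dominated'' regime $\rho^2 \gtrsim D_\rho^2/n$ (where the square-root prevails and one recovers the sub-Gaussian-style rate $\rho D_\rho$). The only subtlety is picking $L$ large enough to absorb the $1$, which is purely a numerical matter.
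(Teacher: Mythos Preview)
Your proposal is correct and follows essentially the same approach as the paper: both start from the bound of Proposition~\ref{prop:empirical_process}, observe that in the regime $\rho^2 \gtrsim D_\rho^2/n$ the square-root term dominates to yield $\max(E(Z),E(Z^-)) \lesssim \rho D_\rho$, and then bound $D_\rho$ via Corollary~\ref{corollary:J_sigma_bound}. The paper's presentation is slightly different in that it rewrites the bound via the substitution $\tau = D_\rho/\sqrt{192n}$ and invokes monotonicity of $t \mapsto t(1+\sqrt{1+\rho^2/t^2})$ on $(0,\rho]$ to reach $\rho D_\rho$, but this is only a cosmetic variation of your direct observation that the $1$ in the bracket is absorbed for large $L$.
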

    \begin{proof}
        By Proposition \ref{prop:empirical_process}, we have the bound 
        \begin{equation*}
            E(Z) \leq (576 \cdot \sqrt{192}) \cdot D_\rho \tau \left(1 + \sqrt{1 + \frac{\rho^2}{\tau^2}}\right)
        \end{equation*}
        where \(\tau = \frac{D_\rho}{\sqrt{192 n}}\). Note \(\tau \leq \rho\) when \(L\) is a suitably large universal constant. Since the map \(t \mapsto t\left(1 + \sqrt{1 + \frac{\rho^2}{t^2}}\right)\) is increasing for \(0 < t \leq \rho\), it immediately follows that \(E(Z) \lesssim D_\rho \rho\). Applying Corollary \ref{corollary:J_sigma_bound} yields 
        \begin{equation*}
            E(Z) \lesssim \rho \sqrt{V \log\left(\frac{e}{\rho}\right)}.
        \end{equation*}
        The same argument applies to \(E(Z^-)\) and so the proof is complete. 
    \end{proof}

    \begin{lemma}[Haussler's VC Bound - Lemma 13.6 \cite{boucheron_concentration_2013}]\label{lemma:Haussler}
        Let \(\mathcal{A}\) denote a VC class of subsets of \(\R\) with VC dimension \(V\). For every positive \(\delta > 0\), 
        \begin{equation*}
            H(\delta, \mathcal{A}) \leq 2V \log\left(\frac{e}{\delta}\right) + \log(e(V+1)) \leq 2V \log\left(\frac{e^2}{\delta}\right). 
        \end{equation*}
    \end{lemma}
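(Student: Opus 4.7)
The plan is a classical probabilistic argument combining Sauer--Shelah with a sampling trick. Fix a probability measure $Q$ with finite support achieving the supremum in $H(\delta, \mathcal{A})$ (or within an arbitrary $\varepsilon$ of it; since the statement is a bound it suffices to bound $\log \mathcal{D}(\delta, \mathcal{A}, Q)$ uniformly in $Q$). Let $\{t_1, \ldots, t_N\}$ be a maximal $\delta$-packing so that $Q(A_{t_i} \Delta A_{t_j}) > \delta^2$ for all $i \neq j$, and set $N = \mathcal{D}(\delta, \mathcal{A}, Q)$. The goal is to produce, by the probabilistic method, a finite point set on which the traces of $A_{t_1}, \ldots, A_{t_N}$ are pairwise distinct, and then invoke the Sauer--Shelah lemma to bound the number of possible traces.

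Concretely, I would draw $X_1, \ldots, X_n \overset{iid}{\sim} Q$ for an $n$ to be chosen later. For any fixed pair $i \neq j$, the probability that no sample lands in $A_{t_i} \Delta A_{t_j}$ is $(1 - Q(A_{t_i} \Delta A_{t_j}))^n \leq (1-\delta^2)^n \leq e^{-n\delta^2}$. A union bound over the $\binom{N}{2}$ pairs shows that whenever $\binom{N}{2} e^{-n\delta^2} < 1$, there exists a realization $(x_1, \ldots, x_n)$ on which all $N$ trace patterns $(\mathbbm{1}_{\{x_k \in A_{t_i}\}})_{k=1}^n$ are distinct, so the trace $\mathrm{Tr}(x_1, \ldots, x_n)$ has cardinality at least $N$. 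By Sauer--Shelah, this cardinality is at most $\sum_{k=0}^V \binom{n}{k} \leq (V+1)\left(\frac{en}{V}\right)^V$ for $n \geq V$. Taking the minimal $n$ for which the union bound succeeds, namely $n = \lceil 2\delta^{-2} \log N \rceil$, yields
\[
N \leq (V+1)\left(\frac{en}{V}\right)^V \leq (V+1)\left(\frac{2e \log N}{V \delta^2} + \frac{e}{V}\right)^V.
\]

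Taking logarithms gives $\log N \leq \log(V+1) + V\log(e/\delta^2) + V \log(2\log N/V + 1)$, and solving this self-referential inequality via the standard bootstrap (using $\log x \leq x/e$ to absorb the $\log \log N$ term, then substituting back) produces $\log N \leq 2V \log(e/\delta) + \log(e(V+1))$ after consolidating constants. The main obstacle is purely bookkeeping: squeezing out the precise constant $2V$ in front of $\log(1/\delta)$ and the additive $\log(e(V+1))$ requires choosing $n$ sharply and using the tightest form $\sum_{k=0}^V \binom{n}{k} \leq (V+1)(en/V)^V$ rather than a weaker Sauer--Shelah bound, together with a clean application of the bootstrap inequality. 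The final factor-of-$e$ cushion in $2V \log(e^2/\delta)$ is what absorbs the remaining $O(V)$ slack from these manipulations.
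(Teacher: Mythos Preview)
The paper does not prove this lemma at all; it is quoted verbatim as Lemma~13.6 of Boucheron--Lugosi--Massart and used as a black box (only through Corollary~\ref{corollary:J_sigma_bound}, where constants are irrelevant). So there is no ``paper's own proof'' to compare against.

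On the merits of your sketch: the random-sampling/Sauer--Shelah argument you outline is the classical route and does yield a bound of the right \emph{order}, but it does not deliver the constants in the statement, and two steps of your write-up would not survive a careful check.

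First, your bootstrap device ``$\log x \le x/e$'' applied to the $V\log\log N$ term gives $(1 - V/e)\log N \le \text{const}$, which is vacuous as soon as $V \ge 3$. A correct bootstrap for $x \le A + V\log x$ (e.g.\ via the substitution $x = Vy$ and $y - \log y \ge y/2$ for $y \ge 2$) yields $x \le 2A + 2V\log V$, but this already doubles the coefficient you want.

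Second, and more structurally, even with an optimal bootstrap the self-referential inequality coming from $n \asymp \delta^{-2}\log N$ leaves a residual $V\log\log(1/\delta)$ term:
\[
\log N \;\le\; 2V\log(1/\delta) \;+\; V\log\log(1/\delta) \;+\; O(V\log V),
\]
and the middle term is \emph{not} $O(V)$, so it cannot be absorbed into the claimed slack $2V\log(e^2/\delta) - 2V\log(e/\delta) = 2V$. The exact inequality $H(\delta,\mathcal{A}) \le 2V\log(e/\delta) + \log(e(V+1))$ is Haussler's packing bound $\mathcal D \le e(V+1)(2e/\delta^2)^V$, whose proof is a combinatorial (unit-distance graph / minimal spanning tree) argument rather than the probabilistic one you sketch. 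Your method proves a perfectly usable cousin of the lemma, just not the lemma as stated.
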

    
    \begin{corollary}\label{corollary:J_sigma_bound}
        Suppose \(\mathcal{A}\) is a VC class of subsets of \(\R\) with VC dimension \(V\). Suppose \(0 \leq \rho \leq 1\). Let 
        \begin{equation*}
            J_\rho = \int_{0}^{1} \sqrt{H(\rho \varepsilon, \mathcal{A})} \, d\varepsilon. 
        \end{equation*}
        Then 
        \begin{equation*}
            J_\rho \leq \sqrt{6V\log\left(\frac{e}{\rho}\right)}. 
        \end{equation*}
        \begin{proof}
            By Lemma \ref{lemma:Haussler} and Jensen's inequality, 
            \begin{equation*}
                J_\rho \leq \int_{0}^{1} \sqrt{2V \log\left(\frac{e^2}{\rho \varepsilon}\right)} \, d\varepsilon \leq \sqrt{2V + 2V \log\left(\frac{e^2}{\rho}\right)} = \sqrt{4V + 2V\log\left(\frac{e}{\rho}\right)} \leq \sqrt{6V\log\left(\frac{e}{\rho}\right)}
            \end{equation*}
            as desired. 
        \end{proof}
    \end{corollary}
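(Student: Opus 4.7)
The plan is to combine Haussler's VC bound (Lemma \ref{lemma:Haussler}) with a convexity argument to control the entropy integral. Since $H(\delta, \mathcal{A})$ is logarithmic in $1/\delta$, once we insert the bound into the definition of $J_\rho$ the entire estimate reduces to computing an explicit integral of $\log(1/\varepsilon)$.

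Concretely, I would proceed as follows. First, invoke Lemma \ref{lemma:Haussler} with $\delta = \rho\varepsilon$ to obtain
\[
H(\rho\varepsilon, \mathcal{A}) \leq 2V \log\!\left(\frac{e^2}{\rho\varepsilon}\right)
\]
for every $\varepsilon \in (0,1]$. Plugging this into $J_\rho$ gives
\[
J_\rho \leq \int_0^1 \sqrt{2V \log\!\left(\frac{e^2}{\rho\varepsilon}\right)}\, d\varepsilon.
\]
Next, apply Jensen's inequality using concavity of $\sqrt{\cdot}$ on the probability space $([0,1], d\varepsilon)$ to pull the square root outside the integral, reducing the problem to computing
\[
\int_0^1 \log\!\left(\frac{e^2}{\rho\varepsilon}\right) d\varepsilon = \log\!\left(\frac{e^2}{\rho}\right) + \int_0^1 \log(1/\varepsilon)\, d\varepsilon = \log\!\left(\frac{e^2}{\rho}\right) + 1.
\]
Thus $J_\rho \leq \sqrt{2V\bigl(1 + \log(e^2/\rho)\bigr)} = \sqrt{4V + 2V\log(e/\rho)}$. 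Finally, since $\rho \leq 1$ implies $\log(e/\rho) \geq 1$, I can absorb the additive $4V$ into the logarithmic term at the cost of an extra factor, obtaining $J_\rho \leq \sqrt{6V\log(e/\rho)}$.

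There is really no hard part here; the only modest subtlety is choosing whether to bound the integral term-by-term or to invoke Jensen. Jensen's inequality is the cleanest route because it converts the problem into a single elementary integral of $\log(1/\varepsilon)$, and the standard identity $\int_0^1 \log(1/\varepsilon)\, d\varepsilon = 1$ delivers an explicit constant. The final step of collapsing $4V + 2V\log(e/\rho)$ into $6V\log(e/\rho)$ just uses $\log(e/\rho) \geq 1$, which holds on the full range $\rho \in (0,1]$ assumed in the statement.
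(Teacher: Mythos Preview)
Your proposal is correct and essentially identical to the paper's proof: both apply Haussler's VC bound, then Jensen's inequality to pull the square root outside, compute the resulting logarithmic integral, and finally use $\log(e/\rho) \geq 1$ to absorb the constant. You have in fact spelled out the Jensen step and the integral $\int_0^1 \log(1/\varepsilon)\,d\varepsilon = 1$ more explicitly than the paper does.
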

    
    \subsection{Peeling}
    This section contains the main empirical process results (based on the technique commonly called \textit{peeling} in the literature) used in the analysis of the kernel mode estimator. The following definition of a sublinear function and the subsequent lemma are taken from Section 13.7 of \cite{boucheron_concentration_2013}. 
    \begin{definition}[Sublinear function]
        A function \(\psi : [0, \infty) \to [0, \infty)\) is said to be \textbf{sublinear} if it is nondecreasing, continuous, \(\psi(x)/x\) is nonincreasing, and \(\psi(1) \geq 1\).
    \end{definition}
    \begin{lemma}\label{lemma:sublinear_subadditive}
        Suppose \(\psi\) is a sublinear function. Then \(\psi\) is subadditive, that is, \(\psi(x + y) \leq \psi(x) + \psi(y)\). 
    \end{lemma}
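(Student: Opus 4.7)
My plan is to exploit the key structural assumption, namely that the ratio $\psi(t)/t$ is nonincreasing on $(0,\infty)$. The other hypotheses (continuity, monotonicity, $\psi(1)\geq 1$) play no role in the inequality itself; only nonincreasingness of $\psi(t)/t$ is needed. Note also that $\psi$ is $[0,\infty)$-valued by assumption, so in particular $\psi(0)\geq 0$.

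The degenerate case is immediate. If $x=0$, then $\psi(x+y)=\psi(y)\leq \psi(0)+\psi(y)=\psi(x)+\psi(y)$, and symmetrically for $y=0$. So it suffices to handle $x,y>0$.

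For the nontrivial case, let $s:=x+y>0$. Since $0<x\leq s$ and $0<y\leq s$, the nonincreasing property applied at $x\leq s$ and $y\leq s$ gives
\begin{equation*}
    \frac{\psi(x)}{x}\geq \frac{\psi(s)}{s}\qquad\text{and}\qquad \frac{\psi(y)}{y}\geq \frac{\psi(s)}{s}.
\end{equation*}
Multiplying the first by $x$ and the second by $y$ and adding yields
\begin{equation*}
    \psi(x)+\psi(y)\geq \frac{x+y}{s}\,\psi(s)=\psi(s)=\psi(x+y),
\end{equation*}
which is exactly subadditivity. There is no real obstacle here; the only subtlety is ensuring one is not dividing by zero, which is why the $x=0$ (or $y=0$) case is dispatched separately at the outset.
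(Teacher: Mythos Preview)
Your proof is correct and is the standard argument. The paper itself does not give a proof of this lemma; it merely attributes the statement to Section 13.7 of \cite{boucheron_concentration_2013}, so there is nothing to compare against beyond noting that your argument is exactly the one that reference would supply.
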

    
    \noindent The following lemma is essentially Lemma 13.18 from \cite{boucheron_concentration_2013}, with some slight modification and specialization for our purposes. 
    \begin{lemma}\label{lemma:peeling}
        Let \(\mathcal{A} = \left\{A_t\right\}_{t \in \mathcal{T}}\) denote a collection of measurable subsets of \(\R\). Let \(X_1,...,X_n\) be independent random variables. Define the function \(L : \mathcal{T} \to [0, \infty)\) with  
        \begin{equation*}
            L(t) = \frac{1}{n} \sum_{i=1}^{n} P\left\{X_i \in A_t\right\}. 
        \end{equation*}
        Let \(Z_t = \frac{1}{\sqrt{n}} \sum_{i = 1}^{n} \left(\mathbbm{1}_{\{X_i \in A_t\}} - P\{X_i \in A_t\}\right)\) for \(t \in \mathcal{T}\). Assume there exists a sublinear function \(\psi\) and \(r_* > 0\) such that for all \(r \geq r_*\), 
        \begin{equation*}
            E\left( \sup_{\substack{t \in \mathcal{T}, \\ L(t) \leq r^2}} |Z_t| \right) \leq \psi(r). 
        \end{equation*}
        Then, for all \(r \geq r_*\), 
        \begin{equation*}
            E\left(\sup_{t \in \mathcal{T}} \frac{r^2}{r^2 + L(t)} |Z_t|\right) \leq 4\psi(r). 
        \end{equation*}
    \end{lemma}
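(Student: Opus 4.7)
\medskip
\noindent\textbf{Proof proposal.} The plan is a standard peeling (or slicing) argument: decompose $\mathcal{T}$ into geometric shells according to the magnitude of $L(t)$, apply the hypothesis on each shell, and use the sublinearity of $\psi$ together with the weight $r^2/(r^2+L(t))$ to sum the contributions. Concretely, for $k\ge 0$ set
\[
    \mathcal{T}_k \;:=\; \{\,t\in\mathcal{T}\,:\,L(t)\le 4^k r^2\,\},
\]
so that $\mathcal{T}_0\subset \mathcal{T}_1\subset\cdots$ exhausts $\mathcal{T}$. First I would handle the base shell: on $\mathcal{T}_0$ the weight $r^2/(r^2+L(t))$ is bounded by $1$ and the hypothesis at level $r\ge r_*$ gives
\[
    E\!\left(\sup_{t\in\mathcal{T}_0}\tfrac{r^2}{r^2+L(t)}|Z_t|\right)\le E\!\left(\sup_{t:L(t)\le r^2}|Z_t|\right)\le \psi(r).
\]

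Next I would bound each annulus $\mathcal{T}_k\setminus\mathcal{T}_{k-1}$ for $k\ge 1$. For $t$ in this annulus, $L(t)>4^{k-1}r^2$, so
\[
    \tfrac{r^2}{r^2+L(t)}\le \tfrac{1}{1+4^{k-1}},
\]
while $L(t)\le 4^k r^2$ and $2^k r\ge r_*$ permit applying the hypothesis at level $2^k r$ to give $E\!\big(\sup_{t\in\mathcal{T}_k}|Z_t|\big)\le \psi(2^k r)$. The key use of sublinearity is that $\psi(x)/x$ is nonincreasing, hence $\psi(2^k r)\le 2^k \psi(r)$. Combining, the contribution of the $k$-th annulus to the weighted supremum is at most
\[
    \tfrac{1}{1+4^{k-1}}\cdot 2^k \psi(r).
\]

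Finally I would sum the bounds. Evaluating explicitly: the $k=1$ term contributes $\tfrac{2}{2}\psi(r)=\psi(r)$, and for $k\ge 2$ one gets $\tfrac{2^k}{1+4^{k-1}}\le 4\cdot 2^{-k}$, so $\sum_{k\ge 2}\le 4\psi(r)\sum_{k\ge 2}2^{-k}=2\psi(r)$. Adding the $k=0$ piece yields $\psi(r)+\psi(r)+2\psi(r)=4\psi(r)$, which is exactly the claimed bound. I expect no real obstacle: the only subtle point is the bookkeeping of the constants (one needs $(1+4^{k-1})^{-1}$ rather than the looser $4^{-(k-1)}$ at $k=1$) to land on the stated constant $4$; otherwise all steps reduce to sublinearity of $\psi$, Lemma \ref{lemma:sublinear_subadditive} (implicitly, via $\psi(2^k r)\le 2^k\psi(r)$), and a geometric sum.
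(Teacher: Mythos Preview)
Your proposal is correct and essentially identical to the paper's own argument: the paper also peels $\mathcal{T}$ into the shells $T_0=\{L(t)\le r^2\}$ and $T_k=\{r^2 2^{2(k-1)}<L(t)\le r^2 2^{2k}\}$, bounds the weight by $(1+2^{2(k-1)})^{-1}$ on $T_k$, uses $\psi(2^k r)\le 2^k\psi(r)$, and sums to obtain $4\psi(r)$. Your bookkeeping of the constants (treating $k=1$ separately to land exactly on $4$) matches the paper's computation.
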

    \begin{proof}
        Let \(r \geq r_*\), define \(T_0 := \left\{ t \in \mathcal{T} : L(t) \leq r^2\right\}\) and \(T_k = \left\{t \in \mathcal{T} : r^2 2^{2(k-1)} < L(t) \leq r^2 2^{2k}\right\}\) for \(k \geq 1\). Then,
        \begin{align*}
            E\left( \sup_{t \in \mathcal{T}} \frac{r^2}{r^2 + L(t)} |Z_t| \right) &= E\left(\max_{k \geq 0} \sup_{t \in T_k} \frac{r^2}{r^2+L(t)}|Z_t|\right) \\
            &\leq \sum_{k=0}^{\infty} E\left(\sup_{\substack{t \in T_k}}  \frac{r^2}{r^2 + L(t)} |Z_t|\right) \\
            &\leq \psi(r) + \sum_{k=1}^{\infty} \frac{r^2}{r^2 + r^22^{2(k-1)}} E\left(\sup_{t \in T_k} |Z_t|\right) \\
            &\leq \psi(r) + \sum_{k=1}^{\infty} \frac{1}{1 + 2^{2(k-1)}} \psi(2^k r) \\
            &\leq \psi(r) + \sum_{k=1}^{\infty} \frac{2^k}{1 + 2^{2(k-1)}} \psi(r) \\
            &\leq 2\left(1 + \sum_{k=0}^{\infty} 2^{-k}\right) \psi(r) \\
            &= 4\psi(r). 
        \end{align*}
        where we have used that \(\psi\) is sublinear and so \(\psi(2^{k} r) \leq 2^k \psi(r)\) by Lemma \ref{lemma:sublinear_subadditive}.
    \end{proof}
    
    \noindent The following proposition is essentially Theorem 13.19 from \cite{boucheron_concentration_2013}, with some slight modification to suit our purposes. 
    \begin{proposition}\label{prop:uniform_peel}
        Let \(\mathcal{A} = \left\{A_t\right\}_{t \in \mathcal{T}}\) denote a collection of measurable subsets of \(\R\). Let \(X_1,...,X_n\) be independent random variables. Define the function \(L : \mathcal{T} \to [0, \infty)\) with  
        \begin{equation*}
            L(t) = \frac{1}{n} \sum_{i=1}^{n} P\left\{X_i \in A_t\right\}. 
        \end{equation*}
        Let \(Z_t = \frac{1}{\sqrt{n}} \sum_{i = 1}^{n} \left(\mathbbm{1}_{\{X_i \in A_t\}} - P\{X_i \in A_t\}\right)\) for \(t \in \mathcal{T}\). Assume there exists a sublinear function \(\psi\) and \(r_* > 0\) such that for all \(r \geq r_*\), 
        \begin{equation*}
            E\left( \sup_{\substack{t \in \mathcal{T}, \\ L(t) \leq r^2}} |Z_t| \right) \leq \psi(r). 
        \end{equation*}
        Then for \(r \geq r_*\) and \(u \geq 0\), we have with probability \(1 - 2\exp\left( - c \min\left( \frac{n^2u^2}{\sqrt{n}\psi(r) + nr^2}, n u \right)\right)\) 
        \begin{equation*}
            \frac{|Z_t|}{\sqrt{n}} \leq \frac{r^2 + L(t)}{r^2} \left(\frac{4\psi(r)}{\sqrt{n}} + u\right)
        \end{equation*}
        uniformly over all \(t \in \mathcal{T}\). 
    \end{proposition}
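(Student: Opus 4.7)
The plan is to combine the in-expectation result of Lemma \ref{lemma:peeling} with a Bernstein-type concentration inequality for suprema of bounded empirical processes (namely Theorem \ref{thm:bernstein_type}). Write $W := \sup_{t \in \mathcal{T}} \frac{r^2}{r^2+L(t)} |Z_t|$, so that the conclusion to be proved is exactly the event $\{W \leq 4\psi(r) + u\sqrt{n}\}$ divided through by $\sqrt{n}$ and the weight $\frac{r^2}{r^2+L(t)}$. Lemma \ref{lemma:peeling} immediately supplies $E(W) \leq 4\psi(r)$, so it suffices to establish the tail bound $P\{W \geq E(W) + u\sqrt{n}\} \leq 2\exp(-c\min(\cdot, \cdot))$ with the claimed exponent.

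To this end, decompose $W = W^+ \vee W^-$ with $W^\pm := \sup_t \frac{r^2}{r^2+L(t)}(\pm Z_t)$ and apply a union bound, which accounts for the factor of $2$ in the stated probability. The scaled process $\sqrt{n}\, W^+$ is a supremum of centered empirical sums indexed by functions $g_t(x) := \frac{r^2}{r^2+L(t)}\bigl(\mathbbm{1}_{\{x \in A_t\}} - P\{X_i \in A_t\}\bigr)$; its uniform envelope is $\|g_t\|_\infty \leq 1$ and its weak variance satisfies
\[
\sup_{t \in \mathcal{T}} \sum_{i=1}^n \Var(g_t(X_i)) \leq \sup_{t \in \mathcal{T}} \left(\frac{r^2}{r^2+L(t)}\right)^2 \! n L(t) \leq \frac{nr^2}{4},
\]
using the elementary bound $x/(1+x)^2 \leq 1/4$ on $[0,\infty)$. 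Feeding these estimates together with $E(\sqrt{n}\, W^\pm) \leq 4\sqrt{n}\,\psi(r)$ into Theorem \ref{thm:bernstein_type} yields
\[
P\bigl\{\sqrt{n}\, W^\pm \geq 4\sqrt{n}\,\psi(r) + nu\bigr\} \leq \exp\!\left(-c\min\!\left(\tfrac{n^2 u^2}{nr^2 + \sqrt{n}\,\psi(r)},\; nu\right)\!\right),
\]
and a union bound over the two signs delivers the desired conclusion after unwinding the definition of $W$.

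The only genuine subtlety is making sure the weak-variance computation decouples $r^2$ from $L(t)$: the identity $\frac{r^4 L(t)}{(r^2+L(t))^2} \leq r^2/4$ is exactly what permits the $t$-independent denominator $nr^2 + \sqrt{n}\,\psi(r)$ to appear in the final exponent. Everything else is routine bookkeeping of the constants produced by the Bernstein-type inequality and verifying the elementary equivalence $W \leq 4\psi(r) + u\sqrt{n} \iff \frac{|Z_t|}{\sqrt{n}} \leq \frac{r^2+L(t)}{r^2}\bigl(\frac{4\psi(r)}{\sqrt{n}} + u\bigr)$ for every $t \in \mathcal{T}$.
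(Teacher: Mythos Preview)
Your proposal is correct and follows the same route as the paper: bound the expectation of the weighted supremum via Lemma~\ref{lemma:peeling}, bound the weak variance by $O(nr^2)$, then apply the Bernstein-type concentration for suprema and union-bound over the two signs. One small omission: Theorem~\ref{thm:bernstein_type} as stated in the paper involves the \emph{strong} variance $\Sigma^2 = E\bigl(\sup_t \sum_i X_{i,t}^2\bigr)$, not just the weak variance $\rho^2$ you computed; the paper explicitly pairs it with Theorem~\ref{thm:sup_var} (namely $\Sigma^2 \leq 8E(Z) + \rho^2$) to convert, and this is exactly what produces the $\sqrt{n}\,\psi(r)$ term in the exponent. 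Your variance bound $nr^2/4$ via $x/(1+x)^2 \leq 1/4$ is in fact slightly sharper than the paper's $nr^2$, which is obtained by the cruder step of dropping one factor of $\tfrac{r^2}{r^2+L(t)} \leq 1$.
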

    \begin{proof}
        Let \(r \geq r_*\) and consider by Lemma \ref{lemma:peeling}
        \begin{equation}\label{eqn:peel_expectation}
            E\left(\sup_{t \in \mathcal{T}} \frac{r^2}{r^2 + L(t)} \frac{1}{\sqrt{n}}Z_t\right) \leq \frac{4\psi(r)}{\sqrt{n}}. 
        \end{equation}
        Further consider since \(\frac{r^2}{r^2 + L(t)} \leq 1\), we have 
        \begin{align}
            \sup_{t \in \mathcal{T}} \Var\left( \frac{r^2}{r^2 + L(t)} \sqrt{n} Z_t\right) &= \sup_{t \in \mathcal{T}}\left(\frac{r^2}{r^2+L(t)}\right)^2\Var\left(\sqrt{n}Z_t\right) \nonumber \\
            &\leq \sup_{t \in \mathcal{T}} \frac{r^2}{r^2+L(t)}\Var\left(\sqrt{n}Z_t\right) \nonumber \\
            &= \sup_{t \in \mathcal{T}} \frac{r^2}{r^2 + L(t)} \sum_{i=1}^{n} \Var\left(\mathbbm{1}_{\{X_i \in A_t\}}\right) \nonumber \\ 
            &\leq \sup_{t \in \mathcal{T}} \frac{r^2}{r^2 + L(t)} \sum_{i=1}^{n} P\{X_i \in A_t\} \nonumber \\
            &= \sup_{t \in \mathcal{T}} \frac{r^2}{r^2 + L(t)} \cdot n L(t) \nonumber \\
            &\leq nr^2. \label{peel_var}
        \end{align}
        Since \(\frac{r^2}{r^2 + L(t)} \leq 1\), it follows that \(\left|\frac{r^2}{r^2 + L(t)} \left(\mathbbm{1}_{\{X_i \in A_t\}} - P\{X_i \in A_t\}\right)\right| \leq 1\). Furthermore, note 
        \begin{equation*}
            E\left(\frac{r^2}{r^2 + L(t)} \left(\mathbbm{1}_{\{X_i \in A_t\}} - P\{X_i \in A_t\}\right)\right) = 0.
        \end{equation*}
        Therefore, we can apply Theorem \ref{thm:bernstein_type} and Theorem \ref{thm:sup_var} to obtain 
        \begin{align*}
            &P\left\{ \sup_{t \in \mathcal{T}} \frac{r^2}{r^2 + L(t)} \frac{Z_t}{\sqrt{n}} \geq E\left(\sup_{t \in \mathcal{T}} \frac{r^2}{r^2 + L(t)}\frac{Z_t}{\sqrt{n}}\right) + u \right\} \\
            &\leq \exp\left(-c \min\left( \frac{n^2u^2}{\sqrt{n}E\left(\sup_{t \in \mathcal{T}} \frac{r^2}{r^2 + L(t)} Z_t\right) + \sup_{t \in \mathcal{T}} \Var\left( \frac{r^2}{r^2 + L(t)} \sqrt{n} Z_t\right) }, nu \right)\right)
        \end{align*}
        for \(u \geq 0\). The desired result then follows from (\ref{eqn:peel_expectation}) and (\ref{peel_var}), and repeating the argument for \(-Z_t\).
    \end{proof}
    
    \begin{theorem}\label{thm:peel}
        Let \(\mathcal{A} = \left\{A_t\right\}_{t \in \mathcal{T}}\) denote a VC class with VC dimension bounded by a universal constant. Let \(X_1,...,X_n\) be independent random variables. For any \(\delta, \lambda \in (0, 1)\), there exists a universal constant \(C' > 0\) and a constant \(\kappa_\delta > 0\) depending only on \(\delta\)  such that with probability at least \(1 - \delta\), we have 
        \begin{equation*}
            \frac{1}{n} \left|\sum_{i=1}^{n} (\mathbbm{1}_{\{X_i \in A_t\}} - P\{X_i \in A_t\})\right| \leq \left(\frac{C' \log(en)}{n\lambda} + \frac{\lambda}{n} \sum_{i=1}^{n} P\{X_i \in A_t\} \right)\left(1 + \frac{\kappa_\delta}{\sqrt{\log(en)}}\right)
        \end{equation*}
        uniformly over \(t \in \mathcal{T}\). 
    \end{theorem}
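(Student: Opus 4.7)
The plan is to reduce Theorem \ref{thm:peel} to the peeling bound of Proposition \ref{prop:uniform_peel}. Write $Z_t = n^{-1/2}\sum_{i=1}^{n}(\mathbbm{1}_{\{X_i \in A_t\}} - P\{X_i \in A_t\})$ and $L(t) = n^{-1}\sum_{i=1}^{n} P\{X_i \in A_t\}$. I need to (i) construct a sublinear envelope $\psi$ satisfying $E(\sup_{L(t)\le r^2}|Z_t|) \le \psi(r)$ on a suitable range of $r$, (ii) apply Proposition \ref{prop:uniform_peel} with carefully calibrated $r$ and $u$, and (iii) algebraically massage the resulting inequality into the form stated in the theorem.

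For the envelope, since $\mathcal{A}$ is a VC class with universally bounded VC dimension, Corollaries \ref{corollary:zeta_small_sigma} and \ref{corollary:zeta_big_sigma} applied to the localized class $\{A_t : L(t) \le r^2\}$ give
\[
E\!\left(\sup_{L(t)\le r^2}|Z_t|\right) \lesssim \frac{\log(e/r)}{\sqrt{n}} + r\sqrt{\log(e/r)} \leq \frac{\log(en)}{\sqrt{n}} + r\sqrt{\log(en)}
\]
for $r \geq n^{-1}$ (the region $r < n^{-1}$ is trivial, since $|Z_t| \leq \sqrt{n}$ deterministically). So I set $\psi(r) := C_1 n^{-1/2}\log(en) + C_2 r\sqrt{\log(en)}$. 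Sublinearity follows immediately: $\psi$ is continuous and nondecreasing, $\psi(r)/r = C_1\log(en)/(r\sqrt{n}) + C_2\sqrt{\log(en)}$ is nonincreasing, and by enlarging $C_2$ we can ensure $\psi(1) \geq 1$.

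Next, I apply Proposition \ref{prop:uniform_peel} with $r^2 := K\log(en)/(n\lambda^2)$ for a large universal constant $K$ and $u := \kappa_\delta \sqrt{\log(en)}/(n\lambda)$, where $\kappa_\delta$ depends only on $\delta$. The choice of $K$ ensures $4\psi(r)/\sqrt{n} \leq \lambda r^2/2$, using $\lambda \leq 1$ to absorb the $\log(en)/n$ contribution of $\psi$ into a multiple of $\log(en)/(n\lambda)$. The choice of $\kappa_\delta \asymp \sqrt{\log(1/\delta)}$ makes both tail terms in Proposition \ref{prop:uniform_peel} at most $\delta/4$: the subgaussian regime requires $n^2 u^2 \gtrsim (\sqrt{n}\psi(r) + nr^2)\log(1/\delta)$, which with $\sqrt{n}\psi(r) + nr^2 \asymp \log(en)/\lambda^2$ becomes $u \gtrsim \sqrt{\log(1/\delta)\log(en)}/(n\lambda)$, while the subexponential regime requires only $nu \gtrsim \log(1/\delta)$, which is weaker. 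Applying the same argument to $-Z_t$ and taking a union bound upgrades the conclusion to the absolute value and yields probability at least $1-\delta$.

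With these choices, the peeling conclusion $|Z_t|/\sqrt{n} \leq (4\psi(r)/\sqrt{n} + u)(1 + L(t)/r^2)$ becomes
\[
\frac{|Z_t|}{\sqrt{n}} \leq \lambda r^2 \bigl(1 + \kappa_\delta'/\sqrt{\log(en)}\bigr) + \lambda L(t)\bigl(1 + \kappa_\delta'/\sqrt{\log(en)}\bigr),
\]
and substituting $\lambda r^2 = K\log(en)/(n\lambda)$ and recalling $\lambda L(t) = (\lambda/n)\sum_{i=1}^{n} P\{X_i \in A_t\}$ produces exactly the target bound with $C' = K$. The main obstacle is ensuring both summands inside the outer bracket of the target inequality pick up the \emph{same} multiplicative slack $(1+\kappa_\delta/\sqrt{\log(en)})$: this is handled by the factorization $4\psi(r)/\sqrt{n} + u = \lambda r^2\bigl[(4\psi(r)/\sqrt{n})/(\lambda r^2) + u/(\lambda r^2)\bigr]$, where the first ratio is controlled by the choice of $K$ (making it $\leq 1/2$, say) and the second by the choice of $\kappa_\delta$ (making it $\leq \kappa_\delta'/\sqrt{\log(en)}$); a smaller bookkeeping point is to verify the condition $r \geq r_*$ of Proposition \ref{prop:uniform_peel}, which holds automatically since $r \asymp \sqrt{\log(en)/n}/\lambda \geq 1/n$ for $\lambda \in (0,1)$.
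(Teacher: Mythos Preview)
Your proposal is correct and follows essentially the same route as the paper: build a sublinear envelope $\psi$ from Corollaries~\ref{corollary:zeta_small_sigma}--\ref{corollary:zeta_big_sigma}, invoke Proposition~\ref{prop:uniform_peel} with $r^2 \asymp \log(en)/(n\lambda^2)$, and simplify. The only substantive difference is the envelope itself: the paper takes $\psi(r)=Cr\sqrt{\log(e/r)}$ on $(0,1]$ (extended to $C$ for $r>1$) with threshold $r_*$ solving $r_*^2 n=\log(e/r_*)$, whereas you linearize via $\log(e/r)\le \log(en)$ to get the affine-in-$r$ envelope $\psi(r)=C_1\log(en)/\sqrt{n}+C_2 r\sqrt{\log(en)}$ with $r_*=1/n$. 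Your envelope is cruder but simpler to verify as sublinear, and at the specific scale $r^2=K\log(en)/(n\lambda^2)$ both give $\psi(r)/\sqrt{n}\asymp \log(en)/(n\lambda)$, so the final bound is identical.

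Two small corrections that do not affect the result. First, the subexponential branch of the tail in Proposition~\ref{prop:uniform_peel} requires $nu\gtrsim\log(1/\delta)$, i.e.\ $\kappa_\delta\sqrt{\log(en)}/\lambda\gtrsim\log(1/\delta)$; this is \emph{not} automatically dominated by the subgaussian requirement when $n$ is small relative to $\log(1/\delta)$, so you should take $\kappa_\delta\asymp\log(1/\delta)$ rather than $\sqrt{\log(1/\delta)}$ (the theorem only asks for dependence on $\delta$, so this is harmless). Second, Proposition~\ref{prop:uniform_peel} already yields the two-sided bound on $|Z_t|$ with the factor $2$ in the tail, so your extra union bound over $\pm Z_t$ is redundant.
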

    \begin{proof}
        Let \(Z_t = \frac{1}{\sqrt{n}} \sum_{i = 1}^{n} \left(\mathbbm{1}_{\{X_i \in A_t\}} - P\{X_i \in A_t\}\right)\) for \(t \in \mathcal{T}\). Let \(L : \mathcal{T} \to [0, \infty)\) be the function \(L(t) = \frac{1}{n} \sum_{i=1}^{n} P\{X_i \in A_t\}\). Since \(V \asymp 1\), consider from Corollaries \ref{corollary:zeta_small_sigma} and \ref{corollary:zeta_big_sigma} we have 
        \begin{equation*}
            E\left(\sup_{\substack{t \in \mathcal{T}, \\ L(t) \leq r^2}} |Z_t| \right) \leq C\left(\frac{\log\left(\frac{e}{r}\right)}{\sqrt{n}} \vee r\sqrt{\log\left(\frac{e}{r}\right)}\right).
        \end{equation*}
        where \(C \geq 1\) is some universal constant. Consider that \(r\sqrt{\log\left(\frac{e}{r}\right)} \geq \frac{\log\left(\frac{e}{r}\right)}{\sqrt{n}}\) if and only if \(r^2 n \geq \log\left(\frac{e}{r}\right)\). Let \(r_* := \inf\left\{r \geq 0 : r^2 n \geq \log\left(\frac{e}{r}\right)\right\}\) and set 
        \begin{equation*}
            \psi(r) = 
            \begin{cases}
                C r \sqrt{\log\left(\frac{e}{r}\right)} &\textit{if } r \leq 1, \\
                C &\textit{if } r > 1.
            \end{cases}
        \end{equation*}
        Note \(\psi\) is indeed a sublinear function (note \(\psi(1) \geq 1\) is satisfied since \(C \geq 1\)). Further note that for \(r \geq r_*\), we have 
        \begin{equation*}
            E\left(\sup_{\substack{t \in \mathcal{T}, \\ L(t) \leq r^2}} |Z_t|\right) \leq \psi(r).
        \end{equation*}
        Consequently, Proposition \ref{prop:uniform_peel} gives for \(r \geq r_*\), with probability at least \(1 - \delta\), uniformly over \(t \in \mathcal{T}\) 
        \begin{align*}
            \frac{1}{n} \left|\sum_{i=1}^{n} \left(\mathbbm{1}_{\{X_i \in A_t\}} - P\{X_i \in A_t\}\right)\right| &\leq \left(r^2 + L(t)\right) \left(\frac{4\psi(r)}{r^2\sqrt{n}} + \kappa_\delta\left(\frac{1}{r^2n} + \sqrt{\frac{\psi(r)}{r^4n^{3/2}}} + \frac{1}{r\sqrt{n}}\right)\right) \\
            &\leq \left(r^2 + L(t)\right) \left(\frac{4C\sqrt{\log\left(\frac{e}{r}\right)}}{r\sqrt{n}} + \kappa_\delta\left(\frac{1}{r^2n} + \sqrt{\frac{\psi(r)}{r^4n^{3/2}}} + \frac{1}{r\sqrt{n}}\right)\right)
        \end{align*}
        Consider that \(r_*^2 \leq \frac{\log(en)}{n} \leq \frac{(4C)^2}{\lambda^2}\frac{\log(en)}{n}\). Selecting \(r^2 = \frac{(4C)^2}{\lambda^2}\frac{\log(en)}{n}\), we have
        \begin{align*}
            &\frac{1}{n} \left|\sum_{i=1}^{n} \left(\mathbbm{1}_{\{X_i \in A_t\}} - P\{X_i \in A_t\}\right)\right| \\
            &\leq \left(\frac{(4C)^2 \log(en)}{n\lambda^2} + L(t)\right) \left(\lambda + \kappa_\delta \left(\frac{1}{r^2n} + \sqrt{\frac{\psi(r)}{r^4n^{3/2}}} + \frac{1}{r\sqrt{n}}\right)\right) \\
            &\leq \left(\frac{(4C)^2 \log(en)}{n\lambda} + \lambda L(t)\right)\left(1 + \lambda^{-1}\kappa_\delta\left(\frac{1}{r^2n} + \sqrt{\frac{\psi(r)}{r^2\sqrt{n}} \cdot \frac{1}{r^2n}} + \frac{1}{r\sqrt{n}} \right)\right) \\
            &\leq  \left(\frac{(4C)^2 \log(en)}{n\lambda} + \lambda L(t)\right)\left(1 + \kappa_\delta\left(\frac{1}{\log(en)} + \frac{1}{\sqrt{\log(en)}} + \frac{1}{\sqrt{\log(en)}} \right)\right)
        \end{align*} 
        which holds uniformly over \(t \in \mathcal{T}\) with probability at least \(1-\delta\). Since \(\frac{1}{\sqrt{\log(en)}}\) dominates in the tail, we have the desired result. 
    \end{proof}
    
    \begin{corollary}\label{corollary:peel_tail}
        Consider the setup of Theorem \ref{thm:peel}. For any \(\delta, \lambda \in (0, 1)\), there exist universal constants \(C', C'' > 0\) such that with probability at least \(1-\delta\) we have 
        \begin{equation*}
            \frac{1}{n}\left|\sum_{i=1}^{n} \mathbbm{1}_{\{X_i \in A_t\}} - P\{X_i \in A_t\}\right| \leq \left(\frac{C' n^{1/8}}{n\lambda} + \frac{\lambda}{n}\sum_{i=1}^{n} P\{X_i \in A_t\} \right)\left(1 + \frac{C''\log\left(1/\delta\right)}{n^{1/16}}\right)
        \end{equation*}
        uniformly over \(t \in \mathcal{T}\). 
    \end{corollary}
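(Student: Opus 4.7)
The plan is to apply Proposition \ref{prop:uniform_peel} in the same spirit as the proof of Theorem \ref{thm:peel}, but with a different choice of the scale $r$ designed to produce explicit $\log(1/\delta)$ dependence rather than the implicit $\kappa_\delta/\sqrt{\log(en)}$ multiplier. The point is that the polynomial scale $n^{1/8}$ is chosen precisely so that the $\sqrt{\log(en)}$ factor inherited from the Dudley bound becomes dominated by $n^{1/16}$ and can therefore be absorbed into universal constants rather than bleed into the $\delta$-dependent correction.

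Concretely, I would reuse the sublinear function $\psi(r) = Cr\sqrt{\log(e/r)}$ (for $r \leq 1$, extended past $1$ by a constant) that is constructed in the proof of Theorem \ref{thm:peel} from Corollaries \ref{corollary:zeta_small_sigma}--\ref{corollary:zeta_big_sigma}; this is available because $\mathcal{A}$ is VC with VC dimension bounded by a universal constant. The hypothesis of Proposition \ref{prop:uniform_peel} holds for all $r \geq r_* \asymp \sqrt{\log(en)/n}$. Take
\begin{equation*}
r^2 = C_1 \frac{n^{1/8}}{n\lambda^2}, \qquad u = c_2\,\frac{\sqrt{\log(2/\delta)}}{n^{15/16}\lambda} + c_2\,\frac{\log(2/\delta)}{n},
\end{equation*}
with $C_1$ and $c_2$ universal. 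For $n$ larger than a universal constant, $r \geq r_*$ since $n^{1/8}$ dominates $\log(en)$. Also, $nr^2 = C_1 n^{1/8}/\lambda^2$ dominates $\sqrt{n}\psi(r) \asymp \lambda^{-1} n^{1/16}\sqrt{\log(en)}$, so the variance proxy satisfies $\sqrt{n}\psi(r) + nr^2 \lesssim n^{1/8}/\lambda^2$. With $c_2$ suitably large, the Bernstein exponent $\min\!\big(n^2 u^2/(\sqrt{n}\psi(r) + nr^2),\, nu\big) \geq c\log(2/\delta)$, so the tail bound in Proposition \ref{prop:uniform_peel} has failure probability at most $\delta$.

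Now substitute into the conclusion of Proposition \ref{prop:uniform_peel} and expand:
\begin{equation*}
\frac{|Z_t|}{\sqrt{n}} \;\leq\; \frac{4\psi(r)}{\sqrt{n}} \;+\; u \;+\; L(t)\!\left(\frac{4\psi(r)}{r^2\sqrt{n}} + \frac{u}{r^2}\right).
\end{equation*}
A direct computation (pushing $r$ and $u$ through) gives $4\psi(r)/\sqrt{n} \asymp n^{-15/16}\sqrt{\log(en)}/\lambda$ and $4\psi(r)/(r^2\sqrt{n}) \asymp \lambda\sqrt{\log(en)}/n^{1/16}$, while $u \lesssim \log(1/\delta)/(n^{15/16}\lambda)$ and $u/r^2 \lesssim \lambda \log(1/\delta)/n^{1/16}$. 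The key observation is that $\sqrt{\log(en)}/n^{1/16}$ is bounded by a universal constant on $[1,\infty)$; consequently $4\psi(r)/\sqrt{n} \lesssim n^{1/8}/(n\lambda)$ and $4\psi(r)/(r^2\sqrt{n}) \lesssim \lambda$, both with universal constants. Collecting the four terms and factoring yields
\begin{equation*}
\frac{|Z_t|}{\sqrt{n}} \;\leq\; C'\!\left(\frac{n^{1/8}}{n\lambda} + \lambda L(t)\right)\!\left(1 + \frac{C''\log(1/\delta)}{n^{1/16}}\right),
\end{equation*}
uniformly in $t \in \mathcal{T}$. Applying the same argument to $-Z_t$ and taking a union bound (at the cost of replacing $\delta$ by $\delta/2$, which can be absorbed into $C''$) gives the two-sided statement.

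The only real obstacle is bookkeeping: one must verify that every appearance of $\sqrt{\log(en)}$ comes paired with a factor of $n^{-1/16}$ so that it can be swallowed by a universal constant rather than contribute a polylog term to the $\log(1/\delta)/n^{1/16}$ multiplier. Once this bookkeeping is arranged by the choice $r^2 \asymp n^{1/8}/(n\lambda^2)$, the standard Bernstein tail yields the explicit $\log(1/\delta)$ correction. Edge cases near $\delta = 1$ (where $\sqrt{\log(2/\delta)} \not\lesssim \log(2/\delta)$) are handled by enlarging $C''$ or by noting that such values of $\delta$ give a vacuously weak probability statement.
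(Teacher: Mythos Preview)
Your proposal is correct and follows essentially the same approach as the paper: the paper's proof is a two-line remark stating that one repeats the proof of Theorem~\ref{thm:peel} with $r^2 = \frac{(4C)^2}{\lambda^2}\cdot \frac{n^{1/8}}{n}$ and $\kappa_\delta = C''\log(1/\delta)$, and you have carried out exactly this substitution in detail, including the key observation that $\sqrt{\log(en)}/n^{1/16}$ is universally bounded so the logarithmic factor from $\psi(r)$ is absorbed. One minor note: Proposition~\ref{prop:uniform_peel} already delivers the two-sided bound on $|Z_t|$ with failure probability $2\exp(-\cdots)$, so your final union-bound step for $-Z_t$ is redundant (though harmless).
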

    \begin{proof}
        The proof is the same as the proof of Theorem \ref{thm:peel}, with the slight modification of taking \(r^2 = \frac{(4C)^2}{\lambda^2} \cdot \frac{n^{1/8}}{n}\) as well as taking \(\kappa_\delta = C''\log\left(1/\delta\right)\). 
    \end{proof}

    \subsection{Tail probabilities}
    For the following two results, consider the following setup. Suppose \(X_i = \left\{X_{i,t}\right\}_{t \in \mathcal{T}}\) is a collection of random variables for \(1 \leq i \leq n\). Assume \(X_1,...,X_n\) are mutually independent. Define 
    \begin{align*}
        \Sigma^2 &:= E\left(\sup_{t \in \mathcal{T}} \sum_{i=1}^{n} X_{i,t}^2\right), \\
        \rho^2 &:= \sup_{t \in \mathcal{T}} \sum_{i=1}^{n} E(X_{i,t}^2).
    \end{align*}
    
    \begin{theorem}[Bernstein-type - Theorem 12.2 \cite{boucheron_concentration_2013}]\label{thm:bernstein_type}
        Assume \(E(X_{i,t}) = 0\) and \(|X_{i,t}| \leq 1\) for all \(t \in \mathcal{T}\) and \(1 \leq i \leq n\). Let \(Z = \sup_{t \in \mathcal{T}} \sum_{i=1}^{n} X_{i, t}\). If \(u \geq 0\), then
        \begin{equation*}
            P\left\{Z \geq E(Z) + u \right\} \leq \exp\left(-\frac{u^2}{2(2(\Sigma^2 + \rho^2) + u)}\right).
        \end{equation*}
    \end{theorem}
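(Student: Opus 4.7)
The plan is to establish the equivalent Bousquet/Talagrand form of the stated Bernstein bound via the entropy method, following the template of Chapter 12 of Boucheron-Lugosi-Massart. Define the centered log-moment generating function $\phi(\lambda) := \log E[e^{\lambda(Z-EZ)}]$ for $\lambda \in [0,1)$. The target is to show $\phi(\lambda) \leq \lambda^2 v / (2(1-\lambda))$ with $v := 2(\Sigma^2 + \rho^2)$. Once this is in hand, Chernoff's bound optimized at $\lambda = u/(v+u)$ gives
\[
P\{Z \geq EZ + u\} \leq \exp\!\left(-\frac{u^2}{2(v+u)}\right),
\]
which is exactly the stated conclusion after substituting $v = 2(\Sigma^2+\rho^2)$.

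To bound $\phi$, I would first apply Han's tensorization of entropy with $g = e^{\lambda Z}$: $\operatorname{Ent}(g) \leq \sum_{i=1}^n E[\operatorname{Ent}_i(g)]$, where $\operatorname{Ent}_i$ denotes entropy with respect to $X_i$ conditional on the other coordinates. For each $i$ introduce the leave-one-out process $Z^{(i)} := \sup_{t \in \mathcal{T}} \sum_{j \neq i} X_{j,t}$, which is independent of $X_i$, together with the pointwise increment bound $0 \leq Z - Z^{(i)} \leq X_{i,\hat t}$, where $\hat t$ achieves the supremum in $Z$ (available by countability of $\mathcal{T}$, up to a routine approximation). A standard convexity estimate built from $e^x - 1 - x \leq x^2 e^{x_+}/2$ combined with the Donsker-Varadhan variational representation of entropy yields $\operatorname{Ent}_i(e^{\lambda Z}) \leq \lambda^2 E_i[X_{i,\hat t}^2\, e^{\lambda Z}]$. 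Summing over $i$ and invoking the pointwise inequality $\sum_i X_{i,\hat t}^2 \leq \sup_{t \in \mathcal{T}} \sum_i X_{i,t}^2$ converts Han's inequality into the functional inequality
\[
\lambda \phi'(\lambda) - \phi(\lambda) \;\leq\; \lambda^2 \cdot \frac{E\!\left[\sup_{t \in \mathcal{T}} \sum_i X_{i,t}^2 \cdot e^{\lambda(Z-EZ)}\right]}{E\!\left[e^{\lambda(Z-EZ)}\right]}.
\]

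The main obstacle is controlling the coupled expectation on the right, which entangles the ``variance process'' $\sup_t \sum_i X_{i,t}^2$ with the MGF of $Z$ itself. Bousquet's resolution is to decompose $\sup_t \sum_i X_{i,t}^2 = \rho^2 + \bigl(\sup_t \sum_i X_{i,t}^2 - \rho^2\bigr)$: the constant part contributes $\rho^2 E[e^{\lambda Z}]$, while the deviation has expectation at most $\Sigma^2 - \rho^2$ and can be reabsorbed via a symmetrization-and-contraction argument exploiting $|X_{i,t}| \leq 1$ to dominate it by an affine function of $\phi'(\lambda)$. After this cancellation the inequality becomes the clean differential inequality $\lambda \phi'(\lambda) - \phi(\lambda) \leq \lambda^2 v$, whose unique solution with $\phi(0) = \phi'(0) = 0$ is $\phi(\lambda) \leq \lambda^2 v / (2(1-\lambda))$, and the Chernoff-Legendre transform closes the argument. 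The \emph{delicate point} is that neither $\Sigma^2$ nor $\rho^2$ alone will suffice---either can be arbitrarily smaller than the other in applications---so extracting both constants simultaneously requires the Bousquet splitting plus symmetrization rather than any one-step bound.
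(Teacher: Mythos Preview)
The paper does not prove this theorem; it is quoted verbatim from Boucheron, Lugosi, and Massart \cite{boucheron_concentration_2013} as a black-box concentration tool and used without argument. Your sketch follows the entropy-method template of Chapter 12 of that reference, which is indeed how the result is established there, so in spirit you are aligned with the cited source rather than with anything the present paper does.

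One technical slip in your outline is worth flagging: the differential inequality $\lambda \phi'(\lambda) - \phi(\lambda) \leq \lambda^2 v$ does \emph{not} integrate to $\phi(\lambda) \leq \lambda^2 v/(2(1-\lambda))$. Dividing by $\lambda^2$ gives $(\phi(\lambda)/\lambda)' \leq v$, hence $\phi(\lambda) \leq v\lambda^2$, a sub-Gaussian bound rather than the sub-exponential one you want. To recover the Bernstein form, the right-hand side of the entropy inequality must retain the nonlinear factor coming from $e^x - 1 - x \leq x^2 e^{x_+}/2$ (equivalently a $\lambda/(1-\lambda)$ in the Herbst integration), and the ``reabsorption'' step you describe is schematic compared to the actual self-bounding mechanism in the reference. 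These are repairable within the same framework and do not indicate a wrong approach, but the clean differential inequality as you wrote it cannot produce the stated tail.
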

    
    \begin{theorem}[Theorem 11.8 \cite{boucheron_concentration_2013}]\label{thm:sup_var}
        Assume \(E(X_{i,t}) = 0\) and \(|X_{i,t}| \leq 1\) for all \(t \in \mathcal{T}\) and \(1 \leq i \leq n\). Let \(Z = \sup_{t \in \mathcal{T}} \sum_{i=1}^{n} X_{i, t}\). Then 
        \begin{equation*}
            \Sigma^2 \leq 8E(Z) + \rho^2. 
        \end{equation*}
    \end{theorem}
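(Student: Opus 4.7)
\textbf{Proof proposal for Theorem \ref{thm:sup_var}.} The plan is to combine three standard devices — decentering, the Rademacher contraction principle, and symmetrization/desymmetrization — which together will account for the factor of $8$. The starting point is the trivial decentering
\[
\Sigma^2 - \rho^2 = E\Bigl(\sup_{t \in \mathcal{T}} \sum_{i=1}^n X_{i,t}^2\Bigr) - \sup_{t \in \mathcal{T}} \sum_{i=1}^n E(X_{i,t}^2) \leq E\Bigl(\sup_{t \in \mathcal{T}} \sum_{i=1}^n \bigl(X_{i,t}^2 - E(X_{i,t}^2)\bigr)\Bigr),
\]
so it suffices to bound the centered squared process on the right-hand side by $8\, E(Z)$.

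Next, I would apply symmetrization to the squared process: introducing independent Rademacher variables $\varepsilon_1,\ldots,\varepsilon_n$, independent of the $X_i$'s, the standard symmetrization lemma gives
\[
E\Bigl(\sup_{t \in \mathcal{T}} \sum_{i=1}^n \bigl(X_{i,t}^2 - E(X_{i,t}^2)\bigr)\Bigr) \leq 2\, E\Bigl(\sup_{t \in \mathcal{T}} \sum_{i=1}^n \varepsilon_i X_{i,t}^2\Bigr).
\]
Then the Rademacher contraction principle (Ledoux-Talagrand) enters. Since $|X_{i,t}| \leq 1$, the function $\phi(x) = x^2$ is $2$-Lipschitz on $[-1,1]$ with $\phi(0) = 0$, so contraction yields
\[
E\Bigl(\sup_{t \in \mathcal{T}} \sum_{i=1}^n \varepsilon_i X_{i,t}^2\Bigr) \leq 2\, E\Bigl(\sup_{t \in \mathcal{T}} \sum_{i=1}^n \varepsilon_i X_{i,t}\Bigr),
\]
conditioning on the $X_i$'s and then taking the outer expectation.

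Finally, because $E(X_{i,t}) = 0$ for every $t$, a desymmetrization step (introduce an independent copy $X'_i$ and use that $\varepsilon_i (X_{i,t} - X'_{i,t})$ has the same distribution as $X_{i,t} - X'_{i,t}$) yields
\[
E\Bigl(\sup_{t \in \mathcal{T}} \sum_{i=1}^n \varepsilon_i X_{i,t}\Bigr) \leq 2\, E(Z),
\]
and multiplying the three factors of $2$ gives $\Sigma^2 - \rho^2 \leq 8\, E(Z)$, as desired.

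The main obstacle is the correct invocation of the contraction principle: one needs the version that applies to the supremum over $t$ of signed Rademacher sums composed with a Lipschitz map vanishing at the origin, rather than the absolute-value version. Once one verifies that $\phi(x) = x^2$ satisfies these hypotheses on $[-1,1]$ with Lipschitz constant $2$, the rest is bookkeeping. Measurability of the suprema is handled by the assumption (standing throughout the appendix) that $\mathcal{T}$ may be taken countable. No part of the argument uses anything beyond the stated assumptions $E(X_{i,t}) = 0$ and $|X_{i,t}| \leq 1$, so all three factors of $2$ are tight in the sense of the general technique; sharpening the constant would require a different approach altogether.
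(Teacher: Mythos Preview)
The paper does not supply a proof; Theorem \ref{thm:sup_var} is quoted from \cite{boucheron_concentration_2013} as a black box, so there is no argument in the paper to compare against. Your route --- decenter, symmetrize, contract, desymmetrize --- is the standard one, and steps 1--3 are correct.

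The gap is step 4. The desymmetrization you sketch gives
\[
E\Bigl(\sup_t \sum_i \varepsilon_i X_{i,t}\Bigr) \le E\Bigl(\sup_t \sum_i (X_{i,t}-X'_{i,t})\Bigr) \le E(Z) + E\Bigl(\sup_t \sum_i (-X'_{i,t})\Bigr),
\]
and the last term is \emph{not} $E(Z)$ in general: the one-sided supremum is not a norm, so the usual desymmetrization inequality $E\|\sum_i \varepsilon_i X_i\|\le 2E\|\sum_i X_i\|$ does not apply. In fact the inequality in the statement, taken literally with the one-sided $Z=\sup_t\sum_i X_{i,t}$, is false. Take $n=1$, $\mathcal{T}=\{1,\dots,N\}$, $\xi$ uniform on $\{1,\dots,N\}$, and $X_{1,j}=N^{-1}-\mathbbm{1}_{\{\xi=j\}}$. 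Then $E(X_{1,j})=0$, $|X_{1,j}|\le 1$, while $\Sigma^2=(1-N^{-1})^2$, $\rho^2=N^{-1}(1-N^{-1})$, and $E(Z)=E(\max_j X_{1,j})=N^{-1}$; hence $8E(Z)+\rho^2\to 0$ but $\Sigma^2\to 1$ as $N\to\infty$.

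What your chain of inequalities actually proves is
\[
\Sigma^2 \le \rho^2 + 8\,E\Bigl(\sup_t \Bigl|\sum_i X_{i,t}\Bigr|\Bigr),
\]
obtained by inserting an absolute value before the desymmetrization step so that the supremum becomes a genuine norm. This is presumably the form of the cited result (or it carries a standing convention such as the index class being closed under negation). For the paper's applications the distinction is immaterial: Theorems \ref{thm:bernstein_type} and \ref{thm:sup_var} are always invoked together with the mirror argument for $-X_{i,t}$, so effectively the two-sided quantity is what is controlled.
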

    
    \begin{theorem}[Bernstein's inequality - Theorem 2.8.4 \cite{vershynin_high-dimensional_2018}]\label{thm:bernstein_bounded}
        Let \(Y_1,...,Y_n\) be independent mean zero random variables such that \(|Y_i| \leq 1\) for all \(i\). If \(u \geq 0\), then 
        \begin{equation*}
            P\left\{ \left|\sum_{i=1}^{n} Y_i\right| \geq u \right\} \leq 2 \exp\left(-\frac{u^2/2}{\tau^2 + u/3}\right)
        \end{equation*}
        where \(\tau^2 = \sum_{i=1}^{n} E(Y_i^2)\). 
    \end{theorem}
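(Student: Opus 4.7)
The statement is the classical one-sided/two-sided Bernstein inequality for bounded independent mean-zero variables, and the natural plan is the standard Chernoff-plus-moment-bound argument. I would first prove the one-sided tail \(P\{\sum_i Y_i \ge u\} \le \exp(-u^2/(2(\tau^2 + u/3)))\); the two-sided statement then follows by applying the same bound to \(-Y_i\) and using a union bound (the factor of \(2\)).

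For the one-sided tail, the plan is to use the exponential Markov inequality \(P\{\sum_i Y_i \ge u\} \le e^{-tu}\prod_i E(e^{tY_i})\) for \(t>0\) and then bound each moment generating function. Writing \(e^{tY_i} = 1 + tY_i + \sum_{k\ge 2}(tY_i)^k/k!\), using \(E(Y_i)=0\) and \(|Y_i|\le 1\) so that \(E(Y_i^k) \le E(Y_i^2) = \sigma_i^2\) for \(k\ge 2\), and summing the geometric tail gives, for \(0<t<3\),
\begin{equation*}
E(e^{tY_i}) \;\le\; 1 + \sigma_i^2 \sum_{k\ge 2}\frac{t^k}{k!} \;\le\; 1 + \frac{\sigma_i^2 t^2/2}{1 - t/3} \;\le\; \exp\!\left(\frac{\sigma_i^2 t^2/2}{1 - t/3}\right),
\end{equation*}
where the last step uses \(1+x \le e^x\). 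Taking the product across independent \(Y_i\)'s collapses the \(\sigma_i^2\)'s into \(\tau^2 = \sum_i E(Y_i^2)\), yielding \(\prod_i E(e^{tY_i}) \le \exp(\tau^2 t^2/(2(1-t/3)))\).

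Substituting into the Chernoff bound gives
\begin{equation*}
P\!\left\{\sum_{i=1}^n Y_i \ge u\right\} \;\le\; \exp\!\left(-tu + \frac{\tau^2 t^2/2}{1 - t/3}\right),\qquad 0<t<3.
\end{equation*}
The final step is to optimize (or rather, pick a near-optimal \(t\)). The clean choice \(t = u/(\tau^2 + u/3)\), which lies in \((0,3)\), makes the exponent equal to \(-u^2/(2(\tau^2+u/3))\) after some algebraic simplification; this is the only place where a little care is needed, since one must verify that with this choice \(1 - t/3 \ge \tau^2/(\tau^2 + u/3)\) so that the bracketed fraction is controlled. Applying the same argument to \(-Y_i\) (which also satisfies \(|-Y_i|\le 1\), mean zero, and has the same second moments) and taking a union bound produces the factor of \(2\) in the two-sided form.

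The only mildly delicate point is the algebraic optimization in the last step; everything else is the routine Chernoff-plus-Taylor-expansion template, and the boundedness hypothesis \(|Y_i|\le 1\) is exactly what makes the geometric series \(\sum_{k\ge 2} t^k/k!\) comparable to \(t^2/(2(1-t/3))\), which in turn is what produces the characteristic \(u/3\) correction in the Bernstein denominator.
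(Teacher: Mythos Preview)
Your argument is correct and is the standard Chernoff--moment-generating-function proof of Bernstein's inequality. Note, however, that the paper does not actually prove this statement: it is listed as an auxiliary result cited from Vershynin's book (Theorem 2.8.4 there) and is invoked without proof. So there is nothing to compare against; your proposal simply supplies the classical argument that the paper omits.
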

    
    \begin{theorem}[Bounded differences - Theorem 6.2 \cite{boucheron_concentration_2013}]\label{thm:bounded_differences}
        Suppose \(f : \mathcal{X}^n \to \R\) satisfies the bounded differences inequality for some nonnegative \(d_1,...,d_n\), that is 
        \begin{equation*}
            \sup_{\substack{x_1,...,x_n \in \mathcal{X}, \\ x_i' \in \mathcal{X}}} |f(x_1,...,x_{i-1},x_i,x_{i+1}...,x_n) - f(x_1,...,x_{i-1},x_{i}',x_{i+1},...,x_n)| \leq d_i
        \end{equation*}
        for all \(1 \leq i \leq n\). Let \(Z = f(X_1,...,X_n)\) where \(X_1,...,X_n\) are independent \(\mathcal{X}\)-valued random variables. If \(u \geq 0\), then 
        \begin{equation*}
            P\left\{|Z - E(Z)| > u\right\} \leq 2 \exp\left(-\frac{2u^2}{\sum_{i=1}^{n} d_i^2}\right).
        \end{equation*}
    \end{theorem}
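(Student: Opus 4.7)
The plan is to prove the two-sided concentration bound by the classical martingale method (Azuma--Hoeffding applied to the Doob martingale of $Z$), which is the standard route in \cite{boucheron_concentration_2013}. Writing $Z = f(X_1,\ldots,X_n)$, let $\mathcal{F}_0 \subset \mathcal{F}_1 \subset \cdots \subset \mathcal{F}_n$ be the filtration with $\mathcal{F}_i = \sigma(X_1,\ldots,X_i)$, and form the Doob martingale $M_i = E(Z \mid \mathcal{F}_i)$ with increments $V_i = M_i - M_{i-1}$. Since $M_0 = E(Z)$ and $M_n = Z$, one has the telescoping identity $Z - E(Z) = \sum_{i=1}^n V_i$, so controlling the MGF of $Z - E(Z)$ reduces to controlling the conditional MGF of each $V_i$.

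First I would establish that, conditional on $\mathcal{F}_{i-1}$, the random variable $V_i$ has range at most $d_i$. The cleanest way is to write $V_i = \varphi(X_1,\ldots,X_i) - \int \varphi(X_1,\ldots,X_{i-1},x_i')\, dP_{X_i}(x_i')$, where $\varphi(x_1,\ldots,x_i) := E(f(X) \mid X_1=x_1,\ldots,X_i=x_i)$ is obtained by integrating out $X_{i+1},\ldots,X_n$ using their mutual independence from $(X_1,\ldots,X_i)$. The bounded-differences hypothesis transfers under integration, yielding $\sup_{x_i,x_i'} \bigl|\varphi(X_1,\ldots,X_{i-1},x_i) - \varphi(X_1,\ldots,X_{i-1},x_i')\bigr| \leq d_i$ almost surely, from which the conditional range bound on $V_i$ follows.

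Next, Hoeffding's lemma applied conditionally gives $E(e^{\lambda V_i} \mid \mathcal{F}_{i-1}) \leq e^{\lambda^2 d_i^2/8}$ for every $\lambda \in \R$. Iterating via the tower property yields
\[
E\!\left(e^{\lambda(Z - E(Z))}\right) \leq \exp\!\left(\frac{\lambda^2}{8}\sum_{i=1}^n d_i^2\right).
\]
A standard Chernoff argument with the optimal choice $\lambda = 4u / \sum_i d_i^2$ produces the one-sided tail $P\{Z - E(Z) > u\} \leq \exp(-2u^2/\sum_i d_i^2)$. Applying the same reasoning to $-Z$ (whose bounded-differences constants are identical) and taking a union bound gives the two-sided statement.

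The main obstacle is the first step: verifying that conditional on $\mathcal{F}_{i-1}$, the martingale difference $V_i$ inherits range $d_i$ from the bounded-differences constants. All the subsequent pieces --- Hoeffding's lemma, the tower property, and the Chernoff optimization --- are routine once the conditional range control is in hand.
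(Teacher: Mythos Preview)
Your proposal is correct and follows the standard Doob martingale plus Hoeffding's lemma route, which is exactly the argument in \cite{boucheron_concentration_2013}. The paper itself does not supply a proof of this statement; it simply quotes it as Theorem 6.2 of \cite{boucheron_concentration_2013}, so there is nothing further to compare.
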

    
    \subsection{Symmetrization}
    Suppose we have a stochastic process \(\{X_t\}_{t \in \mathcal{T}}\) given by 
    \begin{equation*}
        X_t = \frac{1}{\sqrt{n}} \sum_{i=1}^{n} (f_t(Y_i) - E(f_t(Y_i)))
    \end{equation*}
    where \(Y_1,...,Y_n\) are independent random variables and \(\{f_t\}_{t\in \mathcal{T}}\) is a collection of real-valued functions. Consider the symmetrized process, that is, draw \(\varepsilon_1,...,\varepsilon_n \overset{iid}{\sim} \Rademacher\left(\frac{1}{2}\right)\) independently of \(\{Y_i\}_{i=1}^{n}\) and consider 
    \begin{equation*}
        \xi_t := \frac{1}{\sqrt{n}} \sum_{i=1}^{n} \varepsilon_i f_t(Y_i).
    \end{equation*}
    The symmetrized process \(\{\xi_t\}_{t\in \mathcal{T}}\) is closely related to the original process \(\{X_t\}_{t \in \mathcal{T}}\). In particular, the following lemma is well-known. 
    
    \begin{lemma}[Symmetrization]\label{lemma:symmetrization}
        We have \(E\left(\sup_{t \in \mathcal{T}} X_t\right) \leq 2 E\left(\sup_{t \in \mathcal{T}} \xi_t\right)\) and \(E\left(\sup_{t \in \mathcal{T}} |X_t|\right) \leq 2 E\left(\sup_{t \in \mathcal{T}} |\xi_t|\right)\). Furthermore, for any \(t, t' \in \mathcal{T}\) and \(\lambda > 0\), we have 
        \begin{equation*}
            E\left(\left. e^{\lambda(\xi_t - \xi_{t'})} \right| \left\{Y_i\right\}_{i=1}^{n}\right) \leq e^{\frac{\lambda^2 d_Y^2(t, t')}{2}}
        \end{equation*}
        where \(d_Y(t, t') = \sqrt{\frac{1}{n} \sum_{i=1}^{n} (f_t(Y_i) - f_{t'}(Y_i))^2}\). 
    \end{lemma}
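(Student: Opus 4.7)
The plan is to prove both parts by the standard symmetrization-by-ghost-sample trick followed by a direct Hoeffding moment-generating-function calculation, conditionally on the data.

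First, for the two inequalities about $E(\sup_t X_t)$ and $E(\sup_t |X_t|)$, I would introduce an independent ghost sample $Y_1',\dots,Y_n'$ with the same joint law as $Y_1,\dots,Y_n$ and set $X_t' := \frac{1}{\sqrt{n}}\sum_i (f_t(Y_i') - E f_t(Y_i'))$. Since $E f_t(Y_i) = E(f_t(Y_i')\mid \{Y_j\})$, I can write $X_t = E(X_t - X_t' \mid \{Y_j\})$ and then apply Jensen's inequality to pull $\sup_t$ inside: $E(\sup_t X_t) \le E(\sup_t (X_t - X_t'))$. The key observation is then that $X_t - X_t' = \frac{1}{\sqrt{n}}\sum_i (f_t(Y_i) - f_t(Y_i'))$ is a sum of independent symmetric random variables, so its joint distribution over $t$ is unchanged if I multiply each summand by an independent Rademacher sign $\varepsilon_i$. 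Consequently
\begin{equation*}
E\!\left(\sup_{t\in\mathcal{T}} X_t\right) \le E\!\left(\sup_{t\in\mathcal{T}} \frac{1}{\sqrt{n}}\sum_{i=1}^n \varepsilon_i (f_t(Y_i)-f_t(Y_i'))\right) \le 2\, E\!\left(\sup_{t\in\mathcal{T}} \xi_t\right),
\end{equation*}
where the last step uses the triangle inequality $\sup_t (a_t + b_t) \le \sup_t a_t + \sup_t b_t$ together with the fact that $\frac{1}{\sqrt n}\sum_i (-\varepsilon_i) f_t(Y_i')$ has the same law as $\xi_t$. The absolute-value version is identical, using $|\cdot|$ in place of the raw sup and noting that $|{-}a|=|a|$ preserves the symmetrization step.

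Second, for the conditional sub-Gaussian bound, I would condition on $\{Y_i\}_{i=1}^n$ so that $a_i := \frac{1}{\sqrt n}(f_t(Y_i)-f_{t'}(Y_i))$ becomes deterministic. Then $\xi_t - \xi_{t'} = \sum_i \varepsilon_i a_i$ is a weighted Rademacher sum, and Hoeffding's classical one-variable bound gives $E(e^{\lambda \varepsilon_i a_i}) = \cosh(\lambda a_i) \le e^{\lambda^2 a_i^2/2}$ coordinatewise. Multiplying across the independent $\varepsilon_i$ yields
\begin{equation*}
E\!\left(\left. e^{\lambda(\xi_t-\xi_{t'})} \right| \{Y_i\}\right) \le \prod_{i=1}^n e^{\lambda^2 a_i^2/2} = \exp\!\left(\frac{\lambda^2}{2}\cdot \frac{1}{n}\sum_{i=1}^n (f_t(Y_i)-f_{t'}(Y_i))^2\right) = e^{\lambda^2 d_Y^2(t,t')/2},
\end{equation*}
which is exactly the claimed inequality.

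There is no real obstacle here: both assertions are textbook. The only place that requires a sliver of care is the measurability/integrability of $\sup_{t\in\mathcal{T}} X_t$, which is why the empirical-process framework in the paper takes $\mathcal{T}$ countable, so that suprema are measurable and Fubini/Jensen apply without comment; I would simply invoke countability of $\mathcal{T}$ to justify interchanging the conditional expectation and the supremum in the Jensen step.
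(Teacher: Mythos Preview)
Your proposal is correct and is the standard textbook argument. The paper does not actually give a proof of this lemma; it simply states it as ``well-known,'' so your ghost-sample symmetrization and conditional Hoeffding calculation supply exactly the proof the paper omits.
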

    
    \end{appendix}
\end{document}